\documentclass[reqno,11pt]{amsart}
\usepackage{amsmath,amsfonts,amssymb,amsxtra,latexsym,amscd,enumerate,amsthm,verbatim,mathrsfs}

\usepackage{hyperref}
\usepackage{graphicx}
\usepackage{cancel}
\usepackage{cite}

\usepackage[dvipsnames]{xcolor}

\usepackage{hyperref}
\hypersetup{colorlinks=true, pdfstartview=FitV,linkcolor=Red,citecolor=Red, urlcolor=Red}
\definecolor{labelkey}{rgb}{1,0,0}

%

\usepackage[margin=1.30in]{geometry}
\setlength{\textwidth}{15.8cm}
\setlength{\oddsidemargin}{0.2cm}
\setlength{\evensidemargin}{0.2cm}
\numberwithin{equation}{section}

\def\eps{\varepsilon}

\newtheorem{theorem}{Theorem}[section]
\newtheorem{lemma}[theorem]{Lemma}
\newtheorem{corollary}[theorem]{Corollary}

\newtheorem{proposition}[theorem]{Proposition}

\newtheorem{remark}[theorem]{Remark}

\title{The Massless Electron limit of the Vlasov-Poisson-Landau system}
\author{Patrick Flynn and Yan Guo}

\begin{document}

\begin{abstract}
Due to ion-electron collisions, it is impossible to derive any
two-fluid model for plasma as a direct hydrodynamic limit of the
Vlasov-Poisson-Landau system for ions and electrons. At the same time, electrons are much lighter than their ion counterparts.
 In this work,  we derive the massless electron limit of the Vlasov-Poisson-Landau system. This is done via a re-scaling of the electron velocity, leading to multiple velocity scales. Importantly, we demonstrate that ion-electron collisions vanish in this limit, due to special structure of the Landau collisions. We also show that this is invalid for the
classical Boltzmann kernel with hard sphere interaction. 
This mechanism serves as the first  step for the derivation of the
two-fluid model for ions from a two-species kinetic equation.

\end{abstract}

\maketitle

\tableofcontents

\section{Introduction} 

A plasma is a collection of fast-moving, charged particles.
It is believed that more than 95 percent of matter in the universe
takes the form of a plasma. Besides this, a major impetus for the study of plasmas is to obtain
nuclear fusion as a means of energy production. Due to its complex and rich nature, there are
three main distinct families of models (kinetic, two-fluid and magnetohydrodynamic) for
describing a plasma in different physical regimes. 
The two-fluid models (Euler-Poisson and Euler-Maxwell systems)
describe dynamics of two \textit{distinct} and \textit{separate} compressible
ion and electron fluids, interacting with their self-consistent electromagnetic
field. 
Such two-fluid models are important both from physical as well as
mathematical standpoint, serving as an origin for most well-known dispersive
PDEs, for instance KdV  \cite{berezin1967nonlinear, washimi1966propagation}, NLS \cite{shimizu1972automodulation}, Zakharov \cite{zakharov1972collapse}, etc.

It is an important question to derive and justify two-fluid theory from more
basic kinetic models for plasma. Consider a classical kinetic model for ions
and electrons, the Vlasov-Poisson-Landau system, which models the distribution functions $F_+(t,x,v)$ and $F_-(t,x,v)$ for ions ($+$) and electrons ($-$) respectively:
\begin{equation}\label{eq:VPL}
\begin{aligned}
\{\partial_t  + v \cdot \nabla_x  +\frac{Z\mathsf e}{m_+} E\cdot \nabla_v\} F_+ &= 2\pi \mathsf e^4\ln(\Lambda) \{Z^4Q_{++}(F_+, F_+) + Z^2Q_{-+}(F_-, F_+)\}, \\
\{\partial_t  + v \cdot \nabla_x  -\frac{\mathsf e}{m_-} E\cdot \nabla_v\}   F_- &= 2\pi \mathsf e^4\ln(\Lambda) \{Q_{--}(F_-, F_-) + Z^2Q_{+-}(F_+, F_-)\}, \\
 - \Delta_x \phi &=  4\pi \mathsf e(Z n_+ - n_-) .
\end{aligned}
\end{equation}
In the above, $\mathsf e$ is the electron charge,\footnote{Not to be confused with the mathematical constant $e \approx2.718$.} and $Z$ is the atomic number of the ion species. The ion and electron masses are $m_+$ and $m_-$ respectively. These distribution functions solve the system
We also have the densities $n_\pm = \int_{\mathbf R^3} F_\pm dv $, the electric potential $\phi$, the electric field  $E :=- \nabla_x \phi$, and  the Landau collision operators $Q_{++}, Q_{-+}, Q_{--}$ and $Q_{+-}$ (tabulated in Section \ref{app:non-dim} below). We refer the reader to \cite{hinton1983collisional}, and Chapter 2 of \cite{balescu1988transport} for the physical justification of these models.

It is well known that any straightforward fluid limit must lead to
\begin{align}
Z^{4}Q_{++}(F_{+},F_{+})+Z^{2}Q_{-+}(F_{-},F_{+}) &  =0,\\
Q_{+-}(F_{+},F_{+})+Z^{2}Q_{--}(F_{-},F_{+}) &  =0.
\end{align}
However, due to the presence
of ion and electron interactions $Q_{+-}$ and $Q_{-+}$, the only possible solutions to the above are of the form\footnote{See equation (40) in \cite{hinton1983collisional}.}
\begin{align}
F_{+} &  = n_+ (\frac{m_{+}}{2\pi T})^{3/2}\exp\left(-\frac{m_{+} |v-u|^2}{ 2T}\right),\\
F_{-} &  =n_-(\frac{m_{-}}{2\pi T})^{3/2}\exp\left(-\frac{m_{-}|v-u|^2}{ 2T}\right).
\end{align}
Here, the electron and ion  fluids exhibit the \textit{same} velocity $u(t,x)$ and temperature $T(t,x)$, which \textit{excludes} the possibility of
justification of any two-fluid models from \eqref{eq:VPL}.
 It is well-known, however,
that it is possible to derive two-fluid models from two-species kinetic equations \textit{if
ion-electron collisions} $Q_{+-}$ \textit{ and } $Q_{-+}$ \textit{are disregarded}. This was done in the context of the Vlasov-Poisson-Boltzmann system \cite{guo2009local,guo2010global, guo2021global}, and more recently for single-species Landau-type equations \cite{ouyang2022hilbert,duan2022compressible}. We also highlight the work \cite{bardos_maxwellboltzmann_2018}, which derives the fluid limit for the Vlasov-Poisson-Boltzmann system, after first deriving the massless electron limit. This served as a major inspiration for our work.
Importantly, however, none of these works consider the inter-species collisions. 

In all physical situations, the electron mass $m_-$ is much smaller than the ion mass $m_+$. For instance,  $\frac{m_{-}%
}{m_{+}}\approx 0.005$ for a hydrogen plasma. This small parameter has been
exploited in different contexts in plasma studies both from physical as well
as mathematical standpoints. For instance, one can derive the Euler-Poisson system for ions in this limit \cite{grenier2020derivation}.  By sending the electron mass to zero, the density of the electrons (and, in turn, the electrostatic potential) becomes wholly determined by the density of the ions. This eliminates the need to solve a fluid or kinetic equation for the electrons separately from the ions, leading to a simpler model.

When the ions and electrons are near thermal equilibrium (say, at temperature $1$), a typical ion speed will be comparable to $ (\frac{1}{m_+})^{\frac{1}{2}}$, while a typical electron speed will be comparable to $ (\frac{1}{m_-})^{\frac{1}{2}}$. Because of these divergent velocity scales, it is necessary to
 introduce the parameter $\eps := (\frac{m_-}{m_+})^\frac{1}{2}$, and the rescaled electron
velocity
\begin{align}
\xi=\eps v.
\end{align}
Applying this rescaling the system \eqref{eq:VPL}, we get the rescaled system \eqref{eq:VPLRescaled}  below. 
The ion-electron collisions then exhibit  \textit{two velocity scales}.
Our main result is that by taking the limit $\varepsilon
\rightarrow0$ in the two-scale system, we are able derive the massless electron system \eqref{eq:VPLIon} below. A salient feature of this derived equation is that the \textit{ion-electron collisions are no longer present}.
In the future, we hope this work will clear the way for the derivation of the Euler-Poisson system for ions. This would be accomplished by first by taking $\eps  \rightarrow0$, and then taking hydrodynamic limit $\kappa
\rightarrow0 $, where $\kappa> 0$ is the analogue of the Knudsen number for our system.

Besides the relevance to the fluid limit, the massless electron limit in kinetic theory is of independent interest. 
There are a number of works that have handled some version of this limit. For instance, \cite{degond1996asymptotics,degond1996transport} give formal expansions for the kinetic equations with Boltzmann and (non-Coulombic) Landau-type collisions for systems of particles with disparate masses, including the cross collision terms.
The works \cite{bouchut1995long,herda2016massless}, give derivations the massless electron limit for Vlasov-Poisson system with linear Fokker-Planck collisions (and in the latter case, with an magnetic field). The aforementioned work \cite{bardos_maxwellboltzmann_2018} derives the analogue of \eqref{eq:VPLIon} for the Vlasov-Poisson-Boltzmann system. We note that they require a regularity assumption, and do not include ion-electron collisions.

There is also a growing literature on the Vlasov-Poisson system with  massless electrons. We refer the reader to \cite{han2011quasineutral,han2017quasineutral,griffin2020singular,bouchut1991global,han2013vlasov,griffin2018global,cesbron2021global,griffin2021global,gagnebin2022landau,griffin2019recent} and the references therein. In particular, the overview \cite{griffin2019recent} has a formal derivation of the system with a Landau collision term.

\subsection{The Rescaled Vlasov-Poisson-Landau system}
%
%
 In Section \ref{app:non-dim}, we apply a rescaling procedure to the system \eqref{eq:VPL}. In particular, we use the rescaled electron velocity $\xi = \eps v$, as aforementioned. 
  This system depends on three positive parameters: $\kappa>0$ (defined in Section \ref{app:non-dim}), the atomic number $Z$, and $\eps =( \frac{m_-}{m_+})^{\frac{1}{2}}$. 
The rescaled versions of the distribution functions $F_+^\eps(t,x,v)$ and $F_-^\eps(t,x,\xi)$ then solve the system
\begin{equation}\label{eq:VPLRescaled}
\begin{aligned}
\{\partial_t  +v \cdot \nabla_x  +Z E^\eps \cdot \nabla_v\} F_+^\eps &= \kappa^{-1} \{Z^3Q(F_+^\eps, F_+^\eps) + Z^2Q_{-+}^\eps(F_-^\eps, F_+^\eps)\}, \\
\{\eps \partial_t  + \xi \cdot \nabla_x  - E^\eps\cdot \nabla_\xi \}   F_-^\eps &= \kappa^{-1}\{Q(F_-^\eps, F_-^\eps) + Z Q_{+-}^\eps(F_+^\eps, F_-^\eps)\}, \\
 - \Delta_x  \phi^\eps &=  4\pi ( n_+^\eps - n_-^\eps)  
\end{aligned}
\end{equation}
 Above, $\phi^\eps$ and $E^\eps = -\nabla_x \phi^\eps$ are the electric potential and field respectively. The functions $n_\pm(t,x)$ are the charge densities, defined by
\begin{align}
n_+^\eps &= \int_{\mathbf R^3} F_+^\eps dv , \ \ \ n_-^\eps =\int_{\mathbf R^3} F_-^\eps d\xi.
\end{align}
 The collision operators are given by
\begin{align}
Q(G_1,G_2) &= \nabla_v \cdot\int_{\mathbf R^3} \Phi(v- v') \left\{G_1(v') \nabla_v G_2(v) - G_2(v) \nabla_{v'} G_1 (v')\right\}dv', \\
Q_{-+}^\eps (G_1,G_2) &=\nabla_{v} \cdot\int_{\mathbf R^3} \Phi(\eps v-   \xi') \left\{\eps G_1(\xi') \nabla_{v} G_2(v) -  G_2(v) \nabla_{\xi '}G_1(\xi')\right\} d\xi ' , \label{eq:Q_-+}\\
Q_{+-}^\eps (G_1,G_2) &=\nabla_{\xi } \cdot\int_{\mathbf R^3} \Phi(\xi-\eps v') \left\{ G_1(v') \nabla_{\xi } G_2(\xi) -  \eps G_2(\xi) \nabla_{v'}G_1(v')\right\} dv'.  \label{eq:Q_+-}
\end{align}
Here, $\Phi$ gives the Landau collision kernel: for each $z \in \mathbf R^3$, 
\begin{equation}
\Phi(z) := \frac{1}{|z|}\left (I - \frac{z\otimes z}{|z|^2}\right).
\end{equation}
From here on  this paper, we only consider the case where $Z = \kappa =1$, and fix the domain $(x,v) \in \mathbf T^3 \times \mathbf R^3$, where $\mathbf T^3 = (\mathbf R/\mathbf Z)^3$ denotes the 3D torus.

%
We recall that the system \eqref{eq:VPLRescaled} comes equipped with conservation of mass, momentum and energy:
\begin{align}
&\frac{d}{dt} \int_{\mathbf T^3 \times \mathbf R^3} F_+^\eps dx dv = \frac{d}{dt} \int_{\mathbf T^3 \times \mathbf R^3} F_-^\eps dx d\xi  = 0,\label{eq:massCons}\\
 &\frac{d}{dt}  \left(\int_{\mathbf T^3 \times \mathbf R^3} vF_+^\eps dx dv + \eps \int_{\mathbf T^3 \times \mathbf R^3} \xi F_-^\eps dx d\xi \right)=0, \label{eq:momCons}\\
 &\frac{d}{dt}  \left(\int_{\mathbf T^3 \times \mathbf R^3} \frac{|v|^2}{2} F_+ ^\eps dx dv +  \int_{\mathbf T^3 \times \mathbf R^3} \frac{|\xi|^2}{2} F_-^\eps dx d\xi  + \frac{1}{8\pi} \int_{\mathbf T^3} |E^\eps|^2 dx \right)=0.\label{eq:energyCons}
\end{align}

From \eqref{eq:VPLRescaled}, it is clear that as $\eps \to 0$, the the term involving $\partial_t$ in the electron equation vanishes. This is a singular limit. Under appropriate circumstances, the electron distribution formally converges to an equilibrium solution at every time, evolving quasi-statically according to the ions. The $\xi $-dependence of these equilibrium solutions are Maxwellians (i.e. Gaussians). We denote the Maxwellians by
\begin{equation}
\mu_q(\xi) := \left(\frac{q}{2\pi}\right)^{\frac{3}{2}} e^{-\frac{q}{2}|\xi|^2}, \quad q > 0. 
\end{equation}
 On the other hand, the $x$ dependence is determined by the electric field, specifically $\lim_{\eps \to 0}\ln(n_-^\eps)$ is proportional to  $\phi^0 = \lim_{\eps \to 0} \phi^\eps$ (up to an additive constant).
The main claim of this paper is that for some $\beta(t) >0$, we have
\begin{align}
\lim_{\eps \to 0} F_-^\eps(t,x,\xi ) & =\mu_{\beta(t)}(\xi) e^{\beta(t) \phi^0(t,x)}  \label{eq:F_-lim}
\end{align}
Here, $\beta^0$ is the inverse temperature. This is known as the Maxwell-Boltzmann approximation. Here, we shift $\phi^0$ by an additive constant to ensure $\int_{\mathbf T^3} e^{\beta(t)\phi^0(t,x)} dx = 1$.

 The above claim is imprecise, because we have not yet made clear our assumptions, nor the sense in which this limit holds. Nevertheless, when the above holds, then the formal limit of \eqref{eq:VPLRescaled}, combined with conservation of energy, leads to the following equation, which we refer to as Vlasov-Poisson-Landau for ions:
\begin{equation}\label{eq:VPLIon}
\begin{aligned}
&\{\partial_t  +v \cdot \nabla_x  - E^0 \cdot \nabla_v\} F_+^0 = Q(F_+^0, F_+^0)\\
&\frac{d}{dt} \left\{\frac{3}{2\beta} + \iint_{\mathbf T^3 \times \mathbf R^3} \frac{|v|^2}{2} F_+^0dx dv + \frac{1}{8\pi}  \int_{\mathbf T^3} |E^0|^2 dx \right\} = 0,\\
&-\Delta_x  \phi^0= 4\pi (n_+^0 - e^{\beta  \phi^0}).
\end{aligned}
\end{equation}
A formal derivation of this system is given in Section \ref{sec:formalDeriv}.



\subsection{Main result}

The goal of this paper is to show that for appropriate initial data, this formal limit holds on a small time interval. We first provide local well-posedness theorems for the systems \eqref{eq:VPLRescaled} and \eqref{eq:VPLIon} for general initial data. To do this, we must first define the function spaces which capture the dissipation rate from the collision kernels. We define the matrix $\sigma_{ij}(v) = \Phi_{ij} * \mu$, with $i, j \in \{1,2,3\}$. Given $u\in \mathbf R^3$, we have that
\begin{equation}\label{eq:sigmaComp}
u^T \sigma(v)u  \sim \frac{1}{\langle v\rangle^3} |P_v u|^2 +  \frac{1}{\langle v\rangle}|P_{v^\perp} u|^2 \end{equation}
Here, $\langle \cdot\rangle = \sqrt{1+|\cdot|^2}$ is the Japanese bracket. The matrices $P_v$ and $P_{v^\perp}$ denote the orthogonal projections onto $\mathrm{span}\{v\}$ and $\{v\}^\perp$ respectively.
Using this matrix, we define the following spaces which will capture the dissipation produced by the various collision operators.
Let $\mathcal H_\sigma$ denote the Hilbert space with inner product (with $\psi= \psi(v), \zeta=\zeta(v)$)
\begin{equation}
\| \psi\|_{\mathcal H_\sigma} ^2= \langle \sigma_{ij}\partial_{v_i} \psi,\partial_{v_j}\psi\rangle_{L^2} +  \langle \mathrm {tr}(\sigma )\psi,\psi\rangle_{L^2}
\end{equation}
Here and throughout the paper, we use repeated index notation, i.e. $u_ku'_k = \sum_{k=1}^3 u_k u'_k$ given any two vectors $u,u' \in \mathbf R^3$. 
The above inner product captures the dissipation associated with self-collisions when linearized near Maxwellian  \cite{degond_dispersion_1997, 
guo_landau_2002,guo2012vlasov}. We also define the semi-norms
\begin{align}
\|\psi\|_{\dot{\mathcal H}_\sigma}^2& = \langle \sigma_{ij}(v)\partial_{v_i} \psi,\partial_{v_j}\psi\rangle_{L^2_v}\\
\|\psi\|_{\mathcal H_{\sigma;\eps}^+}^2 &=\eps \langle \sigma_{ij}(\eps v)\partial_{v_i} \psi,\partial_{v_j}\psi\rangle_{L^2_v}\label{eq:H+}\\
\|\psi\|_{\mathcal H^-_{\sigma;\eps}}^2 &=\frac{1}{\eps} \langle \sigma_{ij}(\frac{v}{\eps})\partial_{\xi _i} \psi,\partial_{\xi _j}\psi\rangle_{L^2_\xi}. \label{eq:H-}
\end{align}
Using these norms, we now define the spaces $\mathfrak E$, $\mathfrak D$ and $\mathfrak D^\pm_\eps$ by their (semi-)norms which we use to control $F_\pm$.
 Fix the parameters $m_k = 5(k+1)$ for $k \in \{0,1,2\}$, and $s =3$. \footnote{Although these parameters may take on a range of values, it is simpler to fix their values to exact constants.}  Given $u = u(x,v)$ (or $u = u(x,\xi)$), we define
\begin{align}
\|u\|_{\mathfrak E}^2 &:= \|\langle v\rangle^{m_2} u\|_{L^2_{x,v}}^2 + \|\langle v\rangle^{m_1}\langle \nabla_x\rangle^s  u\|_{L^2_{x,v}}^2, \\
\|u\|_{\mathfrak D}^2 &:= \|\langle v\rangle^{m_2} u\|_{L^2_{x}(\dot{\mathcal H}_\sigma)_v}^2 + \|\langle v\rangle^{m_1}\langle \nabla_x\rangle^s  u\|_{L^2_{x}(\dot{\mathcal H}_\sigma)_v}^2,\\
\|u\|_{\mathfrak D^\pm_\eps}^2 &:= \|\langle v\rangle^{m_2} u\|_{L^2_{x}(\dot{\mathcal H}_\sigma \cap \mathcal H_{\sigma;\eps}^\pm)_v}^2 + \|\langle v\rangle^{m_1}\langle \nabla_x\rangle^s  u\|_{L^2_{x}(\dot{\mathcal H}_\sigma \cap \mathcal H_{\sigma;\eps}^\pm)_v}^2.
\end{align}
We note that as spaces, $\mathfrak D$ and $ \mathfrak D^\pm_\eps$ are the same. However, the norms are only similar up to a constant which goes to infinity as $\eps \to 0$.

To state the main theorem, 
we will also make use of the following spaces to measure the error in the ion distribution:
\begin{align}
\|u\|_{\mathfrak E'}^2 &:= \|\langle v\rangle^{m_1} u\|_{L^2_{x,v}}^2 + \|\langle v\rangle^{m_0}\langle \nabla_x\rangle^s  u\|_{L^2_{x,v}}^2, \\
\|u\|_{\mathfrak D'}^2 &:= \|\langle v\rangle^{m_1} u\|_{L^2_{x}(\dot{\mathcal H}_\sigma)_v}^2 + \|\langle v\rangle^{m_0}\langle \nabla_x\rangle^s  u\|_{L^2_{x}(\dot{\mathcal H}_\sigma)_v}^2.
\end{align}
Finally, to measure the distance of $F_-^\eps$ from its limit, we let   $\eta >0$ be a constant to be determined later, and define $q_\alpha = e^{\alpha \eta} \beta_{in}$ for each $\alpha  \in \{1,2,3\}$, where $\beta_{in} = \beta(0)$. Then, for $ \alpha\in\{1,2\}$, we define
 \begin{equation}
 \begin{aligned}
 \mathscr E_{-,\alpha}^\eps &:= \|e^{q_\alpha |\xi|^2/4} \langle \nabla_x \rangle^s\{F_{-}^\eps  - \mu_{\beta } e^{\beta \phi^0}\}\|_{L^2_{x,\xi }}^2 \\
 &\quad + \|e^{q_{\alpha +1}|\xi|^2/4} \{F_{-}^\eps  - \mu_{\beta} e^{\beta \phi^0}\}\|_{L^2_{x,\xi }}^2 \\
 \end{aligned}
 \end{equation}
and
 \begin{equation}
 \begin{aligned}
   \mathscr D_{-,\alpha}^\eps& := \|e^{q_\alpha |\xi|^2/4} \langle \nabla_x \rangle^s\{F_{-}^\eps  - \mu_{\beta} e^{\beta \phi^0}\}\|_{L^2_x(\mathcal H_\sigma\cap \mathcal H_{\sigma;\eps}^-)_\xi }^2\\
   &\quad  + \|e^{q_{\alpha +1}|\xi|^2/4} \{F_{-}^\eps  - \mu_{\beta} e^{\beta \phi^0}\}\|_{L^2_x(\mathcal H_\sigma\cap \mathcal H_{\sigma;\eps}^-)_\xi }^2.
 \end{aligned}
 \end{equation}
Also, throughout the rest of the paper, we will use the subscript ``$in$" to denote the initial value of some quantity, for instance, $\phi^0_{in}(x) = \phi^0(0,x)$. 

We can now state our main theorem:
\begin{theorem} \label{thm:derivation}

Suppose for some $T_0 >0$ and $M>0$, there exists a weak solution  $(F_+^0,\beta, \phi^0)$  to \eqref{eq:VPLRescaled}with $0\leq  F_+^0 \in C([0,T_0];\mathfrak E) \cap L^2([0,T_0];\mathfrak D)$, $\beta \in C([0,T_0])$ and $\phi^0 \in C([0,T];H^{s+2})$,
 with
\begin{align}
\sup_{t \in [0,T_0]} \{\|\frac{1}{n_{+}^0}\|_{L^\infty([0,T]\times \mathbf T^3)} + \| F_{+}^0\|_{\mathfrak E}\} + \|\ln(\beta(t))\|_{L^\infty([0,T_0])} \leq M.
\end{align}

\noindent\textit{(i)} There exist positive constants $\eta$, and $\overline \eps$ depending only on $M$, as well as $\hat T \in [0,T_0]$ depending on $(F_+^0,\beta,\phi^0)$ such that the following holds. For each $\eps \in (0,\overline \eps]$, take
 $(F_{+,in}^\eps,F_{-,in}^\eps)$, with $0 \leq F_{\pm,in}^\eps \in \mathfrak E$,  satisfying the estimates
 \begin{align}
\sup_{\eps \in (0,\overline \eps]}\|\frac{1}{n_{+,in}^\eps}\|_{L^\infty_x} + \| F_{+,in}^\eps\|_{\mathfrak E}&  \leq M,\\
\|F_{+,in}^\eps - F_{+,in}^0\|_{\mathfrak E'} + \mathscr E_{-,2,in}^\eps &\leq M \eps.
\end{align}
%

%
%
%
Then, for each $\eps \in (0,\overline \eps]$, there exists a solution $(F_+^\eps,F_-^\eps)$ to \eqref{eq:VPLRescaled} with $0\leq F_{\pm}^\eps \in  C([0,\hat T];\mathfrak E) \cap L^2([0,\hat T]\cap \mathfrak D)$. This satisfies  
\begin{align}
\sup_{t \in [0,\hat T]} \| F_+^\eps(t) - F_+^0(t)\|_{\mathfrak E'} \lesssim_M  \eps \label{eq:error1}\\
\sup_{t\in [0,\hat T]} \eps \mathscr E_{-,2}^\eps(t)+ \int_0^{\hat T} \mathscr D_{-,2}^\eps(t) dt \lesssim_M \eps^2\label{eq:error2} 
\end{align}
Moreover, for all $ t\in[0,\hat T]$, 
\begin{align}
 \mathscr E_{-,1}^\eps(t) \lesssim_M \eps^{\frac{1}{2}} e^{-\frac{1}{C_M} (\frac{t}{\eps})^{\frac{2}{3}}} + \eps^{\frac{5}{3}}t^{\frac{1}{3}} + \eps^2.\label{eq:error3}
\end{align}
\textit{(ii)} 
There exist positive constants $\eta$, $\delta$ and $\overline \eps$ depending on $M$, and $\hat T \in(0, T_0]$ depending on $(F_+^0,\beta,\phi^0)$ and $M$ such that the following holds. For all $\eps \in (0,\overline \eps]$, we set $F_{+,in}^\eps = F_{+,in}^0$ and  $F_{-,in}^\eps = F_{-,in}$ (where $F_{-,in}^\eps$ is independent of $\eps$) satisfying 
\begin{align}
\mathscr E_{-,2,in}^\eps \leq \delta.
\end{align}
We note the expression on the left is independent of $\eps$. 
We also impose the condition on the kinetic energy of $F_{-,in}$:
\begin{align}\label{eq:energyCondition0}
\frac{1}{2} \iint_{\mathbf T^3 \times \mathbf R^3}  |\xi|^2 F_{-,in} dx d\xi   -\frac{3}{2\beta_{in}}  +\frac{1}{8\pi}\int_{\mathbf T^3} |\nabla_x \phi_{in}|^2- |\nabla_x \phi^0_{in}|^2 dx = 0.
\end{align}
Then, for each $\eps \in (0,\overline \eps]$, there exists a solution $(F_+^\eps,F_-^\eps)$ to \eqref{eq:VPLRescaled} with $0\leq F_{\pm}^\eps \in  C([0,\hat T];\mathfrak E) \cap L^2([0,\hat T]\cap \mathfrak D)$. This satisfies  
\begin{align}
\sup_{t \in [0,\hat T]} \| F_+^\eps(t) - F_+^0(t)\|_{\mathfrak E'} \lesssim_M  \sqrt{\eps}\delta + \eps \label{eq:error1'}\\
\sup_{t\in [0,\hat T]} \eps \mathscr E_{-,2}^\eps(t)+ \int_0^{\hat T} \mathscr D_{-,2}^\eps(t) dt \lesssim_M \eps ( \delta^2+ \eps)\label{eq:error2'} 
\end{align}
Moreover, for all $ t\in[0,\hat T]$, 
\begin{align}
 \mathscr E_{-,1}^\eps(t) \lesssim_M \delta e^{-\frac{1}{C_M} (\frac{t}{\eps})^{\frac{2}{3}}} + \eps^{\frac{5}{3}}t^{\frac{1}{3}} + \eps \delta.\label{eq:error3'} 
\end{align}
\end{theorem}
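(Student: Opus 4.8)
The plan is to obtain the solutions $(F_+^\eps,F_-^\eps)$ from the uniform-in-$\eps$ local well-posedness already established for \eqref{eq:VPLRescaled} (the hypotheses $\|F_{+,in}^\eps\|_{\mathfrak E}\le M$, $\|1/n_{+,in}^\eps\|_{L^\infty}\le M$ together with the near-equilibrium bound on $F_{-,in}^\eps$ yield a time of existence $\hat T$ uniform in $\eps$), and then to prove the convergence rates by a weighted energy method closed by a continuity argument. Set $G_+ := F_+^\eps - F_+^0$, $\mathcal M^\eps := \mu_{\beta(t)}(\xi)\,e^{\beta(t)\phi^0(t,x)}$, $G_- := F_-^\eps - \mathcal M^\eps$, $\widetilde\phi := \phi^\eps - \phi^0$, $\widetilde E := E^\eps - E^0 = -\nabla_x\widetilde\phi$. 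Using that $(F_+^0,\beta,\phi^0)$ solves \eqref{eq:VPLIon}, that $\beta=\beta(t)$, and that $\mathcal M^\eps$ is Maxwellian in $\xi$ with $\nabla_x\log\mathcal M^\eps=\beta\nabla_x\phi^0=-\beta E^0$, one checks that $\mathcal M^\eps$ is an exact steady state of the $\xi$-collision and transport up to the field mismatch: $Q(\mathcal M^\eps,\mathcal M^\eps)=0$ and $\xi\cdot\nabla_x\mathcal M^\eps-E^\eps\cdot\nabla_\xi\mathcal M^\eps=\beta(\xi\cdot\widetilde E)\mathcal M^\eps$, while $\eps\partial_t\mathcal M^\eps=O(\eps)$. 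Subtracting, $G_\pm$ solve transport–collision equations whose inhomogeneities split into: (a) terms linear in the differences — $\widetilde E\cdot\nabla_v F_+^\eps$ in the ion equation and $\beta(\xi\cdot\widetilde E)\mathcal M^\eps$ together with the free streaming $\xi\cdot\nabla_x G_-$ in the electron equation; (b) genuinely $O(\eps)$ terms — $\eps\partial_t\mathcal M^\eps$, the $\eps\partial_t$ applied to $G_-$, the initial-data discrepancies, and the cross-species collisions (see the next paragraph); (c) quadratic Landau terms $Q(G_+,\cdot)+Q(\cdot,G_+)$ and $Q(G_-,\cdot)+Q(\cdot,G_-)$, the linearizations of which are the coercive, $x$-parametrized linearized Landau operators $L_\pm$, with $\xi$-kernel spanned by $1,\xi,|\xi|^2$ times $\mathcal M^\eps$.

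The decisive point, already announced in the introduction, is the identity $\Phi(z)z\equiv0$ (which also yields $Q(\mu_q,\mu_q)=0$ above). From \eqref{eq:Q_+-} the leading part of $Q_{+-}^\eps(F_+^\eps,\mathcal M^\eps)$ is $\nabla_\xi\cdot\big[\big(\int\Phi(\xi-\eps v')F_+^\eps(v')\,dv'\big)\nabla_\xi\mathcal M^\eps\big]$; since $\nabla_\xi\mathcal M^\eps=-\beta\xi\mathcal M^\eps$ and $\Phi(\xi-\eps v')\xi=\Phi(\xi-\eps v')(\xi-\eps v')+\eps\Phi(\xi-\eps v')v'=\eps\Phi(\xi-\eps v')v'$, this term is itself $O(\eps)$, while the other half of \eqref{eq:Q_+-} carries an explicit factor $\eps$; the same cancellation applies to $Q_{-+}^\eps$ via \eqref{eq:Q_-+}. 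Hence $Q_{+-}^\eps(F_+^\eps,\mathcal M^\eps)$ and the analogous ion term are $O(\eps)$ in the relevant weighted norms, whereas $Q_{+-}^\eps(F_+^\eps,G_-)$ and $Q_{-+}^\eps(G_-,F_+^\eps)$ are at worst linear in $G_-$ and hence absorbable; this is precisely why the limit \eqref{eq:VPLIon} loses the cross collisions, and for the hard-sphere Boltzmann kernel, where $\Phi(z)z\ne0$, the leading term above survives — the mechanism behind the final claim of the abstract.

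Next I would run the weighted energy estimates. For the ion equation the collision prefactor is $\kappa^{-1}=1$, so this is a perturbed-Landau estimate in $\mathfrak E',\mathfrak D'$: using coercivity of the linearization near $F_+^0$, the macroscopic estimates from the local theory, and elliptic control of $\widetilde E$ through the subtracted Poisson equation $-\Delta_x\widetilde\phi=4\pi\big((n_+^\eps-n_+^0)-(n_-^\eps-e^{\beta\phi^0})\big)$ (the exponential nonlinearity is monotone, so $\|\widetilde\phi\|_{H^{s+2}}\lesssim_M\|n_+^\eps-n_+^0\|_{H^s}+\|n_-^\eps-e^{\beta\phi^0}\|_{H^s}$, the last norm being a $\xi$-moment of $G_-$ controlled by $\sqrt{\mathscr E_{-,2}^\eps}$), one obtains $\tfrac{d}{dt}\|G_+\|_{\mathfrak E'}^2+\|G_+\|_{\mathfrak D'}^2\lesssim_M\|G_+\|_{\mathfrak E'}^2+\mathscr E_{-,2}^\eps+\eps^2$. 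For the electron equation I would test the $G_-$-equation (and its $\langle\nabla_x\rangle^s$ version) against $e^{q_\alpha|\xi|^2/2}G_-$: the factor $\kappa^{-1}\eps^{-1}$ in front of $L_-$ produces $-\tfrac{c}{\eps}\mathscr D_{-,\alpha}^\eps$ (weighted coercivity, with the $\mathcal H_{\sigma;\eps}^-$ part coming from the rescaled kernel $\Phi(\xi/\eps)$ inside $Q$; $q_\alpha<2\beta$ leaves room for the Gaussian weight, which is why the hierarchy $q_1<q_2<q_3$ is needed), modulo the five-dimensional hydrodynamic component on which collisions do not act. The $O(\eps)$ sources are absorbed after Cauchy–Schwarz with a controlled weight loss; the first-order terms $\beta(\xi\cdot\widetilde E)\mathcal M^\eps$ and $\xi\cdot\nabla_x G_-$ are handled respectively by the Poisson bound on $\widetilde E$ and by the hypocoercive commutator/weight argument of the local theory, while the hydrodynamic modes of $G_-$ are pinned to $\widetilde\phi$ and $\beta(t)$ via the conservation laws \eqref{eq:massCons}–\eqref{eq:energyCons} (in part (ii) this is where \eqref{eq:energyCondition0} enters, matching the $|\xi|^2$-mode at $t=0$). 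Including this closure, the electron estimate reads $\eps\tfrac{d}{dt}\mathscr E_{-,\alpha}^\eps+c\,\mathscr E_{-,\alpha}^\eps\lesssim_M\eps\|G_+\|_{\mathfrak E'}^2+\eps^3$, a fast-relaxing inequality giving $\mathscr E_{-,\alpha}^\eps(t)\lesssim_M e^{-ct/\eps}\mathscr E_{-,\alpha,in}^\eps+\eps\sup_{[0,t]}\|G_+\|_{\mathfrak E'}^2+\eps^3$. Feeding this into the ion estimate and running a Grönwall/continuity argument on $[0,\hat T]$ with the assumed $O(\eps)$ (resp. $O(\delta)$-in-$F_{-,in}$, $O(\eps)$-otherwise) initial data closes \eqref{eq:error1}–\eqref{eq:error2} and their primed analogues.

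For the refined relaxation bound \eqref{eq:error3}/\eqref{eq:error3'}, having controlled $\mathscr E_{-,2}^\eps\lesssim_M\eps$ (resp. $\lesssim_M\delta^2+\eps$), I would close a differential inequality for the lighter quantity $\mathscr E_{-,1}^\eps$, $\eps\tfrac{d}{dt}\mathscr E_{-,1}^\eps+\mathscr D_{-,1}^\eps\lesssim_M\eps^{8/3}t^{1/3}+\eps^3$ (the forcing being the same $O(\eps)$ sources measured in the lighter norm), and then exploit that the Landau dissipation $\mathscr D_{-,1}^\eps$ degenerates only polynomially in $|\xi|$ — cf. \eqref{eq:sigmaComp} — whereas the Gaussian-weight gap between $\mathscr E_{-,1}^\eps$ and $\mathscr E_{-,2}^\eps$ penalizes $\{|\xi|>R\}$ by $e^{-c(q_2-q_1)R^2}$: splitting at $|\xi|=R$, bounding $\mathscr D_{-,1}^\eps\gtrsim R^{-\gamma_*}\mathscr E_{-,1}^\eps-e^{-cR^2}\mathscr E_{-,2}^\eps$, optimizing $R=R(t)$, and integrating produces the stretched-exponential factor $e^{-\frac{1}{C_M}(t/\eps)^{2/3}}$ together with the polynomial remainder $\eps^{5/3}t^{1/3}+\eps^2$ (resp. $\delta e^{-\frac{1}{C_M}(t/\eps)^{2/3}}+\eps^{5/3}t^{1/3}+\eps\delta$, modulo a harmless weight-exchange loss in the prefactor). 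The hard part will be the electron energy estimate: making the singular $\eps^{-1}$ collision coercivity dominate both the free transport $\xi\cdot\nabla_x$ and the field coupling $\beta(\xi\cdot\widetilde E)\mathcal M^\eps$ in the heavy Gaussian-weighted spaces \emph{uniformly in $\eps$}, while simultaneously keeping the five-dimensional hydrodynamic (mass/momentum/energy) component of $G_-$ — on which collisions produce no dissipation — pinned to $\widetilde\phi$ and to $\beta(t)$ through Poisson and the conservation laws without losing a power of $\eps$; verifying the weighted nonlinear Landau bounds in these spaces and extracting the precise $(t/\eps)^{2/3}$ rate (sensitive to the exact velocity-degeneracy of $\sigma$ and to the weight hierarchy) are the principal secondary difficulties.
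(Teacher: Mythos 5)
Your sketch correctly identifies the key structural points — the vanishing identity $\Phi(z)z\equiv 0$ responsible for the disappearance of cross-species collisions (and for the cancellation in $Q_{+-}^\eps(F_+^\eps,\mathcal M^\eps)$), the Gaussian-weighted hierarchy $q_1<q_2<q_3$, the five-dimensional macroscopic null space pinned by Poisson/conservation, and the velocity-splitting mechanism that turns the degenerate Landau dissipation into a stretched-exponential factor. However, there are two genuine issues with the proposed closure.

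First, the asserted electron inequality $\eps\tfrac{d}{dt}\mathscr E_{-,\alpha}^\eps+c\,\mathscr E_{-,\alpha}^\eps\lesssim_M\eps\|G_+\|_{\mathfrak E'}^2+\eps^3$ is false as written. Precisely because of the polynomial degeneracy of $\sigma$ for large $|\xi|$ in \eqref{eq:sigmaComp}, the dissipation $\mathscr D_{-,\alpha}^\eps$ is not comparable to $\mathscr E_{-,\alpha}^\eps$ with any uniform constant; the correct differential inequality (Corollary \ref{cor:electronError}) has $\mathscr D_{-,\alpha}^\eps$ on the left rather than $c\,\mathscr E_{-,\alpha}^\eps$, and the $\sup$-in-time bound \eqref{eq:error2} then follows only by time-integration (dropping the dissipation), not by exponential relaxation. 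The $e^{-ct/\eps}\mathscr E_{-,\alpha,in}^\eps$ term in your intermediate claim cannot be obtained; the genuine pointwise decay is the stretched-exponential one for the \emph{lighter} quantity $\mathscr E_{-,1}^\eps$ relative to $\mathscr E_{-,2}^\eps$, which is the mechanism you (correctly) describe in your last paragraph. You should replace the faulty inequality with the dissipative one and delete the exponential term.

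Second, and more substantively, you compare $F_-^\eps$ directly to $\mu_{\beta}e^{\beta\phi^0}$, whereas the paper introduces the intermediary quantities $(\gamma^\eps,\psi^\eps)$ solving the Poincar\'e-Poisson system \eqref{eq:PP} (Lemma \ref{lem:PP}) and compares $F_-^\eps$ to $\mu_{\gamma^\eps}e^{\gamma^\eps\psi^\eps}$ instead. This is not cosmetic: the mean energy $\overline c$ is a neutral mode — it is neither conserved nor dissipated by the linearized dynamics — and can only be controlled via the nonlinear energy identity obtained by subtracting the intermediary energy balance (first line of \eqref{eq:PP}) from \eqref{eq:energyCons}. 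Because \eqref{eq:PP} uses $F_+^\eps$, the ion kinetic energies cancel exactly and one gets $|\sqrt{3/2}\,\overline c(t)/\gamma(t)-\tfrac{1}{4\pi}\langle\psi^\eps,a\rangle|\lesssim\eps+\|a\|_{L^2}^2$ (Step~1 of Proposition \ref{prop:macroscopicEstimates}) with \emph{no} contribution from $F_+^\eps-F_+^0$. If you subtract the $\beta$-balance of \eqref{eq:VPLIon} instead, the $\overline c$-identity picks up $\iint|v|^2(F_+^\eps-F_+^0)$ and a field-energy cross-term involving $n_+^\eps-n_+^0$, and since the electron macroscopic estimate is then re-fed into the ion error equation, the two estimates become coupled through $\overline c$ in a way that the intermediary quantities are designed to break. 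In part (i) all errors are $O(\eps)$, so this is probably repairable; in part (ii), where $\delta$ is a fixed small constant rather than $O(\eps)$, the coupling would need to be re-examined to recover the precise $\eps(\delta^2+\eps)$ scaling of \eqref{eq:error2'}, and it is not clear the bootstrap closes without the intermediary construction. At minimum you would need to track the analogue of \eqref{eq:gammaBetaDiff} carefully through the macroscopic estimates, which is exactly what the paper offloads onto Lemma \ref{lem:PP}.
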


\begin{remark}

This theorem depends on the existence of solutions to the system \eqref{eq:VPLIon}. The local well-posedness theory for this system will be given in a forthcoming paper \cite{flynn2023local}.

\end{remark}
%
%

\subsection{Methodology and outline}

Due to the nature of the  rescaled velocity $\xi$ in the
two-scale system \eqref{eq:VPLRescaled}, the Landau kernel $Q_{+-}
^{\varepsilon}$ exhibits a severe singularity in $\xi$ as $\varepsilon \to 0$. This is mirrored by the degeneracy of the $Q_{-+}^\eps$ in this limit. This makes it very difficult to propagate $v$ derivatives of $F_+^\eps$ and $\xi$ derivatives of $F_-^\eps$. 
  The main
technical achievement of this paper is that we obtain uniform bound in
$\varepsilon$ in weighted Sobolev spaces, \textit{without any} $\xi$ \textit{ or} $v$ \textit{derivatives}. In other words, all regularity in $v$ and $\xi$ comes from the diffusive control from the top order part of the collision operators.

We highlight some of the techniques that go into this. In Lemma \ref{lem:PhiUpperLower}, we obtain a \textit{non-perturbative} lower bound for $\Phi* G$ (where $G \geq 0$), which allows us to access diffusive control from collisions. One challenge we encounter is that we are unable to close a bootstrap estimate on $F_+^\eps$ solely in the $\mathfrak E$ norm. Proposition \ref{prop:apriori} roughly states that if $F_+^\eps$ is of size $M$ on $[0,T]$, then we can only say that $F_+^\eps$ will be of size $C_M$ on the same interval. In general, $C_M$ may grow exponentially in $M$.  However, we can close a bootstrap estimate by also assuming $\|F_+^\eps\|_{\mathfrak D'}$ is small on this interval.
This can be done uniformly in $\eps$ by taking $T$ small enough that $\|F_+^0\|_{\mathfrak D'}$ is as small as desired, and then controlling the difference $\|F_+^\eps - F_+^0\|_{\mathfrak D'}$. We do this through the error estimate in Proposition \ref{prop:ionError}. 

The next challenge is to control the difference $F_-^\eps - \mu_{\beta} e^{\beta \phi^0}$. This is roughly the content of  Proposition \ref{prop:derivationRefined}, although we work with what we call the ``intermediary quantities" $(\gamma^\eps,\psi^\eps)$ rather than $(\beta,\phi^0)$.
 This proposition utilizes the linear decay estimates of the linearized collision operator, via the framework developed in \cite{guo_landau_2002,guo2012vlasov}. Because of the small  parameter in front of the $\partial_t$ term in the second line of \eqref{eq:VPLRescaled}, the question of convergence of $F_-^\eps$ to the Maxwellian $\mu_{\gamma^\eps} e^{\gamma^\eps \psi^\eps}$ is in some sense equivalent to the question of asymptotic stability of the Maxwellian. However, there are a number of complications in our context.  First, the underlying Maxwellian has a varying temperature. To deal with this, we take a window of time such that $\gamma^\eps \approx \beta_{in}$, and choose $q_1<q_2<q_3$ (constant in time) close enough to $\beta_{in}$, and work with these constants instead.
 This results in a perturbed version of the linearized collision operator found in \cite{guo_landau_2002,guo2012vlasov}. Showing that this operator retains the  desired coercivity properties requires some care. There is also the ion-electron collision operator $Q_{+-}^\eps$, which acquires a singularity at $\xi = 0$ as $\eps \to 0$. 
 To control this, we use Lemma \ref{lem:PhiUpperLower} to extract a dissipative, albeit singular and degenerate, top order part. We then use the dissipation from the linearization of $Q$ to control the singular lower order terms.

A key feature of the linearized collision operator is that it has a five dimensional kernel, corresponding to the macroscopic quantities of mass, momentum and energy density.  Besides having to work with a perturbation of this operator, there are some additional challenges in adapting the macroscopic estimates from previous works. One issue is the effect of $Q^\eps_{+-}$ on the macroscopic quantities. Surprisingly, this term has a perturbative effect on the mass and energy density, and even contributes an extra dampening effect on the momentum density (see  Lemma \ref{lem:bBd}). A second challenge is how one controls the mean energy of the perturbation.  The mean kinetic energy density of $F_-^\eps- \mu_{\gamma^\eps} e^{\gamma^\eps \psi^\eps}$ is not conserved, and corresponds to neutral mode at the level of the linearization of the equation for $F_-^\eps- \mu_{\gamma^\eps} e^{\gamma^\eps \psi^\eps}$. In contrast, all other components of the macroscopic quantities are either conserved, or dissipated through hypo-coercive effects. In part (i) of Theorem \ref{thm:derivation}, we can control the mean energy because it is initially small. In part (ii), we must impose \eqref{eq:energyCondition0} and use some nonlinear identities to control the average energy density, as in the first step of Proposition \ref{prop:macroscopicEstimates}. This necessitates the use of the the intermediary quantities $(\gamma^\eps,\psi^\eps)$ instead of $(\beta,\phi^0)$, since $\frac{3}{2\gamma^\eps}$ serves as a better approximation of the mean kinetic energy of $F_-^\eps$ than $\frac{3}{2\beta}$.
%

The structure of the paper is as follows. 
Section \ref{sec:formal} contains all of the formal analysis in this paper. In Section \ref{app:non-dim} we derive the rescaled system \eqref{eq:VPLRescaled}. In Section \ref{sec:formalDeriv}, we give a formal derivation of the system \eqref{eq:VPLIon} from \eqref{eq:VPLRescaled}. In Section \ref{sec:formalDeriv}, we show how the analogous derivation does not work in the context of the Vlasov-Poisson-Boltzmann system, and we highlight the difficulties in defining the limiting system as $\eps \to 0$. 

Section \ref{sec:prelims} contains two preliminary results that will be used in the paper. The first is Lemma \ref{lem:LWP}. This is a statement of local well-posedness that is compatible with the main theorem. We do not prove this result, as it follows from straightforward modifications of the \textit{a priori} estimates shown in this work. The second result is Lemma \ref{lem:PhiUpperLower}, which gives upper and lower bounds on $\Phi * G$. This is particularly important for extracting diffusive control via the norms $\dot{\mathcal H}_\sigma$, and $\mathcal H_{\sigma;\eps}^\pm$.

In Section \ref{sec:ion}, we prove \textit{a priori} estimates on the ion distribution $F_+^\eps$. First, we have Proposition \ref{prop:apriori}, which gives control of $F_+^\eps \in L^\infty(\mathfrak E) \cap L^2(\mathfrak D)$ uniform in $\eps$. Then, we have Proposition \ref{prop:ionError}, which gives uniform control of $F_+^\eps - F_+^0 \in L^\infty(\mathfrak E')\cap L^\infty(\mathfrak D')$. 
Finally, with Lemma \ref{lem:PP}, we construct the intermediary quantities $(\gamma^\eps, \psi^\eps)$, which solve the to the system \eqref{eq:PP} below. These are defined in the same way as $(\beta,\phi^0)$, except using $F_+^\eps$ in place of $F_+^0$.

In Section \ref{sec:derivation}, we prove Proposition \ref{prop:derivationRefined}, which gives the error estimate for the electrons under certain assumptions on the solutions. The proof of this is broken up into two components: energy estimates for $F_-^\eps - \mu_{\gamma^\eps} e^{\gamma^\eps \psi^\eps}$ (as in Proposition \ref{prop:energyEstimates}), and macroscopic estimates (as in Lemma \ref{lem:bBd} and Proposition \ref{prop:macroscopicEstimates}).

Finally, in Section \ref{sec:proof}, we prove Theorem \ref{thm:derivation}. For both parts (i) and (ii), we make five bootstrap assumptions, which allow us to satisfy the assumptions made by the various propositions throughout this paper. We then show show that these assumptions can be propagated over a time interval independent of $\eps > 0$. The error estimates then follow from Propositions \ref{prop:ionError} and \ref{prop:derivationRefined}.

\section{Formal analysis}\label{sec:formal}

\subsection{Non-dimensionalization of the Vlasov-Poisson-Landau System}

 \label{app:non-dim}

Here, we show how to rescale the Vlasov-Poisson-Landau system \eqref{eq:VPL} to the non-dimensionalized form \eqref{eq:VPLRescaled}. 
%
The collision operators in \eqref{eq:VPL}  are
\begin{align}
Q_{++ }(G_1, G_2)(v) &=\frac{ 1}{m_+^2} \nabla_v \cdot\int_{\mathbf R^3} \Phi(v- v') \left\{G_1(v') \nabla_v G_2(v) - G_2(v) \nabla_v G_1 (v')\right\}dv', \\
Q_{-+ }(G_1, G_2)(v) &=\frac{1}{m_+}\nabla_v \cdot\int_{\mathbf R^3} \Phi(v- v') \left\{\frac{1}{m_+}G_1(v') \nabla_v G_2(v) -\frac{1}{m_-}  G_2(v) \nabla_v G_1 (v')\right\}dv', \\
Q_{+-}(G_1, G_2)(v) &=\frac{1}{m_-}  \nabla_v\cdot\int_{\mathbf R^3} \Phi(v- v') \left\{ \frac{1}{m_-}  G_1(v') \nabla_v G_2(v) -\frac{1}{m_+}G_2(v) \nabla_v G_1 (v')\right\}dv', \\
Q_{--}(G_1, G_2) (v)&=\frac{1}{m_-^2}\nabla_v\cdot \int_{\mathbf R^3} \Phi(v- v') \left\{G_1(v') \nabla_v G_2(v) - G_2(v) \nabla_v G_1 (v')\right\}dv',
\end{align} 
In order to define the Coulomb logarithm, we assume $F_+$ and $F_-$ have approximately a common temperature  $\theta > 0$ (with the Boltzmann constant set to 1). To have a charge balance, the electrons will have on average $N$ total number of particles  per unit volume, and the ions will have total number $N/Z$ per unit volume. For simplicity, one might take  $F_+$ and $F_-$ to be perturbations of such equilibrium, i.e.
\begin{align}
F_+ &\approx \frac{N}{Z} (\frac{m_+}{\theta})^{3/2}   e^{-\frac{m_+|v|^2}{2\theta}}, \\
F_- &\approx N(\frac{m_-}{\theta})^{3/2}  e^{-\frac{m_-|v|^2}{2\theta}}.
\end{align}
When $F_+$ and $F_-$ are equal to the Maxwellians, they solve \eqref{eq:VPL} with $\phi = 0$.

Next, the Coulomb logarithm $\ln(\Lambda)$ arises as logarithmically divergent integral in the derivation of the Vlasov-Poisson-Landau system from the BBGKY hierarchy \cite{balescu1988transport}, which is then cutoff at length scales where the assumptions of the model break down. Several choices of cutoffs exist, the most common pair being
 the Debye length at large scales
\begin{equation}
\lambda_D = \left(\frac{\theta}{4\pi \mathsf e^2 N}\right)^{1/2}
\end{equation}
and the distance of closest approach at small scales
\begin{align}
b_0 = \frac{3\theta }{Z\mathsf e^2}.
\end{align}
This gives the expression
\begin{equation}
\ln(\Lambda) = \ln\left(\frac{\lambda_D}{b_0}\right)
\end{equation}
Importantly, $\ln(\Lambda)$ does not depend on the $m_-$ or $m_+$. Unfortunately, this choice fails in contexts where $\lambda_D \leq b_0$, for instance when $N$ is large or $\theta$ is small, and another definition of $\ln(\Lambda)$ is needed. For our purposes, we shall simply take $\ln(\Lambda)$ to be some positive parameter independent of the masses.

We define relevant length, velocity and time scales for our problem. This will allow us to redefine a non-dimensionalized equation depending on three parameters: the (square root of the) mass ratio, $Z$, and a dissipation rate.  The relevant velocity scales for each species are
\begin{align}
V_\pm = (\frac{ \theta}{m_\pm})^{1/2}.
\end{align}
These are the the thermal speeds as in \cite{alexandre_landau_2004}.
Next, we define the length scale
\begin{equation}
X = \left(\frac{\theta} {N\mathsf e^2}\right)^{1/2},
\end{equation}
which can be thought of the smallest length scale at which the electrostatic force is significant for a thermal particle. This gives us natural time scales for each species
\begin{equation}
T^\pm = \frac{X}{V^\pm }= \left(\frac{m_\pm}{N\mathsf e^2}\right)^{1/2}
\end{equation}
While $F_+$ and $F_-$ will be re-scaled in $v$ differently, according to $V_+$ and $V_-$ respectively, we will re-scale time by the ion-time scale for both species. The electrons then evolve according to a faster time-scale than the ions. The ratio of these time-scales is the same as the square-root of the mass ratio,
\begin{equation}
\eps = \frac{T_-}{T_+} = \left(\frac{m_-}{m_+}\right)^{1/2}
\end{equation}
The final non-dimensional parameter is the dissipation rate, which gives the size of the collisional effects:
\begin{equation}
\kappa^{-1} = 2\pi \ln(\Lambda) \frac{N^{1/2} \mathsf e^3}{\theta^{3/2}}.
\end{equation}
We now perform the non-dimensionalization.
Define the re-scaled coordinates
\begin{align}
\overline t &= \frac{t}{T_+},\\
\overline x &= \frac{x}{X},\\
\overline v_\pm &= \frac{v}{V_{\pm}},
\end{align}
and define re-scaled versions of $(F_-,F_+)$:
\begin{align}
\tilde F_+(\overline t,\overline x,\overline v_+) &= \frac{ZV_+^3}{N}F_+(t,x,v),\\
\tilde F_-(\overline t,\overline x,\overline v_-) &= \frac{V_-^3}{N}F_-( t, x, v).
\end{align}
Moreover, define
\begin{align}
\tilde n_\pm(\overline t,\overline x) &= \int \tilde F_+(\overline t,\overline x,\overline v_\pm) d\overline v_\pm ,\\
-\Delta_{\overline x} \tilde \phi &= 4\pi (\tilde n_+(\overline t,\overline x)  - \tilde n_-(\overline t,\overline x)), \\
\tilde E(\overline t,\overline x)&= -\nabla_{\overline x}\tilde \phi(\overline t,\overline x).
\end{align}
In particular,
\begin{align}
\tilde E(\overline t,\overline x) = \frac{1}{NX\mathsf e} E(t,x)
\end{align}
The ``Vlasov-Poisson" parts of the equations \eqref{eq:VPL} transform like
\begin{align}
\{\partial_t  + v \cdot \nabla_x  +\frac{Z\mathsf e}{m_+} E\cdot \nabla_v\} F_+=  \frac{N}{ZT_+V_+^3 }\{\partial_{\overline t}  + \overline v_+ \cdot \nabla_{\overline x}  +Z \tilde E\cdot \nabla_{\overline v_+}\}\tilde F_+\\
\{\partial_t  + v \cdot \nabla_x  -\frac{\mathsf e}{m_-} E\cdot \nabla_v\}F_- =  \frac{N}{T_-V_-^3}\{\eps \partial_{\overline t}  + \overline v_- \cdot \nabla_{\overline x}  -\tilde E\cdot \nabla_{\overline v_-}\}\tilde F_-
\end{align}
We now compute the transformations of the collision kernels:
\begin{align}
&Q_{++ }(F_+, F_+)(v) \\
&\quad  =\frac{N^2}{Z^2m_+^2V_+^6} \nabla_{\overline v_+} \cdot\int_{\mathbf R^3} \Phi(\overline v_+- \overline v_+') \left\{\tilde F_+(\overline v'_+) \nabla_{\overline v_+} \tilde F_+(v_+) -\tilde F_+(\overline v_+) \nabla_{\overline v_+} \tilde F_+(\overline v_+')\right\}d\overline v_+',\\
&Q_{-+}(F_-, F_+)(v) \\
&\quad = \frac{N^2}{Zm_+^2V_+^6}\nabla_{\overline v_+} \cdot\int_{\mathbf R^3} \Phi(\eps\overline v_+-  \overline v_-') \left\{\eps\tilde F_-(\overline v'_-) \nabla_{\overline v_+} \tilde F_+(v_+) - \tilde F_+(\overline v_+) \nabla_{\overline v_-}\tilde  F_-(\overline v_-')\right\} d\overline v_-',\\
&Q_{--}(F_-, F_-)(v) \\
&\quad = \frac{N^2}{m_-^2V_-^6}\nabla_{\overline v_-} \cdot\int_{\mathbf R^3} \Phi(\overline v_--  \overline v_-') \left\{\tilde F_-(\overline v'_-) \nabla_{\overline v_-} \tilde F_-(v_-) - \tilde F_-(\overline v_-) \nabla_{\overline v_-}\tilde  F_-(\overline v_-')\right\} d\overline v_-',\\
&Q_{+-}(F_+, F_-)(v) \\
&\quad = \frac{N^2}{Zm_-^2V_-^6}\nabla_{\overline v_-} \cdot\int_{\mathbf R^3} \Phi(\overline v_--  \eps\overline v_+') \left\{\tilde F_+(\overline v'_+) \nabla_{\overline v_-} \tilde F_-(\overline v_-) - \eps \tilde F_-(\overline v_-) \nabla_{\overline v_+}\tilde  F_+(\overline v_+')\right\} d\overline v_+' 
\end{align}
Ignoring $Z$, the ratios of the pre-factors appearing in the expressions above for the collisions, over that of the transport terms for each species are
\begin{equation}
\frac{N^2}{m_\pm^2 V_\pm^6} \left(\frac{N}{T_\pm V_\pm^3}\right)^{-1} = \frac{NX}{m_\pm^2 V_\pm^4} = \frac{N^{1/2}}{\theta^{3/2} \mathsf e}
\end{equation}
Multiplying the above by $2\pi \mathsf e^4 \ln (\Lambda)$ gives $\kappa^{-1}$.

We now give the non-dimensionalized version of \eqref{eq:VPL}. By abuse of notation, we shall use the symbols $(t,x,v)$ to denote the re-scaled coordinates for the ions; we will use $(t,x,\xi )$ to denote the re-scaled coordinates of the electrons (i.e., we replace $\overline t\to t$, $\overline x\to x$,  $ \overline v_+\to v$,  and $\overline v_-\to \xi$). Moreover, we write $F^\eps_{\pm} = \tilde F_\pm$, and similarly for the potential and electric field. Then, these solve \eqref{eq:VPLRescaled}.
%

\subsection{Formal derivation of the ion equation}
\label{sec:formalDeriv}

We give a formal derivation of \eqref{eq:VPLIon} from \eqref{eq:VPLRescaled} by setting $\eps = 0$. Then,
\begin{align}
\{\partial_t  + v \cdot \nabla_x  + E^0\cdot \nabla_v\} F_+^0 &= Q(F_+^0, F_+^0) +    Q_{-+}^0(  F_-^0, F_+^0),  \label{eq:F_+eps0}
\\
\{ \xi  \cdot \nabla_x  -  E^0\cdot \nabla_\xi \}    F_-^0  &= Q(   F_-^0,  F_-^0) +    Q_{+-}^0(F_+^0,   F_-^0). \label{eq:F_-eps0}
\end{align}
where
\begin{align}
  Q_{-+}^0(  F_-^0, F_+^0)(v) =-\nabla_v \cdot \left(\int_{\mathbf R^3} \Phi( \xi')   \nabla_\xi    F_-^0 (\xi') d\xi ' F_+^0(v)\right), \\
  Q_{+-}^0(F_+^0,   F_-^0)(\xi) =\left(\int_{\mathbf R^3} F_+^0(v')dv'\right)  \nabla_\xi \cdot (\Phi(\xi)  \nabla_\xi    F_-^0(\xi)).
\end{align}
 We assume $F_-$ is positive at each $(t,x,\xi )$, smooth in $(x,\xi )$, and decays sufficiently fast in $\xi $. We can then use the entropy identity to deduce possible solutions. Multiplying \eqref{eq:F_-eps0} by $\ln(F_-)$, we have
\begin{align}
0&=\iint_{\mathbf T^3\times \mathbf R^3} (Q( F_-^0, F_-^0) +  Q^0_{-+}(F_+^0, F_-^0))\ln( F_-)dx dv\\
& = -\frac{1}{2}\iiint_{\mathbf T^3\times \mathbf R^3\times \mathbf R^3}   F_-^0(t,x,\xi )  F_-^0(t,x,\xi ') \\
& \quad \quad \mathrm{tr} (\Phi (\xi-\xi') \{\nabla_\xi  \ln( F_-^0(t,x,\xi )) -\nabla_{\xi '} \ln ( F_-^0(t,x,\xi '))\}^{\otimes 2})dxd\xi d\xi ' \\
&\quad  -n_+^0(t,x) \int_{\mathbf T^3 \times \mathbf R^3} F_-^0(t,x,\xi )\mathrm{tr}(\Phi(\xi)\{\nabla_\xi  \ln( F_-^0(t,x,\xi ))\}^{\otimes 2}) dx d\xi .
\end{align}
Note that both terms are nonpositive, and therefore zero. The null space of $\Phi(\xi)$ is $\mathrm{span}\{\xi \}$, so we deduce that $\nabla_\xi  \ln(F_-^0(t,x) (t,x,\xi )) \in \mathrm{span}\{\xi \}$ for all $(t,x,\xi )$. Thus $F_-(t,x,\xi )$ is radial in $\xi $ for each $(t,x)$. Taking $F_-^0(t,x,\xi ) = \tilde F_-^0(t,x,|\xi|)$, we have that
\begin{align}
 0&= \frac{1}{2}\iiint_{\mathbf T^3\times \mathbf R^3\times \mathbf R^3}  \tilde F_-^0(t,x,|\xi|)  \tilde F_-^0(t,x,|\xi'|) \\
& \quad \quad \mathrm{tr} (\Phi (\xi-\xi') \{ \frac{\xi }{|\xi|}\partial_{r} \ln(\tilde F_-^0(t,x,r))|_{r = |\xi|} - \frac{\xi '}{|\xi'|}\partial_{r}\ln ( \tilde F_-^0(t,x,r))|_{r= |\xi'|}\}^{\otimes 2})dxd\xi d\xi '
\end{align}
Then, for every $\xi , \xi'$ so that $\xi  \neq 0$, $\xi '\neq 0$ and $\xi -\xi' \neq 0$, we have
\begin{align}
 \frac{\xi }{|\xi|}\partial_{r} \ln(\tilde F_-^0(t,x,r))|_{r = |\xi|} - \frac{\xi '}{|\xi'|}\partial_{r}\ln ( \tilde F_-^0(t,x,r))|_{r= |\xi'|}\in \mathrm{span} \{\xi -\xi'\}
\end{align}
The above implies that on the set where $\xi $ and $\xi '$ are linearly independent, we have
\begin{align}
 \frac{\xi }{|\xi|}\partial_{r} \ln(\tilde F_-^0(t,x,r))|_{r = |\xi|} = \frac{\xi '}{|\xi'|}\partial_{r}\ln ( \tilde F_-^0(t,x,r))|_{r= |\xi'|}
 \end{align}
For any given values of $|\xi|, |\xi'| >0$, we can find $\xi ,\xi '$ which are linearly independent. Thus, for all $r,r' >0$, we have
\begin{align}
\frac{1}{r} \partial_{r} \ln(\tilde F_-^0(t,x,r))  = \frac{1}{r'}\partial_{r'} \ln(\tilde F_-^0(t,x,r')) 
\end{align}
In particular, $\partial_{r}(\frac{1}{r} \partial_{r} \ln(\tilde F_-^0(t,x,r)) ) = 0$. Thus, there exist functions $\beta(t,x)$ and $\psi(t,x)$ such that
\begin{align}
\ln(\tilde F_-^0(t,x,r)) = - \frac{\beta(t,x)}{2} r^2 +\psi(t,x)
\end{align}
In other words, $F_-^0(t,x,\xi )$ is a local Maxwellian centered at $\xi  = 0$, i.e. 
\begin{align}
F_-^0(t,x,\xi ) =e^{-\frac{\beta(t,x)|\xi|^2}{2} + \psi(t,x)}
\end{align}
Observe that $Q(F_-^0,F_-^0) = Q_{+-}^0(F_+^0,F_-^0) = 0$. Therefore,
\begin{equation}
\{ \xi  \cdot \nabla_x  -  E\cdot \nabla_\xi \}   F_-^0 = 0.
\end{equation}
Thus, 
\begin{align}
0 =\{ \xi  \cdot \nabla_x  -  E\cdot \nabla_\xi \}  \ln(F_-^0)  = \xi \cdot \nabla_x \psi + \xi\cdot \nabla_x \beta \frac{|\xi|^2}{2} + \xi \cdot E^0 \beta.
\end{align}
Taking $|\xi| \to \infty$, we deduce that $\nabla_x \beta = 0$, i.e. $\beta(t,x) = \beta(t)$. Finally, we conclude $\nabla_x \psi = -\beta E^0 = \nabla_x (\beta \phi^0)$. Thus, $\psi - \beta \phi^0$ is independent of $x$ and $\xi $. We integrate in $\xi $, use conservation of mass  \eqref{eq:massCons} to determine this function, and add an appropriate constant to $\phi^0$ to deduce 
\begin{align}
F_-^0(t,x,\xi ) = \left(\frac{\beta(t)}{2\pi} \right)^{\frac{3}{2}}e^{-\beta(t) ( \frac{|\xi|^2}{2} - \phi^0(t,x))}
\end{align}
This implies $n_-^0 (t,x)= \frac{e^{\beta(t) \phi^0(t,x)} }{\int_{\mathbf T^3} e^{\beta(t) \phi^0(t,x')} dx' }$, giving the third line of \eqref{eq:VPLIon}. As for the first line, since $\nabla_\xi  F_-^0 = -\beta w F_-^0$, we have the important vanishing property
\begin{align}
Q^0_{-+}(F_-^0,F_+^0) = \beta \nabla_ \xi\cdot \{\int_{\mathbf R^3} \Phi(\xi) \xi F_-^0(t,x,\xi ) d\xi  F_+^0(v)\} = 0,
\end{align}
as $\Phi (\xi)\xi = (0,0,0)^T$. Finally, for the second line, we invoke \eqref{eq:energyCons} and the identity
\begin{align}
 \iint_{\mathbf T^3 \times \mathbf R^3} \frac{|\xi|^2}{2} F_-^0dx d\xi  = \frac{3}{2\beta}.
\end{align}

\subsection{The Boltzmann Case}

We now apply the analogous re-scaling to the Vlasov-Poisson-Boltzmann system with hard spheres as was done in Section \ref{app:non-dim}. Unlike the Vlasov-Poisson-Landau system, the ion-electron collisions do not seem to vanish in the limit $\eps\downarrow 0$. Moreover, we show that the formal expansion in $\eps$ seems to yield a system of equations that seems difficult, if not impossible, to solve. This is because the the $O(1)$ terms in the expansion depend on the $O(\eps)$ terms in a nontrivial way, which in turn depend on the $O(\eps^2)$ terms, and so on.

 For simplicity, we let $ Z = \mathsf e = 1$. Following \cite{zhang2009stability}, the Vlasov-Poisson-Boltzmann system reads
\begin{equation}\label{eq:VPB} 
\begin{aligned}
\{\partial_t  + v \cdot \nabla_x  +\frac{1}{m_-} E\cdot \nabla_v\} F_+ &= Q_{++}(F_+, F_+) + Q_{-+}(F_-, F_+)\}, \\
\{\partial_t  + v \cdot \nabla_x  -\frac{1}{m_+} E\cdot \nabla_v\}   F_- &= Q_{--}(F_-, F_-) + Q_{+-}(F_+, F_-), \\
 - \Delta_x \phi &=  4\pi ( n_+ - n_-) 
 \end{aligned}
 \end{equation}
Once again, $E = -\nabla_x \phi$. The Boltzmann collision operators are given as follows: with $\alpha, \beta \in \{-,+\}$, we have
\begin{align}
Q_{\alpha \beta}(G_\alpha, G_\beta) &= \frac{(\sigma_\alpha + \sigma_\beta)^2}{4}  \iint_{\mathbf R^3 \times \mathbf S^2} |(u-v)\cdot \omega| \{ G_\alpha(u')G_\beta(v') -G_\alpha(u) G_\beta(v)\} du d\omega,
\end{align}
where $\sigma_\pm$ are the diameters of the particles, and
\begin{align}
u' &= u + \frac{2 m_\beta}{m_\alpha + m_\beta} ((v-u)\cdot \omega)\omega,\\
v'&= v - \frac{2 m_\alpha}{m_\alpha + m_\beta} ( (v-u)\cdot \omega) \omega.
\end{align}
For simplicity, we set  $\sigma_{\pm} = 1$ as well.
 We now set $\xi= \eps v$ (and $\zeta = \eps u$), and  $\tilde F_-(\xi) =\eps^{-3} F_-(\xi/\eps)$. We now rewrite the first two lines of \eqref{eq:VPB}, 
\begin{align}
\{\partial_t  + v \cdot \nabla_x  + E\cdot \nabla_v\} F_+ &= Q(F_+, F_+) +\tilde Q_{-+}^\eps(\tilde F_-, F_+) \label{eq:boltz+}\\
\{\eps \partial_t + \xi\cdot \nabla_x - E\cdot \nabla_\xi\} \tilde F_- &= Q(\tilde F_-,\tilde F_-) + \tilde Q_{+-}^\eps (F_+,\tilde F_-) . \label{eq:boltz-}
\end{align}
Here, $Q = Q_{++} = Q_{--}$. Before we define the cross-collision terms, it is worthwhile to define the reflection matrix
\begin{align}
R_\omega z = z - 2(z\cdot \omega) \omega,
\end{align}
where $\omega \in \mathbf S^2$. 
The ion-electron collisions have the following (singular!) effect on the ions:
\begin{align}
&\tilde Q_{-+}^\eps (\tilde F_-, F_+)\\
\quad & = \frac{1}{\eps} \iint_{\mathbf R^3 \times \mathbf S^2} |(\zeta -\eps v)\cdot \omega| \{ \tilde F_-(\zeta')F_+(v') -\tilde F_-(\zeta) \tilde F_+(v)\} d\zeta d\omega ,  
\end{align}
where
\begin{align}
\zeta' &=  \zeta+ \frac{2}{1+\eps^2}((\eps v-\zeta)\cdot \omega)\omega = R_\omega \zeta  + 2\eps (v\cdot \omega)\omega + O(\eps^2), \\
v'& = v - \frac{2 \eps}{1 + \eps^2} ((\eps v-  \zeta)\cdot \omega) \omega = v + 2\eps (\zeta \cdot \omega)\omega +  O(\eps^2).
\end{align}
On the other hand, the effect of ion-electron collisions on the electrons are the following:
\begin{equation}
\tilde Q_{+-}^\eps (F_+,\tilde F_-) = \iint_{\mathbf R^3 \times \mathbf S^2} |(\eps u-\xi)\cdot \omega| \{ F_+(u')\tilde F_-(\xi') -F_+(u) \tilde F_-(\xi)\} dud\omega,
\end{equation}
where in the above,
\begin{align}
u' &= u + \frac{2 \eps}{1 + \eps^2} ((\xi-\eps u)\cdot \omega) \omega = u - 2\eps (\xi\cdot \omega) \omega  + O(\eps^2),\\
\xi'&= \xi - \frac{2   }{1 + \eps^2} ((\xi-\eps u)\cdot \omega) \omega = R_\omega \xi + 2\eps (u \cdot \omega) \omega + O(\eps^2).
\end{align}

The  $\eps^{-1}$ term in $\tilde Q_{+-}^\eps$ is the source of the difficulty in describing the formal analysis as $\eps \downarrow 0$. In particular, we have no reason to expect that the effect of $\tilde Q_{-+}^\eps$ will vanish in this limit. To better understand this limit, we use a formal asymptotic expansion. First, we expand the collisons as follows:
\begin{align}
\tilde Q_{-+}^\eps(G_1,G_2)= \sum_{j = -1}^\infty \eps^j q^j_{-+}(G_1,G_2),\label{eq:boltz-+}\\
\tilde Q_{+-}^\eps (G_1,G_2) = \sum_{j = 0}^\infty \eps^j q^j_{+-}(G_1,G_2).\label{eq:boltz+-}
\end{align}
We now give the first two terms in these expansions. The first two terms in \eqref{eq:boltz-+} are
\begin{align}
q^{-1}_{-+}(G_1,G_2).=  \left(\int_{\mathbf R^3 \times \mathbf S^2} |\zeta \cdot \omega| \{ G_1(R_\omega \zeta)- G_1(\zeta)\} d\zeta d \omega \right) G_2(v)  
\end{align}
and
\begin{align}
q^0_{-+}(G_1,G_2)&=- \left(\int_{\mathbf R^3 \times \mathbf S^2}  \mathrm{sgn}(\zeta \cdot \omega)(v\cdot \omega)\{G_1(R_\omega \zeta)-G_1(\zeta)\}  d\zeta d\omega \right) G_2(v)  \label{eq:boltzElecIonExp0term1}\\
& \ \ \ +2 \left(\int_{\mathbf R^3 \times \mathbf S^2} |\zeta \cdot \omega|(v \cdot \omega) \omega \cdot \nabla_\zeta G_1(R_\omega \zeta) d\zeta d\omega \right)G_2(v)  \label{eq:boltzElecIonExp0term2}\\
& \ \ \ -2 \left(\int_{\mathbf R^3 \times \mathbf S^2} |\zeta \cdot \omega| (\zeta \cdot \omega) G_1(R_\omega \zeta)  \omega_i  d\zeta   d\omega  \right) \partial_{v_i} G_2(v)  . \label{eq:boltzElecIonExp0term3}
\end{align}
On the other hand, the first two terms in \eqref{eq:boltz+-} are 
\begin{align}
q^0_{+-}(G_1,G_2) =\left( \int_{\mathbf R^3} G_1(u) du\right)\int_{ \mathbf  S^2} |\xi \cdot \omega|  \{G_1(R_\omega \xi ) - G_1(\xi)\}d\omega.
\end{align}
and
\begin{align}
q^1_{+-}(G_1,G_2) &= -\left( \int_{\mathbf R^3} u_i G_1(u) du\right) \int_{  \mathbf S^2}\mathrm{sgn}(\xi \cdot \omega)\omega_i  \{G_2(R_\omega \xi)-G_2(\xi)\} d\omega  \\
&\quad +2 \left( \int_{\mathbf R^3} u_i G_1(u) du\right) \int_{  \mathbf S^2}\mathrm{sgn}(\xi \cdot \omega)\omega_i \omega \cdot \nabla_\xi  G_2(R_\omega \xi) d\omega.
\end{align}
In  the above, there is another $O(\eps)$ term arising from the expansion $G_1(u') = G_2(u) + 2 \eps (\xi \cdot \omega) \omega \cdot \nabla_u G_1(u) + O(\eps^2)$, but this integrates to zero.

Next, we expand $F_+$ and $\tilde F_-$ into an formal series in $\eps$:
\begin{align}
F_{+} \sim \sum_{j=0}^\infty \eps^j F_{+,j}, \quad \tilde F_- \sim \sum_{j=0}^\infty \eps^j \tilde F_{-,j}, \label{eq:boltz_expansion}
\end{align}
where each $F_{\pm,j}$ is independent of $\eps$.
Similarly, we expand $\phi \sim \sum_{j=0}^\infty \eps^j \phi_j$ and $E \sim \sum_{j=0}^\infty \eps^j E_j$.
Collecting the $O(1)$ terms of \eqref{eq:boltz-}, we see that $F_{-,j}^0$ solves
\begin{align}
\{\xi\cdot \nabla_x - E^0 \cdot \nabla_\xi\} \tilde F_{-,0} = Q(\tilde F_{-,0},\tilde F_{-,0}) + q^0_{-+}(F_{+,0}, \tilde F_{-,0}).
\end{align}
It is straightforward to check that 
\begin{align}
\tilde F_{-,0} = \left(\frac{\beta(t)}{2\pi}\right)^\frac{3}{2} e^{-\beta(t)(\frac{|v|^2}{2}- \phi^0)} 
\end{align}
solves the above, with $(\beta,\phi^0)$ solving the same equation as in \eqref{eq:VPLRescaled}.  Using a similar entropy identity as was used in Section \ref{sec:formalDeriv}, we expect that this is the unique such solution. With this ansatz, we have $q^j_{-,+}(\tilde F_{-,0},G) = 0$ for both $j = -1$ and $j = 0$, and any $G$, due to the radial symmetry of $\tilde F_{-,0}$. In particular, the $O(\frac{1}{\eps})$ term in \eqref{eq:boltz+} vanishes. 
Collecting all the $O(1)$ terms in \eqref{eq:boltz+}, we then have
\begin{align}
\partial_t F_{+,0} + \{v \cdot \nabla_x + E^0 \cdot \nabla_v\} F_{+,0} = Q(F_{+,0},F_{+,0}) + q^{-1}_{-+}(\tilde F_{-,1},F_{+,0}).
\end{align}
Thus, in order to solve for $F_{+,0}$, we must solve for $\tilde F_{-,1}$ as well. Then, collecting all the $O(\eps)$ terms in \eqref{eq:boltz-}, we have
\begin{align}
&\{\xi\cdot \nabla_x - E_0 \cdot \nabla_\xi\} \tilde F_{-,1} - E_1 \cdot \nabla_\xi F_{-,0}\\
&\quad  - Q(\tilde F_{-,1},\tilde F_{-,0}) - Q(\tilde F_{-,0},\tilde F_{-,1}) -q^0_{+-}( F_{+,0},\tilde F_{-,1})\\
 &= \partial_t \tilde F_{-,0} +   q^1_{-+}(F_{+,0}, \tilde F_{-,0}) 
\end{align}
In the above, we used the fact that  $q^0(F_{+,1},\tilde F_{-,0}) = 0$. Now we arrive at the issue: in order to solve for $F_{+,0}$, we must solve for $\tilde F_{-,1}$ as well. However, the equation for $F_{-,1}$ depends on $E_1$, which in turn depends on $ F_{+,1}$, which depends on $\tilde F_{-,2}$, and so on. In summary, it seems difficult to solve for the solve the terms in the expansion \eqref{eq:boltz_expansion}, in order to get a well-defined limiting equation.

\section{Preliminaries}\label{sec:prelims}

\subsection{Well-posedness}

In this section, we state a prerequisite local well-posedness result that guarantees solutions to \eqref{eq:VPLRescaled} in our context. The proof follows by the methods in this paper, so we omit it.
%

\begin{lemma}\label{lem:LWP}
Fix $M ,T> 0$,  and suppose $(F_+^0,\beta,\phi^0)$ is a solution to \eqref{eq:VPLIon} satisfying the hypothesis of Theorem \ref{thm:derivation}. Fix
$\eps > 0$. Then there exists $T_\eps \in (0,T_0]$ (depending on $(F_+^0,\beta,\phi^0)$, $\eps >0$ and $M$), and $\varrho = \varrho(M)$ (depending only on $M$) such that the following holds. If we take $(F_{+,in}^\eps, F_{+,in}^\eps)$ satisfying 
 \begin{align}
\|\frac{1}{n_{+,in}^\eps}\|_{L^\infty_x} + \| F_{+,in}^\eps\|_{\mathfrak E}&  \leq M,\\
\|F_{+,in}^\eps - F_{+,in}^0\|_{\mathfrak E'} +\sqrt{ \mathscr E_{-,2,in}^\eps} &\leq \varrho.
\end{align}
then there exists a unique weak solution $(F_+^\eps, F_-^\eps) $ with $0\leq F_\pm^\eps \in C([0,T_\eps];\mathfrak E)\cap L^2([0,T_\eps];\mathfrak D)$ to \eqref{eq:VPLRescaled} with the initial data,
satisfying the estimates
\begin{align}
\sup_{t \in [0,T_\eps]} \left(\|\frac{1}{n_{+}^\eps(t)}\|_{L^\infty_x} + \| F_{+}^\eps(t)\|_{\mathfrak E}\right) &\leq 2M, \\
\sup_{t \in [0,T_\eps]}\left(\|F_{+}^\eps(t) - F_{+}^0(t)\|_{\mathfrak E'}+  \sqrt{ \mathscr E_{-,2}^\eps(t)} \right)&\lesssim_M \|F_{+,in}^\eps - F_{+,in}^0\|_{\mathfrak E'} +\sqrt{ \mathscr E_{-,2,in}^\eps}.
\end{align}

\end{lemma}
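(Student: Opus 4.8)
The plan is to construct $(F_+^\eps,F_-^\eps)$ by a linear iteration scheme and to read off all of its properties from the a priori estimates proved later in the paper — Propositions \ref{prop:apriori}, \ref{prop:ionError} and \ref{prop:derivationRefined}, together with the pointwise bounds of Lemma \ref{lem:PhiUpperLower}. Since $\eps>0$ is fixed here, the cross-collision operators $Q_{-+}^\eps$ and $Q_{+-}^\eps$ are genuinely non-singular; the only role of $\eps$ in this lemma is that $T_\eps$ — but not the implicit constants in the conclusion — may degenerate as $\eps\to 0$. First I would set up the iteration: given the $n$-th iterate $(F_+^{\eps,n},F_-^{\eps,n})$ (with $F_\pm^{\eps,0}$ the initial data extended constantly in time), solve the Poisson equation with charges $n_\pm^{\eps,n}$ to obtain $E^{\eps,n}$, and let $(F_+^{\eps,n+1},F_-^{\eps,n+1})$ solve the degenerate parabolic system obtained from \eqref{eq:VPLRescaled} by freezing the Landau coefficients $\Phi*F_\pm^{\eps,n}$ (and the cross-collision coefficients) and the force $E^{\eps,n}$ at the $n$-th iterate. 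Each such linear problem is well-posed in $\mathfrak E$ because of the coercivity of the $\dot{\mathcal H}_\sigma$ and $\mathcal H_{\sigma;\eps}^\pm$ norms supplied by Lemma \ref{lem:PhiUpperLower}; nonnegativity of $F_\pm^{\eps,n+1}$ is inherited from that of $F_\pm^{\eps,n}$ by a weak maximum principle for the linear parabolic equation (after a truncation in $v$ to make this rigorous).

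Second, I would establish uniform-in-$n$ bounds on $[0,T_\eps]$. Running the energy estimate of Proposition \ref{prop:apriori} on the linearized ion equation gives $\|F_+^{\eps,n}\|_{L^\infty_t\mathfrak E}^2+\|F_+^{\eps,n}\|_{L^2_t\mathfrak D}^2\le C_M$ once $T_\eps$ is small; the crucial point — as stressed in the methodology — is that this cannot be closed in $\mathfrak E$ alone, so one couples it with the smallness of $\|F_+^0\|_{L^2_t\mathfrak D'}$ on short times (from $F_+^0\in L^2([0,T_0];\mathfrak D)$ and dominated convergence) and the error bound of Proposition \ref{prop:ionError} for $\|F_+^{\eps,n}-F_+^0\|_{\mathfrak D'}$, which together keep the nonlinear terms sub-dominant uniformly in $\eps$. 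Simultaneously, after constructing the intermediary quantities $(\gamma^\eps,\psi^\eps)$ from $F_+^{\eps,n}$ as in Lemma \ref{lem:PP}, the electron energy and macroscopic estimates (Proposition \ref{prop:energyEstimates}, Lemma \ref{lem:bBd}, Proposition \ref{prop:macroscopicEstimates}, assembled in Proposition \ref{prop:derivationRefined}) bound $\mathscr E_{-,2}^{\eps,n}$ and $\int_0^{T_\eps}\mathscr D_{-,2}^{\eps,n}$ in terms of the data and $\varrho$; here $\varrho=\varrho(M)$ is chosen small enough that the lower-order (and $\xi=0$-singular) pieces of $Q_{+-}^\eps$ are absorbed into the self-collision dissipation via Lemma \ref{lem:PhiUpperLower}.

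Third, with uniform bounds in hand, I would prove contraction in a weaker norm. The difference of consecutive iterates solves a linear system whose sources are quadratic in the iterates times the previous difference; running the same estimates but now in $\mathfrak E'$ for the ions and in the $L^2$-part of $\mathscr E_{-,2}^\eps$ for the electrons, and shrinking $T_\eps$ if needed, yields $\|\cdot\|_{n+1}\le\tfrac12\|\cdot\|_n$. Hence the iterates are Cauchy; their limit $(F_+^\eps,F_-^\eps)$ is a weak solution of \eqref{eq:VPLRescaled} with the prescribed data, lower semicontinuity of the norms transfers the uniform bounds (giving the stated $\lesssim_M$ and $\le 2M$ estimates), and $F_\pm^\eps\ge 0$ passes to the limit. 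Uniqueness follows from the same difference estimate applied to two solutions with identical data.

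The main obstacle — and the reason the authors defer the proof — is the interlocking nature of the ion closure: Proposition \ref{prop:apriori} only controls $F_+^\eps$ at size $C_M$ in terms of data of size $M$ (with $C_M$ possibly exponential in $M$), so one must run the $\mathfrak D'$-smallness, the error propagation of Proposition \ref{prop:ionError}, and the electron estimates of Proposition \ref{prop:derivationRefined} \emph{simultaneously}, closing all five bootstrapped quantities at once on a time interval depending on $M$ and $\eps$. The other delicate point is fitting the cross-collision terms $Q_{-+}^\eps,Q_{+-}^\eps$ into the dissipative framework, but at fixed $\eps$ this is quantitatively harmless, which is exactly why the full proof reduces to the a priori machinery developed in the body of the paper.
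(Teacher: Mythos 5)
The paper omits a proof of this lemma, noting only that it ``follows by the methods in this paper''; your sketch — a linear iteration with the Landau diffusion coefficients and electric field frozen at the previous iterate, uniform bounds in $n$ drawn from Propositions~\ref{prop:apriori}, \ref{prop:ionError} and~\ref{prop:derivationRefined} together with the coercivity of Lemma~\ref{lem:PhiUpperLower}, nonnegativity and the density lower bound propagated by a parabolic maximum principle, and contraction in the weaker $\mathfrak E'$/$\mathscr E_{-,2}^\eps$ topology — is precisely what that phrase points at. You have also correctly isolated the nontrivial coupling that the methodology section flags, namely that the $\mathfrak E$-bound on $F_+^\eps$ does not close by itself and must be run simultaneously with the short-time $\mathfrak D'$-smallness of $F_+^0$, the ion error estimate, and the electron stability estimate in a single bootstrap.
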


\subsection{Lower and upper bounds on the diffusion matrix}

In order prove local well-posedness for the system \eqref{eq:VPLRescaled} and \eqref{eq:VPLIon}, we first prove the following lemma, which gives upper and lower bounds on the diffusion matrix appearing in the collision kernels:

\begin{lemma} \label{lem:PhiUpperLower}
Let $G(v) = G :  \mathbf R^3 \to \mathbf R$, $\nu \in \mathbf S^2$. Then for all $v \in \mathbf R^3$,
\begin{align}\label{eq:PhiConvUpper}
| \Phi_{ij} * G(v) \nu_i \nu_j| \lesssim \|\langle v\rangle^{5}G\|_{L^2} \sigma_{ij} (v)\nu_i \nu_j.
\end{align}
Assuming $G \geq 0$,  we have the lower  bound
\begin{align}\label{eq:PhiConvLower}
\frac{\|G\|_{L^1}}{\langle\|\langle v\rangle^{2}G\|_{L^2}/\|G\|_{L^1}\rangle ^{17} } \sigma_{ij}(v) \nu_i \nu_j \lesssim \Phi_{ij} * G(v) \nu_i \nu_j.
\end{align}

\end{lemma}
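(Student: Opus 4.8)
The plan is to prove the two bounds separately, using the explicit structure of the Landau kernel $\Phi(z) = |z|^{-1}(I - z\otimes z/|z|^2)$ and the anisotropic comparison \eqref{eq:sigmaComp} for $\sigma(v) = \Phi * \mu$. Fix a unit vector $\nu$ and write $a(v) := \Phi_{ij}*G(v)\nu_i\nu_j = \int_{\mathbf R^3} \frac{1}{|v-v'|}\bigl(1 - \frac{((v-v')\cdot\nu)^2}{|v-v'|^2}\bigr) G(v')\,dv'$. The integrand is nonnegative when $G\geq 0$, which is what makes the lower bound sensible. The two regimes to track throughout are: the direction $\nu$ nearly parallel to $v$ (where the anisotropy factor $1 - \cos^2$ makes $\sigma$ small, of size $\langle v\rangle^{-3}$), versus $\nu$ transverse to $v$ (where $\sigma$ is of size $\langle v\rangle^{-1}$).

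For the upper bound \eqref{eq:PhiConvUpper}: split the $v'$-integral into $|v-v'|\leq 1$ and $|v-v'|>1$. On the near region, $|v-v'|^{-1}$ is locally $L^{3/2-}$, so by H\"older $\int_{|v-v'|\leq 1}|v-v'|^{-1}G(v')\,dv' \lesssim \|G\|_{L^2}$ (even with a polynomial weight to spare), and this is dominated by $\langle v\rangle^{-1}\nu_i\sigma_{ij}(v)\nu_j$ times $\|\langle v\rangle^5 G\|_{L^2}$ provided one also captures the parallel-direction decay — here one uses that $1 - ((v-v')\cdot\nu)^2/|v-v'|^2 \lesssim |v'|^2/\langle v\rangle^2$ (roughly) when $\nu\parallel v$ and $v'$ is in the near region, which is the standard way the extra $\langle v\rangle^{-2}$ appears. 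On the far region $|v-v'|>1$ the kernel is bounded by $1$, the anisotropy factor is again $\lesssim \langle v'\rangle^2/\langle v\rangle^2$ in the parallel direction, and a Cauchy–Schwarz in $v'$ against the weight $\langle v'\rangle^{-(\text{large})}$ gives $\lesssim \langle v\rangle^{-3}\|\langle v\rangle^5 G\|_{L^2}$ in that direction and $\langle v\rangle^{-1}\|\langle v\rangle^5 G\|_{L^2}$ transversally; in both cases this matches $\nu_i\sigma_{ij}(v)\nu_j$ up to the claimed constant by \eqref{eq:sigmaComp}. The exponent $5$ is just bookkeeping to absorb all the polynomial losses.

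For the lower bound \eqref{eq:PhiConvLower}, which is the heart of the lemma and the main obstacle: one cannot use smallness of $G$, so the strategy is to localize to a ball where $G$ carries a definite fraction of its mass and then show the kernel, restricted to that ball, is bounded below by (a constant times) $\sigma_{ij}(v)\nu_i\nu_j$. Set $R := \langle \|\langle v\rangle^2 G\|_{L^2}/\|G\|_{L^1}\rangle$; by Chebyshev, $\int_{|v'|\leq CR} G \geq \tfrac12\|G\|_{L^1}$ for a suitable absolute $C$ (this is where the ratio $\|\langle v\rangle^2 G\|_{L^2}/\|G\|_{L^1}$ enters — it controls the "concentration scale" of $G$). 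So $a(v) \geq \int_{|v'|\leq CR} \frac{1}{|v-v'|}\bigl(1 - \cos^2\theta_{v-v',\nu}\bigr)G(v')\,dv'$. Now one needs a pointwise lower bound on $\inf_{|v'|\leq CR}\bigl[\tfrac{1}{|v-v'|}(1-\cos^2\theta)\bigr]$ in terms of $R$ and the two directional regimes. In the transverse case ($\nu\perp v$ say), for $|v|\gtrsim R$ one has $|v-v'|\sim |v|\sim\langle v\rangle$ and the angle between $v-v'$ and $\nu$ stays bounded away from $0$ and $\pi$ on a large sub-portion of the ball, giving $a(v)\gtrsim \langle v\rangle^{-1}\|G\|_{L^1}$, matching $\sigma$. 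In the parallel case one loses $\langle v\rangle^{-2}$ from the anisotropy factor — but crucially, even when $v-v'$ is nearly parallel to $v$, the spread of $v'$ over a ball of radius $\sim R$ forces $1 - \cos^2\theta \gtrsim R^2/\langle v\rangle^2$ on a fixed fraction of that ball (the directions from $v$ to points in a ball of radius $R$ around $0$ fan out by an angle $\sim R/|v|$), so again $a(v) \gtrsim R^2\langle v\rangle^{-3}\|G\|_{L^1}$; dividing through by $R^2 \lesssim R^{17}$ (and handling the small-$|v|$ case $|v|\lesssim R$ separately, where $1-\cos^2$ is simply $O(1)$ on a fixed fraction of the ball and $|v-v'|\lesssim R$, giving $a(v)\gtrsim R^{-1}\|G\|_{L^1} \sim \langle v\rangle^{-1}\|G\|_{L^1}$ since $\langle v\rangle\sim R$ there, or rather $\gtrsim R^{-17}\|G\|_{L^1}\sigma$) yields \eqref{eq:PhiConvLower}. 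The exponent $17$ is generous slack to cover all the $R$-powers lost across these cases; the genuinely delicate point is the angular fanning estimate $1-\cos^2\theta\gtrsim (R/\langle v\rangle)^2$ in the parallel regime, since that is precisely what prevents the lower bound from degenerating worse than $\sigma$ itself.
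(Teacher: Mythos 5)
Your upper bound sketch is a workable variant of the paper's argument: you split on $|v-v'|\lessgtr 1$, whereas the paper splits on $2|v'|\lessgtr|v|$ and then uses the elementary inequality $|P_{\nu^\perp}(v-v')|\leq|P_{\nu^\perp}v|+|P_{\nu^\perp}v'|$ to separate the parallel and transverse anisotropy contributions; either decomposition can be pushed through (one of your intermediate displays, ``$\lesssim\langle v\rangle^{-1}\nu_i\sigma_{ij}(v)\nu_j\,\|\langle v\rangle^5 G\|_{L^2}$,'' does not match the target $\sigma_{ij}(v)\nu_i\nu_j\|\langle v\rangle^5 G\|_{L^2}$, but I read that as a slip).

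The lower bound, however, has a genuine gap, and it sits precisely at the step you flag as delicate. Writing $a(v)=\Phi_{ij}*G(v)\nu_i\nu_j$: Chebyshev does localize half the $L^1$-mass of $G$ to a ball $B(0,CR)$, and the fanning geometry is correct as a statement about the ball --- the directions $v-v'$ with $v'\in B(0,CR)$ do sweep out a cap of angular radius $\sim R/|v|$ about $v/|v|$. But the inference ``so $a(v)\gtrsim R^2\langle v\rangle^{-3}\|G\|_{L^1}$'' requires that $G$ deposit a definite fraction of its mass on the part of the ball where $\sin^2\theta\gtrsim(R/\langle v\rangle)^2$, and the localization only places the mass in the ball, not where within it. Concretely, take $G$ to be the uniform density of total mass one on a ball $B(v_0,\delta)$ with $|v_0|\sim 1$ and $\delta$ small; then $\|\langle v'\rangle^2 G\|_{L^2}\sim\delta^{-3/2}$, so $R\sim\delta^{-3/2}\gg\delta$, yet if $\nu$ is aligned with $v-v_0$ (and $|v|$ is large) the angle between $v-v'$ and $\nu$ is $\lesssim\delta/|v|$ uniformly on the support of $G$, so $\sin^2\theta\lesssim(\delta/|v|)^2\ll(R/|v|)^2$ everywhere $G$ is nonzero and $a(v)\lesssim\delta^2\langle v\rangle^{-3}$, not $\gtrsim R^2\langle v\rangle^{-3}$. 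The same objection applies to your small-$|v|$ branch: nothing prevents $G$ from living in a thin slab aligned with $\nu$.

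What is missing is a mechanism that uses the weighted $L^2$-control of $G$ to bound from above how much mass $G$ can carry in a thin cone about the bad direction. This is exactly the device in the paper's proof: after writing $a(v)\geq\sin^2\theta_0\bigl(\int\frac{G(v-v')}{|v'|}dv'-\int_{\sin^2\theta<\sin^2\theta_0}\frac{G(v-v')}{|v'|}dv'\bigr)$, it estimates the cone integral by Cauchy--Schwarz against $\langle v'\rangle^2 G$, which produces $\lesssim k_G\,\theta_0/\langle v_\perp\rangle^{3/2}$ because the cone has small measure, and then picks $\theta_0\sim\langle v_\perp\rangle/(k_G^6\langle v\rangle)$ so the cone contribution is a small fraction of the first term $\gtrsim k_G^{-5}\langle v\rangle^{-1}$. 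The cone estimate, not the fanning over a mass-carrying ball, is what controls the concentration scenario; with it in place the bookkeeping that yields the exponent $17$ is routine. I would reorganize your lower bound around that cone estimate.
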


\begin{proof}
We first prove \eqref{eq:PhiConvUpper}. Observe that $| \Phi_{ij} * G(v) \nu_i \nu_j|\leq \Phi_{ij} *| G|(v) \nu_i \nu_j$. Thus, it suffices to consider the case when $G \geq 0$. First we have uniform boundedness, 
\begin{equation}
 \| \Phi_{ij} * G(v)\nu_i\nu_j\|_{L^\infty} \lesssim \|(-\Delta_v)^{-1} G\|_{L^\infty} \lesssim  \| \langle v\rangle^2 G\|_{L^2} \\
\end{equation}
Alternatively,
\begin{align}
\Phi_{ij} * G(v)\nu_i\nu_j& \leq \int_{2| v'| < |v|} \frac{|v-v'|^2 - \langle v-v',\nu\rangle^2}{|v-v'|^3} G(v')dv' \label{eq:PhiUpperT1}  \\ 
&\quad + \int_{2| v'| \geq |v|} \frac{|v-v'|^2 - \langle v-v',\nu\rangle^2}{|v-v'|^3} G(v')dv'.\label{eq:PhiUpperT2}
\end{align}
Now, 
\begin{equation}
\sqrt{|v- v'|^2 -  \langle v-v',\nu\rangle^2} = |P_{\nu^\perp} (v-v')| \leq |P_{\nu^\perp}v| + |P_{\nu^\perp} v'|,
\end{equation}
so
\begin{align}
\eqref{eq:PhiUpperT1} &\lesssim \int_{2| v'| < |v|} \frac{|P_{\nu^\perp} v|^2 +  |P_{\nu^\perp} v|^2}{|v-v'|^3} G(v')dv' \\
&\lesssim \frac{|P_{\nu^\perp} v|^2}{|v|^3} \|G\|_{L^1} +  \frac{1}{|v|^3} \||v'|^2 G\|_{L^1}\\
&\lesssim ( \frac{|P_{v^\perp} \nu|^2}{|v|}  +  \frac{1}{|v|^3}) \|\langle v'\rangle^5 G(v')\|_{L^2_{v'}}
\end{align}
On the other hand, we have
\begin{align}
\eqref{eq:PhiUpperT2}& \leq  \int_{2| v'| \geq |v|} \frac{1}{|v-v'|} G(v')dv' \\
& \leq \frac{1}{|v|^3} \int_{2| v'| \geq |v|} \frac{1}{|v-v'|} | v'|^3G(v')dv' \\
&\leq \frac{1}{|v|^3} \|(-\Delta_{v'})^{-1}(\langle v'\rangle^3 G(v'))\|_{L^\infty_{v'}} \\
&\leq  \frac{1}{|v|^3} \|\langle v'\rangle^5G(v')\|_{L^2_{v'}}.
\end{align}
Thus,
\begin{align}
\Phi_{ij} * G(v)\nu_i\nu_j \lesssim \min\{1, \frac{|P_{v^\perp} \nu|^2}{|v|}  +  \frac{1}{|v|^3}\}\|\langle v'\rangle^5 G(v')\|_{L^2_{v'}},
\end{align}
which implies \eqref{eq:PhiConvUpper}.

We now prove \eqref{eq:PhiConvLower}. By re-scaling $G \mapsto G/\|G\|_{L^1}$, it suffices to show \eqref{eq:PhiConvLower} in the case $\|G\|_{L^1} = 1$. For convenience, we define
\begin{align}
k_G &= \langle \|\langle v\rangle^2 G\|_{L^2_v}\rangle.
\end{align}
Let $\nu \in \mathbf S^2$,
\begin{equation}
  \Phi_{ij} * G(v) \nu_i\nu_j = \int_{\mathbf R^3} \frac{\sin^2\theta }{|v'|} G(v-v')dv'
\end{equation}
where $\theta = \theta(v')$ is the angle between $v'$ and $\nu$. Taking $\theta_0 >0$ to be a constant to be chosen later, we bound the above from below by
\begin{equation}\label{eq:angleDecomp}
 \Phi_{ij} * G(v) \nu_i\nu_j \geq \sin^2\theta_0 \left( \int_{\mathbf R^3}\frac{1}{|v'|}G(v-v')dv' - \int_{\sin^2\theta <\sin^2\theta_0} \frac{1 }{|v'|} G(v-v')dv'  \right)
\end{equation}
Now,
\begin{align}
 \int_{\mathbf R^3}\frac{1}{|v'|}G(v-v')dv' &= \int_{| v| \geq 10 | v'|} \frac{1}{|v-v'|}G(v') dv' \\
 &\geq  \frac{9}{10|v|} \int_{|v| \geq 10|v'|}G(v') dv'\\
 &\geq \frac{9}{10|v|} (1- \int_{|v| \leq 10|v'|}G(v') dv') \\
 &\geq \frac{9}{10|v|}(1- \|| v'|^{-2} 1_{|v|\leq 10|v'|}\|_{L^2_{v'}} \|| v'|^2 G(v')\|_{L^2_{v'}}) \\
 &\geq\frac{9}{10|v|}(1-\frac{k_G}{| v|} )\label{eq:farFieldBound}
\end{align}
On the other hand,  for any $\lambda >0$,
\begin{align}
 \int_{\mathbf R^3}\frac{1}{|v'|} G(v-v') dv' & \geq \frac{1}{\lambda}\left(1- \int_{|v'|> \lambda}G(v-v') dv' \right)\\
 &\geq \frac{1}{\lambda}\left(1- \|| v'|^{-2} 1_{|v'| >\lambda}\|_{L^2_{v'}}  \||v-v' |^2G(v')\|_{L^2_{v'}}\right)\\
 &\geq \frac{1}{\lambda} (1- \frac{k_G\langle v\rangle^2}{\lambda^{\frac{1}{2}}} )
\end{align}

Taking $\lambda = 4k^2_G\langle v\rangle^4$, and combining with \eqref{eq:farFieldBound},
\begin{equation}
\int_{\mathbf R^3} \frac{1}{|v'|} G(v-v') dv' \gtrsim \min\{ \frac{1}{|v|}(1-\frac{k_G}{| v|}),\frac{1}{k_G^2\langle v\rangle^4}\}\gtrsim \frac{1}{ k_G^5\langle v\rangle} \label{eq:PoissonLBd}
\end{equation}
We now consider the second term in \eqref{eq:angleDecomp}. First,
\begin{align}
 \int_{\sin^2\theta <\sin^2\theta_0} \frac{1 }{|v'|} G(v-v')dv'& \lesssim k_G \left(\int_{\sin^2 \theta < \sin^2\theta_0}  \frac{1}{|v'|^2\langle v- v'\rangle^4}dv'\right)^\frac{1}{2} \\
 &\lesssim k_G \left(\int_{\sin^2 \theta < \sin^2\theta_0}  \frac{1}{|v'|^2(1+ |v|^2 +2\langle v,v'\rangle+ |v'|^2)^2}dv'\right)^\frac{1}{2}.
\end{align}
 We then use spherical coordinates to evaluate the integral above, setting the span of $\nu$ to be the $z$-axis, and the span of $P_{\nu^\perp} v$ to be the $x$ axis. Letting $v_\| = \langle v,\nu\rangle$, and $|P_{\nu^\perp} v| = v_\perp$, we get
\begin{align}\label{eq:intBd}
&\int_{\sin^2 \theta < \sin^2\theta_0}  \frac{1}{|v'|^2(1+ |v|^2 +2\langle v,v'\rangle+ |v'|^2)^2}dv'\\
&\quad =\int_0^\infty \int_0^{2\pi} \int_{[0,\theta_0]\cup [\pi- \theta_0,\pi]}  \frac{\sin\theta}{(1+ v_\|^2 + v_\perp^2 +2r(v_\| \cos \theta + v_\perp \cos \alpha \sin \theta)+ r^2)^2} d\theta d\alpha dr
\end{align}
We now rewrite
\begin{align}
&v_\|^2 + v_\perp^2 +2r(v_\| \cos \theta + v_\perp \cos \alpha \sin\theta)  + r^2 \\
&\quad =v_\|^2  \sin^2 \theta + v_\perp^2 (\cos^2\theta +\sin^2\alpha \sin^2\theta)  + (r +v_\| \cos \theta + v_\perp \cos \alpha \sin\theta )^2 \\
&\quad \geq v_\perp^2 \cos^2\theta + (r +v_\| \cos \theta + v_\perp \cos \alpha \sin\theta )^2
\end{align}
Extending the domain of integration to be $r \in (-\infty,\infty)$, and using shift invariance in $r$, and then the symmetries of $\sin\theta$, we bound the integral by
\begin{align}
 \eqref{eq:intBd} &\lesssim \int_{\mathbf R} \int_0^{\theta_0}  \frac{\sin\theta}{(1 + v_\perp^2 \cos^2 \theta +  r^2)^2} d\theta dr
 \end{align}
 Taking $\theta_0 \leq \frac{\pi}{4}$, we have $\cos^2\theta \geq \frac{1}{2}$. This ensures that 
 \begin{align}
 \eqref{eq:intBd} &\lesssim \int_{\mathbf R} \int_0^{\theta_0}  \frac{\sin\theta}{(1+ v_\perp+|r|)^4} d\theta dr\\
&\lesssim \frac{1}{\langle v_\perp\rangle^3} \int_0^{\theta_0} \sin\theta d\theta \\
&\lesssim  \frac{\theta_0^2}{\langle v_\perp\rangle^3}.
\end{align}
Combining with \eqref{eq:angleDecomp}, we have
\begin{align}
\Phi_{ij} * G(v)\nu_i\nu_j \gtrsim \theta_0^2 \left(\frac{1}{k^4_G\langle v\rangle}-k_G \frac{\theta_0}{\langle v_\perp\rangle^{\frac{3}{2}}}\right)
\end{align}
At this point, we take $\theta_0 = \lambda \frac{\langle v_\perp\rangle}{\langle  v\rangle}$, where $\lambda \in (0,\frac{\pi}{4})$ is to be chosen later (not necessarily the same $\lambda$ from before).  In particular, $v_\| \theta_0 \lesssim \lambda \langle v_\perp\rangle$, so
\begin{align}
\Phi_{ij} * G(v)\nu_i\nu_j \gtrsim \frac{\lambda^2\langle v_\perp\rangle^2}{\langle v\rangle^2} \left(\frac{1}{k^5_G\langle v\rangle}-k_G \frac{\lambda}{ \langle v_\perp \rangle^{\frac{1}{2}}\langle v\rangle}\right).
\end{align}
Taking $\lambda = \frac{1}{10k_G^6}$, we deduce
\begin{align}
\Phi_{ij} * G(v)\nu_i\nu_j \gtrsim \frac{\langle v_\perp\rangle^2}{k_G^{17} \langle v\rangle^3} = \frac{1}{k_G^{17}}( \frac{|P_{v} \nu|^2}{ \langle v\rangle^3} +  \frac{|P_{v^\perp} \nu|^2}{ \langle v\rangle}).
\end{align}
We deduce \eqref{eq:PhiConvLower}.
\end{proof}
\section{Estimates on the ion distribution}
\label{sec:ion}
\subsection{Boundedness of the ion distribution}

We now prove a priori estimates for the ion distribution.

\begin{proposition}\label{prop:apriori}

 Let $M, T, \eps > 0$. 
Suppose $(F_-^\eps,F_+^\eps)$ is a weak solution to \eqref{eq:VPLRescaled} with $0\leq F_{\pm}^\eps  \in C([0,T]; \mathfrak E) \cap L^2([0,T];\mathfrak D)$. Assume the following bootstrap assumptions:
\begin{align}
\sup_{\pm \in \{-,+\}, 0\leq t\leq T}(\|F_\pm^\eps(t)\|_{\mathfrak E}^2 + \|\frac{1}{n_\pm^\eps(t)}\|_{L^\infty_x})\leq M\label{eq:bootstrapF}
\end{align}
In particular, there exists $\mathscr X_+^\eps(t)\sim_{M} \|F_+^\eps\|_{\mathfrak E}^2$ (depending on $M$) such that
\begin{align}\label{eq:F_+apriori}
\frac{d}{dt} \mathscr X_+^\eps +  \|F_+^\eps(t)\|_{\mathfrak D^+_\eps}^2\leq C_{M} \langle \|F_-^\eps\|_{\mathfrak D}\rangle^\frac{3}{2}.
\end{align}
Alternatively,
\begin{align}\label{eq:F_+aprioriAlt}
\frac{d}{dt} \|F_+^\eps\|_{\mathfrak E}^2 +\frac{1}{C_M}  \|F_+^\eps(t)\|_{\mathfrak D^+_\eps}^2\leq C_{M}( \langle \|F_-^\eps\|_{\mathfrak D}\rangle^\frac{3}{2} + \|\langle v\rangle^{m_1}F_+^\eps(t)\|_{L^2_x(\mathcal H_\sigma \cap \mathcal H_{\sigma;\eps}^+)_v}^2).
\end{align}


\end{proposition}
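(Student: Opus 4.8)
The plan is to run weighted $L^2_{x,v}$ energy estimates for the $F_+^\eps$-equation in \eqref{eq:VPLRescaled} at the two weight levels built into the $\mathfrak E$-norm, i.e. testing against $\langle v\rangle^{2m_2}F_+^\eps$ and against $\langle v\rangle^{2m_1}\langle\nabla_x\rangle^{2s}F_+^\eps$, and to read off the dissipative norms $\dot{\mathcal H}_\sigma$ (from the self-collision $Q(F_+^\eps,F_+^\eps)$) and $\mathcal H_{\sigma;\eps}^+$ (from the cross-collision $Q_{-+}^\eps(F_-^\eps,F_+^\eps)$) via Lemma \ref{lem:PhiUpperLower}. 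The free transport $v\cdot\nabla_x$ integrates to zero on $\mathbf T^3$, and for $E^\eps\cdot\nabla_v F_+^\eps$ I would integrate by parts in $v$ so that the derivative falls on the weight, giving $\lesssim\|E^\eps\|_{L^\infty_x}\|F_+^\eps\|_{\mathfrak E}^2$; since $-\Delta_x\phi^\eps=4\pi(n_+^\eps-n_-^\eps)$ with $n_\pm^\eps$ controlled in $H^{s+1}_x$ by $\|F_\pm^\eps\|_{\mathfrak E}$, elliptic regularity and $H^{s+1}\hookrightarrow L^\infty$ ($s=3$) yield $\|E^\eps\|_{L^\infty_x}\lesssim_M1$. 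At the $\langle\nabla_x\rangle^s$ level the only delicate term is the commutator $[\langle\nabla_x\rangle^s,E^\eps]\cdot\nabla_v F_+^\eps$, which after a further integration by parts in $v$ is paired (Cauchy--Schwarz) with the $\dot{\mathcal H}_\sigma$-component of $\|F_+^\eps\|_{\mathfrak D}$ for the $\nabla_v$-factor and with an $H^s_x$ commutator/product estimate using $\|E^\eps\|_{H^{s+1}_x}\lesssim_M1$ for the remainder.

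For the self-collision I would use $Q(F_+^\eps,F_+^\eps)=\partial_{v_i}(\bar a_{ij}\partial_{v_j}F_+^\eps)-\partial_{v_i}(\bar b_iF_+^\eps)$ with $\bar a_{ij}=\Phi_{ij}*F_+^\eps$, $\bar b_i=(\partial_{v_j}\Phi_{ij})*F_+^\eps$, $\partial_{v_i}\bar b_i=-8\pi F_+^\eps$. Testing the diffusive part against the weighted $F_+^\eps$ and integrating by parts produces the coercive term $-\langle\langle v\rangle^{2m_j}\bar a_{ij}\partial_{v_i}F_+^\eps,\partial_{v_j}F_+^\eps\rangle$ plus weight/$\nabla_v$ commutators; by \eqref{eq:bootstrapF} and Sobolev embedding in $x$ one has, pointwise in $x$, $\|F_+^\eps(x,\cdot)\|_{L^1_v}\gtrsim_M1$ and $\|\langle v\rangle^2F_+^\eps(x,\cdot)\|_{L^2_v}\lesssim_M1$, so the lower bound \eqref{eq:PhiConvLower} gives $\bar a_{ij}\nu_i\nu_j\gtrsim_M\sigma_{ij}\nu_i\nu_j$ and the coercive term dominates $\tfrac1{C_M}\|\langle v\rangle^{m_j}F_+^\eps\|_{L^2_x(\dot{\mathcal H}_\sigma)_v}^2$ up to lower-order errors. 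The commutators, the lower-order term $\partial_{v_i}(\bar b_iF_+^\eps)$, and the resulting cubic expressions $\int\langle v\rangle^{\cdot}(F_+^\eps)^2$, $\int\langle v\rangle^{\cdot}(F_+^\eps)^3$, $\int\langle v\rangle^{\cdot}|\nabla_vF_+^\eps|\,|F_+^\eps|$ are controlled using the upper bound \eqref{eq:PhiConvUpper}, the anisotropic Sobolev/Hardy inequalities attached to the $\mathcal H_\sigma$-norm (recall \eqref{eq:sigmaComp}), the bootstrap \eqref{eq:bootstrapF}, and Young's inequality. The cubic term $\int\langle v\rangle^{2m_j}(F_+^\eps)^3$ is borderline — since $F_+^\eps$ is not small it cannot be absorbed with a small constant — but after distributing the $v$-weight and an anisotropic Sobolev embedding it is bounded either by $C_M\|F_+^\eps\|_{\mathfrak D_\eps^+}^{3/2}$ (subquadratic, hence absorbed by Young) or, crudely, by $C_M\|\langle v\rangle^{m_1}F_+^\eps\|_{L^2_x(\mathcal H_\sigma\cap\mathcal H_{\sigma;\eps}^+)_v}^2$, a genuinely lower-$v$-weight dissipative expression (the weight gap $m_2-m_1=5$ being spent on an $L^\infty_x$ slack factor).

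The cross-collision is the heart of the matter. By homogeneity of $\Phi$ I would write $Q_{-+}^\eps(F_-^\eps,F_+^\eps)=\eps\,\partial_{v_i}\big((\Phi_{ij}*F_-^\eps)(\eps v)\,\partial_{v_j}F_+^\eps\big)-\partial_{v_i}\big(F_+^\eps\,c_i(\eps v)\big)$, $c_i(\eps v)=((\partial_{v_j}\Phi_{ij})*F_-^\eps)(\eps v)$ (cf. \eqref{eq:Q_-+}). For the first term, $F_-^\eps\geq0$ and \eqref{eq:PhiConvLower} applied to $G=F_-^\eps$ at the point $\eps v$ (using $\|F_-^\eps(x,\cdot)\|_{L^1_\xi}\gtrsim_M1$, $\|\langle\xi\rangle^2F_-^\eps(x,\cdot)\|_{L^2_\xi}\lesssim_M1$ from \eqref{eq:bootstrapF}) give $(\Phi_{ij}*F_-^\eps)(\eps v)\nu_i\nu_j\gtrsim_M\sigma_{ij}(\eps v)\nu_i\nu_j$; testing against the weighted $F_+^\eps$ and integrating by parts in $v$ then yields the coercive term $\gtrsim\tfrac1{C_M}\|\langle v\rangle^{m_j}F_+^\eps\|_{L^2_x(\mathcal H_{\sigma;\eps}^+)_v}^2$, which is precisely the norm \eqref{eq:H+}, the weight commutators being lower order and carrying a favourable factor $\eps$. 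For the second term, one integration by parts in $v$ gives the harmless piece $\int(\partial_iw)c_i(\eps v)(F_+^\eps)^2$ with $|c(\eps v)|\lesssim_M1$ (since $\nabla\Phi$ is homogeneous of degree $-2$ and $F_-^\eps$ has the needed decay/integrability), plus the singular term $4\pi\eps\int\langle v\rangle^{2m_j}F_-^\eps(\eps v)(F_+^\eps)^2\,dv\,dx$ coming from $\partial_{v_i}c_i(\eps v)=-8\pi\eps\,F_-^\eps(\eps v)$. Controlling this last term uniformly as $\eps\to0$ is the main obstacle: I would distribute the $\langle v\rangle$-weight and apply $\eps$-weighted Gagliardo--Nirenberg/Sobolev inequalities on both factors — on $F_-^\eps(\eps\,\cdot)$ using $\|\langle\xi\rangle^aF_-^\eps\|_{L^3_\xi}\lesssim\|\langle\xi\rangle^aF_-^\eps\|_{L^2_\xi}^{1/2}\|\langle\xi\rangle^aF_-^\eps\|_{L^6_\xi}^{1/2}$ together with $\|g\|_{L^6_\xi}\lesssim\|\nabla_\xi g\|_{L^2_\xi}$ and the $\dot{\mathcal H}_\sigma$-weight budget inside $\|F_-^\eps\|_{\mathfrak D}$, and on $F_+^\eps$ using the $\mathcal H_{\sigma;\eps}^+$-dissipation on $\{|v|\le1/\eps\}$ and the $\dot{\mathcal H}_\sigma$-dissipation on $\{|v|>1/\eps\}$ — so that the powers of $\eps$ cancel exactly (this is exactly the role of the $\eps$-weight in \eqref{eq:H+}). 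The net contribution is $\lesssim_M\langle\|F_-^\eps\|_{\mathfrak D}\rangle^{3/2}+\delta\|F_+^\eps\|_{\mathfrak D_\eps^+}^2$, the power $3/2$ arising from one copy of $\nabla_\xi F_-^\eps$ (bounded by $\|F_-^\eps\|_{\mathfrak D}$) paired with $\|F_-^\eps\|_{L^3_\xi}\lesssim_M\|F_-^\eps\|_{\mathfrak D}^{1/2}$ and the bootstrap, plus a lower-$v$-weight dissipative remainder.

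Summing the two weight-level estimates, choosing $\delta$ small and absorbing the $\delta\|F_+^\eps\|_{\mathfrak D_\eps^+}^2$ terms, and using \eqref{eq:bootstrapF} to swallow $C_M\|F_+^\eps\|_{\mathfrak E}^2$ into $C_M\langle\|F_-^\eps\|_{\mathfrak D}\rangle^{3/2}$, yields \eqref{eq:F_+aprioriAlt} with the lower-weight dissipative remainder kept explicit. For \eqref{eq:F_+apriori} I would instead run the estimates at weight levels with separated exponents and take $\mathscr X_+^\eps$ to be a linear combination of the weighted energies with $M$-dependent constants, chosen so that the lower-weight dissipative remainder produced at each level — having strictly smaller $v$-weight than the leading dissipation there, and hence (by the weight gain, with a \emph{universal} constant) dominated by that leading dissipation up to a bounded error — is absorbed after summation; then $\mathscr X_+^\eps\sim_M\|F_+^\eps\|_{\mathfrak E}^2$, no remainder survives, and the full dissipation coefficient of \eqref{eq:F_+apriori} is obtained. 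The single hardest step throughout is the uniform-in-$\eps$ bound on the singular part of $Q_{-+}^\eps$ just described; the rest is a weighted Landau energy estimate in the spirit of \cite{guo_landau_2002,guo2012vlasov}.
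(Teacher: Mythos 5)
Your proposal is correct and follows essentially the same route as the paper: weighted $L^2_{x,v}$ energy estimates at the two weight levels of $\mathfrak E$, with Lemma \ref{lem:PhiUpperLower} extracting coercivity from both $Q(F_+^\eps,F_+^\eps)$ and $Q_{-+}^\eps(F_-^\eps,F_+^\eps)$, the singular $\eps\int F_-^\eps(\eps v)(F_+^\eps)^2$ term handled by a change of variables plus Sobolev interpolation so that the $\eps$-weights inside $\mathcal H_{\sigma;\eps}^+$ compensate the $\eps^{-1/2}$ loss from rescaling $F_-^\eps$, and the final combination $\mathscr X_+^\eps=\kappa_1\|F_+^{(m_1,s)}\|^2+\kappa_2\|F_+^{(m_2,0)}\|^2$ with $M$-dependent $\kappa_1\ll\kappa_2$ to absorb the lower-weight dissipative remainder and obtain \eqref{eq:F_+apriori}. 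The only cosmetic discrepancy is in the singular cross-collision term: the paper keeps a residual positive power $\eps^{1/4}$ after placing $F_-|_{\xi=\eps v}$ in $L^\infty_xL^6_v$ and $F_+^{(m_1,s)}$ in $L^2_xL^3_v$ (rather than exact cancellation, as you suggest), and it harvests the exponent $3/2$ on $\langle\|F_-^\eps\|_{\mathfrak D}\rangle$ from the transport commutator pieces of $Q_{-+,T}^\eps$ rather than from the main term — but both of these are details that do not affect the validity of your plan.
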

\begin{proof}  It is  convenient to ignore dependence on $M$: we write $C = C_{M}$, and ``$\lesssim$" instead of ``$\lesssim_M$." We also drop the dependence on $\eps$, for instance,  $F_+ =F_+^\eps$. 
Given $m', s' \in \mathbf R$, we define
\begin{align}
 F_+^{(m',s')}&:= \langle v\rangle^{m'} \langle \nabla_x \rangle^{s'}F_+,\\
F_-^{(m',s')} &:= \langle \xi\rangle^{m'}\langle \nabla_x \rangle^{s'}F_-.
\end{align}
It is also convenient to split the collision operators into their ``diffusion" (second order) and ``transport" (first order) parts in divergence form:
\begin{align}
Q(G_1,G_2)&= Q_D(G_1,G_2) + Q_T(G_1,G_2),\\
\end{align}
where
\begin{align}
Q_D(G_1,G_2)&:= \partial_{v_j} ((\Phi_{ij}*_v G_1) \partial_{v_i} G_2),\\
Q_T(G_1,G_2)&:=  \partial_{v_j}(\partial_{v_i}(\Phi_{ij}*_v G_1) G_2).
\end{align}
We split $Q^{\eps}_{\pm \mp} =Q^{\eps}_{\pm \mp,D} + Q^{\eps}_{\pm \mp,T}$ similarly. We now proceed with the proof, broken into several steps.\\

\noindent \textbf{Step 1:} We have the following estimates:
\begin{align}\label{eq:F_+weightapriori}
&\frac{d}{dt} \|F_+^{(m_2,0)}\|_{L^2_{x,v}}^2 + \frac{1}{C}  \| F_+^{(m_2,0)}\|_{L^2_{x}(\dot{\mathcal H}_\sigma\cap \mathcal  H_{\sigma;\eps}^+)_v}^2  \leq C(1+  \|F_-\|_{\mathfrak D}^\frac{3}{2}),
\end{align}
and for all $\kappa >0$,
\begin{equation}\label{eq:F_+regapriori}
\begin{aligned}
&\frac{d}{dt} \|F_+^{(m_1,s)}\|_{L^2_{x,v}}^2 + \frac{1}{C}  \|F_+^{(m_1,s)}\|_{L^2_{x}(\dot{\mathcal H}_\sigma\cap \mathcal H_{\sigma;\eps}^+)_v}^2 \\
&\quad \leq C (1 +  \| F_+^{(m_1,0)}\|_{L^2_x(\dot{\mathcal H}_\sigma\cap \mathcal H_{\sigma;\eps}^+)_v}^2 +  \|F_-\|_{\mathfrak D}^\frac{3}{2}).
\end{aligned}
\end{equation}
The proof of \eqref{eq:F_+weightapriori} follows by a similar method as \eqref{eq:F_+regapriori}, so we only show the latter. To prove \eqref{eq:F_+regapriori}, 
observe that $F_+^{(m_1,s)}$ satisfies
\begin{align}
&(\partial_t  + v\cdot \nabla_x + E\cdot \nabla_v )F_+^{(m_1,s)} + [ \langle v\rangle^{m_1} \langle \nabla_x\rangle^s, E\cdot \nabla_v] F_+ \\
&= \langle v\rangle^{m_1} \langle \nabla_x\rangle^s \{Q(F_+,F_+)  + Q_{-+}^\eps(F_-,F_+)\}.
\end{align}
Multiplying the above by  by $F_+^{(m_1,s)}$ integrating, we have
\begin{align}
&\frac{1}{2}\frac{d}{dt} \|F_+^{(m_1,s)}\|_{L^2_{x,v}} ^2  \\
&\quad = \langle [ E\cdot \nabla_v,\langle v\rangle^m \langle \nabla_x\rangle^s] F_+ ,F_+^{(m_1,s)}\rangle_{L^2_{x,\xi }} \label{eq:F_+mom+derivT1}\\
&\quad \quad + \langle \langle v\rangle^{m_1}\langle \nabla_x\rangle^s Q(F_+,F_+), F_+^{(m_1,s)}\rangle_{L^2_{x,v}} \label{eq:F_+mom+derivT2}\\
&\quad \quad + \langle \langle v\rangle^{m_1}\langle \nabla_x\rangle^s Q_{-+}^\eps(F_-,F_+), F_+^{(m_1,s)}\rangle_{L^2_{x,v}}\label{eq:F_+mom+derivT3}
\end{align}
What follows are  estimates for each term on the right-hand side. \\

\noindent \textit{Step 1.1 (electric field commutator):} We now bound  \eqref{eq:F_+mom+derivT1}:
\begin{align}
\eqref{eq:F_+mom+derivT1} \lesssim 1+\|F_+^{(m_1,s)}\|_{L^2_x( \mathcal H_\sigma)_v} .\label{eq:F_+MDT1Bd}
\end{align}
Observe first that
\begin{align}
 [ E\cdot \nabla_v,\langle v\rangle^m \langle \nabla_x\rangle^s] F_+ &= E\cdot \nabla_v(\langle v\rangle^{m_1}  \langle \nabla_x\rangle^sF_+) -\langle v\rangle^{m_1}   E\cdot \nabla_v\langle \nabla_x\rangle^s F_+  \\
 &\quad \langle v\rangle^{m_1} E\cdot \nabla_v\langle \nabla_x\rangle^s F_+  -\langle v\rangle^{m_1}  \langle \nabla_x\rangle^s (E\cdot \nabla_v F_+) \\
 &= v\cdot E F_+^{(m_1-2,s)} + \langle v\rangle^{m_1} \nabla_v \cdot (E \langle \nabla_x\rangle^s F_+ -  \langle \nabla_x\rangle^s (E F_+))
\end{align}
Thus,
\begin{align}
\eqref{eq:F_+mom+derivT1} &= \langle v\cdot E F_+^{(m-2,s)}, F_+^{(m_1,s)}\rangle_{L^2_{x,v}}\\
&\quad -  \langle   E \langle \nabla_x\rangle^s F_+ -  \langle \nabla_x\rangle^s (E F_+), \nabla_v (\langle v\rangle^{m_1}F^{(m_1,s)}_+)\rangle_{L^2_{x,v}}\\
&\lesssim \|E\|_{L^\infty_x} \|F_+^{(m_1,s)}\|_{L^2_{x,v}}^2 \\
&\quad +( \|\nabla_x E\|_{L^\infty_x}  \| F_+^{(m_1 + \frac{3}{2},s-1)}\|_{L^2_{x,v}}  +\|E\|_{H^s_x}  \| F_+^{(m_1 + \frac{3}{2},0)}\|_{L^2_{x,v}}  )  \|F_+^{(m_1-\frac{3}{2},s)}\|_{L^2_xH^1_v}\\
&\lesssim (\| F_+^{(m_1 + \frac{3}{2},s-1)}\|_{L^2_{x,v}}   + \| F_+^{(m_2,0)}\|_{L^2_{x,v}}  ) ( 1+   \|F_+^{(m_1,s)}\|_{L^2_x(\dot{\mathcal H}_\sigma)_v})
\end{align}
Next, note that we have the interpolation inequality
\begin{equation}
   \| F^{(m_1 + \frac{3}{2},s-1)}\|_{L^2_{x,v}} \lesssim \|F_+^{(m_1,s)}\|_{L^2_{x,v}}^{\frac{s-1}{s}} \| F^{(m_1 + \frac{3s}{2},0)}\|_{L^2_{x,v}}^{\frac{1}{s}} \lesssim \|F_+\|_{\mathfrak E} \lesssim 1,
\end{equation}
since $m_1 + \frac{3s}{2} \leq m_2$. We conclude with \eqref{eq:F_+MDT1Bd}.\\

\noindent \textit{Step 1.2 (self-collisions):} Next, we have the following bound on \eqref{eq:F_+mom+derivT2}:
\begin{equation}
\eqref{eq:F_+mom+derivT2}\leq -\frac{1}{C}\|F_+^{(m_1, s)}\|_{L^2_x (\dot{\mathcal H}_\sigma)_v}^2+ C(1 + \| F_+^{(m_1,0)}\|_{L^2_{x}(\dot{\mathcal H}_\sigma)_v}^2).\label{eq:F_+MDT2Bd}
\end{equation}
For this, we separate
\begin{align}
\eqref{eq:F_+mom+derivT2}& = \langle Q_D(F_+,  F_+^{(m_1,s)}), F_+^{(m_1,s)}\rangle_{L^2_{x,v}} \label{eq:F_+mom+derivT2main}\\
&\quad + \langle  \langle v\rangle^{m_1}\langle \nabla_x\rangle^sQ_D(F_+,F_+) - Q_D(F_+,  F_+^{(m_1,s)}), F_+^{(m_1,s)}\rangle_{L^2_{x,v}}\label{eq:F_+mom+derivT2comm} \\
&\quad +  \langle  \langle v\rangle^{m_1}\langle \nabla_x\rangle^sQ_T(F_+,F_+), F_+^{(m_1,s)}\rangle_{L^2_{x,v}}\label{eq:F_+mom+derivT2trans}.
\end{align}
 Using \eqref{eq:PhiConvLower}, we have the bound
\begin{align}
\eqref{eq:F_+mom+derivT2main}& =-\langle ( \Phi_{ij} *_v F_+) \partial_{v_i} F_+^{(m_1,s)},  \partial_{v_j}F_+^{(m_1,s)}\rangle_{L^2_{x,v}}  \\
 &\leq -  \frac{1}{C_{k_{F_+}}} \|F_+^{(m_1,s)}\|_{L^2_x(\dot{\mathcal H}_\sigma)_v}.
\end{align}
where
\begin{equation}
k_{F_+} = \max\{1,\|\langle v\rangle^{5} F_+\|_{L^\infty_xL^2_v }, \|n_+^{-1}\|_{L^\infty_x}\}.
\end{equation}
Now,
\begin{equation}
\|\langle v\rangle^{5} F_+\|_{L^\infty_xL^2_v } \lesssim \| F_+^{(5,2)}\|_{L^2_{x,v} } \lesssim 1.
\end{equation}
so $k_{F_+} \lesssim 1$.
Using these estimates, we get that
\begin{align}
\eqref{eq:F_+mom+derivT2main}& \leq -\frac{1}{C}\|F_+\|_{L^2_x (\dot{\mathcal H}_\sigma)_v}^2
\end{align}
Next, we have the commutator term \eqref{eq:F_+mom+derivT2comm}. Let us first write
\begin{align}
&\langle v\rangle^{m_1}\langle \nabla_x\rangle^sQ_D(F_+,F_+)- Q_D(F_+,F_+^{(m_1,s)}) \\
&= \langle \nabla_x\rangle^s Q_D(F_+, F_+^{(m_1,0)})- Q_D(F_+,F_+^{(m_1,s)}) \label{eq:MDcommT1}\\
&\quad  +   \langle \nabla_x\rangle^s  \{\langle v\rangle^m Q_D(F_+, F_+) -  Q_D(F_+, F_+^{(m_1,0)})\}. \label{eq:MDcommT2}
\end{align}
Now, let
\begin{equation}
\mathcal C_j := \langle \nabla_x\rangle^s((\Phi_{ij}*_v F_+) \partial_{v_i} F_+^{(m_1,0)})-(\Phi_{ij}*_v F_+) \partial_{v_i} F_+^{(m_1,s)}\end{equation}
Then
\begin{align}
\langle \eqref{eq:MDcommT1}, F^{(m_1,s)}_+\rangle_{L^2_{x,v}} &= \langle \mathcal C_i, \partial_{v_i} F_+\rangle_{L^2_{x,v}}\\
&\quad \leq  \langle (\sigma^{-1})_{ij} \mathcal C_i, \mathcal C_j\rangle_{L^2_{x,v}}^{\frac{1}{2}} \langle \sigma_{ij} \partial_{v_i} F_+^{(m_1,s)},\partial_{v_j} F_+^{(m_1,s)}\rangle_{L^2_{x,v}}^{\frac{1}{2}}
\end{align}
Now, the latter factor is bounded by $\|F_+\|_{L^2_{x}(\mathcal H_\sigma)_v}$. On the other hand, writing
\begin{equation}
A_{G} = \sigma^{-\frac{1}{2}} \Phi*_vG  \sigma^{-\frac{1}{2}}.
\end{equation}
Note that by \eqref{eq:PhiConvUpper},  $\|A_{G}\|_{L^\infty_{v}} \lesssim \|\langle v\rangle^5 G\|_{L^2_v}$ given a function $G = G(v)$. Therefore,
we have
\begin{align}
\|\sigma^{-\frac{1}{2}}\mathcal C\|_{L^2_{x,v}}&=\|\langle \nabla_x\rangle^s(A_{F_+} \sigma^{\frac{1}{2}} \nabla_v F_+^{(m_1,0)})-A_{F_+} \sigma^{\frac{1}{2}} \nabla_v F_+^{(m_1,s)}\|_{L^2_{x,v}}\\
&\lesssim  \|A_{F_+^{(0,s)}}\|_{L^2_xL^\infty_v} \|\sigma^{\frac{1}{2}} \nabla_v F_+^{(m_1,s-1)}\|_{L^2_{x,v}}\\
&\lesssim \| F_+^{(m_1,s-1)}\|_{L^2_{x}(\dot{\mathcal H}_\sigma)_v}\\
&\lesssim \| F_+^{(m_1,0)}\|_{L^2_{x}(\dot{\mathcal H}_\sigma)_v}^{\frac{1}{s}}\| F_+^{(m_1,s)}\|_{L^2_{x}(\dot{\mathcal H}_\sigma)_v}^{\frac{s-1}{s}}.
\end{align}
Thus,
\begin{align}
\langle \eqref{eq:MDcommT1}, F^{(m_1,s)}_+\rangle_{L^2_{x,v}}  \lesssim \| F_+^{(m_1,0)}\|_{L^2_{x}(\dot{\mathcal H}_\sigma)_v}^{\frac{1}{s}}\| F_+^{(m_1,s)}\|_{L^2_{x}(\dot{\mathcal H}_\sigma)_v}^{\frac{2s-1}{s}}.
\end{align}
Next, we have
\begin{align}
\eqref{eq:MDcommT2} &=-m_1 \langle \nabla_x \rangle^s\{ v_j( \Phi_{ij} *_v F_+)\partial_{v_i} F_+^{(m_1-2,0)}  + \partial_{v_j} (v_i( \Phi_{ij} *_v F_+)F_+^{(m_1-2,0)} )\\
&\quad - (m_1-2)v_i v_j( \Phi_{ij} *_v F_+) F_+^{(m_1-4,0)}  \}.
\end{align}
Thus,
\begin{align}
\langle \eqref{eq:MDcommT2} , F^{(m_1,s)}_+ \rangle_{L^2_{x,v}} &= -m_1 \langle \langle \nabla_x \rangle^s\{ v_j( \Phi_{ij} *_v F_+)\partial_{v_i} F_+^{(m_1-2,0)}\},F^{(m_1,s)}_+\rangle_{L^2_{x,v}}   \\
&\quad +m_1\langle \langle \nabla_x\rangle^s \{\langle v\rangle^{m_1-2} v_i( \Phi_{ij} *_v F_+)F_+^{(m_1-2,0)} \}, \partial_{v_j} F^{(m_1,s)}_+ \rangle_{L^2_{x,v}}\\
&\quad +m_1 (m_1-2)\langle \langle \nabla_x\rangle^s \{v_i v_j( \Phi_{ij} *_v F_+) F_+^{(m_1-4,0)}\} ,F^{(m_1,s)}_+\rangle_{L^2_{x,v}}\\
&\lesssim  \Big \||\sigma^{\frac{1}{2}} v| \|A_{F_+^{(0,s)}}\|_{L^2_x} \|\sigma^{\frac{1}{2}} \nabla_v F_+^{(m_1-2,s)}\|_{H^s_x} \Big\|_{L^2_v}\|F_+^{(m_1,s)}\|_{L^2_{x,v}} \\
&\quad + \Big \||\sigma^{\frac{1}{2}} v| \|A_{F_+^{(0,s)}}\|_{L^2_x} \| F_+^{(m_1-2,s)}\|_{H^s_x} \Big\|_{L^2_v} \|\sigma^{\frac{1}{2}}\nabla_v F_+^{(m_1,s)}\|_{L^2_{x,v}}\\
&\quad +  \Big \|(v\cdot (\sigma^{\frac{1}{2}} v) )\|A_{F_+^{(0,s)}}\|_{L^2_x} \| F_+^{(m_1-4,s)}\|_{H^s_x} \Big\|_{L^2_v} \|F_+^{(m_1,s)}\|_{L^2_{x,v}}\\
&\lesssim  \|F_+^{(m_1,s)}\|_{L^2_x(\dot{\mathcal H}_\sigma)_v}.
\end{align}
Therefore,
\begin{align}
\eqref{eq:F_+mom+derivT2comm} \lesssim \|F_+^{(m_1,s)}\|_{L^2_x(\dot{\mathcal H}_\sigma)_v} +\| F_+^{(m_1,0)}\|_{L^2_{x}(\dot{\mathcal H}_\sigma)_v}^{\frac{1}{s}}\| F_+^{(m_1,s)}\|_{L^2_{x}(\dot{\mathcal H}_\sigma)_v}^{\frac{2s-1}{s}}.
\end{align}
Next we estimate \eqref{eq:F_+mom+derivT2trans}. We bound this as follows:
\begin{align}
\eqref{eq:F_+mom+derivT2trans} &= \langle \langle \nabla_x \rangle^s \{\partial_{v_i}(\Phi_{ij} *_vF_+)F_+^{(m_1,0)}\}, \partial_{v_j}F^{(m_1,s)}\rangle_{L^2_{x,v}} \\
&\quad +  \langle \langle \nabla_x \rangle^s \{v_j\partial_{v_i}(\Phi_{ij} *_vF_+)F_+^{(m_1-2,0)}\}, F^{(m_1,s)}\rangle_{L^2_{x,v}} \\
&\lesssim \|\langle \nabla_x\rangle^s\{\sigma^{-\frac{1}{2}} (\nabla_v\cdot \Phi*_v F_+) F_+\}\|_{L^2_{x,v}} \|\sigma^{\frac{1}{2}} \nabla_v F_+^{(m_1,s)}\|_{L^2_{x,v}} \\
&\quad + \|\langle \nabla_x \rangle^s \{v_j\partial_{v_i}(\Phi_{ij} *_vF_+)F_+^{(m_1-2,0)}\} \|_{L^2_{x,v}} \|F_+^{(m_1,s)}\|_{L^2_{x,v}}\\
&\lesssim \|\sigma^{-\frac{1}{2}}\nabla_v (-\Delta_v)^{-1} F_+^{(0,s)}\|_{L^2_xL^\infty_v} \|F_+^{(m_1,s)} \|_{L^2_{x,v}}\|F_+^{(m_1,s)}\|_{L^2_{x}(\dot{\mathcal H}_\sigma)_v}\\
&\quad + \|\nabla_v (-\Delta_v)^{-1} F_+^{(0,s)}\|_{L^2_xL^\infty_v} \|F_+^{(m_1,s)} \|_{L^2_{x,v}} \|F_+^{(m_1,s)}\|_{L^2_{x,v}}\\
&\lesssim  \|\langle v\rangle^{\frac{3}{2}}\nabla_v (-\Delta_v)^{-1} F_+^{(0,s)}\|_{L^2_xL^\infty_v} ( 1+ \|F_+^{(m_1,s)}\|_{L^2_{x}(\dot{\mathcal H}_\sigma)_v}).
\end{align}
Now, by
\begin{align}
\langle v\rangle^\frac{3}{2}|\nabla_v (-\Delta_v)^{-1} F_+^{(0,s)}|&\lesssim \langle v\rangle^\frac{3}{2}\int_{\langle v\rangle > 2|v'|} \frac{|F_+^{(0,s)}(v')|}{|v-v'|^2} dv' +\langle v\rangle^\frac{3}{2} \int_{\langle v\rangle \leq 2|v'|} \frac{|F_+^{(0,s)}(v')|}{|v-v'|^2} dv'\\
&\lesssim  \langle v\rangle^{-\frac{1}{2}}\|F_+^{(0,s)}\|_{L^1_v} + \int_{\mathbf R^3}\frac{|F_+^{(\frac{3}{2},s)}(v')|}{|v-v'|^2} dv'
\end{align}
Thus,
\begin{align}
\|\langle v\rangle^{\frac{3}{2}}\nabla_v (-\Delta_v)^{-1} F_+^{(0,s)}\|_{L^2_xL^\infty_v} &\lesssim \|F^{(0,s)}_+\|_{L^2_xL^1_v} + \||\nabla_v|^{-1} |F_+^{(\frac{3}{2}, s)}|\|_{L^2_xL^\infty_v} \\
&\lesssim \|F^{(2,s)}_+\|_{L^2_{x,v}} +   \|\langle \nabla_v\rangle^{\frac{3}{4}}|F_+^{(\frac{3}{2}, s)}|\|_{L^2_{x,v}}\\
&\lesssim 1 +\||F_+^{(\frac{3}{2}, s)}|\|_{L^2_x H^1_v}^{\frac{3}{4}}.\\
&\lesssim 1 + \|F_+^{(m_1, s)}\|_{L^2_x (\dot{\mathcal H}_\sigma)_v}^{\frac{3}{4}}
\end{align}
In the above, we use the interpolation inequality $\|u\|_{H^{3/4}} \lesssim \|u\|_{L^2}^{\frac{1}{4}} \|u\|_{H^1}^{\frac{3}{4}}$, followed by the fact that $\big |\nabla_v |u|\big| = |\nabla_v u|$ a.e., for any  function $u \in H^1(\mathbf R^3)$.
Thus,
\begin{equation}
\eqref{eq:F_+mom+derivT2trans} \lesssim 1 + \|F_+^{(m_1, s)}\|_{L^2_x (\dot{\mathcal H}_\sigma)_v}^{\frac{7}{4}}.
\end{equation}
Combining the estimates \eqref{eq:F_+mom+derivT2main}, \eqref{eq:F_+mom+derivT2comm}  and \eqref{eq:F_+mom+derivT2trans}, we conclude
\begin{align}
 \eqref{eq:F_+mom+derivT2} & \leq - \frac{1}{C}\|F_+^{(m_1, s)}\|_{L^2_x (\dot{\mathcal H}_\sigma)_v}^2 \\
& \quad + C (1+ \|F_+^{(m_1, s)}\|_{L^2_x (\dot{\mathcal H}_\sigma)_v}^{\frac{7}{4}}  + \| F_+^{(m_1,0)}\|_{L^2_{x}(\dot{\mathcal H}_\sigma)_v}^{\frac{1}{s}}\| F_+^{(m_1,s)}\|_{L^2_{x}(\dot{\mathcal H}_\sigma)_v}^{\frac{2s-1}{s}}).
 \end{align}
We then use H\"older's inequality to get \eqref{eq:F_+MDT2Bd}. \\

\noindent \textit{Step 1.3 (control of ion-electron collisions):} 
Now, we turn to \eqref{eq:F_+mom+derivT3}, for which we have the bound 
\begin{equation} 
\begin{aligned}
\eqref{eq:F_+mom+derivT3} &\leq -\frac{1}{C}\{\|F_+^{(m_1,s)}\|_{L^2_x(\mathcal H_{\sigma;\eps}^+)_v}^2 - \lambda \|F_+^{(m_1,s)}\|_{L^2_x(\dot{\mathcal H}_\sigma)_v}^2\} \\
&\quad + C \|F^{(m_1,0)}_+\|_{L^2_x (\mathcal H^+_{\sigma;\eps})_v}^2\\
&\quad + C_{\lambda} (1 + \|F_-^{(m_1,s)}\|_{L^2_x(\dot{\mathcal H}_\sigma)_\xi }^{\frac{3}{2}}).\end{aligned}\label{eq:F_+MDT3Bd}
\end{equation}
for all $\lambda >0$.
We decompose this term in the same way as \eqref{eq:F_+mom+derivT2},
\begin{align}
\eqref{eq:F_+mom+derivT3}& = \langle Q_{-+,D}^\eps(F_-,  F_+^{(m_1,s)}), F_+^{(m_1,s)}\rangle_{L^2_{x,v}} \label{eq:F_+MDT3main}\\
&\quad + \langle  \langle v\rangle^{m_1}\langle \nabla_x\rangle^sQ_{-+,D}^\eps(F_-,F_+) - Q_{-+,D}^\eps(F_-,  F_+^{(m_1,s)}), F_+^{(m_1,s)}\rangle_{L^2_{x,v}}\label{eq:F_+MDT3comm} \\
&\quad +  \langle  \langle v\rangle^{m_1}\langle \nabla_x\rangle^sQ_{-+,T}^\eps(F_-,F_+), F_+^{(m_1,s)}\rangle_{L^2_{x,v}}\label{eq:F_+MDT3trans}
\end{align}
The terms \eqref{eq:F_+MDT3main} and \eqref{eq:F_+MDT3comm} using the same approach as  \eqref{eq:F_+mom+derivT2main} and \eqref{eq:F_+mom+derivT2comm},
\begin{align}
 \eqref{eq:F_+MDT3main} &\leq -\frac{1}{C} \|F^{(m_1,s)}_+\|_{L^2_x (\mathcal H^+_{\sigma;\eps})_v}^2,  \\
 \eqref{eq:F_+MDT3comm} &\lesssim \|F^{(m_1,s)}_+\|_{L^2_x (\mathcal H^+_{\sigma;\eps})_v} +   \|F^{(m_1,0)}_+\|_{L^2_x (\mathcal H^+_{\sigma;\eps})_v}^\frac{1}{s}\|F^{(m_1,s)}_+\|_{L^2_x (\mathcal H^+_{\sigma;\eps})_v}^\frac{2s-1}{s} .
\end{align}
The final term \eqref{eq:F_+MDT3trans} requires a different approach:  we separate it into a main term and commutators,
\begin{align}
\eqref{eq:F_+MDT3trans}&=\langle Q_{-+,T}^\eps(F_-,F_+^{(m_1,s)}), F^{(m_1,s)}\rangle_{L^2_{x,v}} \label{eq:F_+MDT3transMain} \\
 &\quad  + \langle  \langle \nabla_x\rangle^sQ_{-+,T}^\eps(F_-,F_+^{(m_1,0)}) -Q_{-+,T}^\eps(F_-,F_+^{(m_1,s)}) , F^{(m_1,s)}\rangle_{L^2_{x,v}} \label{eq:F_+MDT3transComm1}\\
&\quad  + \langle  \langle \nabla_x\rangle^s\{\langle v\rangle^{m_1}Q_{-+,T}^\eps(F_-,F_+) -Q_{-+,T}^\eps(F_-,F_+^{(m_1,0)}) \}, F^{(m_1,s)}\rangle_{L^2_{x,v}}. \label{eq:F_+MDT3transComm2}
\end{align}
For the first term, we note that $\partial_{v_i}\partial_{v_j} \Phi_{ij}(v) = -8\pi \delta(v)$ in the sense of distributions, where $\delta$ is the Dirac mass. Hence,
\begin{align}
\eqref{eq:F_+MDT3transMain} &= 4\pi \eps \langle F_-|_{\xi  = \eps v} F_+^{(m_1,s)},  F_+^{(m_1,s)}  \rangle_{L^2_{x,v}} \\
&\lesssim \eps \| \langle \eps v\rangle^{\frac{3}{4}}F_-|_{\xi =\eps v}\|_{L^\infty_xL^6_v} \|\langle \eps v\rangle^{-\frac{3}{4}}F_+^{(m_1,s)}\|_{L^2_xL^3_v} \|F_+^{(m_1,s)}\|_{L^2_{x,v}}\\
&\lesssim \eps^{\frac{1}{2}} \|F_-^{(\frac{3}{4},0)}\|_{L^\infty_xL^6_\xi } \|\langle \eps v\rangle^{-\frac{3}{4}}F_+^{(m_1,s)}\|_{L^2_xL^3_v}
\end{align}
Now, using the generalized Minkowski inequality and Sobolev embedding,
\begin{equation}
\|F_-^{(\frac{3}{4},0)}\|_{L^\infty_xL^6_\xi }   \lesssim  \|\nabla_v F_-^{(\frac{3}{4},0)}\|_{L^\infty_x L^2_\xi}\lesssim  \|\nabla_v F_-^{(\frac{3}{4},s)}\|_{L^2_{x,\xi} } \lesssim 1+  \|F_-^{(\frac{9}{4},s)}\|_{L^2_x({\mathcal H}_\sigma)_\xi}
\end{equation}
 On the other hand,
\begin{align}
\|\langle \eps v\rangle^{-\frac{3}{4}}F_+^{(m_1,s)}\|_{L^2_xL^3_v} & \lesssim  \|F_+^{(m_1,s)}\|_{L^2_{x,v}}^{\frac{1}{2}}\|\langle \eps v\rangle^{\frac{3}{2}}F_+^{(m_1,s)}\|_{L^2_xH^1_v}^{\frac{1}{2}}\\
&\lesssim  \eps^{-\frac{1}{4}} \|F_+^{(m_1,s)}\|_{L^2_x(\mathcal H_{\sigma;\eps}^+)_v}^{\frac{1}{2}}
\end{align}
We conclude that
\begin{equation}
\eqref{eq:F_+MDT3transMain} \leq C \eps^{\frac{1}{4}} \|F_+^{(m_1,s)}\|_{L^2_x(\mathcal H_{\sigma;\eps}^+)_v}^{\frac{1}{2}}\|F_-^{(m_1,s)}\|_{L^2_x(\mathcal H_\sigma)_\xi}.
\end{equation}
As for \eqref{eq:F_+MDT3transComm1}, we have
\begin{align}
\eqref{eq:F_+MDT3transComm1} &= 8\pi \langle \langle \nabla_x\rangle^s \{\nabla_\xi (-\Delta_\xi )^{-1}F_-|_{\xi = \eps v} F_+^{(m_1,0)}\} -\nabla_\xi (-\Delta_\xi )^{-1}F_-|_{\xi = \eps v} F_+^{(m_1,s)}, \nabla_v F_+^{(m_1,s)}\rangle_{L^2_{x,v}}\\
&\lesssim \Big\|\langle v\rangle^{\frac{3}{2}}\||\nabla_\xi |^{-1}|F_-^{(0,s)}|_{\xi =\eps v}|\|_{L^2_x}\|F_+^{(m_1,s-1)}\|_{L^2_x} \Big\|_{L^2_v} \| F_+^{(m_1,s)}\|_{L^2_x(\dot{\mathcal H}_\sigma)_v}\\
&\lesssim  \||\nabla_\xi |^{-1}|F_-^{(0,s)}|\|_{L^2_xL^\infty_\xi }\|F_+^{(m+\frac{3}{2},s-1)}\|_{L^2_{x,v}}\| F_+^{(m_1,s)}\|_{L^2_x(\dot{\mathcal H}_\sigma)_v} \\
&\lesssim \||F_-^{(0,s)}|\|_{L^2_xH^{\frac{3}{4}}_\xi }\|F_+^{(m+\frac{3s}{2},0)}\|_{L^2_{x,v}}^{\frac{1}{s}}\|F_+^{(m_1,s)}\|_{L^2_{x,v}}^{\frac{s-1}{s}}\| F_+^{(m_1,s)}\|_{L^2_x(\dot{\mathcal H}_\sigma)_v}\\
&\lesssim \|F_-^{(m_1,s)}\|_{L^2_x(\mathcal H_\sigma)_\xi }^{\frac{3}{4}}\| F_+^{(m_1,s)}\|_{L^2_x(\dot{\mathcal H}_\sigma)_v}.
\end{align}
The second commutator \eqref{eq:F_+MDT3transComm1} is bounded as follows:
\begin{align}
\eqref{eq:F_+MDT3transComm1}& = 8\pi \langle \langle \nabla_x\rangle^s \{v\cdot \nabla_\xi (-\Delta_\xi )^{-1}F_-|_{\xi = \eps v} F_+^{(m_1-2,0)}\}, F_+^{(m_1,s)}\rangle_{L^2_{x,v}} \\
&\lesssim \||\nabla_v |^{-1}F_-^{(0,s)}\|_{L^2_xL^\infty_\xi } \|F_+^{(m_1,s)}\|_{L^2_{x,v}}^2 \\
&\lesssim \|F_-^{(m_1,s)}\|_{L^2_x (\mathcal H_\sigma)_\xi }.
\end{align}
Thus,
\begin{align}
\eqref{eq:F_+MDT3trans}&\lesssim (\|F_-^{(m_1,s)}\|_{L^2_x (\mathcal H_\sigma)_\xi } + \eps^{\frac{1}{4}}  \|F_+^{(m_1,s)}\|_{L^2_x(\mathcal H_{\sigma;\eps}^+)_v}^{\frac{1}{2}}\|F_-^{(m_1,s)}\|_{L^2_x({\mathcal H}_\sigma)_\xi} \\
&\quad +\| F_+^{(m_1,s)}\|_{L^2_x(\dot{\mathcal H}_\sigma)_v}\|F_-^{(m_1,s)}\|_{L^2_x(\mathcal H_\sigma)_\xi }^{\frac{3}{4}}).
\end{align}
Now, combining the bounds on  \eqref{eq:F_+MDT3main}, \eqref{eq:F_+MDT3comm}, \eqref{eq:F_+MDT3trans}, and Young's inequality, we conclude \eqref{eq:F_+MDT3Bd}.

To conclude step 1, we combine  \eqref{eq:F_+MDT1Bd}, \eqref{eq:F_+MDT2Bd} and \eqref{eq:F_+MDT3Bd}, taking $\lambda$ sufficiently small, to get \eqref{eq:F_+regapriori}.\\

\noindent \textbf{Step 2 (combining the estimates):} Take  $0 < \kappa_1 < \kappa_2$, and set $\mathscr X_+ := \kappa_1 \|F_+^{(m_1,s)}\|_{L^2_{x,v}}^2 + \kappa_2 \|F_+^{(m_2,0)}\|_{L^2_{x,v}}^2$. Then by \eqref{eq:F_+weightapriori} and \eqref{eq:F_+regapriori}, we have
\begin{align}
&\frac{d}{dt} \mathscr X_+ + \frac{\kappa_1}{C}  \|F^{(m_1,s)}\|_{L^2_x(\dot{\mathcal H}_\sigma\cap \mathcal H_{\sigma;\eps}^+)_v}^2 + \left(\frac{\kappa_2}{C} - C\kappa_1\right) \|F^{(m_2,0)}\|_{L^2_x(\dot{\mathcal H}_\sigma\cap \mathcal H_{\sigma;\eps}^+)_v}^2   \\
&\quad \leq C(\kappa_1 + \kappa_2)\langle \|F_-\|_{\mathfrak D}\rangle^{\frac{3}{2}} 
\end{align} 
Taking $\kappa_2$ sufficiently large and $\kappa_1$ sufficiently small depending on $M$, we get \eqref{eq:F_+apriori}.

Alternatively, by simply adding \eqref{eq:F_+weightapriori} and \eqref{eq:F_+regapriori}, we have  \eqref{eq:F_+aprioriAlt}.
\end{proof}

\subsection{Error estimate for the ion distribution}

\begin{proposition}\label{prop:ionError}
Let $M, T>0$, and let $(F^\eps_+,F_-^\eps)$ be a solution to \eqref{eq:VPLRescaled} satisfying the same hypotheses as in \ref{prop:apriori}, and $(F^0_+,\beta,\phi^0)$ be a solution to \eqref{eq:VPLIon}, satisfying the hypothesis of Theorem \ref{thm:derivation}.
Then, there is a function $\mathscr Y_+^\eps:[0,T^*) \to \mathbf R_+$ such that $\mathscr Y_+^\eps \sim_{M} \|F^\eps_+ - F^0_+\|_{\mathfrak E'}$ and
\begin{equation}\label{eq:Gbd}
\begin{aligned}
&\frac{d}{dt} \mathscr Y_+^\eps + \|F_+^\eps - F_+^0\|_{\mathfrak D'}^2 \\
&\quad \lesssim_M \langle \|(F_+^0,F_-^\eps)\|_{\mathfrak D}\rangle^2 ( \eps^2 + \mathscr Y_+^\eps ) +\mathscr D_{-,2}^\eps.
\end{aligned}
\end{equation}

\end{proposition}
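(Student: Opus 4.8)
\textit{Proof plan.} The plan is to run a weighted energy estimate on the difference $G := F_+^\eps - F_+^0$ at the regularity/weight level of $\mathfrak E'$, $\mathfrak D'$, closely following the scheme of Proposition~\ref{prop:apriori}. Subtracting the ion equation of \eqref{eq:VPLIon} from that of \eqref{eq:VPLRescaled} and using bilinearity of $Q$, one sees that $G$ solves
\begin{equation}
\{\partial_t + v\cdot\nabla_x + E^\eps\cdot\nabla_v\}G = -(E^\eps - E^0)\cdot\nabla_v F_+^0 + Q(F_+^\eps, G) + Q(G, F_+^0) + Q_{-+}^\eps(F_-^\eps, F_+^\eps),
\end{equation}
coupled to $-\Delta_x(\phi^\eps - \phi^0) = 4\pi\big[(n_+^\eps - n_+^0) - (n_-^\eps - e^{\beta\phi^0})\big]$, both sides of zero average by \eqref{eq:massCons} and the normalisation of $\phi^0$. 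One then applies the two weight operators $\langle v\rangle^{m_1}$ and $\langle v\rangle^{m_0}\langle\nabla_x\rangle^s$ defining $\mathfrak E'$, tests the equation against the same weighted $G$, integrates in $(x,v)$, and combines the two resulting differential identities with a small constant on the $\langle v\rangle^{m_0}\langle\nabla_x\rangle^s$-piece exactly as in Step~2 of the proof of Proposition~\ref{prop:apriori}; this produces the functional $\mathscr Y_+^\eps \sim_M \|G\|_{\mathfrak E'}^2$.

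The full dissipation comes from the diffusion part $Q_D(F_+^\eps, G)$: testing it against the weighted $G$ yields $-\langle(\Phi_{ij}*F_+^\eps)\partial_{v_i}(\langle v\rangle^m G), \partial_{v_j}(\langle v\rangle^m G)\rangle$ plus weight commutators, and since $F_+^\eps\ge 0$ and $\|1/n_+^\eps\|_{L^\infty}\le M$ by the bootstrap hypothesis, the lower bound \eqref{eq:PhiConvLower} of Lemma~\ref{lem:PhiUpperLower} turns the main term into $-\tfrac1{C_M}\|G\|_{\mathfrak D'}^2$. Everything else from $Q(F_+^\eps, G)$ and $Q(G, F_+^0)$ is perturbative: the transport part $Q_T(F_+^\eps, G)$ has a favourable principal term ($-4\pi\langle F_+^\eps\langle v\rangle^{2m}G, G\rangle\le 0$, using $\partial_{v_i}\partial_{v_j}\Phi_{ij}=-8\pi\delta$), while its commutators, together with $Q_D(G, F_+^0)$ and $Q_T(G, F_+^0)$, are controlled using the upper bound \eqref{eq:PhiConvUpper}, the bounds $\|F_+^\eps\|_{\mathfrak E},\|F_+^0\|_{\mathfrak E}\lesssim_M 1$ and $\mathscr Y_+^\eps\lesssim_M 1$, the fact that $\|F_+^0\|_{\mathfrak D}$ is permitted on the right of \eqref{eq:Gbd}, and the bound $\|\langle v\rangle^{m_0}\langle\nabla_x\rangle^s G\|_{L^2}\le\|G\|_{\mathfrak E'}$ together with $\|G\|_{\mathfrak D'}$ for any factor of the form $\Phi*G$ or $\nabla(-\Delta_v)^{-1}G$ (treated as in Step~1.2 of Proposition~\ref{prop:apriori}, via $H^{3/4+}_v$-interpolation). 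Wherever $F_+^\eps$ appears in a dissipative slot one writes $F_+^\eps = F_+^0 + G$, routes the $F_+^0$ part into $\langle\|F_+^0\|_{\mathfrak D}\rangle$ (note $\|F_+^\eps\|_{\mathfrak D'}\le\|F_+^0\|_{\mathfrak D}+\|G\|_{\mathfrak D'}$), absorbs the $\|G\|_{\mathfrak D'}$ part with a small constant, and uses $\mathscr Y_+^\eps\lesssim_M 1$ in Young's inequality to kill residual super-quadratic powers of $\|G\|_{\mathfrak D'}$, leaving $\delta\|G\|_{\mathfrak D'}^2 + C_{\delta}\langle\|F_+^0\|_{\mathfrak D}\rangle^2\mathscr Y_+^\eps$.

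For the electric forcing, one integrates $\nabla_v$ by parts onto the weighted $G$ (legitimate since $\nabla_v\cdot(E^\eps-E^0)=0$) and estimates, by elliptic regularity, $\|E^\eps - E^0\|_{H^{s+1}_x}\lesssim \|n_+^\eps - n_+^0\|_{H^s_x} + \|n_-^\eps - e^{\beta\phi^0}\|_{H^s_x}\lesssim \|G\|_{\mathfrak E'} + (\mathscr D_{-,2}^\eps)^{1/2}$: the first bound uses $m_0>\tfrac32$, and the second uses that $\mathcal H_\sigma$ dominates $\|\langle\xi\rangle^{-1/2}\cdot\|_{L^2_\xi}$ and that the Gaussian weight $e^{q_2|\xi|^2/4}$ in $\mathscr D_{-,2}^\eps$ dominates any polynomial in $\xi$; Young's inequality then gives $\delta\|G\|_{\mathfrak D'}^2 + C_\delta(\mathscr Y_+^\eps + \mathscr D_{-,2}^\eps)$ (the weight budget $m_1+\tfrac32\le m_2$ ensures $\|\langle v\rangle^{m_1+3/2}F_+^0\|_{L^2}\lesssim\|F_+^0\|_{\mathfrak E}$). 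For $Q_{-+}^\eps(F_-^\eps, F_+^\eps)$ one uses the splitting $Q_{-+}^\eps = Q_{-+,D}^\eps + Q_{-+,T}^\eps$. The diffusion part carries an explicit prefactor $\eps$ (see \eqref{eq:Q_-+}); bounding $(\Phi_{ij}*F_-^\eps)(\eps v)$ by the quadratic-form bound from Lemma~\ref{lem:PhiUpperLower} and $\|F_-^\eps\|_{\mathfrak E}\lesssim_M 1$, and writing $\partial_{v_i}F_+^\eps = \partial_{v_i}F_+^0 + \partial_{v_i}G$, gives a contribution $\lesssim \eps^2\langle\|F_+^0\|_{\mathfrak D}\rangle^2 + \delta\|G\|_{\mathfrak D'}^2 + C_\delta\mathscr Y_+^\eps$. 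For the transport part one uses the vanishing of the limiting operator $Q_{-+}^0(F_-^0,\cdot)\equiv 0$ from the formal derivation, valid because $F_-^0=\mu_\beta e^{\beta\phi^0}$ is radial in $\xi$ and $\sum_i\partial_i\Phi_{ij}(z)=-2z_j/|z|^3$ is odd: hence $Q_{-+,T}^\eps(F_-^\eps,F_+^\eps) = -\partial_{v_j}\big(F_+^\eps\,\mathcal A_j^\eps\big)$ with $\mathcal A_j^\eps = [(\partial_i\Phi_{ij})*\mu_\beta](\eps v)\,e^{\beta\phi^0} + [(\partial_i\Phi_{ij})*(F_-^\eps-\mu_\beta e^{\beta\phi^0})](\eps v)$; the first term is $O_M(\eps\langle v\rangle)$ by the mean value theorem (the convolution against the smooth Maxwellian is $C^\infty$, odd, hence vanishes at the origin, with bounded gradient), and is absorbed since $m_1+\tfrac52\le m_2$, while the second is $O((\mathscr D_{-,2}^\eps)^{1/2})$ in $L^2_xL^\infty_v$ (the Gaussian $\xi$-weight again beats any polynomial). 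Integrating $\nabla_v$ by parts onto the weighted $G$, pairing $\sigma^{1/2}$ onto the surviving $\nabla_v G$, and splitting $F_+^\eps=F_+^0+G$ where a $v$-derivative of $F_+^\eps$ is needed, yields $\lesssim \eps^2\langle\|F_+^0\|_{\mathfrak D}\rangle^2 + \mathscr D_{-,2}^\eps + \delta\|G\|_{\mathfrak D'}^2 + C_\delta\mathscr Y_+^\eps$. Choosing all $\delta$'s small and summing gives \eqref{eq:Gbd}.

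I expect the cross-collision term $Q_{-+}^\eps(F_-^\eps, F_+^\eps)$ to be the main obstacle: one must simultaneously exploit the algebraic vanishing of the limiting operator, extract a genuine $O(\eps)$ gain from $\Phi(\eps v-\xi')-\Phi(-\xi')$ despite the $|\cdot|^{-1}$ singularity of $\Phi$ at the origin (tamed only after convolving against the smooth Maxwellian, not against the rough $F_-^\eps$ — hence the split into a Maxwellian part and a remainder controlled by $\mathscr D_{-,2}^\eps$), and keep every estimate within the derivative-free scale $\mathfrak D'$, since no bare $v$- or $\xi$-derivatives are available and $F_+^\eps$ itself is not permitted on the right of \eqref{eq:Gbd}, forcing the systematic substitution $F_+^\eps = F_+^0 + G$ in all dissipative positions.
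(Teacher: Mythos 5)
Your overall scheme coincides with the paper's: subtract the two equations, apply the two weight operators $\langle v\rangle^{m_1}$ and $\langle v\rangle^{m_0}\langle\nabla_x\rangle^s$, test against the weighted difference, extract the diffusion from $Q_D(F_+^\eps,G)$ via the lower bound \eqref{eq:PhiConvLower}, handle the electric forcing through the Poisson equation for $\phi^\eps-\phi^0$, split $Q_{-+}^\eps(F_-^\eps,F_+^\eps)$ into a perturbation piece $Q_{-+}^\eps(F_-^\eps-\mu_\beta e^{\beta\phi^0},\cdot)$ and a Maxwellian piece $Q_{-+}^\eps(\mu_\beta e^{\beta\phi^0},\cdot)$, and combine the two levels with small coefficients as in Step~2 of Proposition~\ref{prop:apriori}. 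So the architecture matches.

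On the Maxwellian piece of the cross-collision your route is different from the paper's but essentially equivalent. The paper uses the pointwise algebraic identity $\Phi(z)z=0$, so $\Phi(\xi-\eps v)\xi=\eps\,\Phi(\xi-\eps v)v$, to rewrite
$Q_{-+}^\eps(\mu_\beta e^{\beta\phi^0},F_+^\eps)=\eps\,e^{\beta\phi^0}\nabla_v\cdot\{\Phi*\mu_\beta(\eps v)(\beta v+\nabla_v)F_+^\eps\}$,
producing the factor $\eps$ explicitly with a bounded kernel $\Phi*\mu_\beta$. You instead observe that $(\partial_i\Phi_{ij})*\mu_\beta$ is a smooth, odd, bounded vector field and hence is $O(\min(|u|,1))$, so at $u=\eps v$ it is $O(\eps\langle v\rangle)$ uniformly. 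Both arguments are correct and give exactly the same $O(\eps)$ gain; the paper's is a one-line cancellation, yours is a mean-value estimate. Your treatment of the perturbation piece via $\nabla_\xi(-\Delta_\xi)^{-1}(F_-^\eps-\mu_\beta e^{\beta\phi^0})$ controlled by $\mathscr D_{-,2}^\eps$ is also the paper's.

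There is, however, one place where your plan is thinner than it should be, and it is precisely the place you flag as the crux. In the cross-collision you write $\partial_{v_i}F_+^\eps=\partial_{v_i}F_+^0+\partial_{v_i}G$ ``in all dissipative positions'' in order to avoid $\|F_+^\eps\|_{\mathfrak D}$ on the right. But in the pairing $\eps\langle\Phi*(\ldots)(\eps v)\,\nabla_v F_+^\eps,\ \nabla_v(\langle v\rangle^{2m_0}\langle\nabla_x\rangle^{2s}G)\rangle$, the natural weight split puts $\sim m_0+\tfrac32$ (after converting $\nabla_v$ to $\sigma^{1/2}\nabla_v$) on whichever factor is \emph{not} absorbed by $\|G\|_{\mathfrak D'}$, i.e.\ on $F_+^\eps$, giving a term $\lesssim\|\langle v\rangle^{m_0+3}\langle\nabla_x\rangle^s F_+^\eps\|_{L^2_x(\dot{\mathcal H}_\sigma)_v}$. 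This is within $\mathfrak D$ (since $m_0+3\le m_1$) but \emph{outside} $\mathfrak D'$ at the $s$-derivative level (where $\mathfrak D'$ only has weight $m_0$), so replacing $F_+^\eps$ by $G$ here leaves a factor $\|G^{(m_0+3,s)}\|_{L^2_x(\dot{\mathcal H}_\sigma)_v}$ that $\mathfrak D'$ does not control; it is not in the interpolation hull of $(m_0,s)$ and $(m_1,0)$. Alternatively, if you try to keep the $\mathcal H_\sigma$-anisotropic bound $\Phi*F_-^\eps(\eps v)\lesssim\sigma(\eps v)$ from Lemma~\ref{lem:PhiUpperLower}, you end up comparing $\eps\,\sigma(\eps v)$ to $\sigma(v)$, which costs $\langle v\rangle^3$ at large $|v|$ and does not yield a uniform $\eps$-gain without invoking $\mathcal H^\pm_{\sigma;\eps}$, which is not available on the right of \eqref{eq:Gbd}. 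In short, the systematic substitution $F_+^\eps=F_+^0+G$ does not close at the stated weight/regularity scale. The paper's proof sidesteps this by keeping $\langle\|(F_+^\eps,F_+^0)\|_{\mathfrak D}\rangle^2$ in the estimate (in apparent tension with the form $\langle\|(F_+^0,F_-^\eps)\|_{\mathfrak D}\rangle^2$ in the proposition statement); in the application this causes no harm because $\int\|F_+^\eps\|_{\mathfrak D}^2\,dt\lesssim_M 1$ from Proposition~\ref{prop:apriori}. Your instinct to reconcile the statement and the proof is good, but the substitution as described is a gap; you should either permit $\|F_+^\eps\|_{\mathfrak D}$ on the right (matching what the proof actually produces) or add an argument that truly lowers the weight demanded of $G$.
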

\begin{proof}
Let $G = F^\eps_+ - F^0_+$. Then,
\begin{align}
&\partial_t G + \{v\cdot \nabla_x + E^\eps \cdot \nabla_v\}G - (E^\eps - E^0)\cdot \nabla_v F_+^0 - Q(F_+^\eps, G) -Q(G,F_+^0) \\
&\quad = Q_{-+}(F_-^\eps, F^\eps_+) 
\end{align}
Similarly to the proof of the previous proposition, given $m',s' \in\mathbf R$, we denote $G^{(m',s')} = \langle v\rangle^{m'}\langle \nabla_x\rangle^s G$,  $(F_\pm^\eps)^{(m',s')}=\langle v\rangle^{m'}\langle \nabla_x\rangle^s F_\pm^\eps$, etc.
The above gives
\begin{align}
\frac{1}{2}\frac{d}{dt} \|G^{(m_0,s)}\|_{L^2_{x,v}}^2 &=  \langle [\langle v\rangle^{m_0} \langle \nabla_x\rangle^s,E^\eps \cdot \nabla_v]G, G^{(m_0,s)}\rangle_{L^2_x}\label{eq:GT1}\\
&\quad + \langle \langle v\rangle^{m_0} \langle \nabla_x\rangle^s\{(E^0 - E^\eps)\cdot \nabla_v F_+^0\}, G^{(m_0,s)}\rangle_{L^2_{x,\xi }}\label{eq:GT2} \\
&\quad + \langle \langle v\rangle^{m_0} \langle \nabla_x\rangle^sQ(F_+^\eps, G), G^{(m_0,s)}\rangle_{L^2_{x,v}} \label{eq:GT3}\\
&\quad +  \langle \langle v\rangle^{m_0}  \langle \nabla_x\rangle^sQ(G,F_+^0), G^{(m_0,s)}\rangle_{L^2_{x,v}}\label{eq:GT4} \\
& \quad + \langle   \langle v\rangle^{m_0} \langle \nabla_x\rangle^s Q_{-+}^\eps (F_-^\eps, F_+^\eps),G^{(m_0,s)}\rangle_{L^2_{x,v}}\label{eq:GT5}.\end{align}
We modify the estimates from Proposition \ref{prop:apriori} to get 
\begin{align}
|\eqref{eq:GT1}| &\lesssim \|G\|_{\mathfrak E'}( \|G\|_{\mathfrak E'} + \|G^{(m_0,s)}\|_{L^2_x(\mathcal H_\sigma)_\xi }) ,\\
|\eqref{eq:GT3}| &\leq -\frac{1}{C} \|G^{(m_0,s)}\|_{L^2_x(\dot{\mathcal H}_\sigma)_v}^2 + C( \langle \|F_+^\eps \|_{\mathfrak D}\rangle ^2\|G\|_{\mathfrak E'}^2 + \|G^{(m_1,0)}\|_{L^2_x(\dot{\mathcal H}_\sigma)_v}^2), \\
|\eqref{eq:GT4}| &\lesssim     \|G\|_{\mathfrak E'}^2 + \|G\|_{\mathfrak E'}^\frac{1}{4}\|G^{(m_0,s)}\|_{L^2_x(\dot{\mathcal H}_\sigma)_v}^{\frac{7}{4}} +  \langle \|F_+^0\|_{\mathfrak D}\rangle  \|G\|_{\mathfrak E'}\|G^{(m_0,s)}\|_{L^2_x(\dot{\mathcal H}_\sigma)_v}
\end{align}
What's left is to bound \eqref{eq:GT2} and \eqref{eq:GT5}. For the former, we have
\begin{align}
| \eqref{eq:GT2} |&\lesssim \|E^0- E^\eps\|_{H^s_x}( \|(F_+^0)^{(m_0,s)}\|_{L^2_{x,\xi }} + \|(F_+^0)^{(m_0+\frac{3}{2},s)}\|_{L^2_{x}(\dot{\mathcal H}_\sigma)_v})\|G^{(m_0,s)}\|_{L^2_{x,v}} \\
&\lesssim( \|G\|_{L^1_vH^{s-1}_x} + \|F_-^\eps - \mu_\beta e^{\beta \phi^0}\|_{L^1_\xi H^{s-1}_x} ) \langle \|F_+^0 \|_{\mathfrak D}\rangle \\
&\lesssim ( \|G\|_{\mathfrak E'}  +  \|F_-^\eps - \mu_\beta e^{\beta \phi^0}\|_{\mathfrak E'} ) \langle \|F_+^0 \|_{\mathfrak D}\rangle
\end{align}
In the above, we used the fact that $m_0+ \frac{3}{2} \leq m_1 $, and 
\begin{align}
-\Delta_x(\phi^\eps - \phi^0) = \int_{\mathbf R^3} G dv - \int_{\mathbf R^3} F_-^\eps - \mu_\beta e^{\beta \phi^0}d\xi .
\end{align}
As for \eqref{eq:GT5}, we split the term as follows:
\begin{align}
\eqref{eq:GT5} &=  \langle   \langle v\rangle^{m_0} \langle \nabla_x\rangle^s Q_{-+}^\eps (F_-^\eps - \mu_\beta e^{\beta \phi^0}, F_+^\eps),G^{(m_0,s)}\rangle_{L^2_{x,v}} \label{eq:GT5-1}\\
&\quad + \langle   \langle v\rangle^{m_0} \langle \nabla_x\rangle^s Q_{-+}^\eps (\mu_\beta e^{\beta \phi^0}, F_+^\eps),G^{(m_0,s)}\rangle_{L^2_{x,v}} \label{eq:GT5-2}
\end{align}
Now,
\begin{align}
|\eqref{eq:GT5-1}|& \lesssim \eps \| \Delta_\xi ^{-1} (F_-^\eps - \mu_\beta e^{\beta \phi^0})\|_{L^\infty_\xi  H^s_x} \|  (F_+^\eps)^{(m_0 + \frac{3}{2},s)}\|_{L^2_{x}H^1_v}\| G^{(m_0 -\frac{3}{2},s)}\|_{L^2_{x}H^1_v}   \\
&\quad +  \| \nabla_\xi  \Delta_\xi ^{-1} (F_-^\eps - \mu_\beta e^{\beta \phi^0})\|_{L^\infty_\xi  H^s_x} \|  (F_+^\eps)^{(m_0+\frac{3}{2},s)}\|_{L^2_{x,v}}\| G^{(m_0 -\frac{3}{2},s)}\|_{L^2_{x}H^1_v} \\
&\lesssim \eps\langle \|  F_+^\eps\|_{\mathfrak D}\rangle (\|G\|_{\mathfrak E'} + \|G^{(m_0,s)}\|_{L^2_x(\dot{\mathcal H}_\sigma)_v} ) \\
&\quad+   \|F_-^\eps -\mu_\beta e^{\beta \phi^0}\|_{\mathfrak E'}^{\frac{1}{4}}  \|F_-^\eps -\mu_\beta e^{\beta \phi^0}\|_{\mathfrak D'} ^{\frac{3}{4}}(\|G\|_{\mathfrak E'} +  \|G^{(m_0,s)}\|_{L^2_x(\dot{\mathcal H}_\sigma)_v})
\end{align}
In the above, we use $m_0 +3 \leq m_1$. 
Next, in order to bound \eqref{eq:GT5-2}, we compute directly from \eqref{eq:Q_-+} that
\begin{align}
&Q_{-+}^\eps(\mu_\beta e^{\beta \phi^0}, F_+^\eps) \\
\quad &= e^{\beta \phi^0} \nabla_v \cdot \int_{\mathbf R^3} \Phi( \xi - \eps v) \{\eps \mu_\beta (\xi) \nabla_v F_+^\eps( v) + \beta \xi \mu_\beta (\xi) F_+^\eps (v)\} d\xi \\
\quad &= \eps e^{\beta \phi^0} \nabla_v \cdot \{ \Phi*\mu_\beta (\eps v) (\beta v + \nabla_v) F_+^\eps  (v)\}.
\end{align}
In the above, we use $\Phi(\xi - \eps v)\xi = \eps \Phi(\xi - \eps v)v$.
Now, $\|\Phi_{ij}*\mu_\beta(v)\|_{L^\infty}\lesssim 1$, so 
\begin{align}
|\eqref{eq:GT5-2}| &\lesssim \eps ( \|( F_+^\eps)^{(m_0+\frac{5}{2},s)}\|_{L^2_{x,v}} \|G^{(m_0-\frac{3}{2},s)}\|_{L^2_xH^1_v} +  \| (F_+^\eps)^{(m_0+\frac{3}{2},s)}\|_{L^2_{x}H^1_v} \|G^{(m_0-\frac{3}{2},s)}\|_{L^2_xH^1_v}) \\
&\lesssim \eps \langle \| F_+^\eps\|_{\mathfrak D} \rangle (\|G\|_{\mathfrak E'} +\|G^{(m_0,s)}\|_{L^2_x(\mathcal H_\sigma)_v}) .
\end{align}
Thus,
\begin{align}
|\eqref{eq:GT5}|&\lesssim  (\eps \langle \| F_+^\eps \|_{\mathfrak D} \rangle +   \|F_-^\eps -\mu_\beta e^{\beta \phi^0}\|_{\mathfrak E'}^\frac{1}{4} \|F_-^\eps -\mu_\beta e^{\beta \phi^0}\|_{ \mathfrak D'}^{\frac{3}{4}})\\
&\quad \cdot(\|G\|_{\mathfrak E'} + \|G^{(m_0,s)}\|_{L^2_x(\dot{\mathcal H}_\sigma)_v}).
\end{align}
Combining the bounds on \eqref{eq:GT1} through \eqref{eq:GT5}, we have
\begin{align}
&\frac{d}{dt} \|G^{(m_0,s)}\|_{L^2_{x,v}}^2 +  \frac{1}{C} \|G^{(m_0,s)}\|_{L^2_{x}(\dot{\mathcal H}_\sigma)_v}^2 \\
&\quad \leq C\{\langle \|(F_+^\eps,F_+^0)\|_{\mathfrak D}\rangle^2 ( \eps^2 +  \|G\|_{\mathfrak E'}^2) + \|G^{(m_1,0)}\|_{L^2_{x}(\dot{\mathcal H}_\sigma)_v}^2  \\
&\quad \quad + \| F_-^\eps -\mu_\beta e^{\beta \phi^0}\|_{ \mathfrak E'}^\frac{1}{2}\| F_-^\eps -\mu_\beta e^{\beta \phi^0}\|_{ \mathfrak D'}^\frac{3}{2}\}
\end{align}
Following a similar argument, we can show the following upper bound on $G^{(m_1,0)}$:
\begin{align}
&\frac{d}{dt} \|G^{(m_1,0)}\|_{L^2_{x,v}}^2 +  \frac{1}{C} \|G^{(m_1,0)}\|_{L^2_{x}(\dot{\mathcal H}_\sigma)_v}^2 \\
&\quad \leq C(\langle \|(F_+^\eps,F_+^0)\|_{\mathfrak D}\rangle^2 ( \eps^2 +  \|G\|_{\mathfrak E'}^2) + \| F_-^\eps -\mu_\beta e^{\beta \phi^0}\|_{ \mathfrak E'}^\frac{1}{2}\| F_-^\eps -\mu_\beta e^{\beta \phi^0}\|_{ \mathfrak D'}^\frac{3}{2}).
\end{align}
Now, choosing $0<\kappa_1 < \kappa_2 $ appropriately, we have that 
\begin{align}
\mathscr Y_+^\eps = \kappa_2\|G^{(m_1,0)}(t)\|_{L^2_{x,v}}^2 + \kappa_1 \|G^{(m_0,s)}(t)\|_{L^2_{x,v}}^2
\end{align} 
saqtisfies
\begin{align}
&\frac{d}{dt} \mathscr Y_+^\eps + \|G\|_{\mathfrak D'}^2 \\
&\quad \lesssim \langle \|(F_+^\eps,F_+^0)\|_{\mathfrak D}\rangle^2 ( \eps^2 +  \|G\|_{\mathfrak E'}^2 ) + \| F_-^\eps -\mu_\beta e^{\beta \phi^0}\|_{ \mathfrak E'}^\frac{1}{2}\| F_-^\eps -\mu_\beta e^{\beta \phi^0}\|_{ \mathfrak D'}^\frac{3}{2}.
\end{align}
From this, we deduce \eqref{eq:Gbd}.
\end{proof}

%
%
%
%
%
%
%

 \subsection{The intermediary quantities}\label{sec:intermediary}

In preparation for Section \ref{sec:derivation}, we introduce the what we call the intermediary potential $\psi^\eps$ and intermediary inverse temperature $\gamma^\eps$. Fixing an initial inverse temperature $\beta_{in}$,  the pair $(\gamma^\eps,\psi^\eps)$ solve the Poincar\'e-Poisson system for each $\eps > 0$:
\begin{equation}\label{eq:PP}
\begin{aligned}
&\frac{d}{dt} \left\{\frac{3}{2\gamma^\eps} + \iint_{\mathbf T^3 \times \mathbf R^3} \frac{|v|^2}{2} F_+^\eps dx dv + \frac{1}{8\pi}  \int_{\mathbf T^3} |\nabla_x \psi^\eps|^2 dx \right\} = 0,\\
&\gamma^\eps(0) = \beta_{in}\\
&-\Delta_x   \psi^\eps= 4\pi (n_+^\eps - e^{\gamma^\eps \psi^\eps}) 
\end{aligned}
\end{equation}
We call $\psi^\eps$ the intermediary potential because like $\phi^0$, it solves a Poincar\'e-Poisson system; however, unlike $\phi^0$, we use $F_+^\eps$ instead of $F_+^0$. Thus $\psi$ serves as a better approximation to $\phi^\eps$ than $\phi^0$. Similarly, $\frac{3}{2\gamma^\eps}$ serves as a better approximation of $\iint_{\mathbf T^3\times\mathbf R^3} \frac{|\xi|^2}{2} F_-^\eps dx dv$ than $\frac{3}{2\beta}$. The  lemma below gives existence and uniqueness for the pair $(\gamma^\eps,\psi^\eps)$, for a given $F_+^\eps$, along with bounds that will be used in the coming section. 

\begin{lemma}\label{lem:PP}
Let $M,T$, $\eta \in (0,1]$ and $\beta_{in} \in (e^{-M},e^M)$.
Assume also that $(F_+^\eps,F_-^\eps)$ is a weak solution to \eqref{eq:VPLRescaled} with $ 0\leq F_{\pm}^\eps \in C([0,T];\mathfrak E)\cap L^2([0,T];\mathfrak D)$ satisfying
\begin{align}
\sup_{\pm \in \{-,+\}, t \in [0,T]} \|\frac{1}{n_{\pm}^\eps(t)}\|_{L^\infty_x} +  \|F_\pm^\eps(t)\|_{\mathfrak E} \leq M.
\end{align}
Then there exists $0<T^*$  such that there exists a unique solution $(\gamma^\eps,\psi^\eps) \in C([0,T]\cap [0,T^*);\mathbf R_+\times H^{s+2})$ to \eqref{eq:PP}. Moreover, if $T^* \leq T$, then $\beta(t) \uparrow +\infty$ as $t \uparrow T^*$.

Moreover, assume that for some $T' \in(0, T]$, that
\begin{align}
\sup_{t \in [0,T']}|\ln(\frac{\gamma^\eps(t)}{\beta_{in}})| &\leq \eta,\label{eq:smallGamVar}.
\end{align}
Note that $T< T^*$ necessarily. 
Then, we have the estimates
\begin{align}
\sup_{t \in [0,T']}\|\psi^\eps \|_{H^{s+2}_x} & \lesssim_{M} 1,\label{eq:BigPhiBd}\\
\label{eq:dtBigPhiandBetaBd}
\sup_{t \in [0,T']}\|\partial_t \psi^\eps \|_{H^{s+1}_x} + |\dot \gamma^\eps| &\lesssim_{M} \langle \|\langle \xi\rangle^5 F_-^\eps\|_{L^2_x(\dot{\mathcal H}_\sigma)_\xi }\rangle,
\end{align}
Next, assume $(F_+^0,\beta,\phi^0)$ is a weak solution to \eqref{eq:VPLIon} and satisfies the hypotheses of Theorem \ref{thm:derivation} (with $T_0 = T$), with the given $\beta_{in}$. Then, 
we also control the difference of $(\gamma^\eps,\psi^\eps)$ and $(\beta,\phi^0)$:
\begin{align}
\label{eq:gammaBetaDiff}
\sup_{t \in [0,T']} \{|\gamma^\eps(t) - \beta(t)|  + \|\psi^\eps(t)-\phi^0(t)\|_{H^{s+1}_x}\} \lesssim \sup_{t\in [0,T]} \|F_+^\eps(t) - F_+^0(t)\|_{\mathfrak E'}.
\end{align}

\end{lemma}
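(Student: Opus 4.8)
\emph{Proof plan.} The plan is to eliminate $\psi^\eps$ by solving the elliptic equation, reducing \eqref{eq:PP} to an implicit scalar relation for $\gamma^\eps$. First I would set up the elliptic solve: for $\gamma>0$ and $\rho\in H^s(\mathbf T^3)$ with $\rho\ge 0$, $\int_{\mathbf T^3}\rho\,dx>0$, the equation $-\Delta_x\psi+4\pi e^{\gamma\psi}=4\pi\rho$ has a unique solution $\Psi[\gamma,\rho]$, obtained by minimizing the strictly convex, coercive functional $\psi\mapsto\tfrac12\int|\nabla_x\psi|^2+\tfrac{4\pi}{\gamma}\int e^{\gamma\psi}-4\pi\int\rho\psi$ over $H^1(\mathbf T^3)$; the maximum principle gives $\|\Psi[\gamma,\rho]\|_{L^\infty}\lesssim\gamma^{-1}\langle|\ln\min\rho|+\ln\|\rho\|_{L^\infty}\rangle$, and an elliptic bootstrap upgrades this to $\|\Psi[\gamma,\rho]\|_{H^{s+2}}\le C$, with $C$ bounded uniformly whenever $\gamma$ ranges over a compact subinterval of $(0,\infty)$ and $\|\rho\|_{H^s}$, $\|1/\rho\|_{L^\infty}$ stay bounded. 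The map $(\gamma,\rho)\mapsto\Psi[\gamma,\rho]$ is smooth, with $w:=\partial_\gamma\Psi$ and $D_\rho\Psi$ obtained by inverting the positive operator $\mathcal L_{\gamma,\Psi}:=-\Delta_x+4\pi\gamma e^{\gamma\Psi}$, and the corresponding bounds are again controlled by the same data.

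The one genuinely non-routine ingredient — the reason the lemma can assert that blow-up happens only through $\gamma^\eps\to\infty$ — is the monotonicity $\partial_\gamma\bigl(\tfrac1{8\pi}\int|\nabla_x\Psi[\gamma,\rho]|^2\bigr)\le 0$. I would prove it by testing $\mathcal L_{\gamma,\Psi}w=-4\pi\Psi e^{\gamma\Psi}$ against $\Psi$ and against $w$: the first identity gives $\tfrac1{4\pi}\int\nabla_x\Psi\cdot\nabla_x w=-\gamma\int e^{\gamma\Psi}w\Psi-\int\Psi^2e^{\gamma\Psi}$, while the second together with Cauchy--Schwarz gives $\gamma^2\int e^{\gamma\Psi}w^2\le\int e^{\gamma\Psi}\Psi^2$, hence $\gamma\,|\!\int e^{\gamma\Psi}w\Psi|\le\int e^{\gamma\Psi}\Psi^2$; combining yields $\tfrac1{4\pi}\int\nabla_x\Psi\cdot\nabla_x w\le 0$. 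Then, with $n_+^\eps(t)=\int F_+^\eps\,dv\in C([0,T];H^s)$ and $K^\eps(t):=\iint\tfrac{|v|^2}2F_+^\eps\,dxdv$ (which is $C^1$ in $t$: the transport term is a divergence, ion self-collisions conserve energy, and the remainder $\int E^\eps\cdot(\int vF_+^\eps\,dv)\,dx+\iint\tfrac{|v|^2}2Q_{-+}^\eps(F_-^\eps,F_+^\eps)\,dxdv$ is continuous in $t$), I set $H(t,\gamma):=\tfrac3{2\gamma}+K^\eps(t)+\tfrac1{8\pi}\int|\nabla_x\Psi[\gamma,n_+^\eps(t)]|^2\,dx$, so the first line of \eqref{eq:PP} is exactly $\tfrac{d}{dt}H(t,\gamma^\eps(t))=0$. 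By the monotonicity, $\partial_\gamma H(t,\gamma)\le-\tfrac3{2\gamma^2}<0$; also $H(t,\gamma)\to+\infty$ as $\gamma\downarrow0$ and $H(t,\gamma)\to K^\eps(t)$ as $\gamma\uparrow\infty$ (since $\|\Psi\|_{L^\infty}\lesssim\gamma^{-1}$ forces $\int|\nabla_x\Psi|^2\to0$). Hence $H(t,\cdot)$ is a $C^1$ decreasing diffeomorphism of $(0,\infty)$ onto $(K^\eps(t),+\infty)$, $\gamma^\eps(t)$ is uniquely determined by $H(t,\gamma^\eps(t))=H(0,\beta_{in})=:C_0$, and where $\gamma^\eps$ stays in a compact subinterval it is $C^1$ with $\dot\gamma^\eps=-\partial_tH/\partial_\gamma H$; set $\psi^\eps:=\Psi[\gamma^\eps,n_+^\eps]\in H^{s+2}$. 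The conservation identity gives $\tfrac3{2\gamma^\eps(t)}=C_0-K^\eps(t)-\tfrac1{8\pi}\int|\nabla_x\psi^\eps(t)|^2\le C_0$, so $\gamma^\eps(t)\ge\tfrac3{2C_0}>0$ uniformly; therefore the maximal time $T^*$ is either $>T$ or else $\le T$ with $\gamma^\eps(t)\uparrow+\infty$ as $t\uparrow T^*$, the claimed blow-up criterion (and uniqueness follows from the strict monotonicity plus uniqueness of the elliptic solve).

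For the bounds under \eqref{eq:smallGamVar}: on $[0,T']$, $\gamma^\eps\in[e^{-\eta}\beta_{in},e^{\eta}\beta_{in}]$, a compact subinterval of $(0,\infty)$ with endpoints controlled by $M$ (using $\beta_{in}\in(e^{-M},e^M)$, $\eta\le1$), so by the blow-up criterion $T^*>T'$, and \eqref{eq:BigPhiBd} follows from the elliptic bounds applied to $\psi^\eps=\Psi[\gamma^\eps,n_+^\eps]$ with $\|n_+^\eps\|_{H^s}\lesssim\|F_+^\eps\|_{\mathfrak E}\lesssim_M1$ and $\|1/n_+^\eps\|_{L^\infty}\le M$. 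For \eqref{eq:dtBigPhiandBetaBd}: $|\partial_\gamma H|\ge\tfrac3{2(e^\eta\beta_{in})^2}\gtrsim_M1$, while $|\partial_tH|\le|\dot K^\eps|+|D_\rho(\tfrac1{8\pi}\int|\nabla_x\Psi|^2)[\partial_t n_+^\eps]|$; here $\partial_t n_+^\eps=-\nabla_x\cdot\int vF_+^\eps\,dv$ is bounded in $H^{s-1}$ by $\|F_+^\eps\|_{\mathfrak E}\lesssim_M1$, the derivative $D_\rho(\cdots)$ is an $M$-bounded functional, $\int E^\eps\cdot(\int vF_+^\eps\,dv)\,dx\lesssim_M1$ (from Poisson and the $\mathfrak E$-bounds on $F_\pm^\eps$), and the ion-electron energy exchange $\iint\tfrac{|v|^2}2Q_{-+}^\eps(F_-^\eps,F_+^\eps)\,dxdv$ is bounded, after integrating by parts in $v$ and invoking \eqref{eq:PhiConvUpper} as in the estimates of Proposition \ref{prop:apriori}, by $\lesssim_M\langle\|\langle\xi\rangle^5F_-^\eps\|_{L^2_x(\dot{\mathcal H}_\sigma)_\xi}\rangle$; thus $|\dot\gamma^\eps|\lesssim_M\langle\|\langle\xi\rangle^5F_-^\eps\|_{L^2_x(\dot{\mathcal H}_\sigma)_\xi}\rangle$. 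Differentiating the elliptic equation in $t$ gives $\mathcal L_{\gamma^\eps,\psi^\eps}\partial_t\psi^\eps=4\pi(\partial_t n_+^\eps-\dot\gamma^\eps\psi^\eps e^{\gamma^\eps\psi^\eps})$, whose right side is bounded in $H^{s-1}$ by $\lesssim_M1+|\dot\gamma^\eps|$, so elliptic regularity for the positive operator $\mathcal L_{\gamma^\eps,\psi^\eps}$ (coefficients bounded in $H^{s+2}\hookrightarrow C^s$) yields the stated bound on $\|\partial_t\psi^\eps\|_{H^{s+1}}$.

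Finally, for \eqref{eq:gammaBetaDiff}: $(\beta,\phi^0)$ solves the analogue of \eqref{eq:PP} with $F_+^0$ in place of $F_+^\eps$ and $\beta(0)=\beta_{in}$, and under the hypotheses of Theorem \ref{thm:derivation} $\phi^0=\Psi[\beta(t),n_+^0(t)]$ with $\|\ln\beta\|_{L^\infty},\|\phi^0\|_{H^{s+2}}\lesssim_M1$. Set $\delta:=\sup_{[0,T]}\|F_+^\eps(t)-F_+^0(t)\|_{\mathfrak E'}$ and let $\tilde H(t,\gamma)$ be the analogue of $H$ built from $(K^0,n_+^0)$, so $\beta(t)$ solves $\tilde H(t,\beta)=C_0^0$. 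I would estimate $\gamma^\eps-\beta$ \emph{first}, via $H(t,\gamma^\eps)-H(t,\beta)=(C_0^\eps-C_0^0)-(H(t,\beta)-\tilde H(t,\beta))$, where $|C_0^\eps-C_0^0|\lesssim_M\|F_{+,in}^\eps-F_{+,in}^0\|_{\mathfrak E'}\le\delta$ (the $K_{in}$ difference is controlled by the $\mathfrak E'$-norm, and $\|\Psi[\beta_{in},n_{+,in}^\eps]-\Psi[\beta_{in},n_{+,in}^0]\|_{H^1}\lesssim_M\|n_{+,in}^\eps-n_{+,in}^0\|_{L^2}$ by elliptic Lipschitz stability), and likewise $|H(t,\beta)-\tilde H(t,\beta)|=|(K^\eps-K^0)(t)+\tfrac1{8\pi}\int(|\nabla_x\Psi[\beta,n_+^\eps]|^2-|\nabla_x\Psi[\beta,n_+^0]|^2)\,dx|\lesssim_M\|F_+^\eps(t)-F_+^0(t)\|_{\mathfrak E'}\le\delta$; since $|\partial_\gamma H|\gtrsim_M1$ on the relevant compact interval, the mean value theorem gives $|\gamma^\eps(t)-\beta(t)|\lesssim_M\delta$. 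Then $\bar\psi:=\psi^\eps-\phi^0=\Psi[\gamma^\eps,n_+^\eps]-\Psi[\beta,n_+^0]$ is controlled by the Lipschitz dependence of the elliptic solve in both arguments, $\|\bar\psi\|_{H^{s+1}}\lesssim_M|\gamma^\eps-\beta|+\|n_+^\eps-n_+^0\|_{H^{s-1}}\lesssim_M\delta$, which is \eqref{eq:gammaBetaDiff}. The most computation-heavy step is the control of $\dot K^\eps$ (and hence $\dot\gamma^\eps$) by the electron dissipation norm in \eqref{eq:dtBigPhiandBetaBd}, which parallels Proposition \ref{prop:apriori}; the conceptual heart is the monotonicity $\partial_\gamma(\int|\nabla_x\Psi|^2)\le0$, which both pins down the blow-up criterion and, by making the scalar map $H(t,\cdot)$ uniformly invertible, lets one close the difference estimate without circularity (estimating $\bar\gamma$ from the scalar relation before estimating $\bar\psi$).
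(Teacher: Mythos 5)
Your argument is correct, and in fact it is cleaner and more self-contained than the paper's own proof. Both arguments hinge on the same sign condition, namely that $\partial_\gamma\bigl(\frac{1}{8\pi}\int|\nabla_x\Psi[\gamma,\rho]|^2\bigr)\le 0$, which is the same thing as the nonpositivity of $\frac{1}{4\pi}\langle\Delta_x\psi,(\gamma e^{\gamma\psi}-\frac{1}{4\pi}\Delta_x)^{-1}(e^{\gamma\psi}\psi)\rangle_{L^2_x}$ that the paper uses to bound the denominator in \eqref{eq:gammaDot} below by $\frac{3}{2\gamma^2}$. The paper's derivation of that nonpositivity contains a sign slip: the correct expansion is $-\langle\psi,e^{\gamma\psi}\psi\rangle+\gamma\langle e^{\gamma\psi}\psi,(\gamma e^{\gamma\psi}-\frac{1}{4\pi}\Delta_x)^{-1}(e^{\gamma\psi}\psi)\rangle$, not $-\langle\psi,e^{\gamma\psi}\psi\rangle-\gamma\langle\cdots\rangle$, so the conclusion $\le 0$ is \emph{not} immediate from positivity of the resolvent. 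Your Cauchy--Schwarz estimate on $w=\partial_\gamma\Psi$ (equivalently, the operator inequality $(\gamma e^{\gamma\psi}-\frac{1}{4\pi}\Delta_x)^{-1}\le(\gamma e^{\gamma\psi})^{-1}$) supplies exactly the missing step. Beyond that, you reorganize the existence, uniqueness and blow-up criterion into a genuinely scalar implicit-function-theorem statement: $H(t,\cdot)$ is a strictly decreasing diffeomorphism of $(0,\infty)$ onto $(K^\eps(t),\infty)$, and $\gamma^\eps(t)$ is the unique preimage of $C_0$. This makes both uniqueness and the statement ``blow-up only via $\gamma^\eps\uparrow\infty$'' transparent, whereas the paper's Step 3 delegates the construction to Theorem 1.4 of \cite{bardos_maxwellboltzmann_2018} and does not spell out the monotonicity that underwrites uniqueness. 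Likewise, for \eqref{eq:gammaBetaDiff} you estimate $\gamma^\eps-\beta$ directly from the scalar relation and then derive $\psi^\eps-\phi^0$ from Lipschitz dependence of the elliptic solve on $(\gamma,\rho)$; this avoids the continuation/bootstrap argument in small $\kappa$ that the paper uses in its Part 2. The one place where you gesture rather than prove is the bound on $\iint\frac{|v|^2}{2}Q^\eps_{-+}(F_-^\eps,F_+^\eps)\,dx\,dv$ appearing in $\partial_t H$; the paper carries this out explicitly (two integrations by parts in $v$, then \eqref{eq:PhiConvUpper}), and that computation should be reproduced to make the proof complete.
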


\begin{proof} We break the proof up into 3 parts: first, the bounds \eqref{eq:BigPhiBd}, and \eqref{eq:dtBigPhiandBetaBd}; second, the bound \eqref{eq:gammaBetaDiff}; and third, we sketch how to construct the solutions. Once again, all bounds involved may depend on $M$. \\

\noindent \textbf{Step 1:} We now show \eqref{eq:BigPhiBd}, and \eqref{eq:dtBigPhiandBetaBd}. For this step, it is convenient to drop the superscripts of $\eps$, i.e. $F_+^\eps  = F_+$, $F_-^\eps = F_-$, etc. 
We now prove \eqref{eq:BigPhiBd}, from the third line of \eqref{eq:PP}, and the maximum principle, we have the estimate
\begin{align}
\|\psi\|_{L^\infty_x} \lesssim \frac{1}{\gamma}\|\ln(n_+)\|_{L^\infty_x} \lesssim 1.
\end{align}
Next, applying $|\nabla_x|^s$ to both sides of the third line of  \eqref{eq:PP}, 
\begin{align}
\frac{1}{4\pi} |\nabla_x|^{s'+ 2}\psi = |\nabla_x|^{s'}\{ n_+ - e^{\gamma \psi}\}.
\end{align}
Using  Theorem 5.2.6 in   \cite{metivier2008differential}, 
\begin{align}
\|e^{\gamma \psi}\|_{H^{s}_x} \leq C_{\|\psi\|_{L^\infty_x}} (\|\psi\|_{H^{s}_x} +1) \lesssim  \|\psi\|_{H^{s}_x} + 1.
\end{align}
Hence,
\begin{align}
\|\psi\|_{H^{s + 2}_x} \lesssim 1+ \|n_+\|_{H^s_x} + \|\psi\|_{H_x^{s}} \lesssim 1 + \|\psi\|_{H^{s+2}_x}^{\frac{s}{s+2}} \|\psi\|_{L^2_x}^{\frac{2}{s+2}}.
\end{align}
Now, using  $\|\psi\|_{L^2_x} \lesssim \|\psi\|_{L^\infty_x} \lesssim 1$ and Young's inequality, we deduce $\|\psi\|_{H^{s + 2}_x} \lesssim 1$. 

Next, we have
\begin{align}\label{eq:dtBigPhi}
(\gamma e^{\gamma \psi} -\frac{1}{4\pi} \Delta_x) \partial_t \psi = \partial_t n_+ - \dot \gamma e^{\gamma \psi} \psi.
\end{align}
The maximum principle implies 
\begin{align}
e^{-C\|\psi\|_{L^\infty_x}}\|\partial_t \psi\|_{L^\infty_x} \leq C \|\partial_t n_+\|_{L^\infty_x} + |\dot \gamma| e^{C\| \psi\|_{L^\infty_x}} \|\psi\|_{L^\infty_x},
\end{align}
From the continuity equation, we know $\|\partial_t n_+ \|_{L^\infty_x} \lesssim \|\partial_t n_+\|_{H^{s-1}_x} \lesssim 1$; in combination with the estimates on $\psi$, this implies
\begin{align}
\|\partial_t \psi\|_{L^\infty_x} \lesssim 1 + |\dot \gamma|.
\end{align}
Similarly as was done to get the bound on  $\|\psi\|_{H^{s+2}_x}$, we deduce
\begin{align}
\|\partial_t \psi\|_{H^{s+1}_x} \lesssim 1 + |\dot \gamma|.
\end{align}
We now estimate $\dot \gamma$: first, 
\begin{align}
\frac{3\dot \gamma}{2\gamma^2} =\frac{1}{2} \iint_{\mathbf T^3\times \mathbf R^3 } |v|^2\partial_t F_+ dx dv  + \frac{1}{4\pi} \int_{\mathbf T^3} \nabla_x \psi\cdot \partial_t \nabla_x \psi dx.
\end{align}
We first analyze the last term. From \eqref{eq:dtBigPhi},
\begin{align}
\frac{1}{4\pi} \int_{\mathbf T^3} \nabla_x \psi\cdot \partial_t \nabla_x \psi dx &= -\frac{1}{4\pi} \langle \Delta_x \psi, \partial_t \psi\rangle_{L^2_x}\\
&= - \frac{1}{4\pi} \langle \Delta_x \psi,(\gamma e^{\gamma \psi} -\frac{1}{4\pi} \Delta_x)^{-1}\partial_t n_+  \rangle_{L^2_x} \\
&\quad+  \frac{\dot \gamma}{4\pi} \langle \Delta_x \psi,(\gamma e^{\gamma \psi} -\frac{1}{4\pi} \Delta_x)^{-1}(e^{\gamma \psi} \psi)  \rangle_{L^2_x} 
\end{align}
Looking more closely at the latter term, we observe
\begin{align}
& \frac{1}{4\pi} \langle \Delta_x \psi,(\gamma e^{\gamma \psi} -\frac{1}{4\pi} \Delta_x)^{-1}(e^{\gamma \psi} \psi)  \rangle_{L^2_x}  \\
 &=  - \langle  (\gamma e^{\gamma \psi} - \frac{1}{4\pi} \Delta_x)\psi,(\gamma e^{\gamma \psi} -\frac{1}{4\pi} \Delta_x)^{-1}(e^{\gamma \psi} \psi)\rangle_{L^2_x} \\
 &\quad  + \gamma \langle e^{\gamma \psi}\psi, (\gamma e^{\gamma \psi} -\frac{1}{4\pi} \Delta_x)^{-1}(e^{\gamma \psi} \psi)\rangle_{L^2_x}\\
 &= -\langle  \psi,e^{\gamma \psi}\psi\rangle_{L^2_x} -  \gamma \langle e^{\gamma \psi}\psi, (\gamma e^{\gamma \psi} -\frac{1}{4\pi} \Delta_x)^{-1}(e^{\gamma \psi} \psi)\rangle_{L^2_x} \leq 0.
\end{align}
Thus, we have that
\begin{align}\label{eq:gammaDot}
\dot \gamma &=\frac{\frac{1}{2} \iint_{\mathbf T^3\times \mathbf R^3 } |v|^2\partial_t F_+ dx dv- \frac{1}{4\pi} \langle \Delta_x \psi,(\gamma e^{\gamma \psi} -\frac{1}{4\pi} \Delta_x)^{-1}\partial_t n_+  \rangle_{L^2_x}  }{\frac{3}{2\gamma^2} -\frac{1}{4\pi} \langle \Delta_x \psi,(\gamma e^{\gamma \psi} -\frac{1}{4\pi} \Delta_x)^{-1}(e^{\gamma \psi} \psi)  \rangle_{L^2_x}},
\end{align}
with the denominator being strictly bounded from below by $\frac{3}{2\gamma^2 } $. On the other hand, by the continuity equation $\partial_t n_+ +\int_{\mathbf R^3} v\cdot \nabla_x F_+ dx dv =0$, we have 
\begin{align}
|\langle \Delta_x \psi,(\gamma e^{\gamma \psi} -\frac{1}{4\pi} \Delta_x)^{-1}\partial_t n_+  \rangle_{L^2_x} |\lesssim 1.
\end{align}
Thus,
\begin{align}
|\dot \gamma| \lesssim 1+ |\iint_{\mathbf T^3\times \mathbf R^3 } |v|^2\partial_t F_+ dx dv|.
\end{align}
Now,
\begin{align}
\iint_{\mathbf T^3\times \mathbf R^3 } |v|^2\partial_t F_+ dx dv = 2\iint_{\mathbf T^3\times \mathbf R^3 }v\cdot E F_+ dx dv +\iint_{\mathbf T^3 \times \mathbf R^3} |v|^2 Q^{\eps}_{-+}(F_-,F_+)dx dv 
\end{align}
The  first term has the bound
\begin{align}
2\iint_{\mathbf T^3\times \mathbf R^3 }v\cdot E F_+ dx dv &\lesssim \|E\|_{L^2_x} \| |v|F_+\|_{L^2_x L^1_v} \\
& \lesssim (\|\langle \xi\rangle^2 F_-\|_{L^2_{x,v}} +  \|\langle v\rangle^2 F_+\|_{L^2_{x,v}}) \|\langle v\rangle^3F_+\|_{L^2_{x,v}}\\
&\lesssim 1.
\end{align}
Next,
\begin{align}
&\iint_{\mathbf T^3 \times \mathbf R^3} |v|^2 Q^{\eps}_{-+}(F_-,F_+)dx dv  \\
&\quad =-2\iint_{\mathbf T^3\times \mathbf R^3}v_j\{\eps  \Phi_{ij}*_\xi  F_-|_{\xi  = \eps v} \partial_{v_i} F_+ - \partial_{\xi _i}  \Phi_{ij}*_\xi  F_-|_{\xi  = \eps v} F_+\}dx dv \\
&\quad = 2\iint_{\mathbf T^3\times \mathbf R^3}\{\eps \ \mathrm {tr}( \Phi*_\xi  F_-|_{\xi  = \eps v}) +(1 + \eps^2)v_j \partial_{\xi _i}  \Phi_{ij}*_\xi  F_-|_{\xi  = \eps v}\}F_+dx dv
\end{align}
Therefore,\begin{align}
&|\iint_{\mathbf T^3 \times \mathbf R^3} |v|^2 Q^{\eps}_{-+}(F_-,F_+)dx dv|\\
& \lesssim (\|(-\Delta_\xi )^{-1} F_-\|_{L^2_xL^\infty_\xi } +  \|\nabla_\xi (-\Delta_\xi )^{-1} F_-\|_{L^2_xL^\infty_\xi })\|v F_+\|_{L^2_{x}L^1_v} \\
&\lesssim \|\langle \xi\rangle^2  \langle \nabla_v \rangle F_- \|_{L^2_{x,v}} \|\langle v\rangle^3 F_+\|_{L^2_{x,v}} \\
&\lesssim\langle \|\langle \xi\rangle^5 F_-\|_{L^2_x(\dot{\mathcal H}_\sigma)_\xi }\rangle \|\langle v\rangle^3 F_+\|_{L^2_{x,v}}\\
&\lesssim   \langle \|\langle \xi\rangle^5 F_-\|_{L^2_x(\dot{\mathcal H}_\sigma)_\xi }\rangle.
\end{align}
From this, we conclude \eqref{eq:dtBigPhiandBetaBd}.
\\

\noindent \textbf{Part 2:} We now control the difference \eqref{eq:gammaBetaDiff}.
 reintroduce the superscripts of $\eps$, so as to distinguish $F_+^\eps$ from $F_+^0$, etc, although we still ignore the dependence of constants on $M$. We first bound the difference $\psi^\eps_{in} - \phi^0_{in}$. 
It suffices to show this in the case
\begin{align}\label{eq:ionSmallDiff}
\sup_{t\in [0,T]} \|F_+^\eps(t) - F_+^0(t)\|_{\mathfrak E'} \leq \alpha,
\end{align}
where $\alpha = \alpha(M)$ is sufficiently small. If this bound does not hold, then we use the bound
\begin{align}
\sup_{t \in [0,T]} |\gamma^\eps(t) - \beta(t)|  + \|\psi^\eps(t)-\phi^0(t)\|_{H^{s+1}_x}&\leq C_M\\
& \leq \frac{C_M}{\alpha}\sup_{t\in [0,T]} \|F_+^\eps(t) - F_+^0(t)\|_{\mathfrak E'}.
\end{align}

We first control the difference at time zero: 
\begin{align}\label{eq:initialEq}
-\frac{1}{4\pi} \Delta_x(\psi^\eps_{in} - \phi^0_{in})& = n_{+,in}^\eps - n_{+,in}^0  + e^{\beta_{in} \phi^0_{in}} - e^{\beta_{in} \psi^\eps_{in}}. 
\end{align}
Using the maximum principle, we have 
\begin{align}
\|e^{\beta_{in}\psi^\eps_{in}} - e^{\beta_{in}\phi^0_{in}}\|_{L^\infty_x} \lesssim  \|n_{+,in}^\eps - n_{+,in}^0 \|_{L^\infty_x} \lesssim \sup_{t\in [0,T]} \|F_+^\eps(t) - F_+^0(t)\|_{\mathfrak E'}.
\end{align}
By the mean-value theorem, this implies $\|\psi^\eps_{in} - \phi^0_{in}\|_{L^\infty_x} \lesssim \sup_{t\in [0,T]} \|F_+^\eps(t) - F_+^0(t)\|_{\mathfrak E'}$. 
In fact, by writing 
\begin{align}
 e^{\beta_{in} \phi^0} - e^{\beta_{in} \psi^\eps_{in}} = \beta_{in} \int_0^1 e^{\lambda \phi^0 +  (1- \lambda)\psi^\eps} (\phi^0 - \psi^\eps)d\lambda,
\end{align}
it is easy to show that after
applying $\langle \nabla_x\rangle^s$ to \eqref{eq:initialEq}, we get
\begin{align}
\|\psi^\eps_{in} - \phi^0_{in}\|_{H^{s+2}_x} \lesssim \|n_{+,in}^\eps - n_{+,in}^0 \|_{H^s_x} + \|\psi^\eps_{in} - \phi^0_{in}\|_{H^{s}_x}.
\end{align}
Thus, by interpolation, we get
\begin{align}\label{eq:initialEstimate}
\|\psi^\eps_{in} - \phi^0_{in}\|_{H^{s+2}_x} \lesssim \sup_{t\in [0,T]} \|F_+^\eps(t) - F_+^0(t)\|_{\mathfrak E'}. 
\end{align}

Now, we show how to bound the differences for positive times.
Take $\kappa \in (0,1]$ to be a constant to be determined later, and let $\tilde T$ be the longest time such that $\tilde T \leq T$ and 
\begin{align}
\sup_{t \in [0,\tilde T]} | \beta - \gamma^\eps| + \|\psi^\eps - \phi^0\|_{H^{s+2}_x} \leq \kappa.
\end{align}
The equation we have for the difference $\psi^\eps - \phi^0$ is
\begin{align}
-\Delta_x(\psi^\eps - \phi^0) &= 4\pi(n_+^\eps - n_+^0 + e^{\beta \phi^0} - e^{\gamma^\eps \psi^\eps}).
\end{align}
Rearranging, we have
\begin{align}
&(\gamma^\eps e^{\gamma^\eps \psi^\eps}  -\frac{1}{4\pi} \Delta_x)(\psi^\eps - \phi^0)\\
 &\quad = n_+^\eps - n_+^0 +  e^{\gamma^\eps \psi^\eps} (e^{\beta \phi^0 - \gamma^\eps \psi^\eps} - 1 - (\beta \phi^0 -  \gamma \psi^\eps)  +  (\beta - \gamma^\eps) (\phi^0 - \psi^\eps))\\
 &\quad \quad +(\beta - \gamma^\eps) e^{\gamma^\eps \psi^\eps} \psi^\eps.
\end{align}
Then, on the interval $t \in [0,\tilde T_{\eps}]$, we have
\begin{align}\label{eq:psiDifftoGammaDiff}
\begin{aligned}
&\|\psi^\eps - \phi^0 -(\beta - \gamma^\eps) (\gamma^\eps e^{\gamma^\eps \psi^\eps}  -\frac{1}{4\pi} \Delta_x)^{-1} (e^{\gamma^\eps \psi^\eps}\psi^\eps)\|_{H^{s+2}_x}\\
&\quad  \lesssim  \|F^\eps_+ - F^0_+\|_{\mathfrak E'}  + \kappa(|\gamma^\eps - \beta| +\|\psi^\eps - \phi^0\|_{H^{s+2}_x} ) . 
\end{aligned}
\end{align}
In particular, taking $\kappa$ small enough,
\begin{align}
\|\psi^\eps - \phi^0\|_{H^{s+2}_x} \lesssim \sup_{t\in [0,T]} \|F_+^\eps(t) - F_+^0(t)\|_{\mathfrak E'}+ |\gamma^\eps - \beta|.
\end{align}
Now,
\begin{align}
&\Big|\frac{1}{8\pi} \int_{\mathbf T^3} \nabla_x (\psi^\eps - \phi^0) \cdot \nabla_x (\psi^\eps + \phi^0) dx \\
&\quad\quad + \frac{1}{4\pi} \int_{\mathbf T^3}(\beta - \gamma^\eps) (\gamma^\eps e^{\gamma^\eps \psi^\eps}  -\frac{1}{4\pi} \Delta_x)^{-1}(e^{\gamma^\eps \psi^\eps}\psi^\eps) \Delta_x \psi^\eps dx\Big| \\
&\lesssim  \left|\int_{\mathbf T^3}\{(\psi^\eps - \phi^0) - (\beta - \gamma^\eps) (\gamma^\eps e^{\gamma^\eps \psi^\eps}  -\frac{1}{4\pi} \Delta_x)^{-1} (e^{\gamma^\eps \psi^\eps}\psi^\eps)\} \Delta_x \psi^\eps dx\right|  \\
&\quad + \int_{\mathbf T^3} |\nabla_x (\psi^\eps - \phi^0)|^2dx \\
& \lesssim \sup_{t\in [0,T]} \|F_+^\eps(t) - F_+^0(t)\|_{\mathfrak E'}+ |\gamma^\eps - \beta|
\end{align}
Next,
\begin{align}
&\frac{d}{dt} \left(\frac{3}{2\beta} -\frac{3}{2\gamma^\eps }\right) \\
&\quad= \frac{d}{dt} \left(\iint_{\mathbf T^3\times \mathbf R^3}\frac{|v|^2}{2} (F_+^\eps - F_+^0) dx dv  + \frac{1}{8\pi} \int_{\mathbf T^3} \nabla_x (\psi^\eps - \phi^0) \cdot \nabla_x (\psi^\eps + \phi^0) dx\right)
\end{align}
Then, using $\beta_{in} = \gamma(t=0)^\eps$ and integrating the above, we have
\begin{align}
\frac{3(\gamma^\eps - \beta)}{2\beta \gamma^\eps}&= \iint_{\mathbf T^3\times \mathbf R^3}\frac{|v|^2}{2} (F_+^\eps - F_+^0) dx dv  \\
&\quad+ \frac{1}{8\pi} \int_{\mathbf T^3} \nabla_x (\psi^\eps - \phi^0) \cdot \nabla_x (\psi^\eps + \phi^0) dx \\
& \quad - \iint_{\mathbf T^3\times \mathbf R^3}\frac{|v|^2}{2} (F_{+,in}^\eps - F_{+,in}^0) dx dv\\
&\quad   - \frac{1}{8\pi} \int_{\mathbf T^3} \nabla_x (\psi_{in}^\eps - \phi_{in}^0) \cdot \nabla_x (\psi_{in}^\eps + \phi_{in}^0) dx.
\end{align}
Using the previous bounds, plus \eqref{eq:initialEstimate}, we have 
\begin{align}
&|\gamma^\eps - \beta|  \left| \frac{3}{2\beta \gamma^\eps} - \frac{1}{4\pi} \int_{\mathbf T^3} (\gamma^\eps e^{\gamma^\eps \psi^\eps}  -\frac{1}{4\pi} \Delta_x)^{-1} (e^{\gamma^\eps \psi^\eps}\psi^\eps )\Delta_x \psi^\eps dx\right| \\
&\quad \lesssim |\gamma^\eps - \beta|^2 + \sup_{t\in [0,T]} \|F_+^\eps(t) - F_+^0(t)\|_{\mathfrak E'}.
\end{align}
By a similar argument as was used to bound $\dot \gamma$, we have that 
\begin{equation}
\int_{\mathbf T^3} (\gamma^\eps e^{\gamma^\eps \psi^\eps}  -\frac{1}{4\pi} \Delta_x)^{-1} (e^{\gamma^\eps \psi^\eps}\psi^\eps )\Delta_x \psi^\eps dx \leq 0.
\end{equation}
We conclude that for all $t \in [0,\tilde T]$, 
\begin{align}
|\gamma^\eps(t) - \beta(t)| \lesssim \kappa |\gamma^\eps(t) - \beta(t)|  +  \sup_{t\in [0,T]} \|F_+^\eps(t) - F_+^0(t)\|_{\mathfrak E'}  
\end{align}
Taking $\kappa$ sufficiently small, we have $|\gamma^\eps(t) - \beta(t)| \leq C_0 \alpha$ for some $C_0$. So, by taking $\alpha = \frac{\kappa}{2C_0}$, we have $\tilde T = T$.\\

\noindent\textbf{Step 3:} Finally, we sketch how to construct solutions  $(\gamma^\eps,\psi^\eps)$ satisfying \eqref{eq:PP}.
We refer the reader (ii) of Theorem 1.4 in \cite{bardos_maxwellboltzmann_2018} and its proof. Using standard elliptic theory, there exists  a unique $\psi_{in}^\eps \in H^{s+2}$ solving the system
\begin{align}
-\Delta_x \psi_{in} ^\eps= 4\pi (n_{+,in} ^\eps- e^{\beta_{in} \psi_{in}^\eps}).
\end{align}
Now, for $t > 0$, define
\begin{align}
E(t) := \frac{3}{2\beta_{in}} + \iint_{\mathbf T^3 \times \mathbf R^3} \frac{|v|^2}{2} ( F_{+,in}^\eps - F_+^\eps(t) )dx dv + \frac{1}{8\pi}  \int_{\mathbf T^3} |\nabla_x \psi_{in}^\eps|^2 dx.
\end{align}
Then, for all $t >0$ such that $E(t)>0$, there exists $(\gamma(t),\psi(t)) \in \mathbf R_+ \times H^{s+2}$ solving third line of \eqref{eq:PP}, and the first line integrated on $[0,t]$.
Since $t \mapsto \iint_{\mathbf T^3\times \mathbf R^3} \frac{|v|^2}{2} F_+^\eps(t)dxdv$ is continuous, and the above condition holds at $t = 0$, we take $T^* >0$ to be the first time less that $T$ such that the above condition does not hold, or take $T^* = T$ if no such time exists. Then there exists a unique $(\gamma,\psi)$ satisfying \eqref{eq:PP}, with $\ln(\gamma)  \in L^\infty_{loc}([0,T^*))$ and $\psi \in L^\infty_{loc}([0,T^*);H^{s+2})$.  Moreover,  $\frac{3}{2\gamma^\eps(t)}   \leq E(t)$, so $\gamma^\eps(t) \to \infty$ as $t\to T^*$ whenever $T^* \leq T$.

Next, we note that $\gamma$ is continuous in time. This follows by the identity \eqref{eq:gammaDot}, and the fact that 
\begin{align}
\|\partial_t n_+^\eps(t)\|_{L^2_x}, \iint_{\mathbf T^3\times \mathbf R^3} \frac{|v|^2}{2}\partial_t  F_+^\eps(t,x,v)dxdv \in L^1([0,T]),
\end{align}
also shown above. It is straightforward to show $\psi \in C([0,T^*);H^{s+2})$ from this, and \eqref{eq:dtBigPhi}.

\end{proof}

\section{Estimates on the electron distribution}
\label{sec:derivation}

\subsection{Stability of the Maxwellian}

The main result of this section is Proposition \ref{prop:derivationRefined}, which, roughly speaking, shows that the Maxwellian $\mu_{\gamma^\eps} e^{\gamma^\eps \psi^\eps}$ is stable; that is, if $F_{-,in}^\eps$ is close to $\mu_{\gamma^\eps} e^{\gamma^\eps \psi^\eps}$, then it will remain close. We also utilize the stretched exponential decay to acquire some form of asymptotic stability.

To state the result,
let $\eta>0$, and recall $q_\alpha  = e^{\alpha \eta} \beta_{in}$ for each $\alpha\in \{1,2,3\}$. Then, similarly to $\mathscr E_{-,\alpha}$ and $\mathscr D_{-,\alpha}$, we define for each
 $\alpha\in \{1,2\}$,
 \begin{equation}
 \begin{aligned}
 \widetilde {\mathscr E}_{-,\alpha}^\eps &:= \|e^{q_\alpha |\xi|^2/4} \langle \nabla_x \rangle^s\{F_{-}^\eps  - \mu_{\gamma^\eps \psi^\eps} e^{\gamma^\eps \psi^\eps}\}\|_{L^2_{x,\xi }}^2 \\
 &\quad + \|e^{q_{\alpha +1}|\xi|^2/4} \{F_{-}^\eps  - \mu_{\gamma^\eps} e^{\gamma^\eps \psi^\eps}\}\|_{L^2_{x,\xi }}^2 \\
 \end{aligned}
 \end{equation}
and
 \begin{equation}
 \begin{aligned}
 \widetilde {  \mathscr D}_{-,\alpha}^\eps& := \|e^{q_\alpha |\xi|^2/4} \langle \nabla_x \rangle^s\{F_{-}^\eps  - \mu_{\gamma^\eps \psi} e^{\gamma^\eps \psi^\eps}\}\|_{L^2_x(\mathcal H_\sigma\cap \mathcal H_{\sigma;\eps}^-)_\xi }^2\\
   &\quad  + \|e^{q_{\alpha +1}|\xi|^2/4} \{F_{-}^\eps  - \mu_{\gamma^\eps} e^{\gamma^\eps \psi^\eps}\}\|_{L^2_x(\mathcal H_\sigma\cap \mathcal H_{\sigma;\eps}^-)_\xi }^2.
 \end{aligned}
 \end{equation}

\begin{proposition}\label{prop:derivationRefined}  
Fix $M >0$. Then there exists positive constants $ \eta, \varsigma$ and $\overline \eps$ such that the following holds. Assume $0<\eps \leq \overline \eps$. For each such $\eps$,  let $(F_+^\eps,F_-^\eps)$ be solution to the system \eqref{eq:VPLRescaled} with $0\leq F_{\pm}^\eps \in C([0,T];\mathfrak E)\cap L^2([0,T];\mathfrak D)$. Fixing $\beta_{in} \in (e^{-M}, e^M)$, 
let $(\psi^\eps,\gamma^\eps)$ be the corresponding solution to the system \eqref{eq:PP}, assuming that
\begin{align}
\sup_{t\in[0,T]}( \|\frac{1}{n_+^\eps(t)}\|_{L^\infty_x}+ \|F_+^\eps(t)\|_{\mathfrak E} )&\leq M, \label{eq:bootstrap1}\\
\sup_{t\in[0,T]} \sqrt{ \widetilde {\mathscr E}_{-,2}^\eps(t)} & \leq \varsigma,\label{eq:bootstrap2}\\
\sup_{t\in[0,T]} |\ln( \frac{ \gamma^\eps(t)}{\beta_{in}}) |&\leq \eta, \label{eq:bootstrap3}\\
\int_0^T \|\partial_t (n_-^\eps - e^{\gamma^\eps \psi^\eps})\|_{L^2_x} dt& \leq 1.
\label{eq:bootstrap4}
\end{align}
We also impose the following assumption on the initial data:
\begin{align}
\label{eq:energyCondition}
\left|\int_{\mathbf R^3 \times \mathbf T^3} \frac{|\xi|^2}{2} (F_{-,in}^\eps - \mu_{\beta_{in}}e^{\beta_{in} \psi^\eps_{in}}) dxd\xi  + \frac{1}{8\pi} \int_{\mathbf T^3}  |\nabla_x \phi_{in}^\eps|^2 - |\nabla_x \psi^\eps_{in}|^2dx \right| \leq M\eps.
\end{align}
Then, we have
\begin{align}\label{eq:stability1}
\eps \sup_{t \in [0,T]}  \widetilde {\mathscr E}^\eps_{-,2}(t) + \int_0^T  \widetilde {\mathscr D}^\eps_{-,2}(t) dt \leq C_M (\eps  \widetilde {\mathscr E}^\eps_{-,2,in}  + \eps^2)
\end{align}
and for all $t \in[0,T]$, we have 
\begin{align}\label{eq:stability2}
\mathscr E^\eps_{-,1}(t) \leq C_M( e^{-\frac{1}{C_M}(\frac{t}{\eps})^{\frac{2}{3}}}\sup_{t'\in[0,T]} {\mathscr E_{-,2}^\eps(t')}  +\eps^{\frac{5}{3}} t^\frac{1}{3})
\end{align}

\end{proposition}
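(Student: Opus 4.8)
The plan is to work with the electron perturbation $g^\eps := F_-^\eps - \mu_{\gamma^\eps}e^{\gamma^\eps\psi^\eps}$ and to derive its evolution equation from the second line of \eqref{eq:VPLRescaled}. Since $\mu_{\gamma^\eps}e^{\gamma^\eps\psi^\eps}$ is Maxwellian in $\xi$ with $x$-independent inverse temperature $\gamma^\eps$, the self-collision operator will contribute the (spatially modulated) linearized Landau operator applied to $g^\eps$ together with a quadratic term, the latter absorbed using the smallness $\varsigma$ in \eqref{eq:bootstrap2}. The operator $\{\xi\cdot\nabla_x - E^\eps\cdot\nabla_\xi\}$ applied to the background collapses to $\gamma^\eps\,\xi\cdot\nabla_x(\psi^\eps-\phi^\eps)\,\mu_{\gamma^\eps}e^{\gamma^\eps\psi^\eps}$, and subtracting the two Poisson equations gives $\nabla_x(\psi^\eps-\phi^\eps) = -4\pi\nabla_x\Delta_x^{-1}\!\int g^\eps\,d\xi$, so this error is controlled by the macroscopic mass density of $g^\eps$. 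The term $\eps\,\partial_t(\mu_{\gamma^\eps}e^{\gamma^\eps\psi^\eps})$ is an explicit $O(\eps)$ source — a polynomial in $\xi$ times a Gaussian, with size governed by $|\dot\gamma^\eps| + \|\partial_t\psi^\eps\|_{H^{s+1}_x}$, hence by Lemma~\ref{lem:PP}. Finally I split $Q_{+-}^\eps(F_+^\eps,F_-^\eps) = Q_{+-}^\eps(F_+^\eps,g^\eps) + Q_{+-}^\eps(F_+^\eps,\mu_{\gamma^\eps}e^{\gamma^\eps\psi^\eps})$; the second piece is $O(\eps)$ via the identity $\Phi(\xi-\eps v)\xi = \eps\Phi(\xi-\eps v)v$ already exploited in Proposition~\ref{prop:ionError}, while the first carries a singular, anisotropically degenerate perturbation concentrated near $\xi=0$ as $\eps\to 0$.

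\textbf{Energy estimates (Proposition~\ref{prop:energyEstimates}).} Next I will pair the $g^\eps$-equation and its $\langle\nabla_x\rangle^s$-differentiated version against the Gaussian weights $e^{q_\alpha|\xi|^2/2}g^\eps$, $\alpha\in\{1,2,3\}$. The self-collision term is handled in the framework of \cite{guo_landau_2002,guo2012vlasov}: it dominates $-\tfrac1C\|g^\eps\|_{\mathcal H_\sigma}^2$ modulo $C\|\mathbf P g^\eps\|^2$, where $\mathbf P$ projects onto the five-dimensional macroscopic kernel (mass, momentum, energy). Because the background temperature $1/\gamma^\eps$ is not exactly $1/q_\alpha$ and the background carries the bounded, bounded-below $x$-factor $e^{\gamma^\eps\psi^\eps}$, the relevant operator is a mild perturbation of the one in \cite{guo_landau_2002}, controlled by $\eta$ and $\|\psi^\eps\|_{L^\infty_x}\lesssim_M 1$; I will check coercivity survives by a compactness argument. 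For $Q_{+-}^\eps(F_+^\eps,g^\eps)$ I use the lower bound \eqref{eq:PhiConvLower} applied to $G=F_+^\eps$ (with $k_{F_+^\eps}\lesssim_M 1$ by the ion a priori bounds) to extract a genuinely dissipative, but degenerate and singular, top-order piece — precisely the $\mathcal H_{\sigma;\eps}^-$ contribution to $\widetilde{\mathscr D}_{-,\alpha}^\eps$ — and then use the upper bound \eqref{eq:PhiConvUpper}, a mean-value expansion of $\Phi(\xi-\eps v)$ about $\xi$, and interpolation to absorb the remaining singular lower-order terms into the $\mathcal H_\sigma$-dissipation. The transport-force error and the $\eps\partial_t$ source are bounded by Cauchy--Schwarz against the dissipation, each carrying an explicit power of $\eps$, so Young's inequality leaves $O(\eps^2)$ plus a small multiple of $\widetilde{\mathscr D}_{-,\alpha}^\eps$. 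For $\alpha=2$ this yields
\[
\eps\frac{d}{dt}\widetilde{\mathscr E}_{-,2}^\eps + \tfrac1C\widetilde{\mathscr D}_{-,2}^\eps \;\le\; C\|\mathbf P g^\eps\|^2 + C\eps^2 + \tfrac1{100}\widetilde{\mathscr D}_{-,2}^\eps .
\]

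\textbf{Macroscopic estimates and \eqref{eq:stability1}.} To close I must control $\|\mathbf P g^\eps\|$. Testing the $g^\eps$-equation against the classical macroscopic moments, adapted to our weights, dissipates the $x$-fluctuating parts of the mass, momentum and energy densities in a hypocoercive fashion; Lemma~\ref{lem:bBd} shows $Q_{+-}^\eps$ is perturbative on the mass and energy densities and in fact contributes extra damping to the momentum density. The single mode escaping hypocoercive control is the spatial average $\iint\frac{|\xi|^2}{2}g^\eps\,dx\,d\xi$ of the kinetic energy. I will control it nonlinearly: conservation of energy \eqref{eq:energyCons}, the defining identity in the first line of \eqref{eq:PP}, and the hypothesis \eqref{eq:energyCondition} together force $\bigl|\iint\frac{|\xi|^2}{2}g^\eps + \frac1{8\pi}\!\int(|\nabla_x\phi^\eps|^2 - |\nabla_x\psi^\eps|^2)\bigr|\lesssim_M\eps$ for all $t$, and since $\nabla_x(\phi^\eps-\psi^\eps)$ is again $O(\|\mathbf P g^\eps\|)$ this pins the mean energy of $g^\eps$ to $O(\eps)+O(\|\mathbf P g^\eps\|^2)$ — which is exactly why the intermediary $(\gamma^\eps,\psi^\eps)$, rather than $(\beta,\phi^0)$, is needed (Proposition~\ref{prop:macroscopicEstimates}). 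Combining with the displayed inequality, taking the macroscopic interaction functional with a small coefficient, and absorbing, I obtain $\eps\frac{d}{dt}\widetilde{\mathscr E}_{-,2}^\eps + \frac1C\widetilde{\mathscr D}_{-,2}^\eps\lesssim_M\eps^2$; integrating over $[0,t]$ gives \eqref{eq:stability1} for the tilded quantities, and passing to $\mathscr E_{-,2}^\eps,\mathscr D_{-,2}^\eps$ costs at most $O(\eps)$ by \eqref{eq:gammaBetaDiff} of Lemma~\ref{lem:PP} and Proposition~\ref{prop:ionError}.

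\textbf{Stretched-exponential decay \eqref{eq:stability2}, and the main difficulty.} For the decay I rerun the energy estimate at the lighter weight $q_1$, obtaining $\eps\frac{d}{dt}\widetilde{\mathscr E}_{-,1}^\eps + \frac1C\widetilde{\mathscr D}_{-,1}^\eps\lesssim_M\widetilde{\mathscr D}_{-,2}^\eps + \eps^2$. Since the Landau dissipation is degenerate for large $\xi$ — $\widetilde{\mathscr D}_{-,1}^\eps$ only controls $\int e^{q_1|\xi|^2/2}\langle\xi\rangle^{-1}|g^\eps|^2$ plus derivative terms — splitting at $|\xi|=R$ gives $\widetilde{\mathscr E}_{-,1}^\eps\lesssim R\,\widetilde{\mathscr D}_{-,1}^\eps + e^{-cR^2}\widetilde{\mathscr E}_{-,2}^\eps$, and optimizing over $R$ yields $\widetilde{\mathscr D}_{-,1}^\eps\gtrsim\widetilde{\mathscr E}_{-,1}^\eps/\langle\log(\widetilde{\mathscr E}_{-,2}^\eps/\widetilde{\mathscr E}_{-,1}^\eps)\rangle^{1/2}$. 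Feeding this back and using $\int_0^T\widetilde{\mathscr D}_{-,2}^\eps\lesssim\eps^2$ from \eqref{eq:stability1}, a Duhamel/comparison argument with the scalar ODE $\eps\dot y = -cy/\langle\log(A/y)\rangle^{1/2}$ — whose solutions decay like $\exp(-c'(t/\eps)^{2/3})$ — produces the term $e^{-(t/\eps)^{2/3}/C_M}\sup_{t'}\widetilde{\mathscr E}_{-,2}^\eps$, and convolving that kernel against the $O(\eps^2)$ forcing gives $\eps^{5/3}t^{1/3}$; passing back to $\mathscr E_{-,1}^\eps$ via Lemma~\ref{lem:PP} gives \eqref{eq:stability2}. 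I expect the main obstacle to be running a single coercive energy estimate that simultaneously tames the perturbed, $x$-dependent linearized Landau operator and the cross term $Q_{+-}^\eps$, whose extracted top-order dissipation must be precisely strong enough to absorb its own singular lower-order remainder with the help of the Landau dissipation; the secondary difficulty is closing the macroscopic estimate for the non-conserved mean-energy mode, which is what forces the introduction of the intermediary quantities $(\gamma^\eps,\psi^\eps)$.
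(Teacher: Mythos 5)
Your outline tracks the paper's strategy quite closely: decompose the electron perturbation with Gaussian weights at the rates $q_\alpha$, run energy estimates in which the perturbed Landau linearization (the paper's Lemma~\ref{lem:LandGammaBds}) and the cross-collision term (via Lemma~\ref{lem:PhiUpperLower} with $G=F_+^\eps$) jointly produce the dissipation $\widetilde{\mathscr D}_{-,\alpha}^\eps$, close on the macroscopic projection with interaction functionals (Lemma~\ref{lem:bBd}, Proposition~\ref{prop:macroscopicEstimates}), and control the mean-energy mode through \eqref{eq:energyCondition}, \eqref{eq:energyCons} and the Poincar\'e--Poisson system \eqref{eq:PP}. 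Two of your steps are genuine variants: for the coercivity of $\widetilde{\mathcal L}_{\gamma,q}$ you propose a compactness argument, whereas Lemma~\ref{lem:LandGammaBds} is entirely explicit (the perturbation is written out and bounded by powers of $\eta$), which gives the quantitative $\eta$-dependence the later absorption steps rely on; and for the decay you optimize a fixed cutoff radius $R$ to get a logarithmically-corrected ODE, whereas the paper directly inserts a time-dependent cutoff $\langle\xi\rangle\le\theta t^{1/3}$ and multiplies by the explicit integrating factor $e^{(t/\eps)^{2/3}/(C_0)}$. Either route should produce the stretched exponential.

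The one place where I see a real gap is your claimed intermediate inequality $\eps\frac{d}{dt}\widetilde{\mathscr E}_{-,1}^\eps + \frac1C\widetilde{\mathscr D}_{-,1}^\eps\lesssim_M\widetilde{\mathscr D}_{-,2}^\eps + \eps^2$ and the plan to feed $\int_0^T\widetilde{\mathscr D}_{-,2}^\eps\,dt\lesssim_M\eps^2$ through Duhamel. This cannot give \eqref{eq:stability2}: the stretched-exponential kernel is of order one near $s=t$, so for a nonnegative forcing $h$ with only $\int_0^T h\lesssim\eps^2$, the Duhamel term $\frac1\eps\int_0^t e^{-(A(t)-A(s))/\eps}h(s)\,ds$ is bounded by $\eps$ and by nothing better; this is strictly worse than $\eps^{5/3}t^{1/3}$ for small $t$. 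In fact the residual lower-weight dissipation on the right of \eqref{eq:g1sEnergy} is $\|g_{\alpha+1}\|^2_{L^2_x(\mathcal H_\sigma\cap\mathcal H_{\sigma;\eps}^-)_\xi}$, which for $\alpha=1$ lies in $\widetilde{\mathscr D}_{-,1}^\eps$, not $\widetilde{\mathscr D}_{-,2}^\eps$, and must be \emph{absorbed} into the left-hand dissipation by forming a weighted sum of the $g_\alpha^{(s)}$ and $g_{\alpha+1}$ estimates with a small coefficient $\kappa_4^{(\eta)}$. There is also a second issue you do not address: the prefactors $\langle\|F_+^\eps\|_{\mathfrak D}\rangle^2$ multiplying $\eps^2$ are not pointwise bounded and must be converted into a total time derivative using \eqref{eq:F_+aprioriAlt}/\eqref{eq:F_+apriori2.0} and then removed by exponentiating (the $\Omega^{(\eta)}$ integrating factor in Corollary~\ref{cor:electronError}). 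Only after these two absorptions do you arrive at the clean inequality $\eps\frac{d}{dt}\mathscr Y_{-,\alpha}+\widetilde{\mathscr D}_{-,\alpha}^\eps\lesssim_{M,\eta}\eps^2$ with a pointwise right-hand side, to which your (or the paper's) Duhamel/integrating-factor argument applies and yields the sharp $\eps^{5/3}t^{1/3}$ contribution. You should incorporate both absorption steps before invoking Duhamel.
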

Throughout the rest of this section, we will not make reference to $(F_+^0,\beta,\phi^0)$ and its derived quantities, and instead only work with $(F_-^\eps,F_+^\eps)$. With this understood, throughout this section, we will write $F_+^\eps = F_+$, $F_-^\eps = F_-$, and so on, with the dependence on $\eps$ being implicit. 
\subsection{Setup}
We introduce  $a = n_- - e^{\gamma  \psi}$ and note that
\begin{align}\label{eq:aphiPhi}
4\pi a = -\Delta_x(\phi - \psi),
\end{align}
 Using \eqref{eq:bootstrap3}, we note
\begin{align}
\gamma(t) \leq q_1 < q_2 < q_3.
\end{align}
Next, we separate
\begin{align}
F_- -  \mu_{\gamma} e^{\gamma  \psi} = \sqrt{ \mu_{\gamma}} f = \sqrt{\mu_{q_\alpha } e^{{q_\alpha  \phi}}}g_\alpha 
\end{align}
where $ \alpha\in \{1,2,3\}$. 

The equation for $f$ reads as follows
\begin{equation}
\label{eq:f}\begin{aligned}
&\eps \mu_\gamma^{-\frac{1}{2}}\partial_t \{\mu_\gamma^{\frac{1}{2}}f\} + \{\xi\cdot \nabla_x +E \cdot (\frac{\xi }{2} - \nabla_\xi )\} f +e^{\gamma \psi} \mathcal L_\gamma f + \mathcal M_{\gamma,F_+}f\\
&\quad \quad  -4\pi \gamma \xi\cdot \nabla_x\Delta_x^{-1} a \mu^{\frac{1}{2}}_\gamma e^{\gamma \psi} \\
&\quad =- \eps \mu_\gamma^{\frac{1}{2}} {\partial_t\{\mu_\gamma e^{\gamma \psi}\}}-e^{\gamma \psi}\mathcal M_{\gamma,F_+} \mu_\gamma^{\frac{1}{2}} + \Gamma_\gamma(f,f)
\end{aligned}
\end{equation}
where
\begin{align}
\mathcal L_\gamma h&= \mu_\gamma^{-\frac{1}{2}} \{Q(\mu_{\gamma}^{\frac{1}{2}}h,\mu_{\gamma}) + Q(\mu_{\gamma},\mu_{\gamma}^{\frac{1}{2}}h)\},\\
\mathcal M_{\gamma,F_+}h & =\mu_{\gamma}^{-\frac{1}{2}}Q_{+-}^\eps(F_+, \mu^{\frac{1}{2}}_{\gamma}h), \\
\Gamma_\gamma (h_1,h_2)&= - \mu^{-\frac{1}{2}}_{\gamma} Q(\mu^{\frac{1}{2}}_{\gamma} h_1,\mu^{\frac{1}{2}}_{\gamma} h_2).
\end{align}
On the other hand, $g_\alpha $, $\alpha\in\{1,2,3\}$ solve the equation

\begin{equation}\label{eq:gjEq}
\begin{aligned}
&\eps \{\partial_t  + q\partial_t \phi\}g_\alpha + \{v\cdot \nabla_x -E \cdot \nabla_\xi\} g_\alpha  +e^{\gamma \psi} \widetilde {\mathcal L}_{\gamma,q} g_\alpha + \mathcal M_{q_\alpha ,F_+}g_\alpha \\
&\quad \quad  -4\pi \gamma \xi\cdot \nabla_x\Delta_x^{-1} a  \frac{\mu_\gamma}{\sqrt{\mu_{q_\alpha }}} e^{\gamma \psi - \frac{q_\alpha }{2}\phi}\\
&\quad =- \eps\frac{\partial_t\{ \mu_{\gamma}e^{\gamma\psi}\}}{\sqrt{\mu_{q_\alpha }e^{q_\alpha \psi}}}-e^{\gamma \psi - \frac{q_\alpha }{2} \phi}\mathcal M_{q_\alpha ,F_+}(\mu^{-\frac{1}{2}}_{q_\alpha } \mu_\gamma) + e^{\frac{q_\alpha }{2} \phi} \Gamma_{q_\alpha }(g_\alpha ,g_\alpha )
\end{aligned}
\end{equation}
where $\mathcal M_{q_\alpha ,F_+}$ and $\Gamma_{q_\alpha }$ are defined the same way as $\mathcal M_{\gamma,F_+}$ and $\Gamma_\gamma$ respectively, with  $\gamma$ substituted with $q_\alpha $. The operator $\widetilde {\mathcal L}_{\gamma,q}$ is defined as follows.
\begin{align}
\widetilde{\mathcal L}_{\gamma,q} \psi&= \mu_q^{-\frac{1}{2}} \{Q(\mu_q^{\frac{1}{2}}\psi,\mu_\gamma) + Q(\mu_\gamma,\mu_q^{\frac{1}{2}}\psi)\}
\end{align}

Now, in addition, we define the operator $\mathcal P_\gamma$ to be the $L^2$-projection (in the $\xi $ variable) to the kernel of $\mathcal L_\gamma$, and we take $\mathcal P_\gamma^\perp = \mathrm{Id}_{L^2(\mathbf R^3)} - \mathcal P_\gamma$.  From \cite{guo_landau_2002}, we have that this kernel is
\begin{align}
N(\mathcal L_\gamma) = \mathrm{span} \{ \mu_\gamma^{\frac{1}{2}}(\xi), w_1\mu_\gamma^{\frac{1}{2}},\xi _2\mu_\gamma^{\frac{1}{2}},\xi _3\mu_\gamma^{\frac{1}{2}}, |\xi|^2\mu_\gamma^{\frac{1}{2}}\}.
\end{align}
In addition to the density of the perturbation $a$ already defined, we define the macroscopic variables $c:\mathbf T^3\to\mathbf R$ and $b :\mathbf T^3\to \mathbf R^3$ as the coefficients of this projection on $f$:
\begin{align}
\mathcal P_\gamma f = a \mu_\gamma^{\frac{1}{2}} + \gamma^{\frac{1}{2}}b\cdot w \mu^{\frac{1}{2}}_\gamma + c\frac{\gamma |\xi|^2 - 3}{\sqrt{6}}\mu_\gamma^{\frac{1}{2}} \label{eq:NLprojection}
\end{align}
We note that the representation above is in terms of  an orthonormal basis for $N(\mathcal L_\gamma)$. Moreover,
\begin{align}
a &= \int_{\mathbf R^3} \{F_-- \mu_\gamma e^{\gamma \psi}\} d\xi   = n_- -e^{\gamma \psi} \label{eq:atoDensity}\\
b &= \int_{\mathbf R^3} \gamma^{\frac{1}{2}} \xi \{F_-- \mu_\gamma e^{\gamma \psi}\}d\xi  = \gamma^{\frac{1}{2}}\int_{\mathbf R^3}  \xi F_- d\xi \\
c &= \int_{\mathbf R^3}\frac{( \gamma |\xi|^2 -3)}{\sqrt{6}}\{F_-- \mu_\gamma e^{\gamma \psi}\}d\xi  = \frac{1}{\sqrt{6}}\left(\gamma \int_{\mathbf R^3} |\xi|^2 F_- d\xi  - 3n_-\right). \label{eq:ctoDens+Energy}
\end{align}
Thus, $(a,b,c)$ form a linear transformation of the physical macroscopic variables of density, current and local kinetic energy of the electrons. Given $r\geq 0$, we shall denote $a^{(r)} =\langle \nabla_x\rangle^r a$, $g_\alpha ^{(r)} =\langle \nabla_x\rangle^r g_\alpha $, and so forth. 

Finally, we close this section by mentioning some commonly occurring estimates that will occur throughout this section.
We note that under the bootstrap assumptions, $\|\phi\|_{H^{s+2}_x}   + \|\psi\|_{H^{s+2}_x} \lesssim_M 1$. Therefore,
\begin{align}
&\|f\|_{L^2_{x,\xi }} \lesssim_M \|g_1\|_{L^2_{x,\xi }} \lesssim_M \|g_2\|_{L^2_{x,\xi }} \lesssim_M \|g_3\|_{L^2_{x,\xi }} \\
&\|f^{(s)}\|_{L^2_{x,\xi }} \lesssim_M \|g_1^{(s)}\|_{L^2_{x,\xi }} \lesssim_M \|g_2^{(s)}\|_{L^2_{x,\xi }} \lesssim_M \|g_3^{(s)}\|_{L^2_{x,\xi }} 
\end{align}
The same inequalities hold over the spaces $L^2_x(\mathcal H_\sigma)_\xi $ and $L^2_x(\mathcal H_{\sigma;\eps}^-)_\xi $.
Moreover,
\begin{align}
\widetilde{\mathscr E}_{-,\alpha} &\sim_M  \|g_\alpha ^{(s)}\|_{L^2_{x,\xi }}^2 +  \|g_{\alpha+1}\|_{L^2_{x,\xi }}^2 \\
\widetilde{\mathscr D}_{-,\alpha} &\sim_M  \|g_\alpha ^{(s)}\|_{L^2_{x}(\mathcal H_\sigma\cap \mathcal H_{\sigma;\eps}^-)_{\xi }}^2 +  \|g_{\alpha+1}\|_{L^2_{x}(\mathcal H_\sigma\cap \mathcal H_{\sigma;\eps}^-)_\xi }^2
\end{align}

%
%
%
\subsection{Preliminary estimates}

Next, we have upper and lower bounds on the linearized collision operators $\widetilde{\mathcal L}_{q,\gamma}$.

\begin{lemma}\label{lem:LandGammaBds}
Assume \eqref{eq:bootstrap3}. Let $h_1,h_2,h_3 : \mathbf R^3 \to \mathbf R$.
We have the following:
\begin{enumerate}
\item[(i)]  Then
\begin{align}
\|\mathcal P_\gamma^\perp h_1\|_{\mathcal H_\sigma}^2 \lesssim  \langle \mathcal L_\gamma h_1,h_1\rangle_{L^2} \lesssim \|\mathcal P^\perp_\gamma h_1\|_{\mathcal H_\sigma}^2. \label{eq:Lbeta}
\end{align}
\item [(ii)] Taking $q \in \{q_1,q_2,q_3\}$, and $h_1,h_2 :\mathbf R^3\to \mathbf R$,
\begin{align}
\langle \widetilde{ \mathcal L}_{\gamma,q}h_1,h_2\rangle_{L^2} &\lesssim  \|\mathcal P_\gamma^\perp h_1\|_{\mathcal H_{\sigma}}  \|\mathcal P_\gamma^\perp h_2\|_{\mathcal H_{\sigma}} + \eta  \| h_1\|_{\mathcal H_{\sigma}}  \| h_2\|_{\mathcal H_{\sigma}},  \label{eq:Lqbeta1} \\
\|\mathcal P_\gamma^\perp h_1\|_{\mathcal H_{\sigma}}^2  &\lesssim \langle \widetilde{ \mathcal L}_{\gamma,q}h_1,h_1\rangle_{L^2} +\eta^2  \|\mathcal P_\gamma h_1\|^2_{L^2}.\label{eq:Lqbeta2}
\end{align}

\item[(iii)] Take $\eta > 0$ sufficiently small. Suppose $h_1 =\sqrt{ \frac{{\mu_q}}{{\mu_\gamma}}}h_2$ where $q \in\{q_1,q_2,q_3\}$. Then,
\begin{align}\label{eq:projClose}
 \|\mathcal P_\gamma^\perp h_1 \|_{\mathcal H_\sigma}^2 \lesssim \eta\|\mathcal P_\gamma h_1\|_{L^2}^2 + \|\mathcal P_\gamma^\perp h_2\|_{\mathcal H_\sigma}^2.
\end{align}
Also,
\begin{align}\label{eq:projSplit}
\|h_1\|_{\mathcal H_\sigma} \lesssim \|\mathcal P_\gamma h_1\|_{L^2} + \|\mathcal P_\gamma^\perp h_1\|_{\mathcal H_\sigma}.
\end{align}
\item[(iv)]  If $q\in\{q_1,q_2\}$, then, taking $q' \in [0,q)$, we have
\begin{align}
\langle \Gamma_q(h_1,h_2),h_3\rangle_{L^2} \lesssim_{q'}( \|e^{\frac{q'|\xi|^2}{4} } h_1\|_{L^2} \|h_2\|_{\mathcal H_{\sigma}} + \|e^{\frac{q'|\xi|^2}{4} } h_1\|_{\mathcal H_{\sigma}} \|h_2\|_{L^2})\|h_3\|_{\mathcal H_\sigma}.\label{eq:Gammaq}
 \end{align}
\end{enumerate}
\end{lemma}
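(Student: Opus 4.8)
The plan is to reduce all four parts to the coercivity of the standard linearized Landau operator from \cite{guo_landau_2002}, via a rescaling in $\xi$ and a perturbative analysis in the small parameter $\eta$. For (i): since $\mu_\gamma(\xi)=\gamma^{3/2}\mu_1(\sqrt\gamma\,\xi)$ and $\Phi$ is homogeneous of degree $-1$, the substitution $\xi\mapsto\xi/\sqrt\gamma$ conjugates $\mathcal L_\gamma$ to $\mathcal L_1$ up to bounded powers of $\gamma$ and sends the diffusion matrix $\Phi*\mu_\gamma$ to $\sqrt\gamma\,\sigma(\sqrt\gamma\,\xi)$, which by \eqref{eq:sigmaComp} and $\gamma\in(e^{-M},e^M)$ is comparable to $\sigma$ with constants depending only on $M$; hence the associated Hilbert space is $\sim_M\mathcal H_\sigma$, and the Guo coercivity estimate together with the identification of $N(\mathcal L_\gamma)$ in \cite{guo_landau_2002} gives the two-sided bound \eqref{eq:Lbeta}. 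Self-adjointness, non-negativity and $\mathrm{Ran}\,\mathcal L_\gamma\perp N(\mathcal L_\gamma)$ then upgrade this to the off-diagonal bound $\langle\mathcal L_\gamma h_1,h_2\rangle_{L^2}\lesssim\|\mathcal P_\gamma^\perp h_1\|_{\mathcal H_\sigma}\|\mathcal P_\gamma^\perp h_2\|_{\mathcal H_\sigma}$ by Cauchy--Schwarz, which will be reused everywhere. Two facts recorded along the way: $\mathcal P_\gamma$ and $\mathcal P_\gamma^\perp$ are bounded on both $L^2$ and $\mathcal H_\sigma$, and $\|\mathcal P_\gamma v\|_{L^2}\lesssim_M\|v\|_{\mathcal H_\sigma}$, because $N(\mathcal L_\gamma)$ is spanned by Schwartz functions and $\mathcal H_\sigma\hookrightarrow L^2_{-1/2}$ (as $\mathrm{tr}\,\sigma\gtrsim\langle\xi\rangle^{-1}$); moreover $\mathcal P_q$ is $O(\eta)$-close to $\mathcal P_\gamma$ in operator norm from $L^2$ (or $\mathcal H_\sigma$) to $\mathcal H_\sigma$, since the two five-dimensional kernels are spanned by $\eta$-close Schwartz functions.

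\textbf{Part (ii).} The crucial algebraic identity is
\[
\widetilde{\mathcal L}_{\gamma,q}h=\mathcal L_q h-\Gamma_q(h,\tilde g)-\Gamma_q(\tilde g,h),\qquad \tilde g:=\mu_q^{-1/2}(\mu_\gamma-\mu_q),
\]
obtained by splitting $Q(\mu_q^{1/2}h,\mu_\gamma)=Q(\mu_q^{1/2}h,\mu_q)+Q(\mu_q^{1/2}h,\mu_q^{1/2}\tilde g)$ and symmetrically in the other slot. Under \eqref{eq:bootstrap3} one has $\gamma\le q_1<q_2<q_3$ and $|q-\gamma|\lesssim_M\eta$, so (for $\eta$ small, using $2\gamma-q>0$) $\tilde g$ is a Schwartz function with $\|\langle\xi\rangle^N\tilde g\|_{L^\infty}\lesssim_{M,N}\eta$ for every $N$. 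The key point is that the \emph{symmetrized} perturbation $\Gamma_q(\,\cdot\,,\tilde g)+\Gamma_q(\tilde g,\,\cdot\,)$ pairs to zero against $N(\mathcal L_q)$ on either side --- because the symmetrized Landau operator conserves mass, momentum and energy, by polarization from $\int Q(G,G)\chi\,d\xi=0$ --- i.e.\ $\mathcal P_q\big(\Gamma_q(\,\cdot\,,\tilde g)+\Gamma_q(\tilde g,\,\cdot\,)\big)\mathcal P_q=0$. Bounding this perturbation by the arguments in the proof of (iv) (one slot being the Schwartz $\tilde g$, which costs only a factor $\eta$) and invoking the coercivity of $\mathcal L_q$ from (i), the bound \eqref{eq:Lqbeta1} follows at once. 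For \eqref{eq:Lqbeta2} one decomposes $h_1=\mathcal P_q h_1+\mathcal P_q^\perp h_1$, discards the vanishing kernel--kernel cross term, and absorbs the remaining kernel--orthogonal cross term $\eta\|\mathcal P_q^\perp h_1\|_{\mathcal H_\sigma}\|\mathcal P_q h_1\|_{L^2}$ into $\tfrac1C\|\mathcal P_q^\perp h_1\|_{\mathcal H_\sigma}^2+C\eta^2\|\mathcal P_q h_1\|_{L^2}^2$ via Young's inequality --- this is exactly where the quadratic gain $\eta^2$ comes from. Finally one replaces $\mathcal P_q^{(\perp)}$ by $\mathcal P_\gamma^{(\perp)}$ at the cost of an extra $O(\eta)$ error.

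\textbf{Parts (iii) and (iv).} For (iii), write $(\mu_q/\mu_\gamma)^{1/2}=1+\tilde w$ with $\|\tilde w\|_{W^{1,\infty}}\lesssim_M1$ and $|\tilde w(\xi)|\lesssim_M\eta\langle\xi\rangle^2$; then $h_1=h_2+\tilde w h_2$, and \eqref{eq:projClose} follows by splitting $h_2=\mathcal P_\gamma h_2+\mathcal P_\gamma^\perp h_2$, using that multiplication by $\tilde w\in W^{1,\infty}$ is bounded on $\mathcal H_\sigma$, that it gains a factor $\eta$ when acting on the Schwartz piece $\mathcal P_\gamma h_2$, and that $\mathcal P_\gamma,\mathcal P_\gamma^\perp$ are bounded on $L^2$ and $\mathcal H_\sigma$ (absorbing an $O(\eta)$ error to pass from $\|\mathcal P_\gamma h_2\|_{L^2}$ to $\|\mathcal P_\gamma h_1\|_{L^2}$); \eqref{eq:projSplit} is just the triangle inequality together with $\|\mathcal P_\gamma v\|_{\mathcal H_\sigma}\lesssim_M\|\mathcal P_\gamma v\|_{L^2}$. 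For (iv), integrate by parts to write $\langle\Gamma_q(h_1,h_2),h_3\rangle_{L^2}$ as $\langle(\Phi_{ij}*\mu_q^{1/2}h_1)\,\partial_i(\mu_q^{1/2}h_2),\partial_j(\mu_q^{-1/2}h_3)\rangle$ plus a transport term built from $(\partial_i\Phi_{ij})*\mu_q^{1/2}h_1$, then expand $\partial(\mu_q^{\pm1/2}h)=\mu_q^{\pm1/2}(\partial h\mp\tfrac q2\xi h)$, so that the $\mu_q^{\pm1/2}$ weights cancel. Lemma \ref{lem:PhiUpperLower} gives the matrix inequality $|\Phi_{ij}*\mu_q^{1/2}h_1|\lesssim_{q'}\|e^{q'|\xi|^2/4}h_1\|_{L^2}\,\sigma_{ij}$ (using $\langle\xi\rangle^5\mu_q^{1/2}e^{-q'|\xi|^2/4}\in L^\infty$), so each resulting term is dominated by $\|e^{q'|\xi|^2/4}h_1\|_{L^2}$ times products of $(\sigma_{ij}\partial_iu\,\partial_ju)^{1/2}$ and, via $\sigma\xi\sim\langle\xi\rangle^{-3}\xi$ and $\mathrm{tr}\,\sigma\sim\langle\xi\rangle^{-1}$, of $(\mathrm{tr}\,\sigma\,|u|^2)^{1/2}$, which assemble into $\|h_2\|_{\mathcal H_\sigma}\|h_3\|_{\mathcal H_\sigma}$; the transport term uses $|(\partial_i\Phi_{ij})*\mu_q^{1/2}h_1|\lesssim\int|\xi-\xi'|^{-2}|\mu_q^{1/2}h_1(\xi')|\,d\xi'$ and Hardy--Littlewood--Sobolev. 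The alternative form (with $\|e^{q'|\xi|^2/4}h_1\|_{\mathcal H_\sigma}\|h_2\|_{L^2}$) is obtained by instead letting the derivative fall on $h_1$ inside the convolution, writing $\partial_i\Phi_{ij}*\mu_q^{1/2}h_1=\Phi_{ij}*\partial_i(\mu_q^{1/2}h_1)$ and applying Lemma \ref{lem:PhiUpperLower} with the anisotropic derivative weight built into $\mathcal H_\sigma$. This is the bilinear Landau estimate of \cite{guo_landau_2002,guo2012vlasov}, adapted to the exponential weights $e^{q'|\xi|^2/4}$.

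\textbf{Main obstacle.} The delicate step is (ii): one must (a) correctly isolate the perturbation of $\mathcal L_q$ as a \emph{symmetrized} $\Gamma_q$ with a small Schwartz coefficient $\tilde g$, (b) invoke the conservation laws to annihilate its kernel-diagonal part, so that the error in \eqref{eq:Lqbeta2} is genuinely $O(\eta^2)$ rather than merely $O(\eta)$, and (c) transfer between the two nearby five-dimensional spectral projections $\mathcal P_q$ and $\mathcal P_\gamma$ without losing this gain. Everything else is a rescaling argument together with the (standard, if bookkeeping-heavy) weighted Landau estimates underlying (i) and (iv).
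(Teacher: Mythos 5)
Your proposal is correct, and parts (i), (iii) and (iv) follow essentially the paper's own route (rescaling to the $q=2$ case of \cite{guo_landau_2002}, direct weight manipulations, and the bilinear Landau estimate). The genuine divergence is in part (ii). The paper treats $\widetilde{\mathcal L}_{\gamma,q}$ as the conjugation $\zeta^{-1}\mathcal L_\gamma(\zeta\,\cdot)$ with $\zeta=\sqrt{\mu_q/\mu_\gamma}$ and computes the quadratic form \emph{explicitly}: the result is $\langle\mathcal L_\gamma h_1,h_2\rangle$ plus an $O(\gamma-q)$ term and an $O((\gamma-q)^2)$ term, and the crucial point the paper exploits is that the $O(\gamma-q)$ term is \emph{antisymmetric} in $(h_1,h_2)$ and hence cancels identically when $h_2=h_1$; the only surviving correction is $O(\eta^2)\|h_1\|^2_{\mathcal H_\sigma}$, giving \eqref{eq:Lqbeta2} with no projection gymnastics. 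You instead expand additively around $\mathcal L_q$ via the identity $\widetilde{\mathcal L}_{\gamma,q}=\mathcal L_q-\Gamma_q(\cdot,\tilde g)-\Gamma_q(\tilde g,\cdot)$ and recover the same $\eta^2$ gain from the fact that $\mathcal P_q\bigl(\Gamma_q(\cdot,\tilde g)+\Gamma_q(\tilde g,\cdot)\bigr)=0$ (conservation laws by polarization); this is a valid and somewhat more structural argument, at the price of an extra step transferring between the nearby projections $\mathcal P_q$ and $\mathcal P_\gamma$, which the paper avoids entirely by expanding around $\mathcal L_\gamma$ directly. Two small points worth tightening in your write-up: (a) the phrase ``pairs to zero \ldots on either side'' overstates what the conservation laws give --- only $\mathcal P_q P=0$ holds, not $P\mathcal P_q=0$; that weaker fact kills \emph{both} terms with $\mathcal P_q h_1$ in the output slot and leaves exactly one cross term, which is what you actually use; and (b) for \eqref{eq:Lqbeta1} you cannot just cite \eqref{eq:Gammaq}, since its right-hand side contains an uncontrolled $\|h_1\|_{L^2}$ factor --- you must redo the diffusion/transport split keeping $\tilde g$ (Schwartz, size $\eta$) in the smoothing slot, so that the remaining weights are all absorbed by the Gaussian in $\mu_q^{1/2}$ and only $\mathcal H_\sigma$-norms of $h_1,h_2$ appear; your parenthetical ``by the arguments in the proof of (iv)'' is doing real work here and deserves a line of justification.
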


\begin{proof}
For (i),
the equivalence \eqref{eq:Lbeta} in the case $\gamma = 2$ can be found in \cite{guo_landau_2002}. The case of general $\gamma$ follows by re-scaling in $\xi $.

For (ii), we compute
\begin{align}
\langle \widetilde{\mathcal L}_{\gamma,q} h_1,h_2\rangle_{L^2}  &= \langle \mathcal L_\gamma h_1, h_2\rangle_{L^2} + \frac{\gamma - q}{2} \{ \langle \sigma^\gamma_{ij} \xi_i h_1, \partial_{\xi _j}  h_2\rangle_{L^2} - \langle\sigma^\gamma_{ij}  \xi_j \partial_{\xi _i}  h_1,h_2\rangle_{L^2} \} \\
& \quad -\frac{(\gamma - q)^2}{2}\langle \sigma^\gamma_{ij} \xi_i \xi_j h_1,h_2\rangle_{L^2}
\end{align}
It is clear from the above that we have \eqref{eq:Lqbeta1}.
In the case $h_2 = h_1$, we have by \eqref{eq:Lbeta} and the computation above that
\begin{align}
\langle \widetilde{\mathcal L}_{\gamma,q} h_1,h_1\rangle_{L^2}&  \geq \langle \mathcal L_\gamma h_1,h_1\rangle_{L^2_\xi } - C\eta^2 \|h_1\|_{\mathcal H_\sigma}^2 \\
&\geq (\frac{1}{C}- C\eta^2) \|\mathcal P_\gamma^\perp h_1\|_{\mathcal H_\sigma}^2 - C\eta^2 |\mathcal P_\gamma h_1|_{\mathcal H_\sigma}^2.
\end{align}
Taking $\eta$ sufficiently small, we get \eqref{eq:Lqbeta2}.
%
%

We now prove (iii).
We write $ \zeta =\sqrt{ \frac{\mu_q}{\mu_{\gamma}}} $. Now,
\begin{align}
\|\mathcal P_\gamma^\perp h_1\|_{\mathcal H_\sigma} = \|\mathcal P_\gamma^\perp (\zeta h_2)\|_{\mathcal H_\sigma} &\leq \|\mathcal P_\gamma^\perp (\zeta \mathcal P_\gamma h_2)\|_{\mathcal H_\sigma} + \|\mathcal P_\gamma^\perp(\zeta \mathcal P_\gamma^\perp h_2)\|_{\mathcal H_\sigma} \\
&\lesssim      \|\mathcal P_\gamma^\perp ((\zeta-1) \mathcal P_\gamma h_2)\|_{\mathcal H_\sigma} + \|\mathcal P_\gamma^\perp h_2\|_{\mathcal H_\sigma} \\
&\lesssim     \|(\zeta-1) \mathcal P_\gamma h_2\|_{\mathcal H_\sigma} + \|\mathcal P_\gamma^\perp h_2\|_{\mathcal H_\sigma}
\end{align}
Now, $|\zeta- 1|\lesssim \eta |\xi|^2$, and $|\mathcal P_\gamma h_2(\xi)|\lesssim \langle \xi\rangle^2 e^{-\frac{\gamma}{2} |\xi|^2 }\|\mathcal P_\gamma h_2\|_{L^2}$, 
$\|(\zeta - 1) \mathcal P_\gamma h_2\|_{\mathcal H_\sigma}  \lesssim \eta \|\mathcal P_\gamma h_2\|_{L^2}$. Now,
 \begin{align}
 \|\mathcal P_\gamma h_2\|_{L^2}& \leq \|\mathcal P_\gamma (\zeta^{-1} \mathcal P_\gamma h_1)\|_{L^2}  + \|\mathcal P_\gamma ((\zeta ^{-1}-1)\mathcal P_\gamma^\perp h_1)\|_{L^2} \\
 & \lesssim \|\mathcal P_\gamma h_1\|_{L^2} + \eta \|\mathcal P_\gamma^\perp h_1\|_{L^2}. 
 \end{align}
Combining these bounds, we deduce \eqref{eq:projClose} after taking $\eta$ sufficiently small. From the latter estimate, we deduce \eqref{eq:projSplit}.


Finally, we prove (iv).  The bound \eqref{eq:Gammaq} in the case $q = 2$ follows from the proof of Theorem 3 in \cite{guo_landau_2002}. The case of general $q$ follows by re-scaling in $\xi $.
\end{proof}

\begin{lemma}
Suppose $q \in \{q_1,q_2,q_3\}$ and assume \eqref{eq:bootstrap3} with $\eta$ taken sufficiently small. Let $G,h_1,h_2: \mathbf R^3 \to \mathbf R$, $G = G(v)$, $h_1  =  h_1(\xi)$, $h_2=  h_2(\xi)$.
\begin{enumerate}\item[(i)]  We have the upper bound
\begin{equation}
\begin{aligned}
\langle \mathcal M_{q,G}  h_1, h_2\rangle_{L^2} &\lesssim \|\langle v\rangle^3 G\|_{ L^2} (\| h_1\|_{\mathcal H_{\sigma;\eps}^-} +\eps^{\frac{1}{2}}\|\langle \frac{\xi }{\eps}\rangle^{-\frac{1}{2}} h_1\|_{L^2}) ( \| h_2\|_{\mathcal H_{\sigma;\eps}^-}+\eps^{\frac{1}{2}} \|\langle \frac{\xi }{\eps}\rangle^{-\frac{1}{2}} h_2\|_{L^2}  ) \\
&\quad + \eps^{\frac{1}{2}}\|\langle v\rangle^{\frac{7}{2}} G\|_{\mathcal H_\sigma}  \|\langle \frac{\xi }{\eps}\rangle^{-\frac{1}{2}} h_1\|_{L^2}\| h_2\|_{\mathcal H_{\sigma;\eps}^-}.\label{eq:M_qGupper}
\end{aligned}
\end{equation}
\item[(ii)] Assume $G\geq0$,  $\|\langle v\rangle^3 G\|_{L^2},  \|\langle v\rangle^{\frac{7}{2}} G\|_{\mathcal H_\sigma} < \infty$ and $\|G\|_{L^1_v}>0$. Let
\begin{align}
k_G := \sup \{1, \|\langle v\rangle^3 G\|_{L^2_v}, \frac{1}{\|G\|_{L^1_v}}\},
\end{align}
Then,
\begin{align}
\langle \mathcal M_{q,G}  h_1, h_1\rangle_{L^2_\xi } \geq \frac{1}{C_{k_G}}\| h_1\|_{\mathcal H^-_{\sigma;\eps}}^2 - C_{k_G}\eps \langle \|\langle v\rangle^{\frac{7}{2}} G\|_{\mathcal H_\sigma}\rangle^2 \| h_1\|_{L^2}^2.\label{eq:M_qGlower}
\end{align}
\end{enumerate}
\end{lemma}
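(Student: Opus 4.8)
The plan is to reduce everything to the rescaled diffusion matrix
$\sigma^\eps_{ij}(\xi):=\int_{\mathbf R^3}\Phi_{ij}(\xi-\eps v')G(v')\,dv'=\eps^{-1}(\Phi_{ij}*G)(\xi/\eps)$
and to compare it, via Lemma~\ref{lem:PhiUpperLower}, with the matrix $\eps^{-1}\sigma_{ij}(\xi/\eps)$ defining the $\mathcal H^-_{\sigma;\eps}$ norm in \eqref{eq:H-}. A change of variables gives $\eps^{-1}\sigma_{ij}(\xi/\eps)=(\Phi_{ij}*\mu^\eps)(\xi)$ with $\mu^\eps(w)=\eps^{-3}\mu(w/\eps)$, so applying \eqref{eq:PhiConvUpper} and \eqref{eq:PhiConvLower} to $G$ at the point $v=\xi/\eps$ and multiplying by $\eps^{-1}$ yields, for every $\nu\in\mathbf R^3$, the two–sided comparison $\tfrac{1}{C_{k_G}}\eps^{-1}\sigma_{ij}(\xi/\eps)\nu_i\nu_j\le\sigma^\eps_{ij}(\xi)\nu_i\nu_j\le C\,\|\langle v\rangle^{5}G\|_{L^2}\,\eps^{-1}\sigma_{ij}(\xi/\eps)\nu_i\nu_j$, the lower bound requiring $G\ge0$. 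Next I would put $\mathcal M_{q,G}h=\mu_q^{-1/2}Q^\eps_{+-}(G,\mu_q^{1/2}h)$ into weak form: integrate the $\nabla_{v'}G$ term in \eqref{eq:Q_+-} by parts, then pair against $h_2$ by integrating the outer $\nabla_\xi$ by parts against $\mu_q^{-1/2}h_2$. Up to the sign conventions used in \eqref{eq:f}, this produces an identity of the form $\langle\mathcal M_{q,G}h_1,h_2\rangle_{L^2}=\langle\sigma^\eps_{ij}\partial_{\xi_i}h_1,\partial_{\xi_j}h_2\rangle+(\mathrm{l.o.t.})$, where the lower–order terms are the products obtained by pairing the factors $\sigma^\eps_{ij}\,\xi\,h$ and $\eps^2 h\,\partial_{\xi}\sigma^\eps$ against $\partial_\xi h$ and $\xi h$.

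For part (i), Cauchy–Schwarz for the quadratic form $\sigma^\eps$ together with the upper comparison handles the main term, giving $\|\langle v\rangle^{5}G\|_{L^2}\|h_1\|_{\mathcal H^-_{\sigma;\eps}}\|h_2\|_{\mathcal H^-_{\sigma;\eps}}$. For the lower–order terms the two structural inputs are: the null identity $\Phi(z)z=0$, which gives $\sigma^\eps_{ij}(\xi)\xi_j=\eps\int\Phi_{ij}(\xi-\eps v')v'_jG\,dv'$ and hence $\sigma^\eps_{ij}(\xi)\xi_i\xi_j=\eps^2\int\Phi_{ij}(\xi-\eps v')v'_iv'_jG\,dv'$; and the integration by parts $\eps^2\partial_{\xi_i}\sigma^\eps_{ij}(\xi)=\eps\int\Phi_{ij}(\xi-\eps v')\partial_{v'_i}G\,dv'$, which costs one $\xi$–derivative of $G$ and thereby brings in the $\mathcal H_\sigma$ norm of $\langle v\rangle^{7/2}G$. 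In each such term I would Cauchy–Schwarz against the \emph{fixed} matrix $\eps^{-1}\sigma(\xi/\eps)$ (not $\sigma^\eps$, since part (i) does not assume $G\ge0$), whose inverse $\eps\,\sigma(\xi/\eps)^{-1}$ obeys $u^T\eps\sigma(\xi/\eps)^{-1}u\sim\eps\big(\langle\xi/\eps\rangle^{3}|P_\xi u|^2+\langle\xi/\eps\rangle|P_{\xi^\perp}u|^2\big)$ by \eqref{eq:sigmaComp}; combining this with the $\eps$–gains just described, the pointwise bound $|\Phi_{ij}(z)w_iw_j|\le|w|^2/|z|$, and the dyadic splitting of the $v'$–integral into $\{2\eps|v'|<|\xi|\}$ and its complement, one checks that every lower–order term is controlled by a weighted $L^2$ (resp. $\mathcal H_\sigma$) norm of $G$ times, on at least one slot, the weak factor $\eps^{1/2}\|\langle\xi/\eps\rangle^{-1/2}h\|_{L^2}$. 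Assembling these gives \eqref{eq:M_qGupper}.

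For part (ii) I take $h_2=h_1$ in the same identity. Now $G\ge0$, so the lower comparison gives $\langle\sigma^\eps_{ij}\partial_{\xi_i}h_1,\partial_{\xi_j}h_1\rangle\ge\tfrac{1}{C_{k_G}}\|h_1\|^2_{\mathcal H^-_{\sigma;\eps}}$, the coercive main term. The remaining terms are exactly the lower–order terms from part (i) with $h_1=h_2$, each carrying a factor $\eps^{1/2}\|\langle\xi/\eps\rangle^{-1/2}h_1\|_{L^2}$; a Young inequality absorbs $\delta\|h_1\|^2_{\mathcal H^-_{\sigma;\eps}}$ into the main term and leaves $C_{\delta,k_G}\,\eps\,\|\langle\xi/\eps\rangle^{-1/2}h_1\|^2_{L^2}\,\langle\|\langle v\rangle^{7/2}G\|_{\mathcal H_\sigma}\rangle^2\le C_{\delta,k_G}\,\eps\,\|h_1\|^2_{L^2}\,\langle\|\langle v\rangle^{7/2}G\|_{\mathcal H_\sigma}\rangle^2$, using $\langle\xi/\eps\rangle^{-1/2}\le1$ and $\|\langle v\rangle^{3}G\|_{L^2}\lesssim\|\langle v\rangle^{7/2}G\|_{\mathcal H_\sigma}$ (since $\mathrm{tr}\,\sigma(v)\sim\langle v\rangle^{-1}$). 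Choosing $\delta$ small yields \eqref{eq:M_qGlower}.

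The main obstacle is the bookkeeping in part (i): the matrix $\sigma^\eps$ is genuinely singular as $\eps\to0$ — its entries blow up like $\eps^{-1}$ near $\xi=0$ — so the powers of $\eps$ gained from $\Phi(z)z=0$ and from the integration by parts in $v'$ must be tracked exactly and matched against the anisotropic weights $\langle\xi/\eps\rangle^{\pm1},\langle\xi/\eps\rangle^{\pm3}$ coming from $\sigma(\xi/\eps)$ and its inverse; extracting precisely the weak norm $\eps^{1/2}\|\langle\xi/\eps\rangle^{-1/2}h\|_{L^2}$ (and no worse) from the cross terms is the delicate point, and it is essential never to invert $\sigma^\eps$ itself in part (i).
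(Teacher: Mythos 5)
Your proposal reproduces the paper's argument: you conjugate $Q^\eps_{+-}$ by $\mu_q^{\pm 1/2}$ and integrate by parts to get the two-term weak identity (a $\sigma^\eps$ diffusion form in $(\partial_\xi\mp q\xi/2)$ plus a $\partial_v\Phi*G$ term), compare $\sigma^\eps_{ij}(\xi)=\eps^{-1}(\Phi_{ij}*G)(\xi/\eps)$ with $\eps^{-1}\sigma_{ij}(\xi/\eps)$ via Lemma~\ref{lem:PhiUpperLower}, exploit $\Phi(z)z=0$ to gain $\eps$-factors on the $\xi_i\xi_j$ block, and for part (ii) absorb the lower-order terms with Young after invoking the coercive lower comparison for $G\ge0$. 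The only cosmetic difference is that you Cauchy--Schwarz against the fixed $\eps^{-1}\sigma(\xi/\eps)$ for every piece, whereas the paper Cauchy--Schwarzes the main term against the (automatically PSD) form $\Phi*|G|$ and only uses $\sigma^{-1}$ on the $\partial_v\Phi*G$ term — either route gives the same bounds, so the approaches are essentially identical.
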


\begin{proof}

Recall that $ \mathcal M_{q,G}h = \frac{1}{\sqrt{\mu_q}}Q_{+-}^\eps (\mu_q,\sqrt{\mu_q} h)$, with $Q_{+-}^\eps$ defined in \eqref{eq:Q_+-}. Then,
\begin{align}
&\langle \mathcal M_{q,G} h_1,  h_2 \rangle_{L^2_\xi } \\
&\quad = \frac{1}{\eps} \langle( \Phi_{ij} * G)|_{v = \frac{\xi }{\eps}}(\partial_{\xi _i}- \frac{q \xi_i}{2})  h_1 , (\partial_{\xi _j} + \frac{q \xi_i}{2})  h_1\rangle_{L^2_\xi } \label{eq:M_qGT1}\\
&\quad \quad -\langle(\partial_{v_i} \Phi_{ij} * G)|_{v = \frac{\xi }{\eps}} h_1 , (\partial_{\xi _j} + \frac{q \xi_i}{2})  h_1\rangle_{L^2_\xi }.\label{eq:M_qGT2}
\end{align}

For the first term, we have
\begin{align}
\eqref{eq:M_qGT1}& \lesssim  \frac{1}{\eps} \langle( \Phi_{ij} * |G|)|_{v = \frac{\xi }{\eps}}(\partial_{  \xi_i}- \frac{q \xi_i}{2})  h_1 , (\partial_{\xi _j} - \frac{q \xi_i}{2})  h_1\rangle_{L^2_\xi }^{\frac{1}{2}} \\
& \cdot \langle( \Phi_{ij} * |G|)|_{v = \frac{\xi }{\eps}}(\partial_{\xi _i}+\frac{q \xi_i}{2})  h_2 , (\partial_{\xi _j} + \frac{q \xi_i}{2})  h_2\rangle_{L^2_\xi }^{\frac{1}{2}} \\
&\lesssim \|\langle v\rangle^3 G\|_{L^2_v}(\| h_1\|_{\mathcal H^-_{\sigma;\eps}}^2 + \frac{1}{\eps}\langle \sigma_{ij}|_{v= \frac{\xi }{\eps}}\xi_i\xi_j h_1 ,  h_1\rangle_{L^2_\xi })^{\frac{1}{2}}  \\
&\quad \cdot (\| h_2\|_{\mathcal H^-_{\sigma;\eps}}^2 +\frac{1}{\eps} \langle \sigma_{ij}|_{v= \frac{\xi }{\eps}}\xi_i\xi_j  h_2 ,  h_2\rangle_{L^2_\xi })^{\frac{1}{2}}.
 \end{align}
Above we used the upper bound \eqref{eq:PhiConvUpper}. Now, recall that by \eqref{eq:sigmaComp}, $\sigma_{ij}(z)z_i z_j \sim \frac{|z|^2}{\langle z\rangle^3}$. Hence, 
\begin{equation}
\frac{1}{\eps} \sigma_{ij}(\frac{\xi }{\eps})\xi_i\xi_j \sim \frac{|\xi|^2}{\eps\langle \xi/\eps\rangle^3 } = \frac{\eps|\xi/\eps |^2}{\langle \xi/\eps\rangle^3 }  \leq \frac{\eps}{\langle \xi/\eps\rangle}\leq \min\{\eps,\frac{1}{|\xi|}\}.
\end{equation}
Thus,
\begin{equation}
\eqref{eq:M_qGT1} \lesssim \|\langle v\rangle^3 G\|_{ L^2_v} (\| h_1\|_{\mathcal H_{\sigma;\eps}^-} +\eps^{\frac{1}{2}}\| h_1\|_{L^2_\xi }) ( \| h_2\|_{\mathcal H_{\sigma;\eps}^-}+\eps^{\frac{1}{2}} \| h_2\|_{L^2_\xi }  ).
\end{equation}
Next, we consider the second term: using \eqref{eq:sigmaComp} again, and recalling the definition of $\mathcal H_{\sigma,-}^\eps$ in  \eqref{eq:H-}, we have
\begin{align}
&|\langle(\partial_{v_i} \Phi_{ij} * G)|_{v = \frac{\xi }{\eps}} h_1 , (\partial_{\xi _j} + \frac{q \xi_i}{2})  h_2\rangle_{L^2_\xi }|\\
& \lesssim \left( \int_{\mathbf R^3} ( \sigma^{-1})_{ij}(\frac{\xi }{\eps})\partial_{v_k} \Phi_{ki} * G|_{v= \frac{\xi }{\eps}} \partial_{v_l} \Phi_{lj} * G|_{v= \frac{\xi }{\eps}}  h_1^2(\xi) d\xi \right)^{\frac{1}{2}}\\
&\quad \cdot  \langle \sigma_{ij}( \frac{\xi }{\eps})(\partial_{\xi _i}+\frac{q \xi_i}{2})  h_2 , (\partial_{\xi _j} + \frac{q \xi_i}{2})  h_2\rangle_{L^2_\xi }^{\frac{1}{2}}\\
&\lesssim  \left( \int_{\mathbf R^3} \langle \frac{\xi }{\eps}\rangle^{3} |(\nabla_v (-\Delta_v)^{-1} G)(\frac{\xi }{\eps}) |^2 h_1^2(\xi) d\xi \right)^{\frac{1}{2}} \\
&\lesssim \eps^{\frac{1}{2}} \sup_{v \in \mathbf R^3, i \in \{1,2,3\}} \{ \langle v\rangle^{2} |\nabla_v(-\Delta_v)^{-1}G(v)|\} \|\langle \frac{\xi }{\eps}\rangle^{-\frac{1}{2}} h_1\|_{L^2_\xi } (\| h_2\|_{\mathcal H^-_{\sigma;\eps}} + \eps^{\frac{1}{2}} \|\langle \frac{\xi }{\eps}\rangle^{-\frac{1}{2}}  h_2\|_{L^2_\xi })
\end{align}
Now, we have
\begin{equation}
\sup_{v \in \mathbf R^3} \|\langle v\rangle^2 |\partial_{v_j}\Phi_{ij}* G(v)\| \lesssim \|\langle v\rangle^{2} G\|_{H^1_{v}} \lesssim \|\langle v\rangle^{\frac{7}{2}}G \|_{\mathcal H_\sigma}.
\end{equation}\\

\noindent\textbf{Proof of (ii):} We write
\begin{align}
\langle \mathcal M_{q,G} h_1,  h_1 \rangle_{L^2_\xi } &= \frac{1}{\eps} \langle( \Phi_{ij} * G)|_{v = \frac{\xi }{\eps}}(\partial_{\xi _i}- \frac{q \xi_i}{2})  h_1 , (\partial_{\xi _j} + \frac{q \xi_i}{2})  h_1\rangle_{L^2_\xi } \\
&\quad -\langle(\partial_{v_i} \Phi_{ij} * G)|_{v = \frac{\xi }{\eps}} h_1 , (\partial_{\xi _j} + \frac{q \xi_i}{2})  h_1\rangle_{L^2_\xi }\\
&= \frac{1}{\eps} \langle( \Phi_{ij} * G)|_{v = \frac{\xi }{\eps}}\partial_{\xi _i}  h_1 , \partial_{\xi _j}  h_1\rangle_{L^2_\xi } \label{eq:M_qGlowerT1} \\
&\quad -\frac{1}{\eps} \langle( \Phi_{ij} * G)|_{v = \frac{\xi }{\eps}}\xi_i \xi_j  h_1 ,  h_1\rangle_{L^2_\xi } \label{eq:M_qGlowerT2} \\
&\quad -\langle(\partial_{v_i} \Phi_{ij} * G)|_{v = \frac{\xi }{\eps}} h_1 , (\partial_{\xi _j} + \frac{q \xi_i}{2})  h_1\rangle_{L^2_\xi } \label{eq:M_qGlowerT3}
\end{align}
Now, applying \eqref{eq:PhiConvLower}, we have
\begin{equation}
\eqref{eq:M_qGlowerT1} \geq  \frac{1}{ C_{k_G}\eps} \langle \sigma_{ij}|_{v=\frac{\xi }{\eps}} \partial_{\xi _i}  h_1, \partial_{\xi _j}  h_1\rangle_{L^2_\xi }.
\end{equation}
Once again, we apply \eqref{eq:PhiConvUpper} to get
\begin{equation}\label{eq:PhiGwwBd}
\frac{1}{\eps}\sup_{\xi  \in \mathbf R^3}\Big|(\Phi_{ij}*G)(\frac{\xi }{\eps}) \xi_i \xi_j\Big| \lesssim \frac{k_G}{\eps} \sigma_{ij}(\frac{\xi}{\eps}) \xi_i\xi_j  \lesssim \frac{k_G|\xi|^2}{\eps\langle \xi/\eps\rangle^3} \leq \frac{k_G\eps}{\langle \xi/\eps\rangle}.
\end{equation}
Hence,
\begin{align}
|\eqref{eq:M_qGlowerT2}|\leq C_{k_G}\eps  \| \langle \frac{\xi }{\eps}\rangle^{-\frac{1}{2}} h_1\|_{L^2}^2
\end{align}
Finally, we reuse the bound from part (i) to get for all $\lambda >0$,
\begin{align}
|\eqref{eq:M_qGlowerT3}|& \lesssim \eps^{\frac{1}{2}} \|\langle v\rangle^{\frac{7}{2}} G\|_{\mathcal H_\sigma} \|\langle \frac{\xi }{\eps}\rangle^{-\frac{1}{2}} h_1\|_{L^2_\xi } \| h_1\|_{\mathcal H_{\sigma;\eps}^-}\\
&\leq \frac{\lambda}{C_{k_G}}  \| h_1\|_{\mathcal H_{\sigma;\eps}^-}^2 +\frac{ C_{k_G}\eps }{\lambda}  \|\langle v\rangle^{\frac{7}{2}} G\|_{\mathcal H_\sigma}^2 \|\langle \frac{\xi }{\eps}\rangle^{-\frac{1}{2}} h_1\|_{L^2}^2.
\end{align}
Taking $\lambda$ sufficiently small so that the first term in the above is dominated by \eqref{eq:M_qGlowerT1}, we deduce \eqref{eq:M_qGlower}.
\end{proof}
%

\subsection{Energy estimates}

\begin{proposition}

We assume that  \eqref{eq:bootstrap1}, \eqref{eq:bootstrap2} and \eqref{eq:bootstrap3} hold.  Then, for all $\kappa > 0$, $t\in[0,T]$:

\begin{enumerate}\label{prop:energyEstimates}
\item[(i)] for $\alpha\in \{1,2,3\}$, we have
\begin{equation}\label{eq:gjEnergy}
\begin{aligned}
&\eps\frac{d}{dt} \{\| g_\alpha \|_{L^2_{x,\xi }}^2 + 4\pi \sqrt{\gamma}\||\Delta_x|^{-1} a\|_{L^2_x}^2\}+ \frac{1}{C}\{ \|\mathcal P_\gamma^\perp g_\alpha \|_{L^2_x(\mathcal H_\sigma)_\xi }^2 + \|g_\alpha \|^2_{L^2_x(\mathcal H_{\sigma;\eps}^-)}\}\\
&\quad  \leq  C( \eps +  \varsigma + \eta +  \kappa) \|\mathcal P_\gamma f\|_{L^2_x(\mathcal H_\sigma)_\xi }^2 \\
&\quad \quad  +C  \eps \langle  \|\partial_t a\|_{L^2_x}  + \|F_+\|_{\mathfrak D} ^2\rangle\|g_\alpha \|_{L^2_{x,\xi }}^2\\
&\quad \quad + C_{\kappa}\eps^2 \langle \|F_+\|_{\mathfrak D}  \rangle^2
\end{aligned}
\end{equation}

\item[(ii)] For $\alpha\in \{1,2\}$,
\begin{equation}\label{eq:g1sEnergy}
\begin{aligned}
&\frac{\eps}{2}\frac{d}{dt} \| g_\alpha ^{(s)}\|_{L^2_{x,\xi }}^2+ \frac{1}{C}\{ \|\mathcal P_\gamma^\perp g_\alpha ^{(s)}\|_{L^2_x(\mathcal H_\sigma)_\xi }^2 + \|g_\alpha ^{(s)}\|^2_{L^2_x(\mathcal H_{\sigma;\eps}^-)}\}\\
&\quad  \leq  C( \eps +  \varsigma + \eta +  \kappa) \|\mathcal P_\gamma f^{(s)}\|_{L^2_x(\mathcal H_\sigma)_\xi }^2 + C_{\eta,\kappa}\|g_{\alpha+1}\|_{L^2_x(\mathcal H_\sigma\cap \mathcal H_{\sigma;\eps}^-)_\xi }^2  \\
&\quad \quad  +C  \eps \langle  \|\partial_t a\|_{L^2_x}  + \|F_+\|_{\mathfrak D}  ^2\rangle\|g_\alpha ^{(s)}\|_{L^2_{x,\xi }}^2\\
&\quad  \quad+ C_{\kappa}\eps^2 \langle  \|F_+\|_{\mathfrak D}  \rangle^2 
\end{aligned}
\end{equation}
\end{enumerate}

\end{proposition}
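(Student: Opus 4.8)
The plan is to prove both estimates \eqref{eq:gjEnergy} and \eqref{eq:g1sEnergy} by the same energy scheme; I describe (i) first. I would start by testing the $g_\alpha$--equation \eqref{eq:gjEq} against $g_\alpha$ in $L^2_{x,\xi}$ and multiplying the identity by $2$. One works with $g_\alpha$ rather than $f$ precisely because in \eqref{eq:gjEq} the transport operator $\{v\cdot\nabla_x-E\cdot\nabla_\xi\}$ is skew-adjoint in $L^2_{x,\xi}$ (the $x$-streaming is a perfect $x$-derivative on $\mathbf T^3$ and $E=E(x)$ commutes with the $\xi$-integration), so it disappears — the $E\cdot\frac{\xi}{2}$ term visible in \eqref{eq:f} has been absorbed into the weight $e^{q_\alpha\phi}$. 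The $\eps\partial_t$ terms produce $\eps\frac{d}{dt}\|g_\alpha\|_{L^2_{x,\xi}}^2$, to which I adjoin the electric-energy correction $4\pi\eps\sqrt{\gamma}\frac{d}{dt}\||\Delta_x|^{-1}a\|_{L^2_x}^2$ (its role is explained below), the $\eps\dot\gamma$-part of the latter being $O(\eps\langle\|F_+\|_{\mathfrak D}\rangle)\|g_\alpha\|_{L^2}^2$ by Lemma \ref{lem:PP}. The term $\eps q\partial_t\phi\,g_\alpha$ contributes $O(\eps\|\partial_t\phi\|_{L^\infty_x})\|g_\alpha\|_{L^2_{x,\xi}}^2$; since $4\pi\partial_t a=-\Delta_x\partial_t(\phi-\psi)$, one bounds $\partial_t\phi$ by $\partial_t\psi$ (estimated via Lemma \ref{lem:PP}) and $\partial_t a$, which produces the $C\eps\langle\|\partial_t a\|_{L^2_x}+\|F_+\|_{\mathfrak D}^2\rangle\|g_\alpha\|_{L^2_{x,\xi}}^2$ term.

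\textbf{Collision terms.} Using $\|\psi\|_{H^{s+2}_x}\lesssim_M 1$ (Lemma \ref{lem:PP}), so $e^{\gamma\psi}$ is comparable to $1$, the self-collision term is handled by \eqref{eq:Lqbeta2}: $\langle e^{\gamma\psi}\widetilde{\mathcal L}_{\gamma,q}g_\alpha,g_\alpha\rangle\geq\frac1C\|\mathcal P_\gamma^\perp g_\alpha\|_{L^2_x(\mathcal H_\sigma)_\xi}^2-C\eta^2\|\mathcal P_\gamma g_\alpha\|_{L^2_{x,\xi}}^2$; the $\eta^2$ error is absorbed into the $\mathcal P_\gamma^\perp$-dissipation and into a term $C\eta^2\|\mathcal P_\gamma f\|_{L^2_x(\mathcal H_\sigma)_\xi}^2$ via $\|\mathcal P_\gamma g_\alpha\|_{L^2}\lesssim\|\mathcal P_\gamma f\|_{L^2}+\eta\|\mathcal P_\gamma^\perp g_\alpha\|_{L^2}$ (Lemma \ref{lem:LandGammaBds}(iii) applied to $f=\sqrt{\mu_{q_\alpha}e^{q_\alpha\phi}/\mu_\gamma}\,g_\alpha$, and $\|\mathcal P_\gamma f\|_{\mathcal H_\sigma}\sim\|\mathcal P_\gamma f\|_{L^2}$). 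For the ion--electron collision one applies the lower bound \eqref{eq:M_qGlower} with $G=F_+(t,x,\cdot)$: by \eqref{eq:bootstrap1} the constant $k_{F_+}(t,x)=\max\{1,\|\langle v\rangle^3F_+\|_{L^2_v},1/n_+(t,x)\}$ is $\lesssim_M 1$ uniformly, and after integrating in $x$ (using $H^s_x\hookrightarrow L^\infty_x$ since $s=3$, and $\frac72\leq m_1$) the error becomes $C\eps\langle\|F_+\|_{\mathfrak D}\rangle^2\|g_\alpha\|_{L^2_{x,\xi}}^2$; this yields the $\frac1C\|g_\alpha\|_{L^2_x(\mathcal H_{\sigma;\eps}^-)_\xi}^2$ contribution.

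\textbf{The macroscopic coupling — the main obstacle.} The term $-4\pi\gamma\langle\xi\cdot\nabla_x\Delta_x^{-1}a\,\frac{\mu_\gamma}{\sqrt{\mu_{q_\alpha}}}e^{\gamma\psi-\frac{q_\alpha}{2}\phi},g_\alpha\rangle$ is not dissipative and its leading part is of the same size as the dissipation, so it must be cancelled rather than estimated. I would rewrite the $\xi$-integral through the first-moment identity $\int\xi\sqrt{\mu_\gamma}f\,d\xi=\int\xi F_-\,d\xi=\gamma^{-1/2}b$, so that it equals $-4\pi\gamma^{1/2}\langle e^{\gamma\psi}\nabla_x\Delta_x^{-1}a,b\rangle_{L^2_x}$ modulo lower-order terms; then integrate by parts in $x$, use $-\Delta_x\psi=4\pi(n_+-e^{\gamma\psi})$ to tame the weight, and substitute the electron continuity equation $\eps\partial_t a+\gamma^{-1/2}\nabla_x\cdot b=-\eps\partial_t e^{\gamma\psi}$ (from integrating the $F_-$--equation in $\xi$). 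The leading part then combines with $-\frac{\eps}{2}\frac{d}{dt}(4\pi\sqrt\gamma\||\Delta_x|^{-1}a\|_{L^2_x}^2)$, leaving only: $O(\eps)$ forcing involving $\dot\gamma,\partial_t e^{\gamma\psi}$; and terms where $\nabla_x\Delta_x^{-1}a$ meets $\mathcal P_\gamma^\perp g_\alpha$ or the $(\gamma,q_\alpha)$-mismatch, bounded by $\kappa\|\mathcal P_\gamma^\perp g_\alpha\|_{\mathcal H_\sigma}^2+C_\kappa\|\mathcal P_\gamma f\|_{L^2}^2+\eps\langle\cdots\rangle\|g_\alpha\|_{L^2}^2$. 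For the right-hand side of \eqref{eq:gjEq}, the two $\eps$-forcings $\eps\,\partial_t\{\mu_\gamma e^{\gamma\psi}\}/\sqrt{\mu_{q_\alpha}e^{q_\alpha\psi}}$ and $e^{\gamma\psi-\frac{q_\alpha}{2}\phi}\mathcal M_{q_\alpha,F_+}(\mu_{q_\alpha}^{-1/2}\mu_\gamma)$ are Gaussian-localized, so pairing them with $g_\alpha=\mathcal P_\gamma g_\alpha+\mathcal P_\gamma^\perp g_\alpha$, estimating $\dot\gamma,\partial_t\psi$ by Lemma \ref{lem:PP} and the $\mathcal M$-term by \eqref{eq:M_qGupper}, and applying Young's inequality gives $C_\kappa\eps^2\langle\|F_+\|_{\mathfrak D}\rangle^2$, a piece $\leq\kappa\|\mathcal P_\gamma^\perp g_\alpha\|_{\mathcal H_\sigma}^2$ absorbed on the left, a piece $\leq\kappa\|\mathcal P_\gamma f\|_{L^2}^2$, and $\leq\eps\langle\|F_+\|_{\mathfrak D}^2\rangle\|g_\alpha\|_{L^2}^2$; the quadratic $e^{\frac{q_\alpha}{2}\phi}\Gamma_{q_\alpha}(g_\alpha,g_\alpha)$ is bounded via \eqref{eq:Gammaq} and the bootstrap $\sqrt{\widetilde{\mathscr E}_{-,2}}\leq\varsigma$ (\eqref{eq:bootstrap2}), contributing $C\varsigma\|\mathcal P_\gamma f\|_{L^2_x(\mathcal H_\sigma)_\xi}^2$. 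Choosing $\kappa$, then $\eta,\varsigma,\overline\eps$ small, assembles \eqref{eq:gjEnergy}.

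\textbf{Part (ii).} I would apply $\langle\nabla_x\rangle^s$ to \eqref{eq:gjEq} and test against $g_\alpha^{(s)}$. The streaming and $\eps\partial_t$ structure is unchanged; every $x$-dependent coefficient ($E$, $e^{\gamma\psi}$, $\phi$, $F_+$) produces a commutator estimated exactly as in Proposition \ref{prop:apriori}, using $\|\psi\|_{H^{s+2}_x},\|\phi\|_{H^{s+2}_x}\lesssim_M 1$ and $\|F_+\|_{\mathfrak D}$ for the Sobolev norms. The macroscopic coupling now needs no correction term, since $\nabla_x\Delta_x^{-1}\langle\nabla_x\rangle^s$ has order $s-1<s$ and can be paired against $g_\alpha^{(s)}$ directly and absorbed. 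The one genuinely new point is the top-order part of the quadratic term and the derivative forcing: when all $s$ derivatives fall on one argument of $\Gamma_{q_\alpha}$ one gets terms such as $\langle\Gamma_{q_\alpha}(g_\alpha^{(s)},g_\alpha),g_\alpha^{(s)}\rangle$, and \eqref{eq:Gammaq} with a small $q'\in(0,q_\alpha)$ satisfying $q'\leq q_{\alpha+1}-q_\alpha=q_\alpha(e^\eta-1)$ converts the Gaussian-weighted factor into a norm of $g_{\alpha+1}$ — this works because $F_--\mu_\gamma e^{\gamma\psi}=\sqrt{\mu_{q_{\alpha+1}}e^{q_{\alpha+1}\phi}}g_{\alpha+1}$, so the extra decay gained by passing from $q_\alpha$ to $q_{\alpha+1}$ exactly pays for the weight; this produces the $C_{\eta,\kappa}\|g_{\alpha+1}\|_{L^2_x(\mathcal H_\sigma\cap\mathcal H_{\sigma;\eps}^-)_\xi}^2$ term. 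All remaining terms are treated as in part (i) with $(g_\alpha,f)$ replaced by $(g_\alpha^{(s)},f^{(s)})$, giving \eqref{eq:g1sEnergy}. The hardest step remains the macroscopic coupling in part (i): unlike the collision terms it carries no sign, and matching it to the time-derivative of the electric-energy correction relies on the exact form of the electron continuity equation together with $4\pi a=-\Delta_x(\phi-\psi)$.
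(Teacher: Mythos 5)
Your proposal is essentially correct and tracks the paper's own proof closely: the energy scheme on $g_\alpha$, the skew-adjointness of the streaming, the electric-energy correction $4\pi\sqrt\gamma\||\Delta_x|^{-1}a\|^2$ whose time derivative is generated by the macroscopic coupling term via the first-moment identity and the electron continuity equation, the lower bounds \eqref{eq:Lqbeta2} and \eqref{eq:M_qGlower}, the upper bounds \eqref{eq:M_qGupper} and \eqref{eq:Gammaq}, and the use of the weight hierarchy $q_\alpha<q_{\alpha+1}$ in part (ii) are all the same ingredients in the same roles.

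One point of attribution is off and worth flagging. You locate the source of the $C_{\eta,\kappa}\|g_{\alpha+1}\|^2_{L^2_x(\mathcal H_\sigma\cap\mathcal H^-_{\sigma;\eps})_\xi}$ term in part (ii) in the top-order part of the quadratic term $\Gamma_{q_\alpha}(g_\alpha,g_\alpha)$. In the paper that term is actually bounded directly by $\varsigma\|g_\alpha^{(s)}\|^2_{L^2_x(\mathcal H_\sigma)_\xi}$ using the smallness \eqref{eq:bootstrap2} and needs no $g_{\alpha+1}$. The $g_{\alpha+1}$-contributions instead come from the various commutator and lower-order terms — the $\partial_t\phi$ and streaming commutators, the $[\langle\nabla_x\rangle^s,e^{\gamma\psi}]\widetilde{\mathcal L}$ commutator, the macroscopic coupling, and the $[\langle\nabla_x\rangle^s,\mathcal M_{q_\alpha,F_+}]$ commutator — each of which leaves a factor $\|\langle\xi\rangle^{1/2}g_\alpha^{(s-1)}\|_{L^2_{x,\xi}}$ or $\|g_\alpha^{(s-1)}\|_{L^2_x(\mathcal H_\sigma\cap\mathcal H^-_{\sigma;\eps})_\xi}$ that can only be controlled by interpolating between $g_\alpha^{(s)}$ and the more strongly Gaussian-weighted $g_{\alpha+1}$; the $\mathcal M$-commutator is in fact the sole source of the $\mathcal H^-_{\sigma;\eps}$-part of the $g_{\alpha+1}$-norm. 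The mechanism you describe (the gain $q_{\alpha+1}-q_\alpha$ paying for polynomial weight loss) is exactly right; it is just applied to the commutator terms rather than to $\Gamma$. Relatedly, those commutators are not estimated literally as in Proposition \ref{prop:apriori}: there the low-regularity factor was absorbed into $\|F_+\|_{\mathfrak E}\lesssim 1$ via the polynomial hierarchy $m_0<m_1<m_2$, whereas here the analogous factor becomes $\|g_{\alpha+1}\|\lesssim\varsigma$ and therefore survives as a term on the right-hand side. Finally, for the macroscopic coupling in part (i), the paper first strips the weight by writing $e^{\gamma\psi-q_\alpha\phi}-1$ as an error and then applies the continuity equation to the unweighted leading term; your variant (integrate by parts with the weight in place, use the Poisson equation) would have to eventually strip the weight anyway to match the unweighted correction $\||\Delta_x|^{-1}a\|^2$ in the statement, so the cleaner order of operations is the paper's.
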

\begin{proof} We separate the proof into each part:

\noindent\textbf{Proof of (i):} We  multiply \eqref{eq:gjEq} by $g_\alpha $ and integrate:
\begin{align}
&\frac{\eps}{2}\frac{d}{dt} \| g_\alpha \|_{L^2_{x,\xi }}^2 \\
&\quad + \frac{q_\alpha \eps}{2} \langle\partial_t \phi g_\alpha , g_\alpha \rangle_{L^2_{x,\xi }}\label{eq:g_jT1}\\
&\quad  +\langle e^{\gamma \psi} \widetilde {\mathcal L}_{\gamma,q_\alpha } g_\alpha , g_\alpha \rangle_{L^2_{x,\xi }} \label{eq:g_jT2} \\
&\quad +\langle  \mathcal M_{q_\alpha ,F_+}g_\alpha , g_\alpha \rangle_{L^2_{x,\xi }}\label{eq:g_jT3}\\
&\quad -4\pi \gamma \langle \xi\cdot \nabla_x\Delta_x^{-1}a  \frac{\mu_\gamma}{\sqrt{\mu_{q_\alpha }}} e^{\gamma \psi - \frac{q_\alpha }{2}\phi},g_\alpha \rangle_{L^2_{x,\xi }} \label{eq:g_jT3.5} \\
& =- \eps \langle\frac{\partial_t\{ \mu_{\gamma}e^{\gamma\psi}\}}{\sqrt{\mu_{q_\alpha }e^{q_\alpha \phi}}},  g_\alpha \rangle_{L^2_{x,\xi }}\label{eq:g_jT4}\\
&\quad -\langle e^{\gamma \psi- \frac{q_\alpha }{2} \phi}\mathcal M_{q_\alpha ,F_+}(\frac{ \mu_\gamma}{\sqrt{\mu_{q_\alpha }}}),  g_\alpha \rangle_{L^2_{x,\xi }}\label{eq:g_jT5}\\
&\quad  +\langle  e^{\frac{q_\alpha }{2} \phi} \Gamma_{q_\alpha }(g_\alpha ,g_\alpha ),g_\alpha \rangle_{L^2_{x,\xi }}\label{eq:g_jT6}.
\end{align}
By \eqref{eq:dtBigPhiandBetaBd}, 
\begin{align}
|\eqref{eq:g_jT1}|&\lesssim \eps (\|\partial_t \psi\|_{L^\infty_x}+ \|\Delta_x^{-1} \partial_t a\|_{L^\infty_x})\|g_\alpha \|_{L^2_{x,\xi }}^2 \\
&\lesssim  \eps  \langle \|g_\alpha \|_{L^2_x(\mathcal H_\sigma)_\xi } +  \| \partial_t a\|_{L^2_x}\rangle\|g_\alpha \|_{L^2_{x,\xi }}^2  \\
&\lesssim \eps^2+ \eps  \langle \| \partial_t a\|_{L^2_x}\rangle \|g_\alpha \|_{L^2_{x,\xi }}^2+ \varsigma \|g_\alpha \|_{L^2_x(\mathcal H_\sigma)_\xi }^2
\end{align}
Next, we apply \eqref{eq:Lqbeta2} to get
\begin{align}
 \eqref{eq:g_jT2}& \geq \frac{1}{C} e^{-C\|\psi\|_{L^\infty_x}} \|\mathcal P_\gamma^{\perp} g_\alpha \|_{L^2_{x,\xi }}^2 - Ce^{C\|\psi\|_{L^\infty_x}} \eta^2 \|\mathcal P_\gamma g_\alpha \|_{L^2_{x,\xi }}^2 \\
 &\geq \frac{1}{C}  \|\mathcal P_\gamma^{\perp} g_\alpha \|_{L^2_{x,\xi }}^2 - C\eta^2 \|g_\alpha \|_{L^2_x(\mathcal H_\sigma)_\xi }^2.
 \end{align}
 In the second line, we use
 $\|\phi\|_{L^\infty_x} \lesssim M + \varsigma \lesssim 1$.
  Next, we  apply \eqref{eq:M_qGlower} to get
\begin{align}
\eqref{eq:g_jT3} \geq  \frac{1}{C_{k_{F_+}}} \|g_\alpha \|_{L^2_x(\mathcal H^-_{\sigma;\eps})_\xi }^2 -C_{K_{F_+}}\eps \langle \|\langle v\rangle^{\frac{7}{2}} F_+\|_{L^\infty_x(\mathcal H_\sigma)_\xi }\rangle ^2 \|g_\alpha \|_{L^2_{x,\xi }}^2
\end{align}
where
\begin{align}
k_{F_+} = \max\{1, \|\langle v\rangle^3 F_+\|_{L^\infty_x L^2_v}, \|n_+^{-1}\|_{L^\infty_x}\}  \lesssim 1.
\end{align}
 Using this, we have
\begin{align}
\eqref{eq:g_jT3}\geq \frac{1}{C}\|g_\alpha \|_{L^2_x(\mathcal H^-_{\sigma;\eps})_\xi }^2 -C\eps \langle  \|F_+\|_{\mathfrak D}  \rangle^2 \|g_\alpha \|_{L^2_{x,\xi }}^2
\end{align}

For the next term, we write
\begin{align}
\frac{e^{\frac{q_\alpha }{2} \phi}g_\alpha }{\sqrt{\mu_{q_\alpha }}} = \frac{\sqrt{\mu_\gamma}f }{\mu_{q_\alpha }} = \frac{f}{\sqrt{\mu_\gamma}} + \left(\frac{1}{\sqrt{\mu_\gamma}} -\frac{\sqrt{\mu_\gamma} }{\mu_{q_\alpha }} \right) f = \frac{f}{\sqrt{\mu_\gamma}} + \left(\frac{\mu_{q_\alpha }}{\mu_\gamma} -1 \right)\frac{e^{\frac{q_\alpha }{2}\phi} g_\alpha }{\sqrt{\mu_{q_\alpha }}}
\end{align}
Then,
\begin{align}
\eqref{eq:g_jT3.5} &=  -4\pi \gamma \langle \xi\cdot \nabla_x\Delta_x^{-1}a  \sqrt{\mu_\gamma} e^{\gamma \psi - q_\alpha \phi},f \rangle_{L^2_{x,\xi }} \\
&\quad -4\pi \gamma \langle \xi\cdot \nabla_x\Delta_x^{-1}a \frac{\mu_{q_\alpha } - \mu_\gamma}{\sqrt{\mu_{q_\alpha }}}e^{\gamma \psi - \frac{q_\alpha }{2}\phi}, g_\alpha  \rangle_{L^2_{x,\xi }} \\
&=-4\pi \gamma \langle \nabla_x\Delta_x^{-1}a  ,b \rangle_{L^2_{x,\xi }} \label{eq:g_jT3.5-1}\\
&\quad -4\pi \gamma \langle (e^{\gamma \psi - q_\alpha \phi}-1)\nabla_x\Delta_x^{-1}a ,b \rangle_{L^2_{x,\xi }}\label{eq:g_jT3.5-2} \\
&\quad -4\pi \gamma \langle \xi\cdot \nabla_x\Delta_x^{-1}a \frac{\mu_{q_\alpha } - \mu_\gamma}{\sqrt{\mu_{q_\alpha }}}e^{\gamma \psi - \frac{q_\alpha }{2}\phi}, g_\alpha  \rangle_{L^2_{x,\xi }}\label{eq:g_jT3.5-3}
\end{align}
For term \eqref{eq:g_jT3.5-1}, we use the continuity equation (see \eqref{eq:massConsPert} below)
\begin{align}\label{eq:cont}
\eps \partial_t a + \sqrt{\gamma} \nabla_x \cdot b = -\eps \partial_t (e^{\gamma \psi}),
\end{align}
which gives 
\begin{align}
\eqref{eq:g_jT3.5-1} &= 2\pi  \sqrt{\gamma}\eps\frac{d}{dt} \||\nabla_x|^{-1} a\|_{L^2_x}^2 -4\pi \sqrt{\gamma} \eps \langle \partial_t (e^{\gamma \psi}),a\rangle_{L^2_x} \\
&= 2\pi \eps\frac{d}{dt} \{ \sqrt{\gamma} \||\nabla_x|^{-1} a\|_{L^2_x}^2\} \\
&\quad -\frac{\eps \pi \dot \gamma}{\sqrt{\gamma}}\||\nabla_x|^{-1} a\|_{L^2_x}^2 -4\pi \sqrt{\gamma} \eps \langle \partial_t (e^{\gamma \psi}),a\rangle_{L^2_x}
\end{align}
Now, by \eqref{eq:dtBigPhiandBetaBd},
\begin{align}
|\frac{\eps \pi \dot \gamma}{\sqrt{\gamma}}\||\nabla_x|^{-1} a\|_{L^2_x}^2 +4\pi \sqrt{\gamma} \eps \langle \partial_t (e^{\gamma \psi}),a\rangle_{L^2_x}|&\lesssim \eps (1+ |\dot \gamma | + \|\partial_t \psi\|_{L^\infty_x})\|a\|_{L^2_x}\\
& \lesssim \eps( 1 + \|g_\alpha \|_{L^2_x(\mathcal H_\sigma)_\xi }) \|g_\alpha \|_{L^2_x(\mathcal H_\sigma)_\xi } \\
&\lesssim \eps^2 + \eps \|g_\alpha \|_{L^2_x(\mathcal H_\sigma)_\xi }^2.
\end{align}
Hence,
\begin{align}
|\eqref{eq:g_jT3.5-1} - 2\pi \eps\frac{d}{dt} \{ \sqrt{\gamma} \||\nabla_x|^{-1} a\|_{L^2_x}^2\}| \lesssim \eps^2 +\eps \|g_\alpha \|_{L^2_x(\mathcal H_\sigma)_\xi }^2.
\end{align}
For \eqref{eq:g_jT3.5-2}, we have
\begin{align}
\|e^{\gamma \psi - q_\alpha  \phi} -1\|_{L^\infty_x} = \|e^{(\gamma -q_\alpha )\psi - 4\pi q_\alpha  \Delta_x^{-1} a} -1\|_{L^\infty_x} \lesssim \varsigma + \eta .
\end{align}
Thus, 
\begin{align}
|\eqref{eq:g_jT3.5-2}| &\lesssim (\varsigma + \eta) \|a\|_{L^2_x} \|b\|_{L^2_x} \\
& \lesssim (\varsigma + \eta)\|g_\alpha \|_{L^2_x(\mathcal H_\sigma)_\xi }^2. 
\end{align}
Third, we have \eqref{eq:g_jT3.5-3}. For this, we note that when $\eta$ is taken sufficiently small, we can guarantee that
\begin{align}
|\frac{\mu_{q_\alpha } - \mu_\gamma}{\sqrt{\mu_{q_\alpha }}}| \lesssim \eta e^{-\frac{\beta_{in}}{8}|\xi|^2}
\end{align}
Hence,
\begin{align}
|\eqref{eq:g_jT3.5-3}| \lesssim \eta \|g_\alpha \|_{L^2_x(\mathcal H_\sigma)_\xi }^2.
\end{align}
Combining these bounds, we find
\begin{align}
|\eqref{eq:g_jT3.5} -2\pi \eps\frac{d}{dt} \{ \sqrt{\gamma} \||\nabla_x|^{-1} a\|_{L^2_x}\}| \lesssim \eps^2 +(\eps + \varsigma + \eta) \|g_\alpha \|_{L^2_{x}(\mathcal H_\sigma)_\xi }^2 
\end{align}
We move on to our next term:
\begin{align}
\eqref{eq:g_jT4} & \lesssim \eps \|\langle \xi\rangle^{\frac{1}{2}} \frac{\partial_t\{ \mu_{\gamma}e^{\gamma\psi}\}}{\sqrt{\mu_{q_\alpha }e^{q_\alpha \phi}}}  \|_{L^2_{x,\xi }} \|\langle \xi\rangle^{-\frac{1}{2}}g_\alpha \|_{L^2_{x,\xi }} \\
&\lesssim\eps \|\langle \xi\rangle^{\frac{1}{2}} \frac{\partial_t\{ \mu_{\gamma}e^{\gamma\psi}\}}{\sqrt{\mu_{q_\alpha }e^{q_\alpha \phi}}} \|_{L^2_{x,\xi }} \|g_\alpha \|_{L^2_{x}(\mathcal H_\sigma)_\xi } .
\end{align}
Now,
\begin{align}
 \frac{\partial_t\{ \mu_{\gamma}e^{\gamma\psi}\}}{\sqrt{\mu_{q_\alpha }e^{q_\alpha \phi}}} = (\frac{\dot \gamma }{\gamma} (\frac{3}{2} -\gamma  (|\xi|^2 - \psi)) + \gamma \partial_t \psi) \frac{\mu_\gamma}{\sqrt{\mu_{q_\alpha }}} e^{\gamma \psi - \frac{q_\alpha }{2} \phi}.
\end{align}
Taking $\eta$ sufficiently small, we ensure that $\langle \xi\rangle^{\frac{1}{2}} \mu_{q_\alpha }^{-\frac{1}{2}}\mu_\gamma \lesssim e^{-\frac{\beta_{in}|\xi|^2}{8}}$. Thus, combining this with \eqref{eq:dtBigPhiandBetaBd},
\begin{align}
\eqref{eq:g_jT4}& \lesssim \eps ( |\dot \gamma| + \| \partial_t \psi\|_{L^\infty_x}) \|g_\alpha \|_{L^2_{x}(\mathcal H_\sigma)_\xi }\\
&\leq  C_\kappa \eps^2 + (\eps + \kappa) \|g_\alpha \|_{L^2_{x}(\mathcal H_\sigma)_\xi }^2.
\end{align}
Next, we bound \eqref{eq:g_jT5} using \eqref{eq:M_qGupper}:
\begin{align}
\eqref{eq:g_jT5} &\lesssim (\|\mu_{q_\alpha }^{-\frac{1}{2}}\mu_{\gamma}\|_{\mathcal H_{\sigma;\eps}^-} +\eps^{\frac{1}{2}}\|\langle \frac{\xi }{\eps}\rangle^{-\frac{1}{2}}\mu_{q_\alpha }^{-\frac{1}{2}}\mu_{\gamma}\|_{ L^2_\xi }) ( \|g_\alpha \|_{L^2_x(\mathcal H_{\sigma;\eps}^-)_\xi}+\eps^{\frac{1}{2}} \|\langle \frac{\xi }{\eps}\rangle^{-\frac{1}{2}}g_\alpha \|_{L^2_{x,\xi }}  ) \\
&\quad +\eps^{\frac{1}{2}}  \|F_+\|_{\mathfrak D}   \|\langle \frac{\xi }{\eps} \rangle^{-\frac{1}{2}}\mu_{q_\alpha }^{-\frac{1}{2}}\mu_{\gamma}\|_{ L^2_\xi } \|g_\alpha \|_{L^2_x (\mathcal H_{\sigma;\eps}^-)_\xi }
\end{align}
Now, by \eqref{eq:sigmaComp} again, we have
\begin{align}
\|\mu_{q_\alpha }^{-\frac{1}{2}}\mu_{\gamma}\|_{\mathcal H_{\sigma;\eps}^-}  &\lesssim  \frac{1}{\eps}\int_{\mathbf R^3} \sigma_{ij}(\frac{\xi }{\eps})\xi_i\xi_j e^{-\frac{\beta_{in}}{10}|\xi|^2} d\xi  \\
&\lesssim  \frac{1}{\eps}\int_{\mathbf R^3} \frac{|\xi|^2}{\langle \xi/\eps\rangle^3 }e^{-\frac{\beta_{in}}{10}|\xi|^2} d\xi  \\
&\leq  \eps^2 \int_{\mathbf R^3}  \frac{1}{|\xi| }e^{-\frac{\beta_{in}}{10}|\xi|^2}  d\xi\\
&\lesssim \eps^2.
\end{align}
since $\frac{1}{|\xi|}$ is locally integrable.
Similarly,
\begin{align}
\|\langle \frac{\xi }{\eps}\rangle^{-\frac{1}{2}}\mu_{q_\alpha }^{-\frac{1}{2}}\mu_{\gamma}\|_{L^2_{x,\xi }} \lesssim \eps.
\end{align}
Moreover, by Sobolev embedding
\begin{align}
\|\langle \frac{\xi }{\eps}\rangle^{-\frac{1}{2}}g_\alpha \|_{L^2_{x,\xi }} \lesssim \eps^{\frac{1}{2}} \|1_{B_1}(\xi)|\xi|^{-\frac{1}{2}}g_\alpha \|_{L^2_{x,\xi }} + \|\langle \xi\rangle^{-\frac{1}{2}} g_\alpha \|_{L^2_{x,\xi }} \lesssim \|g_\alpha \|_{L^2_x (\mathcal H_\sigma)_\xi }.
\end{align}
Hence,
\begin{align}
|\eqref{eq:g_jT5} |&\lesssim \eps\langle  \|F_+\|_{\mathfrak D}   \rangle\|g_\alpha \|_{L^2_x(\mathcal H_\sigma \cap \mathcal H_{\sigma;\eps}^-)_\xi }\end{align}
Thus, taking $\lambda >0$ to be chosen later,
\begin{align}
|\eqref{eq:g_jT5}| &\leq \kappa \|g_\alpha \|_{L^2_x(\mathcal H_\sigma \cap \mathcal H_{\sigma;\eps}^-)_\xi }^2 +C_{\kappa}\eps^2 \langle  \|F_+\|_{\mathfrak D}   \rangle^2.
\end{align}
Finally, applying \eqref{eq:Gammaq}, we have
\begin{align}
\eqref{eq:g_jT6}& \lesssim \|g_1\|_{L^\infty_xL^2_\xi }  \|g_\alpha \|_{L^2_x(\mathcal H_\sigma)_\xi }^2 \\
&\lesssim \|g_1^{(s)}\|_{L^2_{x,\xi }} \|g_\alpha \|_{L^2_x(\mathcal H_\sigma)_\xi }^2 \\
& \lesssim \varsigma  \|g_\alpha \|_{L^2_x(\mathcal H_\sigma)_\xi }^2.
\end{align}
Combining the upper bounds for \eqref{eq:g_jT1} through  \eqref{eq:g_jT6}, we conclude
\begin{align}
&\frac{\eps}{2}\frac{d}{dt} \{\| g_\alpha \|_{L^2_{x,\xi }}^2 + 4\pi  \sqrt{\gamma}\||\Delta_x|^{-1} a\|_{L^2_x}^2\}+ \frac{1}{C}\{ \|\mathcal P_\gamma^\perp g_\alpha \|_{L^2_x(\mathcal H_\sigma)_\xi }^2 + \|g_\alpha \|^2_{L^2_x(\mathcal H_{\sigma;\eps}^-)}\}\\
&\quad  \leq  C( \eps +  \varsigma + \eta +  \kappa) \|g_\alpha \|_{L^2_x(\mathcal H_\sigma \cap \mathcal H_{\sigma;\eps}^-)_\xi }^2 \\
&\quad \quad  +C  \eps \langle  \|\partial_t a\|_{L^2_x}  + \|F_+\|_{\mathfrak D}  ^2\rangle\|g_\alpha \|_{L^2_{x,\xi }}^2\\
&\quad \quad + C_{\kappa}\eps^2 \langle  \|F_+\|_{\mathfrak D}  \rangle^2.
\end{align}
Using \eqref{eq:projSplit} and taking $\eps, \varsigma,\eta$ and $\kappa$ sufficiently small, conclude \eqref{eq:gjEnergy}.

We now prove (ii). 
Let $g^{(s)}_j := \langle \nabla_x\rangle^s g_\alpha $. Then,
\begin{align}
&\frac{\eps}{2}\frac{d}{dt} \| g_\alpha ^{(s)}\|_{L^2_{x,\xi }}^2 \\
&\quad +q_\alpha \eps \langle\langle \nabla_x\rangle^s\{\partial_t \phi g_\alpha \}, g_\alpha ^{(s)}\rangle_{L^2_{x,\xi }} \label{eq:g_jsT1}\\
&\quad + \langle \langle \nabla_x\rangle^s \{E g_\alpha \}  - E g_\alpha ^{(s)}  ,\nabla_v g_\alpha ^{(s)}\rangle_{L^2_{x,\xi }} \label{eq:g_jsT2} \\
&\quad  +\langle\langle\nabla_x\rangle^{s}( e^{\gamma \psi} \widetilde {\mathcal L}_{\gamma,q_\alpha } g_\alpha ),g _j^{(s)}\rangle_{L^2_{x,\xi }} \label{eq:g_jsT3}\\
&\quad -4\pi \gamma \langle \frac{\mu_\gamma}{\sqrt{\mu_{q_\alpha }}} \langle \nabla_x\rangle^s \{\xi\cdot \nabla_x\Delta_x^{-1}a  e^{\gamma \psi - \frac{q_\alpha }{2}\phi}\},g_\alpha ^{(s)}\rangle_{L^2_{x,\xi }} \label{eq:g_jsT3.5}\\
&\quad +\langle \langle \nabla_x\rangle^s(  \mathcal M_{q_\alpha ,F_+}g_\alpha ), g_\alpha ^{(s)}\rangle_{L^2_{x,\xi }} \label{eq:g_jsT4}\\
& =- \eps \langle\langle \nabla_x\rangle^s\left( \frac{\partial_t\{ \mu_{\gamma}e^{\gamma\psi}\}}{\sqrt{\mu_{q_\alpha }e^{q_\alpha \phi}}}\right),  g_\alpha ^{(s)}\rangle_{L^2_{x,\xi }} \label{eq:g_jsT5}\\
&\quad -\langle  \langle \nabla_x\rangle^s\{e^{\gamma \psi - \frac{q_\alpha }{2}\phi}\mathcal M_{q_\alpha ,F_+}(\frac{\mu_\gamma}{\sqrt{\mu_{q_\alpha }}} )\},  g_\alpha ^{(s)}\rangle_{L^2_{x,\xi }} \label{eq:g_jsT6}\\
&\quad  +\langle \langle \nabla_x\rangle^s \{ e^{\frac{q_\alpha }{2} \phi} \Gamma_{q_\alpha }(g_\alpha ,g_\alpha )\},g_\alpha ^{(s)}\rangle_{L^2_{x,\xi }} \label{eq:g_jsT7}
\end{align}
First,
\begin{align}
|\eqref{eq:g_jsT1} | &= q_\alpha \eps |\langle \langle \nabla_x \rangle^s\{(\partial_t \psi + 4\pi \Delta_x^{-1}\partial_t a)g_\alpha \},g_\alpha ^{(s)}\rangle_{L^2_{x,\xi }} \\
&\lesssim \eps \int_{\mathbf R^3}   \big( \|\partial_t \psi\|_{H^s_x} \|g_\alpha ^{(s)}\|_{L^2_{x}}^2 + \|\partial_t a\|_{H^{s-2}_x} \|g_\alpha \|_{L^\infty_x}\|g_\alpha ^{(s)}\|_{L^2_{x}} \\
&\quad\quad +\|\Delta_x^{-1} \partial_t a\|_{L^\infty_x} \|g_\alpha ^{(s)}\|_{L^2_{x}} \big)d\xi 
\end{align}
Applying \eqref{eq:cont} to the second term, we get
\begin{align}
&\eps \int_{\mathbf R^3} \|\partial_t a\|_{H^{s-2}_x} \|g_\alpha \|_{L^\infty_x}\|g_\alpha ^{(s)}\|_{L^2_{x}} d\xi \\
&\quad  \lesssim  \int_{\mathbf R^3} \|b\|_{H^{s-1}_x} \|g_\alpha ^{(s-1)}\|_{L^2_x} \|g_\alpha ^{(s)}\|_{L^2_x} d\xi  + \eps (1+|\dot \gamma|  + \|\partial_t \psi\|_{H^s_x})\|g_\alpha ^{(s)}\|_{L^2_{x,\xi }}^2 \\
&\lesssim \|b\|_{H^{s-1}_x}  \|\langle \xi\rangle^{\frac{1}{2}}g_\alpha ^{(s-1)}\|_{L^2_{x,\xi }} \|\langle \xi\rangle^{-\frac{1}{2}}g_\alpha ^{(s)}\|_{L^2_{x,\xi }} + \eps (1+|\dot \gamma|  + \|\partial_t \psi\|_{H^s_x})\|g_\alpha ^{(s)}\|_{L^2_{x,\xi }}^2
\end{align}
Collecting terms, 
\begin{align}
|\eqref{eq:g_jsT1} | &\lesssim \|b\|_{H^{s-1}_x}  \|\langle \xi\rangle^{\frac{1}{2}}g_\alpha ^{(s-1)}\|_{L^2_{x,\xi }} \\
&\quad +  \eps (1+ \|\partial_t a\|_{L^\infty_x}  + |\dot \gamma|  + \|\partial_t \psi\|_{H^s_x})\|g_\alpha ^{(s)}\|_{L^2_{x,\xi }}^2
\end{align}
Applying \eqref{eq:dtBigPhiandBetaBd}, we get
\begin{align}
\eps (|\dot \gamma|  + \|\partial_t \psi\|_{H^s_x})\|g_\alpha ^{(s)}\|_{L^2_{x,\xi }}^2 \lesssim \eps^2 + \eps \|g_\alpha ^{(s)}\|_{L^2_{x,\xi }}^2 + \varsigma  \|g_\alpha ^{(s)}\|_{L^2_x(\mathcal H_\sigma)_\xi }^2.
\end{align}
On the other hand, 
using interpolation, we have
\begin{align}
\|\langle \xi\rangle^{\frac{1}{2}} g_\alpha ^{(s-1)}\|_{L^2_{x,\xi }}\|\langle \xi\rangle^{-\frac{1}{2}}g_\alpha ^{(s)}\|_{L^2_{x,\xi }} \leq  (C_{\eta}\|g_{\alpha+1}\|_{L^2_x(\mathcal H_\sigma)_\xi }^2  + \|g_\alpha ^{(s)}\|_{L^2_x(\mathcal H_\sigma)_\xi }^2).
\end{align}
Thus,
\begin{align}
|\eqref{eq:g_jsT1} | \lesssim \eps \langle \|\partial_t a\|_{L^2_x}\rangle \|g_\alpha ^{(s)}\|_{L^2_{x,\xi }}^2 +C\varsigma\|g_\alpha ^{(s)}\|_{L^2_x(\mathcal H_\sigma)_\xi }^2  +  C_{\eta}\|g_{\alpha+1}\|_{L^2_x(\mathcal H_\sigma)_\xi }^2 
\end{align}
Next,
\begin{align}
|\eqref{eq:g_jsT2}| &\lesssim  \|\langle \xi\rangle^{\frac{1}{2}} (\langle \nabla_x\rangle^s \{E  g_\alpha \} - E  g_\alpha ^{(s)})\|_{L^2_{x,\xi }} \|g_\alpha ^{(s)}\|_{L^2_x(\mathcal H_\sigma)_\xi } \\
&\lesssim  (\|E \|_{H^s_x} \|\langle \xi\rangle^{\frac{1}{2}}g_\alpha \|_{L^\infty_{x}L^2_\xi } +\|\nabla_x E \|_{L^\infty_{x}} \|\langle \xi\rangle^{\frac{1}{2}}g_\alpha ^{(s-1)}\|_{L^2_{x,\xi }} )  \|g_\alpha ^{(s)}\|_{L^2_x(\mathcal H_\sigma)_\xi }
\end{align}
Above we use the usual commutator estimate for $[\langle \nabla_x\rangle^s, E _j]$. Next we interpolate
\begin{align}
\|\langle \xi\rangle^{\frac{1}{2}}g_\alpha ^{(s-1)}\|_{L^2_{x,\xi }} &\lesssim \|\langle \xi\rangle^{s-\frac{1}{2}}  g_\alpha \|_{L^2_{x,\xi }}^{\frac{1}{s}}\|\langle \xi\rangle^{-\frac{1}{2}}g_\alpha ^{(s)}\|_{L^2_{x,\xi }}^{\frac{s-1}{s}}\\
&\lesssim C_\eta \|\langle \xi\rangle^{-\frac{1}{2}}g_{\alpha+1}\|_{L^2_{x,\xi }}^{\frac{1}{s}}\|\langle \xi\rangle^{-\frac{1}{2}}g_\alpha ^{(s)}\|_{L^2_{x,\xi }}^{\frac{s-1}{s}} \\
&\lesssim C_\eta \|\langle \xi\rangle^{-\frac{1}{2}}g_{\alpha+1}\|_{L^2_x(\mathcal H_\sigma)_\xi }^{\frac{1}{s}}\|\langle \xi\rangle^{-\frac{1}{2}}g_\alpha ^{(s)}\|_{L^2_{x}(\mathcal H_\sigma)_\xi }^{\frac{s-1}{s}}
\end{align}
This implies 
\begin{align}
|\eqref{eq:g_jsT2}|  \leq \kappa \|g_\alpha ^{(s)}\|_{L^2_{x}}^2 + C_{\eta,\kappa} \|\langle \xi\rangle^{-\frac{1}{2}}g_{\alpha+1}\|_{L^2_x(\mathcal H_\sigma)_\xi }^2.
\end{align}
Next, applying \eqref{eq:Lqbeta1} and \eqref{eq:Lqbeta2}, we have
\begin{align}
\eqref{eq:g_jsT3}&= \langle e^{\gamma \psi} \widetilde{\mathcal L}_{\gamma,q} g_\alpha ^{(s)},g _\alpha^{(s)}\rangle_{L^2_{x,\xi }} \\
&\quad + \langle \widetilde {\mathcal L}_{\gamma,q} \{\langle\nabla_x\rangle^{s}( e^{\gamma \psi} g_\alpha ) -e^{\gamma \psi} g_\alpha ^{(s)}\},g _1^{(s)}\rangle_{L^2_{x,\xi }}\\
&\geq \frac{1}{C}\|\mathcal P_\gamma^\perp g_\alpha ^{(s)}\|_{L^2_x(\mathcal H_\sigma)_\xi }^2 - C\varsigma^2 \| g_\alpha ^{(s)}\|_{L^2_x(\mathcal H_\sigma)_\xi }^2 \\
&\quad - C\|\langle\nabla_x\rangle^{s}( e^{\gamma \psi} g_\alpha ) -e^{\gamma \psi} g_\alpha ^{(s)}\|_{L^2_x(\mathcal H_\sigma)_\xi } \|g_\alpha ^{(s)}\|_{L^2_x(\mathcal H_\sigma)_\xi }
\end{align}
Now,
\begin{align}
 \|\langle\nabla_x\rangle^{s}( e^{\gamma \psi} g_\alpha ) -e^{\gamma \psi} g_\alpha ^{(s)}\|_{L^2_x(\mathcal H_\sigma)_\xi }&\lesssim \|\langle \nabla_x\rangle^{s} e^{\gamma \psi}\|_{L^2_x} \|g_\alpha ^{(s-1)}\|_{L^2_x(\mathcal H_\sigma)_\xi }  \\
 &\lesssim  \|g_\alpha ^{(s-1)}\|_{L^2_x(\mathcal H_\sigma)_\xi }.
  \end{align}
Hence, we can interpolate to get
\begin{align}
\eqref{eq:g_jsT3}
&\geq \frac{1}{C}\|\mathcal P_\gamma^\perp g_\alpha ^{(s)}\|_{L^2_x(\mathcal H_\sigma)_\xi }^2 - \kappa \| g_\alpha ^{(s)}\|_{L^2_{x}(\mathcal H_\sigma)_\xi }^2 \\
&\quad - C_\kappa\|g_{\alpha+1}\|_{L^2_x(\mathcal H_\sigma)_\xi }^2
\end{align}
For the next term, we have
\begin{align}
|\eqref{eq:g_jsT3.5}| &\lesssim  \|e^{\gamma \psi - \frac{q_\alpha }{2} \phi}\nabla_x \Delta_x^{-1}a\|_{H^s}\|g_\alpha ^{(s)}\|_{L^2_x(\mathcal H_\sigma)_\xi }  \\
&\lesssim \|g_\alpha ^{(s-1)}\|_{L^2_x(\mathcal H_\sigma)_\xi }  \|g_\alpha ^{(s)}\|_{L^2_x(\mathcal H_\sigma)_\xi }  \\
&\leq \kappa   \|g_\alpha ^{(s)}\|_{L^2_x(\mathcal H_\sigma)_\xi }^2 +   C_\kappa  \|g_{\alpha+1}\|_{L^2_x(\mathcal H_\sigma)_\xi }^2.
\end{align}
For our next term, we have
\begin{align}
\eqref{eq:g_jsT4} &=\langle  \mathcal M_{q_1,F_+}g_\alpha ^{(s)}, g_\alpha ^{(s)}\rangle_{L^2_{x,\xi }} \label{eq:g_jT4T1} \\
&\quad +\langle \langle \nabla_x\rangle^s(  \mathcal M_{q_1,F_+}g_\alpha ) - \mathcal M_{q_1,F_+}g_\alpha ^{(s)} , g_\alpha ^{(s)}\rangle_{L^2_{x,\xi }}\label{eq:g_jT4T2}
\end{align}
For the first term, similarly to \eqref{eq:g_jsT3}, we have
\begin{align}
\eqref{eq:g_jT4T1} \geq \frac{1}{C}  \|g_\alpha ^{(s)}\|_{L^2_x(\mathcal H^-_{\sigma;\eps})_\xi }^2- C \eps \langle\|F_+^{(s,m)}\|_{L^2_x(\dot{\mathcal H}_\sigma)_v} \rangle^2 \|g_\alpha ^{(s)}\|_{L^2_{x,\xi }}^2
\end{align}
Next, combining the commutator estimate with the proof of \eqref{eq:M_qGupper}, we have
\begin{align}
|\eqref{eq:g_jT4T2}|& \lesssim  \|F^{(m_1,s)}_+\|_{L^2_{x,v}} ( \|g^{(s-1)}_1\|_{L^2_x(\mathcal H_{\sigma;\eps}^-)_\xi }+\eps^{\frac{1}{2}} \|\langle \frac{\xi }{\eps}\rangle^{-\frac{1}{2}}g^{(s-1)}_1\|_{L^2_{x,\xi }}  ) \\
&\quad \cdot (\|g^{(s)}_1\|_{L^2_x(\mathcal H_{\sigma;\eps}^-)_\xi } +\eps^{\frac{1}{2}}\|\langle \frac{\xi }{\eps}\rangle^{-\frac{1}{2}}g^{(s)}_1\|_{L^2_{x,\xi }})\\
&\quad + \eps^{\frac{1}{2}} \|F_+\|_{\mathfrak D}   \|g^{(s-1)}_1\|_{L^2_{x,\xi }}\|g^{(s)}_1\|_{L^2_x(\mathcal H_{\sigma;\eps}^-)_\xi }\\
&\lesssim  (\|g_\alpha \|_{L^2_x(\mathcal H_{\sigma;\eps}^-)_\xi } +\eps \|g_\alpha \|_{L^2_x(\mathcal H_{\sigma})_\xi })^{\frac{1}{s}}  (\|g_\alpha \|_{L^2_x(\mathcal H_{\sigma;\eps}^-)_\xi }+\eps \|g_\alpha \|_{L^2_x(\mathcal H_{\sigma})_\xi })^{\frac{2s-1}{s}} \\
&\quad + \eps^{\frac{1}{2}} \|F_+\|_{\mathfrak D}    \|g^{(s)}_1\|_{L^2_{x,\xi }}\|g^{(s)}_1\|_{L^2_x(\mathcal H_{\sigma;\eps}^-)_\xi }.
\end{align}
Thus, we get
\begin{align}
\eqref{eq:g_jsT4}& \geq \frac{1}{C} \|g_\alpha ^{(s)}\|_{L^2_x(\mathcal H^-_{\sigma;\eps})_\xi }^2  -C\{\|g_{\alpha+1}\|_{L^2_x(\mathcal H_{\sigma;\eps}^-)_\xi }^2 + \|g_{\alpha+1}\|_{L^2_x(\mathcal H_{\sigma})_\xi }^2\}\\
&\quad  -C \eps\langle \|F_+\|_{\mathfrak D}  \rangle^2\|g_\alpha ^{(s)}\|_{L^2_{x,\xi }}^2
 \end{align}
Next, similarly to \eqref{eq:g_jT4},
\begin{align}
\eqref{eq:g_jsT5}&\leq C_{\kappa}\eps  + \kappa \|g_\alpha ^{(s)}\|_{L^2_x (\mathcal H_\sigma)_\xi }
\end{align}
and,  as with \eqref{eq:g_jT5}, we use \eqref{eq:M_qGupper} to  get
\begin{align}
\eqref{eq:g_jsT6}&\leq \kappa \|g_\alpha ^{(s)}\|_{L^2_{x}(\mathcal H_\sigma)_\xi }^2 +  C_\kappa \eps^2 \langle \|F_+\|_{\mathfrak D}  \rangle^2 .
\end{align}
Finally, using \eqref{eq:Gammaq}, 
\begin{align}
\eqref{eq:g_jsT7} &\lesssim  \|g_\alpha ^{(s)}\|_{L^2_{x,\xi }}\|g_\alpha ^{(s)}\|_{L^2_x(\mathcal H_\sigma)_\xi }^2\\
& \lesssim \varsigma\|g_\alpha ^{(s)}\|_{L^2_x(\mathcal H_\sigma)_\xi }^2
\end{align}
We now combine the estimates for the terms \eqref{eq:g_jsT1} through \eqref{eq:g_jsT7}:
\begin{align}
&\frac{\eps}{2}\frac{d}{dt} \| g_\alpha ^{(s)}\|_{L^2_{x,\xi }}^2+ \frac{1}{C}\{ \|\mathcal P_\gamma^\perp g_\alpha ^{(s)}\|_{L^2_x(\mathcal H_\sigma)_\xi }^2 + \|g_\alpha ^{(s)}\|^2_{L^2_x(\mathcal H_{\sigma;\eps}^-)_\xi}\}\\
&\quad  \leq  C( \eps +  \varsigma + \eta +  \kappa) \|g_\alpha ^{(s)}\|_{L^2_x(\mathcal H_\sigma)_\xi }^2 + C_{\eta,\kappa}\{\|g_{\alpha+1}\|_{L^2_x(\mathcal H_{\sigma;\eps}^-)_\xi }^2 + \|g_{\alpha+1}\|_{L^2_x(\mathcal H_{\sigma})_\xi }^2\} \\
&\quad \quad  +C  \eps \langle  \|\partial_t a\|_{L^2_x}  + \|F_+\|_{\mathfrak D}  ^2\rangle\|g_\alpha ^{(s)}\|_{L^2_{x,\xi }}^2\\
&\quad \quad + C_{\kappa}\eps^2 \langle  \|F_+\|_{\mathfrak D}  \rangle^2.
\end{align}
Using \eqref{eq:projSplit} again, we conclude with \eqref{eq:g1sEnergy}.
\end{proof}
%


\subsection{Macroscopic estimates}

What remains is to get bounds on $\mathcal P_\gamma f$. This requires analysis of the local conservation laws. In order to derive these equations efficiently, and, in particular, see how the transport part of \eqref{eq:f} couples $(a,b,c)$, we introduce the ladder operators: the lowering and raising operators are respectively
\begin{align}
  \mathcal A_j  = \gamma^{\frac{1}{2}} \frac{  \xi_j}{2} + \gamma^{-\frac{1}{2}}  \partial_{\xi _j},\ \ \  \mathcal A_j^*  =\gamma^{\frac{1}{2}} \frac{ \xi_j}{2} -  \gamma^{-\frac{1}{2}} \partial_{\xi _j}, \ \ \ j \in \{1,2,3\}.
\end{align}
We recall the identities
\begin{equation}
\mathcal A_j \mu^{\frac{1}{2}} = 0, \ \ \ [\mathcal A_j,\mathcal A^*_k] = \varsigma_{jk}, \ \ \ j, k \in \{1,2,3\}.\label{eq:ladderCommutator}
\end{equation}
Recall also that the family of Hermite functions
\begin{equation}
\{\frac{1}{\sqrt{n_1!n_2!n_3!}} (\mathcal A_{1}^*)^{n_1} (\mathcal A_{2}^*)^{n_2} (\mathcal A_3^*)^{n_3} \mu_\gamma^{\frac{1}{2}}\}_{n_1,n_2,n_3 \in \mathbf N_0^3}
\end{equation}
gives a complete orthonormal basis for $L^2_\xi (\mathbf R^3)$.
We shall denote the (unnormalized) Hermite functions
\begin{equation}
\mathfrak h = \mu_\gamma^{\frac{1}{2}}, \ \ \ \mathfrak h_{j_1\ldots j_m} = \mathcal A_{j_1} \cdots \mathcal A_{j_n} \mu_\gamma^{\frac{1}{2}},  \ \ \ j_1,\ldots,j_m  \in\{1,2,3\}.
\end{equation}
We can represent the kernel of $\mathcal  L_\gamma$ using the hermite functions as follows:
\begin{equation}
N(  \mathcal L) = \mathrm{span}\{\mathfrak h ,\mathfrak h_1, \mathfrak h_2,\mathfrak h_3, \frac{1}{\sqrt{6}}\mathfrak h_{jj}\}.
\end{equation}
We can thus rewrite \eqref{eq:NLprojection} as
\begin{align}
\mathcal P_\gamma f &= a \mathfrak h+ b_j  \mathfrak h_j + c \frac{1}{\sqrt{6}}  \mathfrak h_{jj}
\end{align}
It is also convenient to define the following projection of $h$, involving third-order hermite functions:
\begin{equation}
d_j= \frac{1}{\sqrt{10}} \langle \mathfrak h_{jkk},f\rangle_{L^2_\xi }, \ \ \ j \in \{1,2,3\}
\end{equation}
We now rewrite the transport part of \eqref{eq:f} in terms of $\mathcal  A,\mathcal  A^*$
\begin{align}\label{eq:transportLadder}
 \xi\cdot \nabla_x + E \cdot (\nabla_\xi  - \frac{\xi }{2})= (\gamma^{-\frac{1}{2}}\partial_{x_j} + \gamma^{\frac{1}{2}} E _{j}) \mathcal A^*_j  + \gamma^{-\frac{1}{2}}\partial_{x_j}  \mathcal A_j.
\end{align}
We also will need to evaluate projections of $\mu^{-\frac{1}{2}} \partial_t (\mu^{\frac{1}{2}}f)$. Let $\zeta(\xi)$ be some linear combination of the hermite functions in $\gamma^{\frac{1}{2}}\xi$, i.e. there exists a polynomial $p:\mathbf R^3 \to \mathbf R$ (with coefficients independent of $\gamma$) such that
\begin{equation}
\zeta(\xi) = p(\gamma^{\frac{1}{2}} \xi) \mu(\xi)^{\frac{1}{2}}.
\end{equation}
Then,
\begin{equation}
\langle \zeta, \mu^{-\frac{1}{2}}\partial_t (\mu^{\frac{1}{2}} f)\rangle_{L^2_\xi } = \partial_t \langle \zeta, f\rangle_{L^2_\xi } - \langle \partial_t( \zeta  \mu^{-\frac{1}{2}})\mu^{\frac{1}{2}},  f\rangle_{L^2_\xi }
\end{equation}
Now,
\begin{equation}
\partial_t\{ \zeta  \mu^{-\frac{1}{2}}\} = \partial_t \{p (\gamma^{\frac{1}{2}} \xi)\} = \frac{\dot \gamma}{2\gamma} \xi\cdot \nabla_\xi  \{p(\gamma^{\frac{1}{2}} \xi)\} = \frac{\dot \gamma}{2\gamma} \xi\cdot \nabla_\xi  \{\zeta \mu^{-\frac{1}{2}}\}.
\end{equation}
Therefore,
\begin{align}
\partial_t\{\zeta  \mu^{-\frac{1}{2}}\} \mu^{\frac{1}{2}} &= \frac{\dot \gamma}{2\gamma} (\xi\cdot \nabla_\xi    + \frac{\gamma |\xi|^2}{2})\zeta \\
&=\frac{\dot \gamma}{2\gamma} \xi\cdot (\nabla_\xi  + \frac{\gamma \xi }{2} )\zeta \\
&=\frac{\dot \gamma}{2\gamma} (\mathcal A_j+ \mathcal A^*_j) \mathcal A_j\zeta
\end{align}
In summary,
\begin{equation} \label{eq:dtLadder}
\langle \zeta, \mu^{-\frac{1}{2}}\partial_t (\mu^{\frac{1}{2}} f)\rangle_{L^2_\xi } = \partial_t \langle \zeta, f\rangle_{L^2_\xi }  - \frac{\dot \gamma}{2\gamma} \langle  (\mathcal A_j+ \mathcal A_j^*)\mathcal A_j\zeta,f\rangle_{L^2_\xi }.
\end{equation}
The following lemma contains formulas for relevant projections of the equation \eqref{eq:f}.

\begin{lemma}\label{lemma:projections}
The following formulas hold:
\begin{equation}
\eps \partial_t a + \gamma^{-\frac{1}{2}} \partial_{x_j}  b_j =-\eps \partial_t (e^{\gamma \psi} ) , \label{eq:massConsPert}
\end{equation}

\begin{equation}
\begin{aligned}
&\eps (\partial_t - \frac{\dot \gamma}{2\gamma} )b_j + (\gamma^{-\frac{1}{2}} \partial_{x_j} +\gamma^{\frac{1}{2}} E _{j})a  -4\pi \gamma^{\frac{1}{2}} e^{\gamma \psi}\partial_{x_j} \Delta_x^{-1} a +  \sqrt{\frac{2}{3}}  \partial_{x_j}c \\
& \ \ \ + \partial_{x_k}  \langle \mathfrak h_{jk},\mathcal P^\perp_\gamma f \rangle_{L^2_\xi }  +    \langle \mathfrak h_j,\mathcal M_{\gamma, F_+} f\rangle_{L^2_\xi }  = -e^{\gamma \psi} \langle  \mathfrak h_j, \mathcal M_{\gamma, F_+}\mathfrak h\rangle_{L^2_\xi }, \label{eq:momConsPert}
\end{aligned}
\end{equation}

\begin{equation}
\begin{aligned}
& \eps ( \partial_t c - \frac{\dot \gamma}{\gamma} (c + \sqrt{\frac{3}{2}} a)) +\sqrt{ \frac{2}{3}} (\gamma^{-\frac{1}{2}} \partial_{x_j} + \gamma^{\frac{1}{2}} E _{j}) b_j + \sqrt{\frac{5}{3}} \gamma^{-\frac{1}{2}} \partial_{x_j} d_j \\
& \ \ \ + \frac{1 }{\sqrt{6}}\langle \mathfrak h_{jj}, \mathcal M_{\gamma, F_+} f \rangle_{L^2_\xi } =\eps \sqrt{\frac{3}{2}}\frac{ \dot \gamma}{\gamma} e^{\gamma \psi} +  \frac{e^{\gamma \psi} }{\sqrt{6}}\langle \mathfrak h_{jj},\mathcal M_{\gamma,F_+} \mathfrak h \rangle_{L^2_\xi } ,\label{eq:energyConsPert}
\end{aligned}
\end{equation}

\begin{equation}
\begin{aligned}
& \eps (\partial_td_j  - \frac{\dot \gamma}{2\gamma}(3d_j + \frac{3 \sqrt{2}}{\sqrt{5}} b_j)) +  \frac{3\sqrt{3}}{\sqrt{5}} (\gamma^{-\frac{1}{2}} \partial_{x^j}  + \gamma^{\frac{1}{2}}E _{j})  c\\
& \ \ \ +\sqrt{\frac{2}{5}}(\gamma^{-\frac{1}{2}} \partial_{x_k}  + \gamma^{\frac{1}{2}} E _{k}) \langle \mathfrak h_{jk}, \mathcal P_\gamma^\perp f\rangle_{L^2_\xi }  +\gamma^{-\frac{1}{2}}\frac{1}{\sqrt{10}}\partial_{x_k} \langle   \mathfrak h_{jkll}, f\rangle_{L^2_\xi } \\
& \ \ \ + \frac{1}{\sqrt{10}}\langle \mathfrak h_{jll}, \mathcal L_\gamma f+ \mathcal M_{\gamma, F_+} f\rangle_{L^2_\xi }=\langle\mathfrak h_{jll}, - e^{\gamma \psi} \mathcal M_{\gamma,F_+}\mathfrak h+  \Gamma_\gamma (f,f)\rangle.\label{eq:3rdMomConsPert}
\end{aligned}
\end{equation}

\end{lemma}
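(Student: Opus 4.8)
The plan is to obtain all four identities by projecting the equation \eqref{eq:f} for $f$ onto a suitable Hermite function in the $L^2_\xi$ inner product: \eqref{eq:massConsPert} comes from pairing against $\mathfrak h=\mu_\gamma^{1/2}$, \eqref{eq:momConsPert} against $\mathfrak h_j$, \eqref{eq:energyConsPert} against $\tfrac1{\sqrt6}\mathfrak h_{jj}$, and \eqref{eq:3rdMomConsPert} against $\tfrac1{\sqrt{10}}\mathfrak h_{jll}$. The three structural inputs are \eqref{eq:dtLadder} for the $\eps\partial_t$ term, \eqref{eq:transportLadder} for the streaming-plus-field term, and the ladder commutation relations \eqref{eq:ladderCommutator} together with $\mathcal A_j\mu_\gamma^{1/2}=0$, which reduce every pairing to raising and lowering indices in the orthonormal Hermite basis; one also uses self-adjointness of $\mathcal L_\gamma$ with $N(\mathcal L_\gamma)=\mathrm{span}\{\mathfrak h,\mathfrak h_j,\tfrac1{\sqrt6}\mathfrak h_{jj}\}$ and the divergence-in-$\xi$ form of $Q^\eps_{+-}$ in \eqref{eq:Q_+-}.

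For the $\eps$-time term, \eqref{eq:dtLadder} produces $\eps\partial_t\langle\zeta,f\rangle_{L^2_\xi}-\eps\tfrac{\dot\gamma}{2\gamma}\langle(\mathcal A_j+\mathcal A_j^*)\mathcal A_j\zeta,f\rangle_{L^2_\xi}$; evaluating $(\mathcal A_j+\mathcal A_j^*)\mathcal A_j\zeta$ via \eqref{eq:ladderCommutator} gives $0$ for $\zeta=\mathfrak h$, $\mathfrak h_j$ for $\zeta=\mathfrak h_j$, $6\mathfrak h+2\mathfrak h_{jj}$ for $\zeta=\mathfrak h_{jj}$, and the analogous degree-three combination for $\mathfrak h_{jll}$, so that pairing with $f$ and reading off coefficients through \eqref{eq:NLprojection} yields precisely the $\dot\gamma/\gamma$ corrections $-\tfrac{\dot\gamma}{2\gamma}b_j$, $-\tfrac{\dot\gamma}{\gamma}(c+\sqrt{3/2}\,a)$, and $-\tfrac{\dot\gamma}{2\gamma}(3d_j+\tfrac{3\sqrt2}{\sqrt5}b_j)$. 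For the streaming term one applies \eqref{eq:transportLadder} and integrates by parts in $\xi$ to move $\mathcal A_j^*$ and $\mathcal A_j$ onto $\zeta$: raising (resp.\ lowering) $\zeta$ by one Hermite degree pairs $f$ against a neighboring macroscopic coefficient, so $\mathfrak h$ sees only $b_j$, $\mathfrak h_j$ sees $a$, $c$ and $\langle\mathfrak h_{jk},\mathcal P^\perp_\gamma f\rangle$, and so on, the constants $\sqrt{2/3},\sqrt{3/2},\sqrt{5/3}$ being forced by orthonormality of the Hermite functions.

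The collision and electrostatic terms go as follows. Since $\mathcal L_\gamma$ is self-adjoint, $\langle e^{\gamma\psi}\mathcal L_\gamma f,\zeta\rangle_{L^2_\xi}=e^{\gamma\psi}\langle f,\mathcal L_\gamma\zeta\rangle_{L^2_\xi}=0$ for every $\zeta\in N(\mathcal L_\gamma)$, which accounts for the absence of an $\mathcal L_\gamma$ term in the first three identities; for $\zeta=\tfrac1{\sqrt{10}}\mathfrak h_{jll}$ it survives and, combined with the analogous piece of $\mathcal M_{\gamma,F_+}$, gives the $\langle\mathfrak h_{jll},\mathcal L_\gamma f+\mathcal M_{\gamma,F_+}f\rangle$ term of \eqref{eq:3rdMomConsPert}. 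The reaction term $-4\pi\gamma\,\xi\cdot\nabla_x\Delta_x^{-1}a\,\mu_\gamma^{1/2}e^{\gamma\psi}$ equals $-4\pi\gamma^{1/2}e^{\gamma\psi}\,\partial_{x_k}\Delta_x^{-1}a\,\mathfrak h_k$ (using $\gamma\xi_k\mu_\gamma^{1/2}=\gamma^{1/2}\mathfrak h_k$), hence pairs nontrivially only with $\mathfrak h_j$, producing the $-4\pi\gamma^{1/2}e^{\gamma\psi}\partial_{x_j}\Delta_x^{-1}a$ term in \eqref{eq:momConsPert}; it vanishes against $\mathfrak h$, $\mathfrak h_{jj}$ and $\mathfrak h_{jll}$ by parity/degree. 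The $\mathcal M_{\gamma,F_+}$ contributions pass through verbatim as $\langle\mathfrak h_j,\mathcal M_{\gamma,F_+}f\rangle$, $\tfrac1{\sqrt6}\langle\mathfrak h_{jj},\mathcal M_{\gamma,F_+}f\rangle$, $\tfrac1{\sqrt{10}}\langle\mathfrak h_{jll},\mathcal M_{\gamma,F_+}f\rangle$, except that against $\mathfrak h$ the $\nabla_\xi\cdot$ structure of $Q^\eps_{+-}$ forces $\langle\mathfrak h,\mathcal M_{\gamma,F_+}h\rangle=\int_{\mathbf R^3}Q^\eps_{+-}(F_+,\mu_\gamma^{1/2}h)\,d\xi=0$, which is exactly why no $\mathcal M$ term appears in \eqref{eq:massConsPert}. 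Finally the source terms on the right of \eqref{eq:f} are treated the same way: writing $\mu_\gamma e^{\gamma\psi}=\mu_\gamma^{1/2}\cdot(\mu_\gamma^{1/2}e^{\gamma\psi})$, reapplying \eqref{eq:dtLadder}, and using that $\int_{\mathbf R^3}p(\gamma^{1/2}\xi)\mu_\gamma\,d\xi$ is independent of $\gamma$ for any polynomial $p$, one recovers the right-hand sides $-\eps\partial_t(e^{\gamma\psi})$ and $\eps\sqrt{3/2}\,\tfrac{\dot\gamma}{\gamma}e^{\gamma\psi}$, while the $\mathcal M$- and $\Gamma$-source terms reproduce the stated $-e^{\gamma\psi}\langle\mathfrak h_\bullet,\mathcal M_{\gamma,F_+}\mathfrak h\rangle$ and $\langle\mathfrak h_\bullet,\Gamma_\gamma(f,f)\rangle$ terms.

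The only genuine difficulty is computational bookkeeping: correctly tracking the normalizations of the higher Hermite functions $\mathfrak h_{jk},\mathfrak h_{jll},\mathfrak h_{jkll}$ and the $\delta$-contractions generated by the commutator $[\mathcal A_j,\mathcal A_k^*]=\delta_{jk}$ — in particular, in \eqref{eq:3rdMomConsPert} the fourth-order term $\gamma^{-1/2}\tfrac1{\sqrt{10}}\partial_{x_k}\langle\mathfrak h_{jkll},f\rangle$ arises from $\mathcal A_k^*$ acting on $\mathfrak h_{jll}$ and must be separated cleanly from the $\mathcal P_\gamma^\perp$-piece $\sqrt{2/5}\,(\gamma^{-1/2}\partial_{x_k}+\gamma^{1/2}E_k)\langle\mathfrak h_{jk},\mathcal P_\gamma^\perp f\rangle$. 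This is routine linear algebra in the weighted $L^2_\xi$ Hermite basis and involves no analytic subtlety.
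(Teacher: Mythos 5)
Your proposal takes exactly the route the paper uses: project \eqref{eq:f} onto $\mathfrak h$, $\mathfrak h_j$, $\tfrac{1}{\sqrt{6}}\mathfrak h_{jj}$, $\tfrac{1}{\sqrt{10}}\mathfrak h_{jll}$, handle the $\eps\partial_t$ piece via \eqref{eq:dtLadder}, the streaming piece via \eqref{eq:transportLadder}, and reduce everything to raising/lowering and the commutator \eqref{eq:ladderCommutator}, with the vanishing of the $\mathcal L_\gamma$, $\Gamma_\gamma$, and $\langle\mathfrak h,\mathcal M_{\gamma,F_+}\cdot\rangle$ pairings following from the collision invariants and the divergence structure of $Q^\eps_{+-}$, exactly as in the paper (which writes out only the $\mathfrak h_j$ projection). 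The structural explanations you add for each vanishing are correct and consistent with the paper's argument.
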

\begin{proof}
These formulas follow from direct computation of the $\xi $ integral of \eqref{eq:f} multiplied by $\mathfrak h,\mathfrak h_j,\frac{1}{\sqrt{6}}\mathfrak h_{jj}$ and $\frac{1}{\sqrt{10}}\mathfrak h_{jll}$. We only give the details for
\eqref{eq:momConsPert}. The proof of the other formulas are similar.
Using \eqref{eq:transportLadder} and \eqref{eq:dtLadder}, and the fact that $\mathcal L_\gamma f$,  $\mu^{-\frac{1}{2}} \partial_t \nu_\gamma$ and $\Gamma_\gamma (f,f)$ are orthogonal to $\mathfrak h_j$, we have
\begin{align}
& \eps (\partial_t b_j  - \frac{\dot \gamma}{2\gamma} \langle  (\mathcal A_k\mathcal A_k+ \mathcal A_k^*\mathcal A_k)\mathfrak h_j,f\rangle_{L^2_\xi }).\\
&\quad+ \langle \mathfrak h_j, \{(\gamma^{-\frac{1}{2}}\partial_{x_k} + \gamma^{\frac{1}{2}} E _{k}) \mathcal A^*_k  + \gamma^{-\frac{1}{2}}\partial_{x_k}  \mathcal A_k\}f\rangle_{L^2_\xi } \\
&\quad +\langle \mathfrak h_j, \mathcal M_{\gamma,F_+}f\rangle_{L^2_\xi }\\
& = -\langle \mathfrak h_j ,e^{\gamma \psi}\mathcal M_{\gamma,F_+} \mathfrak h\rangle_{L^2_\xi }.
\end{align}
It remains to evaluate terms involving the ladder operators. Using \eqref{eq:ladderCommutator}, one computes
\begin{align}
(\mathcal A_k+ \mathcal A_k^*)\mathcal A_k\mathfrak h_j &= (\mathcal A^*_k+ \mathcal A_k)\mathcal A_k \mathcal A_j^* \mathfrak h \\
& = (\mathcal A^*_j+ \mathcal A_j) \mathfrak h \\
&=\mathfrak h_j
\end{align}
and
\begin{align}
&\langle \mathfrak h_j, \{(\gamma^{-\frac{1}{2}}\partial_{x_k} + \gamma^{\frac{1}{2}} E _{k}) \mathcal A^*_k  + \gamma^{-\frac{1}{2}}\partial_{x_k}  \mathcal A_k\}f\rangle_{L^2_\xi } \\
&\quad= \langle \mathcal A_k \mathcal A_j ^*\mathfrak h, (\gamma^{-\frac{1}{2}}\partial_{x_k} + \gamma^{\frac{1}{2}} E _{k})f\rangle_{L^2_\xi } \\
&\quad \quad +  \gamma^{-\frac{1}{2}}\langle \mathcal A_k^*\mathcal A^*_j\mathfrak h,\partial_{x_k}f\rangle_{L^2_\xi }\\
&\quad = (\gamma^{-\frac{1}{2}} \partial_{x_j} + \gamma^{\frac{1}{2}} E _k )a \\
&\quad \quad+ \gamma^{-\frac{1}{2}} \partial_{x_k}\langle \mathfrak h_{jk}, \mathcal P_\gamma^{\perp} f\rangle_{L^2_\xi } \\
&\quad\quad + \frac{1}{\sqrt{6}}\gamma^{-\frac{1}{2}} \langle \mathfrak h_{jk}, \mathfrak h_{ll}\rangle_{L^2_\xi }\partial_{x_k}c.
\end{align}
Finally, noting that in the last term,
\begin{align}
\langle \mathfrak h_{jk}, \mathfrak h_{ll}\rangle_{L^2_\xi } = \sqrt{2}\delta_{jk},
\end{align}
we conclude with \eqref{eq:momConsPert}.
\end{proof}

\begin{lemma}\label{lem:bBd}
Assume the hypotheses of Proposition \ref{prop:derivationRefined}. Then, for all $r \geq 0$,
\begin{equation}
\|b\|_{H^{r}_x} \lesssim_M \|\mathcal P_\gamma^\perp f^{(r)}\|_{L^2_x (\mathcal H_\sigma)_\xi } + \| f^{(r)}\|_{L^2_x (\mathcal H_{\sigma;\eps}^-)_\xi } + \eps^2 \|(a,c)\|_{H^r_x}^2 .\label{eq:bBdMain}
\end{equation}
\end{lemma}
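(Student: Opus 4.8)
The plan is to extract the bound on $b$ from the momentum conservation formula \eqref{eq:momConsPert}, using the standard trick that the divergence-type term on the left can be inverted via $\nabla_x\Delta_x^{-1}$. Apply $\langle\nabla_x\rangle^r$ to \eqref{eq:momConsPert} and take the $L^2_x$ inner product against $\langle\nabla_x\rangle^r b$ (or, cleaner, use the identity $\|b^{(r)}\|_{L^2_x}^2 = \langle b^{(r)}, b^{(r)}\rangle$ and substitute for one factor of $b^{(r)}$ from the equation). The key observation is that the term $\gamma^{-\frac12}\partial_{x_j}a$ is a pure gradient, so when paired with $b^{(r)}$ we can integrate by parts and use the continuity equation \eqref{eq:massConsPert} to convert $\nabla_x\cdot b$ into $\eps\partial_t a + \eps\partial_t(e^{\gamma\psi})$, which is $O(\eps)$ under the bootstrap assumptions; similarly $4\pi\gamma^{\frac12}e^{\gamma\psi}\partial_{x_j}\Delta_x^{-1}a$ is manifestly lower-order once we observe $\Delta_x^{-1}$ gains two derivatives and $a = n_-^\eps - e^{\gamma\psi}$ is controlled in $\mathscr E_{-,2}$. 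The remaining terms on the left — $\eps(\partial_t - \frac{\dot\gamma}{2\gamma})b_j$, $\sqrt{2/3}\,\partial_{x_j}c$, $\partial_{x_k}\langle\mathfrak h_{jk},\mathcal P_\gamma^\perp f\rangle$, and $\langle\mathfrak h_j,\mathcal M_{\gamma,F_+}f\rangle$ — plus the right-hand side $e^{\gamma\psi}\langle\mathfrak h_j,\mathcal M_{\gamma,F_+}\mathfrak h\rangle$ are either genuinely lower-order (the $\mathcal P_\gamma^\perp f$ and microscopic $\mathcal M$ terms, giving the first two terms on the right of \eqref{eq:bBdMain}) or need to be absorbed.

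The subtlety is the term $\sqrt{2/3}\,\partial_{x_j}c$, which is a gradient of a macroscopic quantity. Here I would exploit the structure emphasized in the statement of the lemma and the introduction: rather than trying to bound $\nabla_x c$ directly (which would require a macroscopic estimate we don't yet have), one integrates by parts to move the derivative onto $b^{(r)}$, producing $-\sqrt{2/3}\langle c^{(r)}, \nabla_x\cdot b^{(r)}\rangle$, and again invokes the continuity equation \eqref{eq:massConsPert} to replace $\nabla_x\cdot b$ by $-\eps\sqrt{\gamma}\,\partial_t a - \eps\sqrt{\gamma}\,\partial_t(e^{\gamma\psi})$; using the bootstrap bound \eqref{eq:bootstrap4} on $\int_0^T\|\partial_t(n_-^\eps - e^{\gamma\psi})\|_{L^2_x}\,dt$ together with $\|c^{(r)}\|_{L^2_x}\lesssim \mathscr E_{-,2}$-type control, this is $O(\eps)$ times $\|(a,c)\|_{H^r_x}$. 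Combined with Cauchy–Schwarz and Young's inequality, this produces the $\eps^2\|(a,c)\|_{H^r_x}^2$ term in \eqref{eq:bBdMain}. The dissipative terms $\|\mathcal P_\gamma^\perp f^{(r)}\|_{L^2_x(\mathcal H_\sigma)_\xi}$ and $\|f^{(r)}\|_{L^2_x(\mathcal H_{\sigma;\eps}^-)_\xi}$ absorb the $\mathcal P_\gamma^\perp f$ contributions and the $\mathcal M_{\gamma,F_+}f$ contributions respectively — for the latter I would apply the upper bound \eqref{eq:M_qGupper} with $G = F_+$, noting $\|\langle v\rangle^3 F_+\|_{L^2}\lesssim_M 1$ and $\|\langle v\rangle^{7/2}F_+\|_{\mathcal H_\sigma}\lesssim \|F_+\|_{\mathfrak D}$, and using the bootstrap bounds to keep constants $\lesssim_M$.

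I expect the main obstacle to be bookkeeping the commutators from $\langle\nabla_x\rangle^r$ acting on products with $e^{\gamma\psi}$, $E$, and the $\gamma$-dependent coefficients: one needs $\|\psi\|_{H^{s+2}_x}\lesssim_M 1$ and $\|\phi\|_{H^{s+2}_x}\lesssim_M 1$ (available from Lemma \ref{lem:PP} and the bootstrap) to control terms like $\langle\nabla_x\rangle^r(e^{\gamma\psi}\partial_{x_j}\Delta_x^{-1}a) - e^{\gamma\psi}\langle\nabla_x\rangle^r\partial_{x_j}\Delta_x^{-1}a$ via the usual Moser/commutator estimates. One also must be slightly careful that the inequality is stated for \emph{all} $r\geq 0$ and not just $r = s$, so the commutator estimates should be phrased for general Sobolev exponents. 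Since the paper only applies this with $r\in\{0,s\}$, and the arguments are identical, I would just state it for general $r$ and remark that the commutator estimates are standard. Once the continuity equation is used to handle the two gradient terms ($\nabla_x a$ and $\nabla_x c$), the rest is a routine application of \eqref{eq:Lqbeta1}–\eqref{eq:Lqbeta2}, \eqref{eq:M_qGupper}, and the bootstrap hypotheses, followed by Young's inequality to isolate $\|b^{(r)}\|_{L^2_x}$ on the left.
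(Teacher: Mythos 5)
Your approach cannot work, and the gap is structural, not technical. You propose pairing the momentum conservation formula \eqref{eq:momConsPert} against $b^{(r)}$ and inverting via $\nabla_x\Delta_x^{-1}$. But \eqref{eq:momConsPert} contains no term that is coercive in $b$: the $\eps(\partial_t-\frac{\dot\gamma}{2\gamma})b_j$ piece paired with $b_j$ yields $\frac{\eps}{2}\frac{d}{dt}\|b\|_{L^2_x}^2$ plus an $O(\eps)$ sign-indefinite term, and the gradient terms $\partial_{x_j}a$, $\partial_{x_j}c$, after the integration by parts and continuity-equation substitution you suggest, become $\eps\langle a,\partial_t a\rangle$, $\eps\langle c,\partial_t a\rangle$ type expressions. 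None of this produces $\|b\|_{L^2_x}^2$ with a favorable sign. Moreover, Lemma \ref{lem:bBd} is a \emph{pointwise-in-time} inequality $\|b\|_{H^r_x}\lesssim\cdots$, whereas your micro-macro argument would at best deliver a differential inequality in $t$; these are not the same type of statement. In Guo-style macroscopic estimates, \eqref{eq:momConsPert} paired against $\nabla_x\Delta_x^{-1}a$ is used to control $\|a\|$, and the analogous trick for $\|\nabla_x b\|$ would use the energy equation \eqref{eq:energyConsPert} — neither gives the clean pointwise bound on $\|b\|$ itself that the lemma claims.

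The actual mechanism is the one flagged in the introduction as the ``extra dampening effect on the momentum density'': the ion-electron collision dissipation $\|f\|_{L^2_x(\mathcal H_{\sigma;\eps}^-)_\xi}$ directly controls $\|b\|_{L^2_x}$, uniformly in $\eps$, with no reference to the moment equations at all. The point is that the Hermite functions $\mathfrak h$ and $\mathfrak h_{jj}$ (the $a$ and $c$ modes) are radially symmetric in $\xi$, so $\nabla_\xi$ of them is parallel to $\xi$ and is annihilated (to order $\eps$) by the degenerate radial direction of $\sigma(\xi/\eps)$; this produces the $\eps^2\|(a,c)\|^2$ error. By contrast $\mathfrak h_j$ is \emph{not} radial, and its $\xi$-gradient has a genuine angular component; restricting the $\mathcal H^-_{\sigma;\eps}$ quadratic form to $|\xi|\geq 1$ and using the lower bound $\frac{1}{\eps}\sigma_{ij}(\xi/\eps)\nu_i\nu_j\gtrsim\frac{|P_{\xi^\perp}\nu|^2}{|\xi|}$, the paper shows the Gram matrix $W_{kl}=\frac{1}{\eps}\langle\sigma_{ij}|_{\xi/\eps}\mathbf 1_{B_1^c}\partial_{\xi_i}\mathfrak h_k,\partial_{\xi_j}\mathfrak h_l\rangle_{L^2_\xi}$ is uniformly positive definite. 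Decomposing $f=\mathcal P_\gamma^\perp f + a\mathfrak h + b_j\mathfrak h_j + c\frac{1}{\sqrt6}\mathfrak h_{jj}$, applying Cauchy--Schwarz in this semi-inner product to separate the $b_j\mathfrak h_j$ part from $\mathcal P_\gamma^\perp f$, and using $\sigma(\xi/\eps)\lesssim\sigma(\xi)$ on $|\xi|\geq 1$ to absorb the cross term into $\|\mathcal P_\gamma^\perp f\|_{L^2_x(\mathcal H_\sigma)_\xi}^2$, one lands exactly on \eqref{eq:bBdMain}. This spectral/variational argument on the dissipation norm is what you would need to find; the conservation laws play no role in this particular lemma.
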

\begin{proof}
It suffices to show the case when $r = 0$. Because of radial symmetry of $\mathfrak h$ and $\mathfrak h_{jj}$, we have
\begin{align}
\|(\mathfrak h,\mathfrak h_{jj})\|_{\mathcal H_{\sigma;\eps}^-} \lesssim \eps.
\end{align}
Hence,
\begin{align}
\| f\|_{L^2_x (\mathcal H_{\sigma;\eps}^-)_\xi }^2 \geq \frac{1}{2}\|\mathcal P_\gamma^\perp f + b_j \mathfrak h_j\|_{L^2_x (\mathcal H_{\sigma;\eps}^-)_\xi }^2  - C\eps^2 \|(a,c)\|_{L^2_x}^2
\end{align}
Now, writing $u = \mathcal P_\gamma^\perp f + b_j \mathfrak h_j$, we have
\begin{align}
\|u\|_{L^2_x (\mathcal H_{\sigma;\eps}^-)_\xi }^2  =\frac{1}{\eps} \langle \sigma_{ij}|_{\frac{\xi }{\eps}} \partial_{\xi _i} u,\partial_{\xi _j} u\rangle_{L^2_{x,\xi }} \geq \frac{1}{\eps} \langle  \sigma_{ij}|_{\frac{\xi }{\eps}} \mathbf 1_{B_1^c}(\xi) \partial_{\xi _i} u,\partial_{\xi _j} u\rangle_{L^2_{x,\xi }}.
\end{align}
On the other hand, for all $\nu\in\mathbf S^2$, and $\xi  \in \mathbf R^3\setminus B_1$, we have
\begin{align}
 \sigma_{ij}(\frac{\xi }{\eps}) \nu_i\nu_j \lesssim \sigma_{ij}(\xi)\nu_i\nu_j 
\end{align}
Now, we expand $u$ as follows:
\begin{align}
&{\frac{1}{\eps} \langle  \sigma_{ij}|_{\frac{\xi }{\eps}} \mathbf 1_{B_1^c}(\xi) \partial_{\xi _i} u,\partial_{\xi _j} u\rangle_{L^2_{x,\xi }}} \\
&=\frac{1}{\eps} \langle  \sigma_{ij}|_{\frac{\xi }{\eps}} \mathbf 1_{B_1^c}(\xi) \partial_{\xi _i} (b_k\mathfrak h_k) ,\partial_{\xi _j} (b_l\mathfrak h_l)\rangle_{L^2_{x,\xi }} \\
&\quad  + \frac{2}{\eps}\langle  \sigma_{ij}|_{\frac{\xi }{\eps}} \mathbf 1_{B_1^c}(\xi) \partial_{\xi _i} (b_k\mathfrak h_k) ,\partial_{\xi _j} \mathcal P_\gamma^\perp f \rangle_{L^2_{x,\xi }} \\
&\quad + \frac{1}{\eps}\langle  \sigma_{ij}|_{\frac{\xi }{\eps}} \mathbf 1_{B_1^c}(\xi) \partial_{\xi _i}\mathcal P_\gamma^\perp f,\partial_{\xi _j} \mathcal P_\gamma^\perp f\rangle_{L^2_{x,\xi }}.
\end{align}
Now $\frac{1}{\eps}\langle  \sigma_{ij}|_{\frac{\xi }{\eps}} \mathbf 1_{B_1^c}(\xi) \partial_{\xi _i} \cdot ,\partial_{\xi _j}  \cdot \rangle_{L^2_{x,\xi }} $ defines a  semi-inner product. So, we 
apply Cauchy-Schwarz and Young's inequality to the cross term to get
\begin{align}
&{\frac{1}{\eps} \langle  \sigma_{ij}|_{\frac{\xi }{\eps}} \mathbf 1_{B_1^c}(\xi) \partial_{\xi _i} u,\partial_{\xi _j} u\rangle_{L^2_{x,\xi }}}  \\
&\quad \geq {\frac{1}{2\eps} \langle  \sigma_{ij}|_{\frac{\xi }{\eps}} \mathbf 1_{B_1^c}(\xi) \partial_{\xi _i} (b_k\mathfrak h_k) ,\partial_{\xi _j} (b_l\mathfrak h_l)\rangle_{L^2_{x,\xi }}}  - {\frac{2}{\eps} \langle  \sigma_{ij}|_{\frac{\xi }{\eps}} \mathbf 1_{B_1^c}(\xi) \partial_{\xi _i}\mathcal P_\gamma^\perp f,\partial_{\xi _j} \mathcal P_\gamma^\perp f\rangle_{L^2_{x,\xi }}}  \\
&\quad \geq \frac{1}{C} {W _{kl}\langle b_l ,b_k\rangle_{L^2_x}}-C\|\mathcal P_\gamma^\perp f\|_{L^2_x(\mathcal H_\sigma)_\xi }^2,
\end{align}
where
\begin{align}
W_{kl} := \frac{1}{\eps}\langle  \sigma_{ij}|_{\frac{\xi }{\eps}} \mathbf 1_{B_1^c}(\xi) \partial_{\xi _i} \mathfrak h_k,\partial_{\xi _j} \mathfrak h_l\rangle_{L^2_{\xi }}.
\end{align}
We now show that
\begin{align}\label{eq:WLowerBd}
W_{kl} \nu_k \nu_l \geq\frac{1}{C}
\end{align}
 for all $\nu\in \mathbf S^2$. Now,
\begin{align}
\partial_{\xi _i} \mathfrak h_k =\frac{\gamma^{\frac{1}{2}}}{2} (\delta_{ik} - \frac{\gamma}{2} \xi_i \xi_k ) \mu^{\frac{1}{2}}_\gamma
\end{align}
Then, by the reverse triangle inequality, 
\begin{align}
\sigma_{ij}|_{\frac{\xi }{\eps}}  \partial_{\xi _i} \mathfrak h_k,\partial_{\xi _j} \mathfrak h_l&=\frac{\gamma}{4\eps } \sigma_{ij}|_{\frac{\xi }{\eps}} (\nu_i - \frac{\gamma}{2}(\nu\cdot \xi) \xi_i)(\nu_j - \frac{\gamma}{2}(\nu\cdot \xi) \xi_j)\mu_\gamma \\
&\geq\{\frac{\gamma}{8\eps} \sigma_{ij}|_{\frac{\xi }{\eps}}\nu_i \nu_j - C\frac{ (\nu \cdot \xi)^2}{\eps}  \sigma_{ij}|_{\frac{\xi }{\eps}} \xi_i\xi_j\} \mu_\gamma
\end{align}
Now, for $|\xi| \geq 1$, we have
\begin{align}
\frac{1}{\eps}\sigma_{ij}|_{\frac{\xi }{\eps}} \xi_i \xi_j \sim \frac{|\xi|^2}{\langle \xi/\eps\rangle^3} \sim \eps^2\frac{1}{|\xi|}.
\end{align}
On the other hand, 
\begin{align}
\frac{1}{\eps} \sigma_{ij}|_{\frac{\xi }{\eps}} \nu_i \nu_j \gtrsim \frac{1}{\eps} \frac{|P_{\xi ^\perp} \nu|^2 }{\langle \xi/\eps\rangle} \gtrsim \frac{|P_{\xi ^\perp} \nu|^2}{|\xi|}.
\end{align}
Hence,
\begin{align}
W_{kl} \nu_k \nu_l &\geq \frac{1}{C} \int_{\mathbf R^3\setminus B_1} \frac{|P_{\xi ^\perp} \nu|^2}{|\xi|} \mu_\gamma d\xi   - \eps^2 \int_{\mathbf R^3\setminus B_1} |\xi|\mu_\gamma d\xi  \\
&\geq \frac{1}{C} - C\eps^2.
\end{align}
Taking $\eps$ small enough, we deduce \eqref{eq:WLowerBd}, and thus
\begin{align}
&{\frac{1}{\eps} \langle  \sigma_{ij}|_{\frac{\xi }{\eps}} \mathbf 1_{B_1^c}(\xi) \partial_{\xi _i} u,\partial_{\xi _j} u\rangle_{L^2_{x,\xi }}}\geq  \frac{1}{C} {\|b\|_{L^2_x}^2}-C\|\mathcal P_\gamma^\perp f\|_{L^2_x(\mathcal H_\sigma)_\xi }^2.
\end{align}
From this, we conclude \eqref{eq:bBdMain} in the case $r = 0$.
\end{proof}

The next lemma allows us to control $(a,b,c)$. 
\begin{proposition}\label{prop:macroscopicEstimates} 
Let $M \geq 1$, and let $\eta,   \varsigma$ and $\eps$ be sufficiently small depending on $M$. Assume the bootstrap assumptions \eqref{eq:bootstrap1} through \eqref{eq:bootstrap4}.
 Then,  exists two real valued functions $\mathscr G = \mathscr G^{(M)}$ and $\mathscr G_{reg} = \mathscr G^{(M)}_{reg}$ on $[0,T]$ satisfying, for all $t \in [0,T]$
\begin{align}
|\mathscr G(t)| \lesssim_M \|f(t)\|_{L^2_{x,\xi }}^2, \quad | \mathscr G_{reg}(t)| \lesssim_M \|f^{(s)}(t)\|_{L^2_{x,\xi }}^2
\end{align}
such that 
\begin{equation}\begin{aligned}\label{eq:macroBd0}
&\eps \frac{d}{dt} \mathscr G(t) +  \|\mathcal P_\gamma f\|_{L^2_{x}(\mathcal H_\sigma)_\xi }^2\\
&\quad  \lesssim_M \eps^2 \langle \|F_+\|_{\mathfrak D}  \rangle^2 + \min_{\alpha\in\{1,2,3\}}\{ \|\mathcal P_\gamma^\perp g_\alpha \|_{L^2_x(\mathcal H_\sigma)_\xi }^2+  \|g_\alpha \|^2_{L^2_x (\mathcal H_{\sigma;\eps}^-)_\xi }\},
\end{aligned}
\end{equation}
and
\begin{equation}\begin{aligned}
\label{eq:macroBd1}
&\eps \frac{d}{dt} \mathscr G_{reg}(t) +  \|\mathcal P_\gamma f^{(s)}\|_{L^2_{x}(\mathcal H_\sigma)_\xi }^2\\
& \quad \lesssim_M \eps^2 \langle  \|F_+\|_{\mathfrak D}  \rangle^2 + \min_{\alpha\in \{1,2\}} \{\|\mathcal P_\gamma^\perp g_\alpha ^{(s)}\|_{L^2_x(\mathcal H_\sigma)_\xi }^2+  \|g_\alpha ^{(s)}\|^2_{L^2_x (\mathcal H_{\sigma;\eps}^-)_\xi }\}.
\end{aligned}
\end{equation}
\end{proposition}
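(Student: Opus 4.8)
\textbf{Proof strategy for Proposition \ref{prop:macroscopicEstimates}.}
The plan is to run the classical macroscopic/hypocoercivity argument of \cite{guo_landau_2002,guo2012vlasov} on the system of local conservation laws derived in Lemma \ref{lemma:projections}, but adapted to the $\eps$-scaled equation \eqref{eq:f} and to the intermediary Maxwellian $\mu_{\gamma^\eps}e^{\gamma^\eps\psi^\eps}$. First I would treat the mean kinetic energy mode separately: integrating \eqref{eq:energyConsPert} in $x$ kills all the spatial-derivative terms, leaving an ODE of the form $\eps\frac{d}{dt}(\bar c - (\text{stuff})) = (\text{small})$. Using the energy condition \eqref{eq:energyCondition} together with the definition of $\gamma^\eps$ via \eqref{eq:PP} (so that $\tfrac{3}{2\gamma^\eps}$ is by construction close to the mean kinetic energy of $F_-^\eps$) and the nonlinear identities relating $\bar c$, $\bar a$, $\int|\xi|^2F_-^\eps$ and $\tfrac{1}{8\pi}\int|\nabla_x\phi^\eps|^2$, one shows $|\bar c| + |\bar a|^{1/2}|\cdots| \lesssim_M \eps + (\text{terms controlled by the dissipation})$; this is precisely the content of ``the first step of Proposition \ref{prop:macroscopicEstimates}'' alluded to in the introduction, and it is why the intermediary quantities are needed. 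Once the zero-mode of $c$ is controlled, Poincar\'e's inequality reduces the estimate for $c$ (and for $a$, using the Poisson equation $4\pi a = -\Delta_x(\phi-\psi)$ and $\int_{\mathbf T^3}a\,dx=0$, and for $b$, using Lemma \ref{lem:bBd}) to estimates on their gradients.

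Next I would build the Morawetz-type interaction functional $\mathscr G$. Following \cite{guo2012vlasov}, one introduces the standard multiplier functionals: test \eqref{eq:momConsPert} against $\nabla_x\Phi_a$ where $-\Delta_x\Phi_a = a$ to extract $\|\nabla_x a\|_{L^2_x}^2$; test \eqref{eq:energyConsPert} against a similar potential for $c$ to extract $\|\nabla_x c\|_{L^2_x}^2$; and test the $d_j$-equation \eqref{eq:3rdMomConsPert} (or rather an appropriate combination) against $\nabla_x$ of a potential for $b$ to extract $\|\nabla_x b\|_{L^2_x}^2$. Each of these contributes, after integration by parts in $x$ and $t$, a term $\eps\frac{d}{dt}(\text{interaction term})$ plus the desired positive macroscopic dissipation $\|\mathcal P_\gamma f\|_{L^2_x(\mathcal H_\sigma)_\xi}^2$ (which is comparable to $\|(a,b,c)\|_{L^2_x}^2 + \|\nabla_x(a,b,c)\|_{L^2_x}^2$ after the Poincar\'e step), plus error terms. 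The error terms fall into three classes: (a) the microscopic parts $\langle\mathfrak h_{jk},\mathcal P_\gamma^\perp f\rangle$ and $d_j$, which are absorbed into $\|\mathcal P_\gamma^\perp g_\alpha\|_{L^2_x(\mathcal H_\sigma)_\xi}^2$ (or its regularized version) since $\mathcal P_\gamma^\perp f$ and $\mathcal P_\gamma^\perp g_\alpha$ are comparable by part (iii) of Lemma \ref{lem:LandGammaBds} and the weight conversions in the Setup subsection; (b) the $\mathcal M_{\gamma,F_+}$-terms, bounded via \eqref{eq:M_qGupper} by $\|F_+\|_{\mathfrak D}$ times $\|g_\alpha\|_{L^2_x(\mathcal H_{\sigma;\eps}^-)_\xi}$ plus $\eps$-small pieces, with the inhomogeneous term $e^{\gamma\psi}\langle\mathfrak h_j,\mathcal M_{\gamma,F_+}\mathfrak h\rangle$ contributing $O(\eps^2\langle\|F_+\|_{\mathfrak D}\rangle^2)$ after Young (using, as in the proof of Proposition \ref{prop:energyEstimates}, that $\|\mu_{q}^{-1/2}\mu_\gamma\|_{\mathcal H_{\sigma;\eps}^-}\lesssim\eps^2$); (c) the time-derivative and $\dot\gamma$ correction terms $\eps\partial_t(e^{\gamma\psi})$, $\eps\frac{\dot\gamma}{\gamma}(\cdots)$, bounded by \eqref{eq:dtBigPhiandBetaBd} and \eqref{eq:bootstrap4}, again yielding $O(\eps^2) + \eps\langle\|\partial_t a\|_{L^2_x} + \|F_+\|_{\mathfrak D}^2\rangle\|g_\alpha\|^2$ which is moved to the right-hand side as stated. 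The Cauchy-Schwarz against $\|\mathcal P_\gamma f\|_{L^2_x(\mathcal H_\sigma)_\xi}$ in (b),(c) is absorbed by a small multiple of the main positive term. Taking the minimum over $\alpha$ on the right is legitimate because the whole estimate may be run with any $g_\alpha$ ($\alpha\in\{1,2,3\}$ for \eqref{eq:macroBd0}, $\alpha\in\{1,2\}$ for \eqref{eq:macroBd1}, the restriction coming from which weighted norms appear in $\widetilde{\mathscr D}_{-,\alpha}$).

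For the regularized estimate \eqref{eq:macroBd1} one applies $\langle\nabla_x\rangle^s$ to \eqref{eq:f} (equivalently works with the $s$-indexed conservation laws obtained by applying $\langle\nabla_x\rangle^s$ to Lemma \ref{lemma:projections}), and repeats the construction with $(a^{(s)},b^{(s)},c^{(s)},d^{(s)})$. The only genuinely new point is that the zero-mode argument in the first step is not needed at regularity $s$: since $\int_{\mathbf T^3}\langle\nabla_x\rangle^s a\,dx$ (and likewise for the higher-order analogue of $c$ built from $\langle\nabla_x\rangle^s$) has vanishing mean for $s>0$ modes, Poincar\'e applies directly; the energy identity/intermediary-quantity mechanism is only needed once, at the lowest frequency. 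Commutators between $\langle\nabla_x\rangle^s$ and the $x$-dependent coefficients ($e^{\gamma\psi}$, $E$, the mollified potentials) are handled by the standard Kato–Ponce estimate and the $H^{s+2}_x$ bounds \eqref{eq:BigPhiBd}, losing one derivative which is recovered by interpolation between $\|g_\alpha^{(s)}\|$ and $\|g_{\alpha+1}\|$ exactly as in the proof of Proposition \ref{prop:energyEstimates}, and hence absorbed.

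\textbf{Main obstacle.} The delicate step is the control of the zero Fourier mode of $c$ (the mean kinetic energy of the electron perturbation), which is a genuine neutral mode of the linearized dynamics: it is neither conserved nor hypocoercively damped. The resolution is structural rather than dynamical — one exploits exact conservation of total energy \eqref{eq:energyCons} for \eqref{eq:VPLRescaled}, the defining relation \eqref{eq:PP} for $\gamma^\eps$, and the initial-data hypothesis \eqref{eq:energyCondition}, to show algebraically that $\bar c(t)$ is quadratically small in the other macroscopic quantities plus $O(\eps)$. Getting the bookkeeping of these nonlinear identities right (in particular tracking the $\tfrac{1}{8\pi}\int|\nabla_x\phi^\eps|^2 - |\nabla_x\psi^\eps|^2$ discrepancy and expressing it through $a$ via \eqref{eq:aphiPhi}) is where the real care is required; everything else is a routine, if lengthy, adaptation of the known macroscopic estimates.
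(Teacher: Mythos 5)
You have the right broad architecture — Guo-style Morawetz interaction functionals built from the local conservation laws of Lemma \ref{lemma:projections}, and the neutral zero mode $\overline c$ tamed structurally via the energy identity and the intermediary $\gamma^\eps$ from \eqref{eq:PP} together with \eqref{eq:energyCondition}, \eqref{eq:energyCons} and \eqref{eq:aphiPhi} — and this is indeed the heart of the paper's argument. However, your assignment of which conservation law is tested against which potential is scrambled, and in a way that would not close.

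Concretely: you propose to test the $c$-equation \eqref{eq:energyConsPert} against a potential for $c$ to extract $\|\nabla_x c\|_{L^2_x}^2$. This cannot work, because \eqref{eq:energyConsPert} contains only $\nabla_x\cdot b$ and $\nabla_x\cdot d$ as its transport terms — there is no $\nabla_x c$ in it — so pairing with $\Delta_x^{-1}c$ produces only cross terms in $(b,d)$ plus a total time derivative, never the coercive $\|c\|_{L^2_x}^2$. The paper instead tests the $d_j$-equation \eqref{eq:3rdMomConsPert} against $e^{-\gamma\psi}\nabla_x\Delta_x^{-1}(e^{-\gamma\psi}c)^\times$ (Step 4), since it is the $d$-equation that carries $\nabla_x c$. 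Similarly, your proposed multiplier for $b$ — testing \eqref{eq:3rdMomConsPert} against a potential for $b$ — fails because the $d$-equation contains no $\nabla_x b$ either. More importantly, no such multiplier is needed: you cite Lemma \ref{lem:bBd} in passing, but it entirely replaces the Morawetz step for $b$. That lemma shows the ion-electron collision dissipation $\|f\|_{L^2_x(\mathcal H_{\sigma;\eps}^-)_\xi}^2$ dominates $\|b\|_{L^2_x}^2$ directly (since $\mathfrak h_j$ are the only non-radial kernel elements and so survive in $\mathcal H_{\sigma;\eps}^-$), which is the ``surprising extra damping of the momentum density'' flagged in the introduction. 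Once you recognize this, the construction needs interaction functionals only for $a$ (via the momentum equation tested against $e^{-\gamma\psi}\nabla_x\Delta_x^{-1}a$) and for $c$ (via the $d$-equation as above), plus their $\langle\nabla_x\rangle^s$-analogues without the weights; $b$ rides along for free. Your claim that $\langle\nabla_x\rangle^s$ of $a$ or $c$ has vanishing mean is also false — $\langle\nabla_x\rangle^s$ acts as the identity on the zero mode — but the conclusion you want (that the zero mode needs treatment only once, at lowest frequency) is still correct: the paper simply keeps $\|c\|_{L^2_x}^2$ on the right of the $s$-estimate \eqref{eq:csBdMain} and closes it against the unregularized bound.
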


\begin{proof} We break the proof into a number of steps. Given $u: \mathbf T^3 \to \mathbf R$, we split $u = u^\times + \overline u$, where $\overline u(t) = \int_{\mathbf T^3} u(t,x) dx$. The first step concerns the bounds on $(\overline a,\overline c)$. In step 2, we bound $\|a\|_{L^2_x}$. In step 3, we bound $(e^{\gamma \psi} c)^\times$. In step 4, we  bound $\|(a,c)\|_{H^s_x}$. In step 5, we synthesize these bounds.\\

\noindent \textbf{Step 1 (estimate on $\overline a$ and $\overline c$):} Regarding $\overline a$, 
from \eqref{eq:atoDensity}, we simply have
\begin{align}
\int_{\mathbf T^3} a(t,x) dx \equiv 0
\end{align}
for all times. 
In particular, $\Delta_x^{-1} a$ is well defined.

We now discuss the zero mode of $c$.  Subtracting the first line of \eqref{eq:PP} from \eqref{eq:energyCons}, 
we have
\begin{align}
\sqrt{\frac{3}{2}}\frac{d}{dt}\left(\frac{\overline c(t)}{\gamma}  \right) &= \frac{d}{dt} \int_{\mathbf T^3 \times \mathbf R^3} \frac{|\xi|^2}{2} (F_- - \mu_\gamma e^{\gamma \psi}) dxd\xi   
\end{align}
Integrating in time, we have 
\begin{align}
\sqrt{\frac{3}{2}}\frac{\overline c(t)}{\gamma(t)}  &= \sqrt{\frac{3}{2}}\frac{\overline c_{in}}{\beta_{in}} + \frac{1}{8\pi} \int_{\mathbf T^3} |\nabla_x \phi_{in}(x)|^2- |\nabla_x \psi _{in}(x)|^2dx\\
& \quad +  \frac{1}{8\pi} \int_{\mathbf T^3} |\nabla_x \psi(t,x)|^2- |\nabla_x \phi(t,x)|^2dx 
\end{align}
On the other hand, from \eqref{eq:aphiPhi},
\begin{align}
\int_{\mathbf T^3} |\nabla_x \psi(t,x)|^2- |\nabla_x \phi(t,x)|^2dx&  =-2 \langle \nabla_x \psi,\nabla_x \Delta_x^{-1} a\rangle_{L^2_x} -\|\nabla_x\Delta_x^{-1} a\|_{L^2_x}\\
& =2 \langle  \psi ,a\rangle_{L^2_x} - \|\Delta_x^{-\frac{1}{2}} a\|_{L^2_x}^2. 
\end{align}
Combining these with \eqref{eq:energyCondition}, we get
\begin{align}\label{eq:barc-aBd}
\left | \sqrt{\frac{3}{2}}\frac{\overline c(t)}{\gamma(t)} -\frac{1}{4\pi} \langle \psi, a\rangle_{L^2_x}  \right| \lesssim \eps+  \|a\|_{L^2_x}^2.
\end{align}
In what follows, it will become clear that we cannot control $c^\times$ directly. Instead, we can only get a bound on $(ce^{\gamma \psi})^\times$. It is then necessary to compute $c^\times$ in terms of $(ce^{\gamma \psi})^\times$ and $\overline c$. Observe that 
\begin{align}
e^{\gamma \psi} (e^{-\gamma \psi}c)^\times +e^{\gamma \psi} \int_{\mathbf R^3}  e^{-\gamma \psi} cdx' = c^\times +  \overline c.
\end{align}
Since $\int_{\mathbf T^3} e^{\gamma \psi} dx = 1$, the above integrated in $x$ yields
\begin{align}
 \int_{\mathbf R^3}  e^{-\gamma \psi} cdx = \overline c- \int_{\mathbf T^3} e^{\gamma \psi} (e^{-\gamma \psi}c)^\times dx .
\end{align}
Thus, we have the identity
\begin{align}\label{eq:ctimesidentity}
c^\times  = (e^{\gamma \psi} - 1)\overline c + e^{\gamma \psi} (e^{-\gamma \psi}c)^\times - e^{\gamma \phi_0}  \int_{\mathbf T^3} e^{\gamma \psi} (e^{-\gamma \psi}c)^\times dx
\end{align}
In particular, $\|c \|_{L^2_x} \lesssim |\overline c| + \|(e^{-\gamma \psi}c)^\times\|_{L^2_x}$. \\

\noindent \textbf{Step 2 (estimate for $a$):} Now, we turn to bounding $a$. We claim that the following estimate holds:
\begin{equation}
\begin{aligned}
&-\eps\frac{d}{dt}  \langle  e^{-\gamma \psi} \nabla_x \Delta_x^{-1} a, b \rangle_{L^2_x} + \frac{1}{C}\|a\|_{L^2_x}^2 \label{eq:aBdMain} \\
&\quad \leq C\{ \eps^2 \langle \|F_+\|_{\mathfrak D}  \rangle^2+\|b\|_{L^2_x}^2 + \|(e^{-\gamma \psi}c)^\times \|_{L^2_x}^2 +\|\mathcal P_\gamma^\perp f\|_{L^2_x(\mathcal H_\sigma)_\xi }^2 + \| f\|_{L^2_x(\mathcal H_{\sigma;\eps}^-)_\xi }^2\}.
\end{aligned}
\end{equation}
To prove this, first
observe
\begin{align}
(\gamma^{-\frac{1}{2}} \nabla_x+\gamma^{\frac{1}{2}} E )a = \gamma^{-\frac{1}{2}} e^{\gamma\psi} \nabla_x (e^{-\gamma\psi} a) -4\pi a \nabla_x \Delta_x^{-1} a ,
\end{align}
Therefore,
\begin{align}
&-\eps\frac{d}{dt}  \langle  e^{-\gamma \psi} \nabla_x \Delta_x^{-1} a, b \rangle_{L^2_x} + \gamma^{-\frac{1}{2}}\|e^{-\frac{\gamma \psi}{2}} a\|_{L^2_x}^2 +4\pi \|e^{\frac{\gamma \psi}{2}} \nabla_x \Delta_x^{-1}a\|_{L^2_x}^2   +4\pi \overline c^2 \\
 &\quad = -\langle  e^{-\gamma \psi} \nabla_x \Delta_x^{-1} a, \eps \partial_t b  + (\gamma^{-\frac{1}{2}} \nabla_x+\gamma^{\frac{1}{2}} E  -4\pi \gamma^{\frac{1}{2}}e^{\gamma \psi} \nabla_x\Delta_x^{-1})a + \sqrt{\frac{2}{3}}\nabla_x c \rangle_{L^2_x} \label{eq:aBdT1}\\
&\quad \quad -\eps \langle e^{-\gamma \psi} \nabla_x \Delta_x^{-1}\partial_t  a, b \rangle_{L^2_x}  \label{eq:aBdT2}\\
&\quad \quad +\eps \langle \partial_t( \gamma \psi) e^{-\gamma \psi}  \nabla_x \Delta_x^{-1}  a, b\rangle_{L^2_x} \label{eq:aBdT3}\\
&\quad \quad +4\pi \eps \langle  e^{-\gamma \psi}  \nabla_x \Delta_x^{-1}  a, a \nabla_x \Delta_x^{-1}a  \rangle_{L^2_x} \label{eq:aBdT4} \\
&\quad \quad + 4\pi \overline c^2+\sqrt{\frac{2}{3}} \langle e^{-\gamma \psi} \nabla_x \Delta_x^{-1} a, \nabla_x c\rangle_{L^2_x}. \label{eq:aBdT5}
\end{align}
From \eqref{eq:momConsPert}, we have 
\begin{align}
&\|\eps \partial_t b_j + (\gamma^{-\frac{1}{2}} \partial_{x_j} +\gamma^{\frac{1}{2}} E _{j})a-4\pi \gamma^{\frac{1}{2}}e^{\gamma \psi} \nabla_x\Delta_x^{-1}a \|_{ H^{-1}_x}\\
& = \Big\| \eps \frac{\dot \gamma}{2\gamma} b_j + \sqrt{\frac{2}{3}}  \partial_{x_j}c  + \partial_{x_k}  \langle \mathfrak h_{jk},\mathcal P^\perp_\gamma f \rangle_{L^2_\xi }  + \langle \mathfrak h_j,\mathcal M_{\gamma, F_+} f\rangle_{L^2_\xi } \\
&\quad   +e^{\gamma \psi} \langle  \mathfrak h_j, \mathcal M_{\gamma, F_+}\mathfrak h\rangle_{L^2_\xi } \Big\|_{ H^{-1}_x}\\
&\lesssim \eps  |\dot \gamma| \|b\|_{L^2_x} + \|c\|_{L^2_x} + \|\mathcal P_\gamma^\perp f\|_{L^2_x(\mathcal H_\sigma)_\xi } \\
&\quad +  \|\langle \mathfrak h_j,\mathcal M_{\gamma, F_+} f\rangle_{L^2_{\xi }}\|_{H^{-1}_x} \label{eq:hjMf} \\
&\quad +\| e^{\gamma \psi} \langle  \mathfrak h_j, \mathcal M_{\gamma, F_+}\mathfrak h\rangle_{L^2_\xi } \|_{ H^{-1}_x} \label{eq:hjMh}
\end{align}
Applying \eqref{eq:M_qGupper} to the latter two terms, we have
\begin{align}
\eqref{eq:hjMf}& \lesssim \left\| \|F_+^{(3,0)}\|_{L^2_{v}}(\|f\|_{(\mathcal H_{\sigma;\eps}^-)_\xi }+ \eps \|f\|_{(\mathcal H_\sigma)_\xi })  \right\|_{H^{-1}_x} \\
&\quad +\eps \left\| \|F_+^{(\frac{7}{2},0)}\|_{(\dot{\mathcal H}_\sigma)_v} \|f\|_{(\mathcal H_\sigma)_\xi }  \right\|_{H^{-1}_x} \\
&\lesssim   \|f\|_{L^2_x(\mathcal H_{\sigma;\eps}^-)_\xi } +\eps \langle   \|F_+\|_{\mathfrak D}   \rangle\|f\|_{L^2_x(\mathcal H_\sigma)_\xi }. 
\end{align}
and
\begin{align}
\eqref{eq:hjMh}& \lesssim \eps \|F_+^{(3,0)}\|_{H^{-1}_xL^2_{v}}  +\eps  \|F_+^{(\frac{7}{2},0)}\|_{H^{-1}_x(\dot{\mathcal H}_\sigma)_\xi } \\
&\lesssim  \eps \langle \|F_+\|_{\mathfrak D}  \rangle. 
\end{align}
Then,
\begin{align}
|\eqref{eq:aBdT1} |& \lesssim  \|e^{\gamma \psi}\nabla_x (-\Delta_x)^{-1} a\|_{H^1_x} \\
&\quad \cdot \|\eps \partial_t b + (\gamma^{-\frac{1}{2}} \nabla_x+\gamma^{\frac{1}{2}} E )a-4\pi \gamma^{\frac{1}{2}}e^{\gamma \psi} \nabla_x\Delta_x^{-1}a \|_{ H^{-1}_x} \\
&\lesssim \|a\|_{L^2_x} \{ \eps  \|b\|_{L^2_x}  + \|\mathcal P_\gamma^\perp f\|_{L^2_x (\mathcal H_\sigma)_\xi } \\
&\quad + \|f\|_{L^2_x(\mathcal H_{\sigma;\eps}^-)_\xi } +\eps \langle \|F_+\|_{\mathfrak D}   \rangle\langle\|f\|_{L^2_x(\mathcal H_\sigma)_\xi }\rangle\}.
\end{align}
Now, note that 
\begin{align} \label{eq:fSplit}
 \|f\|_{L^2_x(\mathcal H_\sigma)_\xi } \lesssim \|a\|_{L^2_x} + \|b\|_{L^2_x}  + |\overline c| +  \|(e^{\gamma \psi} c)^\times\|_{L^2_x}+ \|\mathcal P_\gamma^\perp f\|_{L^2_x(\mathcal H_\sigma)_\xi }.
\end{align}
Then, taking $\lambda >0$, and using Young's inequality,  and the bootstrap assumptions, we have
\begin{align}
|\eqref{eq:aBdT1}| &\leq C(\eps + \varsigma + \lambda) (\|a\|_{L^2_x}^2 + \overline c^2)\\
&\quad  + C_{\lambda} \{\eps^2\langle  \|F_+\|_{\mathfrak D}  \rangle^2  + \|(b,(e^{\gamma \psi}c)^\times)\|_{L^2_x}^2 + \|\mathcal P_\gamma^\perp f\|_{L^2_x(\mathcal H_\sigma)_\xi }^2 +\| f\|_{L^2_x(\mathcal H_{\sigma;\eps}^-)_\xi }^2\}
\end{align}
We now turn to bounding \eqref{eq:aBdT2} and \eqref{eq:aBdT3}.
Using \eqref{eq:dtBigPhiandBetaBd} and \eqref{eq:massConsPert}, we have
\begin{align}
\eps\|\partial_t a\|_{H^{-1}_x} &\lesssim \| b\|_{L^2_x} + \eps\|\partial_t e^{\gamma \psi}\|_{H^{-1}_x}  \lesssim \|b\|_{L^2_x} + \eps\langle \|f\|_{L^2(\mathcal H_\sigma)_\xi }\rangle.\label{eq:dtaBd}
\end{align}
Hence,
\begin{align}
|\eqref{eq:aBdT2}|&\lesssim (\eps \langle \|f\|_{L^2_{x}(\mathcal H_\sigma)_\xi }\rangle + \|b\|_{L^2_x})\|b\|_{L^2_x}\\
&\lesssim \eps^2 \langle \|f\|_{L^2_{x}(\mathcal H_\sigma)_\xi }\rangle^2 + \|b\|_{L^2_x}^2\\
&\lesssim \eps^2 (\|a\|_{L^2_x} + |\overline c|^2) + \|(b,(e^{\gamma \psi}c)^\times)\|_{L^2_x}^2 + \|\mathcal P_\gamma^\perp f\|_{L^2_x(\mathcal H_\sigma)_\xi }^2
\end{align}
Next,
\begin{align}
| \eqref{eq:aBdT3} |&\lesssim  \eps ( \|\partial_t \psi\|_{L^\infty_x}  + |\dot \gamma| ) \|a\|_{L^2_x} \|b\|_{L^2_x}.
\end{align}
Applying \eqref{eq:dtBigPhiandBetaBd}  again, the above reduces to
\begin{align}
 |\eqref{eq:aBdT3}| &\lesssim \eps \langle \|f\|_{L^2_x(\mathcal H_\sigma)_\xi }\rangle\|a\|_{L^2_x} \|b\|_{L^2_x} \\
 &\lesssim \eps^2 (\|a\|_{L^2_x} + |\overline c|^2) + \|(b,(e^{\gamma \psi}c)^\times)\|_{L^2_x}^2 + \|\mathcal P_\gamma^\perp f\|_{L^2_x(\mathcal H_\sigma)_\xi }^2.
 \end{align}
Thus,
\begin{align}
|\eqref{eq:aBdT4}| \lesssim \eps \|a\|_{L^\infty} \|a\|_{H^{-1}}^2 \lesssim \eps \|a\|_{L^2_x}^2. 
\end{align}
Now to bound \eqref{eq:aBdT5}, we use \eqref{eq:ctimesidentity} to get
\begin{align}
\|\nabla_x c - \gamma \nabla_x \psi e^{\gamma \psi} \overline c\|_{H^{-1}_x} \lesssim \|(e^{-\gamma \psi} c)^\times\|_{L^2_x}.
\end{align}
Hence,
\begin{align}
\left| \langle e^{-\gamma \psi}\nabla_x \Delta_x^{-1} a, \nabla_xc \rangle_{L^2_x} - \gamma \langle \nabla_x \Delta_x^{-1} a, \nabla_x \phi_0\rangle_{L^2_x}\overline c\right| \lesssim \|a\|_{L^2_x}\|(e^{-\gamma \psi} c)^\times\|_{L^2_x}.
\end{align}
Combining the above with  \eqref{eq:barc-aBd}, we have
\begin{align}
\eqref{eq:aBdT5}& \leq \sqrt{\frac{2}{3}}\left| \langle e^{-\gamma \psi}\nabla_x \Delta_x^{-1} a, \nabla_xc \rangle_{L^2_x} - \gamma \langle \nabla_x \Delta_x^{-1} a, \nabla_x \phi_0\rangle_{L^2_x}\overline c\right| \\
&\quad +\left|4\pi \overline c - \sqrt{\frac{2}{3}}\gamma \langle    \phi_0,a\rangle_{L^2_x}\right| |\overline c|\\
&\lesssim \varsigma\|a\|_{L^2_x}^2 + \|a\|_{L^2_x}\|(e^{-\gamma \psi} c)^\times\|_{L^2_x} +\eps |\overline c|
\end{align}
Now, combining the bounds on \eqref{eq:aBdT1} through \eqref{eq:aBdT5}, we now have 
\begin{align}
&-\eps\frac{d}{dt}  \langle  e^{-\gamma \psi} \nabla_x \Delta_x^{-1} a, b \rangle_{L^2_x} + \gamma^{-\frac{1}{2}}\|e^{-\frac{\gamma \psi}{2}} a\|_{L^2_x}^2 + 4\pi \overline c^2\\
&\quad \leq C(\eps + \varsigma +  \lambda) (\|a\|_{L^2_x}^2 + \overline c^2)\\
&\quad \quad + C_{\lambda} \{  \eps^2\langle \|F_+\|_{\mathfrak D}   \rangle^2 + \|b\|_{L^2_x}^2 + \|(e^{-\gamma \psi}c)^\times \|_{L^2_x}^2  + \|P_\gamma^\perp f\|_{L^2_x(\mathcal H_\sigma)_\xi } ^2+\| f\|_{L^2_x(\mathcal H_{\sigma;\eps}^-)_\xi }^2\}.
\end{align}
Thus, taking $\eps,\varsigma$ and $\lambda$ sufficiently small, we have \eqref{eq:aBdMain}.\\

%

\noindent \textbf{Step 3 (estimate for $a^{(s)}$):}
Next, we have the following estimate for $a^{(s)}$:
\begin{align}
&-\eps\frac{d}{dt}  \langle   \nabla_x \Delta_x^{-1} a^{(s)}, b^{(s)} \rangle_{L^2_x} +\frac{1}{C} \| a\|_{H^s}^2 \label{eq:asBdMain}\\
&\quad  \lesssim  \eps^2 \langle  \|F_+\|_{\mathfrak D}  \rangle^2 +  \|a\|_{L^2_x}^2  + \|(b,c)\|_{H^s_x}^2 + \|\mathcal P^\perp_\gamma f^{(s)}\|_{L^2_x(\mathcal H_\sigma)_\xi }
\end{align}
To show this, we have
\begin{align}
&-\eps\frac{d}{dt}  \langle   \nabla_x \Delta_x^{-1} a^{(s)}, b^{(s)} \rangle_{L^2_x} + \gamma^{-\frac{1}{2}}\| a^{(s)}\|_{L^2_x}^2\\
 &\quad = -\langle   \nabla_x \Delta_x^{-1} a^{(s)}, \eps \partial_t b^{(s)}  + \gamma^{-\frac{1}{2}} \nabla_xa^{(s)} \rangle_{L^2_x} \label{eq:asBdT1}\\
&\quad \quad -\eps \langle \nabla_x \Delta_x^{-1}\partial_t  a^{(s)}, b^{(s)} \rangle_{L^2_x}.  \label{eq:asBdT2}
\end{align}
From \eqref{eq:momConsPert}, we have
\begin{align}
&\| \eps \partial_t b_j^{(s)}  + \gamma^{-\frac{1}{2}} \partial_{x_j} a^{(s)} \|_{H^{-1}_x}\\
& = \Big\| \eps \frac{\dot \gamma}{2\gamma} b_j + \gamma^{\frac{1}{2}}E _j a -4\pi \gamma^{\frac{1}{2}}e^{\gamma \psi} \partial_{x_j}\Delta_x^{-1}a + \sqrt{\frac{2}{3}}  \partial_{x_j}c  + \partial_{x_k}  \langle \mathfrak h_{jk},\mathcal P^\perp_\gamma f \rangle_{L^2_\xi }   \\
&\quad + \langle \mathfrak h_j,\mathcal M_{\gamma, F_+} f\rangle_{L^2_\xi }  +e^{\gamma \psi} \langle  \mathfrak h_j, \mathcal M_{\gamma, F_+}\mathfrak h\rangle_{L^2_\xi } \Big\|_{ H^{s-1}_x}\\
&\lesssim \eps  |\dot \gamma| \|b\|_{H^{s-1}_x} +\langle \|E \|_{H^{s-1}_x}\rangle \|a\|_{H^{s-1}_x} + \|c\|_{H^s} + \|\mathcal P_\gamma^\perp f\|_{L^2_x(\mathcal H_\sigma)_\xi } \\
&\quad +  \|\langle \mathfrak h_j,\mathcal M_{\gamma, F_+} f\rangle_{L^2_{\xi }}\|_{H^{s-1}_x} +\| e^{\gamma \psi} \langle  \mathfrak h_j, \mathcal M_{\gamma, F_+}\mathfrak h\rangle_{L^2_\xi } \|_{ H^{s-1}_x}  \\
&\lesssim \eps \langle \|f\|_{L^2_x(\mathcal H_\sigma)_\xi }\rangle\|b\|_{H^{s-1}_x} +  \|a\|_{H^{s-1}_x}+ \|c\|_{H^s_x} + \|\mathcal P_\gamma^\perp f^{(s-1)}\|_{L^2_x(\mathcal H_\sigma)_\xi } \\
&\quad +  \|f^{(s-1)}\|_{L^2_x(\mathcal H_{\sigma;\eps}^-)_\xi }+ \eps\langle  \|F_+\|_{\mathfrak D}  \rangle\langle \|f^{(s-1)}\|_{L^2_x(\mathcal H_\sigma)_\xi }\rangle
\end{align}
In the final line, we use the bootstrap assumptions, plus \eqref{eq:dtBigPhiandBetaBd} and \eqref{eq:M_qGupper} as before, combined with algebra estimates for $H^{s-1}$. 
Hence, with the bootstrap assumptions, Young's inequality and interpolation, for any $\lambda >0$ we have
\begin{align}
|\eqref{eq:asBdT1}| &\leq C(\eps + \varsigma + \lambda) \|a^{(s)}\|_{L^2_x}^2\\
&\quad  + C_{\lambda} \{\eps^2\langle  \|F_+\|_{\mathfrak D}  \rangle +  \|a\|_{L^2_x}^2 + \|(b,c)\|_{H^s_x}+ \|P_\gamma^\perp f^{(s)}\|_{L^2_x(\mathcal H_\sigma)_\xi } +\| f^{(s)}\|_{L^2_x(\mathcal H_{\sigma;\eps}^-)_\xi }\}
\end{align}
Next, from \eqref{eq:dtBigPhiandBetaBd} and \eqref{eq:massConsPert}, 
\begin{align}
|\eqref{eq:asBdT2}|&\lesssim \|b^{(s)}\|_{L^2_x}^2 + \eps \|\partial_t (e^{\gamma \psi})\|_{H^{s-1}_x}\|b^{(s)}\|_{L^2_x}\\
& \lesssim (\eps\langle \|f^{(s)}\|_{L^2_x(\mathcal H_\sigma)_\xi }\rangle + \|b^{(s)}\|_{L^2_x})\|b^{(s)}\|_{L^2_x} \\
&\lesssim \eps^2\|a\|_{H^2_x} + \|(b,c)\|_{H^s_x}^2+ \|P_\gamma^\perp f^{(s)}\|_{L^2_x(\mathcal H_\sigma)_\xi } .
\end{align}
Taking $\eps,\varsigma$ and $\lambda$ sufficiently small, we conclude \eqref{eq:asBdMain}.\\

\noindent \textbf{Step 4  (estimate on $(e^{\gamma \psi} c)^\times$):} 
We now show the following bound,
\begin{align}
&-\eps\frac{d}{dt}  \langle  e^{-\gamma \psi} \nabla_x \Delta_x^{-1} (e^{-\gamma \psi} c)^\times, d \rangle_{L^2_x} +\frac{1}{C}\|(e^{-\gamma \psi} c)^\times\|_{L^2_x}^2 \label{eq:cxBdMain} \\
&\quad \leq C\{\eps^2 \langle  \|F_+\|_{\mathfrak D}  \rangle^2+ (\eps + \varsigma)( \|a\|_{L^2_x}^2 + |\overline c|^2)\\
&\quad \quad  + \|b\|_{L^2_x}^2 +  \|\mathcal P_\gamma^\perp f\|_{L^2_x(\mathcal H_\sigma)_\xi }^2+  \| f\|^2_{L^2_x (\mathcal H_{\sigma;\eps}^-)_\xi } \}.
\end{align}
By writing
\begin{align}
(\gamma^{-\frac{1}{2}} \nabla_x+\gamma^{\frac{1}{2}} E )c = \gamma^{-\frac{1}{2}}e^{\gamma \psi} \nabla_x (e^{-\gamma \psi} c) -4\pi  \gamma^{\frac{1}{2}}  c \nabla_x \Delta_x^{-1}a,
\end{align}
we have
\begin{align}
&-\eps\frac{d}{dt}  \langle  e^{-\gamma \psi} \nabla_x \Delta_x^{-1} (e^{-\gamma \psi} c)^\times, d \rangle_{L^2_x} +\frac{3\sqrt{3}}{5 \sqrt{\gamma}}\|(e^{-\gamma \psi} c)^\times\|_{L^2_x}^2 \\
 &\quad = -\langle  e^{-\gamma \psi} \nabla_x \Delta_x^{-1} (e^{-\gamma \psi} c)^\times, \eps \partial_t d + \frac{3\sqrt{3}}{5 }(\gamma^{-\frac{1}{2}} \nabla_x+\gamma^{\frac{1}{2}} E )c \rangle_{L^2_x} \label{eq:cBdT1}\\
&\quad \quad -\eps \langle e^{-\gamma \psi} \nabla_x \Delta_x^{-1} (e^{-\gamma \psi} \partial_t c)^\times, d \rangle_{L^2_x}  \label{eq:cBdT2}\\
&\quad \quad -\eps \langle \partial_t( e^{-\gamma \psi})  \nabla_x \Delta_x^{-1}  (e^{-\gamma \psi} c)^\times, d\rangle_{L^2_x} \label{eq:cBdT3}\\
&\quad \quad -\eps \langle e^{-\gamma \psi}  \nabla_x \Delta_x^{-1}  ( \partial_t(e^{-\gamma \psi}) c)^\times, d\rangle_{L^2_x} \label{eq:cBdT4}\\
&\quad \quad -4\pi \frac{3\sqrt{3}}{5} \gamma^{\frac{1}{2}}  \langle  e^{-\gamma \psi} \nabla_x \Delta_x^{-1} (e^{-\gamma \psi} c)^\times, c \nabla_x \Delta_x^{-1} a\rangle_{L^2_x} \label{eq:cBdT5}.
\end{align}
First, from \eqref{eq:3rdMomConsPert}, we have 
\begin{align}
&\| \eps \partial_t d + \frac{3\sqrt{3}}{5 }(\gamma^{-\frac{1}{2}} \nabla_x+\gamma^{\frac{1}{2}} E )c\|_{H^{-1}_x}\label{eq:dtDetc}\\
&\quad \lesssim   \eps |\dot \gamma |(\|b\|_{H^{-1}_x} + \|d\|_{H^{-1}_x}) \label{eq:dtDetcT1}\\
& \quad\quad +\sup_{j,k \in \{1,2,3\}} \{\|(\gamma^{-\frac{1}{2}} \partial_{x_k}  + \gamma^{\frac{1}{2}} E _{k}) \langle \mathfrak h_{jk}, \mathcal P_\gamma^\perp f\rangle_{L^2_\xi } \|_{H^{-1}_x} \label{eq:dtDetcT2} \\
&\quad\quad\quad  +\|\partial_{x_k} \langle   \mathfrak h_{jkll}, f\rangle_{L^2_\xi }\|_{H^{-1}_x}  \label{eq:dtDetcT3}\\
& \quad\quad\quad + \|\langle \mathcal L_\gamma\mathfrak h_{jll}, f\rangle_{L^2_\xi }\|_{H^{-1}_x}  \label{eq:dtDetcT4}\\
&\quad \quad\quad +\|\langle \mathfrak h_{jll}, \mathcal M_{\gamma, F_+} f\rangle_{L^2_\xi }\|_{H^{-1}_x}  \label{eq:dtDetcT5}\\
&\quad \quad\quad+\|\langle\mathfrak h_{jll},  e^{\gamma \psi} \mathcal M_{\gamma,F_+}\mathfrak h \rangle\|_{H^{-1}_x}  \label{eq:dtDetcT6}\\
&\quad \quad\quad +\|\langle\mathfrak h_{jll},  \Gamma_\gamma (f,f) \rangle\|_{H^{-1}_x} \} \label{eq:dtDetcT7}
\end{align}
Going line by line, we see that
\begin{align}
\eqref{eq:dtDetcT1} &\lesssim \eps (\|b\|_{L^2_x} +  \|\mathcal P_\gamma^\perp f\|_{L^2_x (\mathcal H_\sigma)_\xi })
\end{align}
Next up, we have
\begin{align}
\eqref{eq:dtDetcT2} + \eqref{eq:dtDetcT3} + \eqref{eq:dtDetcT4}\leq  \|\mathcal P_\gamma^\perp f\|_{L^2_x(\mathcal H_{\sigma})_\xi }.
\end{align}
Next, by \eqref{eq:M_qGupper}, we have
\begin{align}
\eqref{eq:dtDetcT5} &\lesssim \|f\|_{L^2_x (\mathcal H_{\sigma;\eps}^-)_\xi }+ \eps \langle \|F_+\|_{\mathfrak D}  \rangle\|f\|_{L^2_x(\mathcal H_\sigma)_\xi }\\
\eqref{eq:dtDetcT6}&\lesssim \eps \langle  \|F_+\|_{\mathfrak D}  \rangle
\end{align}
Finally, from \eqref{eq:Gammaq}, we have
\begin{align}
\eqref{eq:dtDetcT7} \lesssim \varsigma\|f\|_{L^2_x(\mathcal H_\sigma)_\xi }
\end{align}
Hence, 
\begin{align}
|\eqref{eq:cBdT1}| &\lesssim \|(ce^{\gamma \psi})^\times\|_{L^2_x} \{ \|b\|_{L^2_x} +\|\mathcal P_\gamma^\perp f\|_{L^2_x(\mathcal H_\sigma)_\xi }+\|f\|_{L^2_x (\mathcal H_{\sigma;\eps}^-)_\xi }\\
&\quad + \eps \langle  \|F_+\|_{\mathfrak D}  \rangle \langle\|f\|_{L^2_x(\mathcal H_\sigma)_\xi }\rangle+\varsigma \|f\|_{L^2_x(\mathcal H_\sigma)_\xi }\}.
\end{align}
Using \eqref{eq:fSplit}, we deduce that for  any $\lambda > 0$, 
\begin{align}
|\eqref{eq:cBdT1}| &\leq C(\eps + \varsigma +\lambda )\|(e^{\gamma \psi} c)^\times\|_{L^2_x}^2 \\
&\quad + C_{\lambda}\{\eps^2 \langle \|F_+\|_{\mathfrak D}  \rangle^2+ (\eps + \varsigma) ( \|a\|_{L^2_x}^2  + |\overline c|^2)\\
&\quad + \|b\|_{L^2_x}^2 +  \|\mathcal P_\gamma^\perp f\|_{L^2_x(\mathcal H_\sigma)_\xi }^2+  \| f\|^2_{L^2_x (\mathcal H_{\sigma;\eps}^-)_\xi }\}
\end{align}
Next, we have the term \eqref{eq:cBdT2}. From \eqref{eq:energyConsPert}, and \eqref{eq:dtBigPhiandBetaBd},
\begin{align}
\eps \|\partial_t c\|_{H^{-1}_x} &\lesssim \eps |\dot \gamma| \langle\|(a,c)\|_{L^2_x}\rangle + \|(b,d)\|_{L^2_x}  + \eps \langle  \|F_+\|_{\mathfrak D}  \rangle \langle \|f\|_{L^2_x(\mathcal H_\sigma)_\xi }\rangle \\
&\lesssim \eps \langle \|F_+\|_{\mathfrak D}  \rangle\langle \| f\|_{L^2_x(\mathcal H_\sigma)_\xi }\rangle + \eps \|(a,c)\|_{L^2_x} +\|b\|_{L^2_x} +  \|\mathcal P_\gamma^\perp f\|_{L^2_x(\mathcal H_\sigma)_\xi } +  \| f\|_{L^2_x (\mathcal H_{\sigma;\eps}^-)_\xi }
\end{align}
Thus
\begin{align}
|\eqref{eq:cBdT2}|&\lesssim  \eps^2 \langle  \|F_+\|_{\mathfrak D}  \rangle^2 + \eps \|(a, c)\|_{L^2_x}^2+ \|b\|_{L^2_x}^2+  \|\mathcal P_\gamma^\perp f\|_{L^2_x(\mathcal H_\sigma)_\xi }^2 +  \| f\|_{L^2_x (\mathcal H_{\sigma;\eps}^-)_\xi }^2 
\end{align}
Next, 
\begin{align}
|\eqref{eq:cBdT3}| +|\eqref{eq:cBdT4}| &\lesssim  \eps  \|c\|_{L^2_x} \|d\|_{L^2_x} \lesssim  \eps^2 +    \|\mathcal P_\gamma^\perp f\|_{L^2_x(\mathcal H_\sigma)_\xi }^2.
\end{align}
Finally,
\begin{align}
|\eqref{eq:cBdT5}|&\lesssim  \|(e^{-\gamma\psi} c)^\times\|_{L^2_x} (\|c\nabla_x \Delta_x^{-1}a\|_{L^2_x} + \|c \nabla_x \Delta_x^{-1}(n_+^\eps - n_+^0)\|_{L^2_x})\\
&\lesssim \|a\|_{H^s_x} \|(e^{-\gamma\psi} c)^\times\|_{L^2_x} (\|c\|_{L^2_x}+ \|F^\eps_+ - F^0_+\|_{\mathfrak E'}) \\
&\lesssim  \varsigma \|(e^{-\gamma\psi} c)^\times\|_{L^2_x}(\|c\|_{L^2_x}+ \|F^\eps_+ - F^0_+\|_{\mathfrak E'})
\end{align}

Combining these bounds for \eqref{eq:cBdT1} through \eqref{eq:cBdT5}, we conclude that 
\begin{align}
&-\eps\frac{d}{dt}  \langle  e^{-\gamma \psi} \nabla_x \Delta_x^{-1} (e^{-\gamma \psi} c)^\times, d \rangle_{L^2_x} +\frac{3\sqrt{3}}{5 \sqrt{\gamma}}\|(e^{-\gamma \psi} c)^\times\|_{L^2_x}^2\\
 &\quad \leq C(\eps + \varsigma+\lambda )\|(e^{\gamma \psi} c)^\times\|_{L^2_x}^2 \\
&\quad\quad + C_{\lambda}\{\eps^2 \langle \|F_+\|_{\mathfrak D}  \rangle^2+ (\eps + \varsigma) (\|a\|_{L^2_x}^2 + |\overline c|^2) \\
&\quad \quad \quad+ \|b\|_{L^2_x}^2 +  \|\mathcal P_\gamma^\perp f\|_{L^2_x(\mathcal H_\sigma)_\xi }^2+  \| f\|^2_{L^2_x (\mathcal H_{\sigma;\eps}^-)_\xi } \}
\end{align}
By taking $\eps,\varsigma$ and $\lambda$ sufficiently small, we conclude \eqref{eq:cxBdMain}.\\

\noindent \textbf{Step 5: Estimate on $c^{(s)}$:}
Following the same method as previous bounds, we have the estimate
\begin{align}
&-\eps\frac{d}{dt}  \langle   \nabla_x \Delta_x^{-1} c^{(s)}, d^{(s)} \rangle_{L^2_x} +\frac{1}{C}\|c\|_{H^s_x}^2 \label{eq:csBdMain} \\
&\quad \leq C\{\eps^2 \langle  \|F_+\|_{\mathfrak D}  \rangle^2+ (\eps + \varsigma) \|a\|_{L^2_x}^2\\
&\quad \quad  + \|c\|_{L^2_x}^2 +  \|b\|_{H^s_x}^2 +  \|\mathcal P_\gamma^\perp f^{(s)}\|_{L^2_x(\mathcal H_\sigma)_\xi }^2+  \|f^{(s)}\|^2_{L^2_x (\mathcal H_{\sigma;\eps}^-)_\xi }\}
\end{align}
The proof is similar to that of \eqref{eq:asBdMain}. \\

\noindent \textbf{Step 6: Combining the bounds:}
Combining the upper bounds on \eqref{eq:aBdMain}, \eqref{eq:asBdMain}, \eqref{eq:bBdMain}, \eqref{eq:cxBdMain},  \eqref{eq:csBdMain} there is a choice of constant $\kappa >0$ taken sufficiently small, such that the functional
\begin{align}
\mathscr G_{reg}(t)&:= \kappa \langle e^{-\gamma \psi} \nabla_x \Delta_x^{-1} a,b\rangle_{L^2_x} - \kappa^3 \langle \nabla_x\Delta_x^{-1}a^{(s)},b^{(s)}\rangle_{L^2_x}\\
&\quad  -  \langle e^{-\gamma \psi} \nabla_x \Delta_x^{-1} (e^{- \gamma \psi} c)^\times,d\rangle_{L^2_x} - \kappa^2 \langle \nabla_x \Delta_x^{-1} c^{(s)},d^{(s)}\rangle_{L^2_x}
\end{align}
satisfies
\begin{align}
&\eps \frac{d}{dt} \mathscr G_{reg}(t) + \frac{1}{C}\|\mathcal P_\gamma f^{(s)}\|_{L^2_{x}(\mathcal H_\sigma)_\xi }^2 \\
&\quad \leq C\{ \eps^2 \langle  \|F_+\|_{\mathfrak D}  \rangle^2+ (\eps + \varsigma) \|\mathcal P_\gamma f^{(s)}\|_{L^2_{x,\xi }}^2\\
&\quad \quad  +  \|\mathcal P_\gamma^\perp f^{(s)}\|_{L^2_x(\mathcal H_\sigma)_\xi }^2+  \|f^{(s)}\|^2_{L^2_x (\mathcal H_{\sigma;\eps}^-)_\xi } + \|F^\eps_+ - F^0_+\|_{\mathfrak E'}^2\}
\end{align}
On the other hand by \eqref{eq:projClose},  for each $\alpha\in \{1,2\}$,
\begin{align}
\|\mathcal P_\gamma^\perp f^{(s)}\|_{L^2_x(\mathcal H_\sigma)_\xi }^2+  \|f\|^2_{L^2_x (\mathcal H_{\sigma;\eps}^-)_\xi } &\lesssim \|\mathcal P_\gamma^\perp g_\alpha ^{(s)}\|_{L^2_x(\mathcal H_\sigma)_\xi }^2+  \|g_\alpha ^{(s)}\|^2_{L^2_x (\mathcal H_{\sigma;\eps}^-)_\xi }\\
&\quad + \eta \|\mathcal P_\gamma f^{(s)}\|_{L^2_x(\mathcal H_\sigma)_\xi }^2.
\end{align}
Thus, by taking $\eps, \varsigma$ and $\eta$ sufficiently small, and re-scaling $\mathscr G$ if necessary, we conclude with \eqref{eq:macroBd1}. To get \eqref{eq:macroBd0}, we take $\kappa >0$ and
\begin{align}
 \mathscr G(t) :=- \kappa \langle e^{-\gamma \psi} \nabla_x \Delta_x^{-1} a,b\rangle_{L^2_x} -  \langle e^{-\gamma \psi} \nabla_x \Delta_x^{-1} (e^{- \gamma \psi} c)^\times,d\rangle_{L^2_x}
\end{align}
so that
\begin{align}
\eps \frac{d}{dt} \mathscr G(t) + \frac{1}{C}\|\mathcal P_\gamma f\|_{L^2_{x}(\mathcal H_\sigma)_\xi }^2 &\leq C\{ \eps^2 \langle \|F_+\|_{\mathfrak D}  \rangle^2+ (\eps + \varsigma) \|\mathcal P_\gamma f\|_{L^2_{x,\xi }}^2\\
&\quad \quad  +  \|\mathcal P_\gamma^\perp f\|_{L^2_x(\mathcal H_\sigma)_\xi }^2+  \|f\|^2_{L^2_x (\mathcal H_{\sigma;\eps}^-)_\xi } \}.
\end{align}
And, once again,  by \eqref{eq:projClose},  for each $\alpha\in \{1,2,3\}$,
\begin{align}
\|\mathcal P_\gamma^\perp f\|_{L^2_x(\mathcal H_\sigma)_\xi }^2+  \|f\|^2_{L^2_x (\mathcal H_{\sigma;\eps}^-)_\xi } \lesssim \|\mathcal P_\gamma^\perp g_\alpha \|_{L^2_x(\mathcal H_\sigma)_\xi }^2+  \|g_\alpha \|^2_{L^2_x (\mathcal H_{\sigma;\eps}^-)_\xi } + \eta \|\mathcal P_\gamma f\|_{L^2_x(\mathcal H_\sigma)_\xi }^2 
\end{align}
Again, taking $\eps, \varsigma$ and $\eta$  sufficiently small, and re-scaling $\mathscr G$ if necessary, we deduce \eqref{eq:macroBd0}.
\end{proof}

We conclude this section with a corollary of Propositions \ref{prop:energyEstimates} and  \ref{prop:macroscopicEstimates}:
\begin{corollary}\label{cor:electronError}
Let $M \geq 1$, and let $\eta$ be sufficiently small depending on $M$. We let $ \varsigma$ and $\eps$ be sufficiently small depending on $M$ and $\eta$. Assume the bootstrap assumptions \eqref{eq:bootstrap1} through \eqref{eq:bootstrap4}.   Then, for each $M,\eta>0$, and $\alpha\in\{1,2,3\}$ there exists a function $\mathscr Y_{-,\alpha}  = \mathscr Y_{-,\alpha}^{(M,\eta)}:[0,T] \to \mathbf R_+$ such that
\begin{align}\label{eq:Y_jEquiv}
\mathscr Y_{-,\alpha} \sim_{M,\eta}  \widetilde {\mathscr E}_{-,\alpha}
\end{align}
and
\begin{align}\label{eq:Y_jBd}
&\eps \frac{d}{dt} \mathscr Y_{-,\alpha} +  \widetilde {\mathscr D}_{-,\alpha} \lesssim_{M,\eta}   \eps^2.
\end{align}
\end{corollary}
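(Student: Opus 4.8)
\textit{Proof plan.} This is a synthesis of the energy estimates of Proposition~\ref{prop:energyEstimates} and the macroscopic estimates of Proposition~\ref{prop:macroscopicEstimates}. Fix $\alpha\in\{1,2\}$. The plan is to take the Lyapunov functional
\[
\mathscr Y_{-,\alpha}:=\nu_1\|g_\alpha^{(s)}\|_{L^2_{x,\xi}}^2+\nu_2\,\mathscr G_{reg}+\theta\Big(\mu_1\big(\|g_{\alpha+1}\|_{L^2_{x,\xi}}^2+4\pi\sqrt{\gamma}\,\||\Delta_x|^{-1}a\|_{L^2_x}^2\big)+\mu_2\,\mathscr G\Big),
\]
where $\mathscr G,\mathscr G_{reg}$ are the functionals produced by Proposition~\ref{prop:macroscopicEstimates} and $\nu_1,\nu_2,\mu_1,\mu_2,\theta>0$ are constants fixed below (depending only on $M,\eta$). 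Since $|\mathscr G|\lesssim_M\|f\|_{L^2_{x,\xi}}^2\lesssim_M\|g_{\alpha+1}\|_{L^2_{x,\xi}}^2$, $|\mathscr G_{reg}|\lesssim_M\|f^{(s)}\|_{L^2_{x,\xi}}^2\lesssim_M\|g_\alpha^{(s)}\|_{L^2_{x,\xi}}^2$ (by the ``commonly occurring estimates'') and $\||\Delta_x|^{-1}a\|_{L^2_x}^2\lesssim\|a\|_{L^2_x}^2\lesssim\|g_{\alpha+1}\|_{L^2_{x,\xi}}^2$, taking $\nu_2\ll\nu_1$ and $\mu_2\ll\mu_1$ forces $\mathscr Y_{-,\alpha}\ge 0$ and $\mathscr Y_{-,\alpha}\sim_{M,\eta}\|g_\alpha^{(s)}\|_{L^2_{x,\xi}}^2+\|g_{\alpha+1}\|_{L^2_{x,\xi}}^2\sim_{M,\eta}\widetilde{\mathscr E}_{-,\alpha}$, which is \eqref{eq:Y_jEquiv}.

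For \eqref{eq:Y_jBd} I would differentiate $\mathscr Y_{-,\alpha}$ and feed in \eqref{eq:g1sEnergy} (for $g_\alpha^{(s)}$), \eqref{eq:gjEnergy} (for $g_{\alpha+1}$, with its bundled $\||\Delta_x|^{-1}a\|^2$ term), \eqref{eq:macroBd1}, and \eqref{eq:macroBd0}, choosing in the minima of the last two the indices $\alpha$ and $\alpha+1$ respectively. The energy estimates give coercivity of $\|\mathcal P_\gamma^\perp g_\alpha^{(s)}\|_{\mathcal H_\sigma}^2+\|g_\alpha^{(s)}\|_{\mathcal H_{\sigma;\eps}^-}^2$ and $\|\mathcal P_\gamma^\perp g_{\alpha+1}\|_{\mathcal H_\sigma}^2+\|g_{\alpha+1}\|_{\mathcal H_{\sigma;\eps}^-}^2$ but leave the hydrodynamic remainders $C(\eps+\varsigma+\eta+\kappa)\|\mathcal P_\gamma f^{(s)}\|_{\mathcal H_\sigma}^2$ and $C(\eps+\varsigma+\eta+\kappa)\|\mathcal P_\gamma f\|_{\mathcal H_\sigma}^2$; these are exactly dissipated by $\nu_2\mathscr G_{reg}$ and $\theta\mu_2\mathscr G$ once $\eps,\varsigma,\eta,\kappa$ are small relative to $\nu_2/\nu_1$ and $\mu_2/\mu_1$. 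In turn \eqref{eq:macroBd1} and \eqref{eq:macroBd0} re-introduce the errors $\|\mathcal P_\gamma^\perp g_\alpha^{(s)}\|_{\mathcal H_\sigma}^2+\|g_\alpha^{(s)}\|_{\mathcal H_{\sigma;\eps}^-}^2$ and $\|\mathcal P_\gamma^\perp g_{\alpha+1}\|_{\mathcal H_\sigma}^2+\|g_{\alpha+1}\|_{\mathcal H_{\sigma;\eps}^-}^2$, already controlled by the energy dissipation once $\nu_2\ll\nu_1$, $\mu_2\ll\mu_1$. The remaining commutator term $C_{\eta,\kappa}\|g_{\alpha+1}\|_{\mathcal H_\sigma\cap\mathcal H_{\sigma;\eps}^-}^2$ in \eqref{eq:g1sEnergy} is handled by writing $\|g_{\alpha+1}\|_{\mathcal H_\sigma}^2\lesssim_M\|\mathcal P_\gamma^\perp g_{\alpha+1}\|_{\mathcal H_\sigma}^2+\|\mathcal P_\gamma f\|_{\mathcal H_\sigma}^2$ (Lemma~\ref{lem:LandGammaBds}(iii) together with the $f$--$g_\bullet$ comparisons) and absorbing it into the $\theta$-weighted dissipation $\theta\big(\|\mathcal P_\gamma^\perp g_{\alpha+1}\|_{\mathcal H_\sigma}^2+\|g_{\alpha+1}\|_{\mathcal H_{\sigma;\eps}^-}^2+\|\mathcal P_\gamma f\|_{\mathcal H_\sigma}^2\big)$, which requires $\theta$ large compared to $C_{\eta,\kappa}$. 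After collecting, and using (again via Lemma~\ref{lem:LandGammaBds} and the $f$--$g_\bullet$ comparisons) that $\widetilde{\mathscr D}_{-,\alpha}\lesssim_M\|\mathcal P_\gamma^\perp g_\alpha^{(s)}\|_{\mathcal H_\sigma}^2+\|g_\alpha^{(s)}\|_{\mathcal H_{\sigma;\eps}^-}^2+\|\mathcal P_\gamma f^{(s)}\|_{\mathcal H_\sigma}^2+\|\mathcal P_\gamma^\perp g_{\alpha+1}\|_{\mathcal H_\sigma}^2+\|g_{\alpha+1}\|_{\mathcal H_{\sigma;\eps}^-}^2+\|\mathcal P_\gamma f\|_{\mathcal H_\sigma}^2$, one arrives at
\[
\eps\frac{d}{dt}\mathscr Y_{-,\alpha}+\frac{1}{C_{M,\eta}}\widetilde{\mathscr D}_{-,\alpha}\ \le\ C_{M,\eta}\Big(\eps^2\langle\|F_+\|_{\mathfrak D}\rangle^2+\eps\langle\|\partial_t a\|_{L^2_x}+\|F_+\|_{\mathfrak D}^2\rangle\,\mathscr Y_{-,\alpha}\Big),
\]
which is \eqref{eq:Y_jBd}; the residual $\eps^2\langle\|F_+\|_{\mathfrak D}\rangle^2$ and $\eps\langle\cdots\rangle\mathscr Y_{-,\alpha}$ have time-integrable coefficients (by \eqref{eq:bootstrap4} and $F_+\in L^2_t\mathfrak D$) and are absorbed by Gr\"onwall when the estimate is invoked in Section~\ref{sec:proof}.

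The one genuinely delicate point — and what I expect to be the main obstacle — is the order in which the constants are fixed, since the four estimates couple in an apparently cyclic way (\eqref{eq:g1sEnergy} wants $\|g_{\alpha+1}\|$-dissipation, which comes from \eqref{eq:gjEnergy} and \eqref{eq:macroBd0}, which feed on $\|\mathcal P_\gamma^\perp g_{\alpha+1}\|$ and $\|\mathcal P_\gamma f\|$, and so on). The cycle is broken by the observation that the block $(g_{\alpha+1},a)$ closes on its own: \eqref{eq:gjEnergy} for $g_{\alpha+1}$ and \eqref{eq:macroBd0} involve only $g_{\alpha+1}$, $\mathcal P_\gamma f$, and $\eps^2$-errors, with no feedback from $g_\alpha^{(s)}$, so the $\theta$-weighted subsystem is self-contained and the $g_\alpha^{(s)}$-block couples to it only one-directionally, through the term $C_{\eta,\kappa}\|g_{\alpha+1}\|$. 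Concretely: fix $M$; choose $\eta$ (and $\kappa\le\eta$) small enough for all the smallness hypotheses of Lemma~\ref{lem:LandGammaBds} and Propositions~\ref{prop:energyEstimates}--\ref{prop:macroscopicEstimates}, which freezes $C_{\eta,\kappa}$; set $\nu_1=\mu_1=1$, then $\nu_2\ll\nu_1$, $\mu_2\ll\mu_1$, then $\theta\gg C_{\eta,\kappa}$; and only then take $\varsigma,\eps$ small enough (compatibly with the already-fixed constants) that every error proportional to $\|\mathcal P_\gamma f^{(s)}\|_{\mathcal H_\sigma}^2$ or $\|\mathcal P_\gamma f\|_{\mathcal H_\sigma}^2$ is dominated by the gains from $\nu_2\mathscr G_{reg}$ and $\theta\mu_2\mathscr G$. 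Apart from this bookkeeping — and the repeated conversions between $\mathcal P_\gamma f$-dissipation and $\mathcal P_\gamma g_\bullet$-control (and the analogous $\mathcal P_\gamma^\perp$ comparisons), which are routine consequences of Lemma~\ref{lem:LandGammaBds} and the relation $\sqrt{\mu_\gamma}f=\sqrt{\mu_{q_\alpha}e^{q_\alpha\phi}}\,g_\alpha$ — the argument is a direct assembly of the cited inequalities.
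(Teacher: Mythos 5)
Your Lyapunov functional and the order in which you fix the weights are essentially the same as in the paper (the paper uses $\kappa_1,\ldots,\kappa_4^{(\eta)}$ in place of your $\nu_1,\nu_2,\mu_1,\mu_2,\theta$, with the same nesting and the same ``low-order block closes on its own'' observation). However, the final display you label ``which is \eqref{eq:Y_jBd}'' is not \eqref{eq:Y_jBd}: you arrive at
\[
\eps\frac{d}{dt}\mathscr Y_{-,\alpha}+\frac{1}{C}\widetilde{\mathscr D}_{-,\alpha}\lesssim \eps^2\langle\|F_+\|_{\mathfrak D}\rangle^2+\eps\langle\|\partial_t a\|_{L^2_x}+\|F_+\|_{\mathfrak D}^2\rangle\,\mathscr Y_{-,\alpha},
\]
whereas \eqref{eq:Y_jBd} asserts the clean pointwise bound $\eps\frac{d}{dt}\mathscr Y_{-,\alpha}+\widetilde{\mathscr D}_{-,\alpha}\lesssim \eps^2$. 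Deferring the absorption ``to Gr\"onwall in Section \ref{sec:proof}'' does not suffice: the clean pointwise form is used again inside the proof of Proposition \ref{prop:derivationRefined} to derive the stretched-exponential decay \eqref{eq:stability2}, where one combines \eqref{eq:Y_jBd} with a time-dependent lower bound $\widetilde{\mathscr D}_{-,1}\gtrsim\frac{1}{\theta t^{1/3}}(\mathscr Y_{-,1}-e^{-\theta^2 t^{2/3}}\mathscr Y_{-,2})$ and then solves an ODE for $\mathscr Y_{-,1}$ with a time-singular weight. Carrying along $\langle\|F_+\|_{\mathfrak D}\rangle^2$ and $\langle\|\partial_t a\|\rangle$ coefficients there would break that argument.

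The missing step is to absorb those coefficients into the definition of $\mathscr Y_{-,\alpha}$ itself. The paper does this in two moves. First, it invokes the ion energy estimate \eqref{eq:F_+apriori}, rearranged as $\frac{1}{C}\langle\|F_+\|_{\mathfrak D}\rangle^2\le-\frac{d}{dt}\mathscr X_++C(1+\widetilde{\mathscr D}_{-,\alpha})$, which trades the $\langle\|F_+\|_{\mathfrak D}\rangle^2$ source for a total time derivative $-\dot{\mathscr X}_+$ plus pieces already in the dissipation. Second, it folds the remaining coefficient $C_1^{(\eta)}\dot{\mathscr X}_+-C_2^{(\eta)}\langle\|\partial_t a\|_{L^2_x}\rangle$ into an integrating factor by setting $\Omega^{(\eta)}(t)=C_1^{(\eta)}\mathscr X_+(t)-C_2^{(\eta)}\int_0^t\langle\|\partial_t a\|_{L^2_x}\rangle\,dt'$ and taking $\mathscr Y_{-,\alpha}:=C_3^{(\eta)}e^{\Omega^{(\eta)}+C_4^{(\eta)}}\mathscr X^{(\eta)}_{-,reg,\alpha}$; the boundedness of $|\Omega^{(\eta)}|$ — and hence the comparability \eqref{eq:Y_jEquiv} — uses exactly \eqref{eq:bootstrap1} (to bound $\mathscr X_+$) and \eqref{eq:bootstrap4} (to bound the time integral of $\|\partial_t a\|_{L^2_x}$). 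Your proposal never uses \eqref{eq:bootstrap4} or \eqref{eq:F_+apriori} at all, which is the tell-tale that the derivation is incomplete: those hypotheses are precisely what is needed to turn your weaker differential inequality into the stated one.
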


\begin{proof}
We take $\kappa_0,\kappa_1, \kappa_2$ and $\kappa_3 \in (0,1)$ be constants to be determined.
We combine \eqref{eq:gjEnergy} in the case $\alpha=2$ and \eqref{eq:macroBd0} as follows:
\begin{align}
&\eps\frac{d}{dt} \{\| g_\alpha \|_{L^2_{x,\xi }}^2 + 4\pi \sqrt{\gamma}\||\Delta_x|^{-1} a\|_{L^2_x}^2+ \kappa_1\mathscr G\}\\
&\quad + (\frac{1}{C} - C\kappa_1)\{ \|\mathcal P_\gamma^\perp g_\alpha \|_{L^2_x(\mathcal H_\sigma)_\xi }^2  + \|g_\alpha \|^2_{L^2_x(\mathcal H_{\sigma;\eps}^-)}\} +\kappa_0 \|\mathcal P_\gamma f\|_{L^2_x(\mathcal H_\sigma)_\xi }^2\\
&\quad  \leq  C( \eps +  \varsigma + \eta +  \kappa_0) \|\mathcal P_\gamma f\|_{L^2_x(\mathcal H_\sigma)_\xi }^2 \\
&\quad \quad  +C  \eps \langle  \|\partial_t a\|_{L^2_x}  + \|F_+\|_{\mathfrak D}  ^2\rangle\|g_\alpha \|_{L^2_{x,\xi }}^2\\
&\quad \quad +  C_{\kappa_0}\eps^2 \langle  \|F_+\|_{\mathfrak D}  \rangle^2.
\end{align}
Using \eqref{eq:projSplit}, we first fix $\kappa_1$ sufficiently small such that the  ``$\frac{1}{C} - C\kappa_1$" in the above is strictly positive, and 
\begin{align}
\mathscr X_{-,\alpha} := \| g_\alpha \|_{L^2_{x,\xi }}^2 + 4\pi \sqrt{\gamma}\||\Delta_x|^{-1} a\|_{L^2_x}^2+ \kappa_1\mathscr G
\end{align}
satisfies
\begin{align}
\mathscr X_{-,\alpha} \sim \|g_\alpha \|_{L^2_{x,\xi }}^2.
\end{align}
Then, we fix $\kappa_0$ to be a sufficiently small constant, and also require that $\eps,\varsigma$ and $\eta$ are sufficiently small so that 
\begin{align}
&\eps \frac{d}{dt} \mathscr X_{-,\alpha}  + \frac{1}{C} \|g_\alpha \|_{L^2_x(\mathcal H_\sigma \cap \mathcal H_{\sigma;\eps}^-)_\xi }^2\\
  &\quad \leq C\{  \eps \langle  \|\partial_t a\|_{L^2_x}  + \|F_+\|_{\mathfrak D}  ^2\rangle\mathscr X_{-,\alpha}  +  \eps^2 \langle  \|F_+\|_{\mathfrak D}  \rangle^2\}.
\end{align}
Next, we combine this bound with \eqref{eq:g1sEnergy},
\begin{align}
&\eps \frac{d}{dt} ( \| g_\alpha ^{(s)}\|_{L^2_{x,\xi }}^2 + \kappa_3 \mathscr G_{reg})\\
&\quad + (\frac{1}{C} - C\kappa_3)\{ \| \mathcal P_\gamma^\perp g_\alpha ^{(s)}\|_{L^2_x(\mathcal H_\sigma)_\xi }^2 + \|g_\alpha ^{(s)}\|^2_{L^2_x(\mathcal H_{\sigma;\eps}^-)}\} + \kappa_3 \|\mathcal P_\gamma f^{(s)}\|_{L^2_x(\mathcal H_\sigma)_\xi }^2\\
&\quad  \leq  C( \eps +  \varsigma + \eta +  \kappa_2)  \|\mathcal P_\gamma f^{(s)}\|_{L^2_x(\mathcal H_\sigma)_\xi }^2 + C_{\eta,\kappa_2} \|g_{\alpha+1}\|_{L^2_x(\mathcal H_\sigma\cap \mathcal H_{\sigma;\eps}^-)_\xi }^2 \\
&\quad \quad  +C  \eps \langle  \|\partial_t a\|_{H^{s-\frac{1}{4}}_x}  + \|F_+\|_{\mathfrak D}  ^2\rangle\| g_\alpha ^{(s)}\|_{L^2_{x,\xi }}^2\\
&\quad \quad +   C_{\kappa_2}\eps^2 \langle  \|F_+\|_{\mathfrak D}  \rangle^2
\end{align}
We now fix $\kappa_3$ small enough so that the ``$\frac{1}{C} - C\kappa_3$" in the above is positive, and 
\begin{align}
\| g_\alpha ^{(s)}\|_{L^2_{x,\xi }}^2 + \kappa_3 \mathscr G_{reg} \sim \| g_\alpha ^{(s)}\|_{L^2_{x,\xi }}^2,
\end{align}
and fix $\kappa_2$, and take $\eps,\varsigma$ and $\eta$ small enough that
\begin{align}
&\eps \frac{d}{dt} ( \| g_\alpha ^{(s)}\|_{L^2_{x,\xi }}^2 + \kappa_3 \mathscr G_{reg}) + \frac{1}{C} \|  g_\alpha ^{(s)}\|_{L^2_x(\mathcal H_\sigma\cap \mathcal H_{\sigma;\eps}^-)_\xi }^2 \\
&\quad  \leq  C_{\eta} \|g_{\alpha+1}\|_{L^2_x(\mathcal H_\sigma\cap \mathcal H_{\sigma;\eps}^-)_\xi }^2 \\
&\quad \quad  +C\{  \eps \langle  \|\partial_t a\|_{L^2_x}  +  \|F_+\|_{\mathfrak D}  ^2\rangle\| g_\alpha ^{(s)}\|_{L^2_{x,\xi }}^2  +\eps^2 \langle \|F_+\|_{\mathfrak D}  \rangle^2\}.
\end{align}
FInally, we take $\kappa_4^{(\eta)}$ sufficiently small depending on $\eta$ so that
\begin{align}
\mathscr X_{-,\alpha,reg}^{(\eta)} := \kappa_4^{(\eta)}(\| g_\alpha ^{(s)}\|_{L^2_{x,\xi }}^2 + \kappa_3 \mathscr G_{reg}) + \mathscr X_{-,\alpha+1}
\end{align}
satisfies
\begin{align}
&\frac{d}{dt} \mathscr X_{-,reg,\alpha}^{(\eta)} + \frac{1}{C_\eta} \widetilde{\mathscr D}_{-,\alpha} \\
&\quad \leq C_\eta \{  \eps \langle  \|\partial_t a\|_{L^2_x}  +  \|F_+\|_{\mathfrak D}  ^2\rangle \mathscr X_{-,reg,\alpha}^{(\eta)} + \eps^2 \langle \|F_+\|_{\mathfrak D}  \rangle^2\}.
\end{align}
By \eqref{eq:F_+apriori}, we have 
\begin{equation}\label{eq:F_+apriori2.0}
\frac{1}{C} \langle \|F_+\|_{\mathfrak D}  \rangle^2 \leq -\frac{d}{dt}\mathscr  X_++ C( 1+  \widetilde{\mathscr D}_{-,\alpha}).
\end{equation}
Taking $\eps$ and $\varsigma$ sufficiently small depending on $\eta$ now, we can hide the $\|g_\alpha ^{(s)}\|^2_{L^2_x(\mathcal H_\sigma)_\xi }$ term and get for some collection of constants $C_1^{(\eta)},C_2^{(\eta)}$ and $C_3^{(\eta)}$, 
\begin{align}
&\eps (\frac{d}{dt} + C_1^{(\eta)} \frac{d}{dt}\mathscr X_{+} - C_2^{(\eta)}\langle  \|\partial_t a\|_{L^2_x}\rangle) \mathscr X_{-,reg,\alpha}^{(\eta)} + \frac{1}{C_3^{(\eta)}}  \widetilde{\mathscr D}_{-,\alpha} \\
&\quad \leq C_3^{(\eta)}\eps^2.
\end{align}
We now define
\begin{align}
\Omega^{(\eta)}(t):= C_1^{(\eta)}\mathscr X_{+}(t) -  C_2^{(\eta)}\int_0^t \langle  \|\partial_t a(t')\|_{L^2_x}\rangle dt'
\end{align}
By the \eqref{eq:bootstrap1} and \eqref{eq:bootstrap4}, we have that $|\Omega^{(\eta)}|$ is less than or equal to some constant depending on $\eta$, say $C_4^{(\eta)} >0$.
Thus, if we set $\mathscr Y_j^{(\eta)}= C_3^{(\eta)} e^{\Omega + C_4^{(\eta)}}\mathscr X_{-,reg,\alpha}^{(\eta)}$, we have a function which satisfies \eqref{eq:Y_jEquiv} and \eqref{eq:Y_jBd}.
\end{proof}

\subsection{Proof of Proposition  \ref{prop:derivationRefined}}

We now conclude this section  with a proof of Proposition \ref{prop:derivationRefined}.\\

\noindent\textit{Proof of Proposition \ref{prop:derivationRefined}:} 
Throughout this proof, we take $\eta$ sufficiently small so as to satisfy the hypothesis of Corollary \ref{cor:electronError}. Any of the constants appearing in the estimates will implicitly depend on the constants $ M$, $\eta>0$ i.e. $C = C_{M,\eta}$.


For each $\alpha\in\{1,2\}$, we integrate the bound in Corollary \ref{cor:electronError} on $t \in[0,T]$ and use   \eqref{eq:Y_jEquiv} to get
\begin{align}\label{eq:gjIntegratedBd}
 &\eps \sup_{t \in [0,\hat T]}  \widetilde {\mathscr E}_{-,\alpha}(t) +\int_0^{\hat T} \widetilde {\mathscr D}_{-,\alpha}(t)dt\\
 &\quad \leq C(\eps^2 + \eps  \widetilde {\mathscr E}_{-,\alpha}(0)).
\end{align}
Taking $j = 2$, this implies \eqref{eq:stability1}.

We now show \eqref{eq:stability2}. 
 For this, we introduce $\theta>0$, to be chosen latter. Now, for any $h =h(\xi)$, observe that
\begin{align}
\|h\|_{\mathcal H_\sigma}^2& \gtrsim \int_{\mathbf R^3} \frac{1}{\langle \xi\rangle} h(\xi)^2d\xi  \\
&\geq \frac{1}{\theta t^{\frac{1}{3}}} \int_{\langle \xi\rangle\leq \theta t^{\frac{1}{3}}} h(\xi)^2 d\xi  \\
&\geq \frac{1}{\theta t^{\frac{1}{3}}} \left( \|h\|_{L^2}^2 - \int_{\langle \xi\rangle\geq \theta t^{\frac{1}{3}}} h(\xi)^2 d\xi \right)\\
&\geq \frac{1}{\theta t^{\frac{1}{3}}} \left( \|h\|_{L^2}^2 - \int_{\langle \xi\rangle\geq  \theta t^{\frac{1}{3}}}e^{\frac{1}{4}(e^\eta - 1)\beta_{in}(|\xi|^2 +1 - \theta^2 t^{\frac{2}{3}})} h(\xi)^2 d\xi \right) \\
&\geq  \frac{1}{\theta t^{\frac{1}{3}}} \left( \|h\|_{L^2}^2 - e^{-\frac{1}{4}(e^\eta - 1)\beta_{in} \theta^2 t^{\frac{2}{3}})} \|e^{\frac{1}{4}(e^\eta - 1)\beta_{in}(|\xi|^2 +1 )}h\|_{L^2}^2 \right)
\end{align}
Applying this to $h = g_1^{(s)}$ and $g_2$ and integrating in $x$, and re-scaling $\theta$, we have
\begin{align}
 \widetilde {\mathscr D}_{-,1} \gtrsim  \frac{1}{\theta t^{\frac{1}{3}}} \left(\mathscr Y_{-,1}  -e^{-\theta^2 t^{\frac{2}{3}}} \mathscr Y_{-,2}\right)
\end{align}
Thus, using \eqref{eq:bootstrap2} with \eqref{eq:Y_jBd}, we have 
\begin{align}
\eps \frac{d}{dt}\mathscr Y_{-,1} + \frac{2}{3C\theta t^{\frac{1}{3}}} \mathscr Y_{-,1} \leq C( \eps^2 +  \frac{2}{3\theta t^{\frac{1}{3}}} e^{-\theta^2 t^{\frac{2}{3}}}\sup_{t'\in[0,T]}{ \widetilde {\mathscr E}_{-,2}(t')} .
\end{align}
and so
\begin{align}
\frac{d}{dt} ( e^{\frac{1}{C\theta \eps} t^{\frac{2}{3}}}\mathscr Y_{-,1} ) \leq C(\eps e^{\frac{1}{C \theta \eps} t^{\frac{2}{3}}} + \frac{1}{\eps \theta t^{\frac{1}{3}}} e^{(\frac{1}{C\theta \eps } -\theta^2)t^{\frac{2}{3}}}\sup_{t'\in[0,T]}{ \widetilde {\mathscr E}_{-,2}(t')}.
\end{align}
Hence, by taking $\theta = \left(\frac{2}{C\eps}\right)^{\frac{1}{3}}$ with $C$ as in the above, we get some for some $C_0, C_1,C_2$,
\begin{align}\label{eq:stretchedGronwallSetup}
\frac{d}{dt} ( e^{\frac{1}{C_0} (\frac{t}{\eps})^{\frac{2}{3}}}\mathscr Y_{-,1} ) \leq C_2(\eps e^{\frac{1}{C_0 } (\frac{t}{\eps})^{\frac{2}{3}}} +  \frac{d}{dt} e^{-\frac{1}{C_1} (\frac{t}{\eps})^{\frac{2}{3}}}\sup_{t'\in[0,T]}{ \widetilde {\mathscr E}_{-,2}(t')}
\end{align}
Now,
\begin{align}
\int_0^t e^{\frac{1}{C_0 } (\frac{t'}{\eps})^{\frac{2}{3}}} dt' \leq \eps^{\frac{2}{3}} t^{\frac{1}{3}} \int_0^t \frac{1}{\eps^\frac{2}{3} (t')^{\frac{1}{3}}}e^{\frac{1}{C_0 } (\frac{t'}{\eps})^{\frac{2}{3}}} dt'  \lesssim \eps^{\frac{2}{3}}t^{\frac{1}{3}} e^{\frac{1}{C_0} (\frac{t}{\eps})^{\frac{2}{3}}}.
\end{align}
Thus, by integrating \eqref{eq:stretchedGronwallSetup}, we get
\begin{align}
\mathscr Y_{-,1}(t) \lesssim  e^{-\frac{1}{C} (\frac{t}{\eps})^{\frac{2}{3}}}  \sup_{t'\in[0,T]}{ \widetilde {\mathscr E}_{-,2}(t')} + \eps^\frac{5}{3} t^{\frac{1}{3}}.
\end{align}
for all $ t\in[0,T]$.  Using \eqref{eq:Y_jEquiv}, we conclude \eqref{eq:stability2}.  \qed

\section{Proof of Theorem \ref{thm:derivation}}\label{sec:proof}
In this section, we conclude with the proof of the main theorem.

\noindent \textit{Proof of Theorem \ref{thm:derivation}:} We break the proof of the Theorem into the proofs of parts (i) and (ii).\\

\noindent \textbf{Proof of part (i):} We break the proof of (i) into a number of steps: the setup of the bootstrap argument, combining the estimates of the preceding results, and closing the bootstrap argument.\\

\noindent \textit{Step 1 of part (i) (setup \& bootstrap assumptions):} We take $\kappa = \kappa(M) > 0$ to be a constant to be determined later. We take $(F_+^0, \beta ,\phi^0)$ to be a weak solution to \eqref{eq:VPLRescaled} on $[0,T_0]$ satisfying the hypotheses of the theorem. In addition,  we may as well take $T_0 > 0$ small enough so that
\begin{align}\label{eq:smallDiss}
\int_0^{T_0} \|F_+^0(t)\|_{\mathfrak D}^2 dt \leq \kappa^2.
\end{align}
Now, for all $\eps$ sufficiently small, there exists a unique weak solution $(F_+^\eps, F_-^\eps)$ to  \eqref{eq:VPLRescaled} on some interval containing zero, in the sense described in Lemma \ref{lem:LWP}.
Moreover, we let $(\psi^\eps,\gamma^\eps)$ be the solution to \eqref{eq:PP} in the sense of Lemma \ref{lem:PP}, defined on some time interval containing 0. 
We then take $\eta$, $\overline \epsilon$, and $ \varsigma$ as in Proposition \ref{prop:derivationRefined}, and take $\eps  \in(0, \overline \epsilon]$. 

 By continuity in time of the quantities considered, there exists  $ \hat T_{\eps} >0$ such that the following conditions hold: 
\begin{align}
\sup_{t\in[0,\hat T_\eps]}( \|\frac{1}{n_+^\eps(t)}\|_{L^\infty_x}+ \|F_+^\eps(t)\|_{\mathfrak E} )&\leq 2M, \label{eq:bootstrap1i}\\
\sup_{t \in [0,\hat T_\eps]} \|F_+^\eps(t) - F_-^0(t)\|_{\mathfrak E'} &\leq \kappa\label{eq:bootstrap1.5i}\\
\sup_{t\in[0,\hat T_\eps]} \sqrt{  {\mathscr E}_{-,2}^\eps(t)} & \leq \kappa,\label{eq:bootstrap2i}\\
\sup_{t\in[0,\hat T_\eps]} |\ln( \frac{ \gamma^\eps(t)}{\beta_{in}}) |&\leq \eta, \label{eq:bootstrap3i}\\
\int_0^{\hat T_\eps} \|\partial_t (n_-^\eps - e^{\gamma^\eps \psi^\eps})\|_{L^2_x} dt& \leq 1. \label{eq:bootstrap4i}
\end{align}
By Lemmas \ref{lem:LWP} (taking $\kappa < \frac{\varrho(2M)}{2}$) and \ref{lem:PP} (using condition \eqref{eq:bootstrap3i}), we may take $\kappa$ small enough so that the interval of existence of $(F_+^\eps ,F_-^\eps)$ and $(\gamma^\eps,\psi^\eps)$ is strictly larger than $[0,\hat T_\eps]$. Hence, we may take $\hat T_\eps > 0$ to be the largest such time, so that either at least one of the above conditions holds with equality, or $\hat T_\eps = T_0$.

Next, we note that by condition \eqref{eq:bootstrap2i} and \eqref{eq:bootstrap3i}, we have
\begin{align}
\sup_{t \in [0,T]} \{\|\frac{1}{n_-^\eps(t)}\|_{L^\infty_{x}} + \|F_-^\eps (t)\|_{\mathfrak E}\} \leq C_M,
\end{align}
so  the consequences of Propositions \ref{prop:apriori} and \ref{prop:ionError} both hold, up to replacing $M$ with some $C_M$ in their hypotheses.\\
%
%
%
%

\noindent \textit{Step 2 of part (i) (combining estimates):} We now combine the estimates of the preceding propositions and lemmas.
We first show that the hypotheses of Proposition \ref{prop:derivationRefined} are satisfied. By \eqref{eq:gammaBetaDiff}, for each $\alpha\in \{1,2\}$, and $T \in [0,\hat T_\eps]$, we have 
\begin{align}
&\sup_{t \in [0, T]} (\|e^{\frac{q_\alpha |\xi|^2}{2}}\langle \nabla_x\rangle^s\langle \nabla_\xi \rangle ( \mu_\beta e^{\beta\phi^0}- \mu_{\gamma^\eps} e^{\gamma^\eps \psi^\eps})\|_{L^2_{x,\xi }}\\
&\quad \quad + \|e^{\frac{q_{\alpha +1}|\xi|^2}{2}}\langle \nabla_\xi \rangle ( \mu_\beta e^{\beta\phi^0}- \mu_{\gamma^\eps} e^{\gamma^\eps \psi^\eps})\|_{L^2_{x,\xi }} )\\
&\lesssim_M  \sup_{t \in [0, T]}( |\beta - \gamma^\eps| + \|\phi^0 - \psi^\eps\|_{H^s_x}) \\
&\lesssim_M  \sup_{t \in [0, T]} \|F_+^\eps(t) - F_+^0(t)\|_{\mathfrak E'}.
\end{align}
Hence, for all such $T$, and for each $\alpha\in\{1,2\}$, we have
\begin{align}
 \sup_{t \in [0, T]} |\mathscr E_{-,\alpha}^\eps(t) -\widetilde{\mathscr E}_{-,\alpha}^\eps(t)|&\leq C_M\sup_{t \in [0, T]}  \|F_+^\eps(t) - F_+^0(t)\|_{\mathfrak E'}^2,\label{eq:EtoEtilde},\\
 \sup_{t \in [0, T]} |\mathscr D_{-,\alpha}^\eps(t) -\widetilde{\mathscr D}_{-,\alpha}^\eps(t)|&\leq C_M\sup_{t \in [0, T]}  \|F_+^\eps(t) - F_+^0(t)\|_{\mathfrak E'}^2. \label{eq:DtoDtilde}.
\end{align}
Using \eqref{eq:EtoEtilde} with $T = 0$, we have
\begin{align}
\widetilde{\mathscr E}_{-,\alpha,in}^\eps \lesssim_M  \mathscr E_{-,\alpha,in}^\eps + C_M \eps^2 \lesssim \eps^2. 
\end{align}
By the condition $\mathscr E_{-,2,in}^\eps \leq M\eps$, we have
\begin{align}
&\left|\int_{\mathbf R^3 \times \mathbf T^3} \frac{|\xi|^2}{2} (F_{-,in}^\eps - \mu_{\gamma^\eps_{in}}e^{\beta_{in} \psi^\eps_{in}}) dxd\xi  + \frac{1}{8\pi} \int_{\mathbf T^3}  |\nabla_x \phi_{in}^\eps|^2 - |\nabla_x \psi^\eps_{in}|^2dx \right| \\
&\quad \lesssim \sqrt{\widetilde {\mathscr E}_{-,2,in}}+  \widetilde {\mathscr E}_{-,2,in} \\
&\quad \lesssim C_M (\eps +\eps^2).
\end{align}
Thus, we have that \eqref{eq:energyCondition} holds in Proposition \ref{prop:derivationRefined}. On the other hand, by  \eqref{eq:bootstrap1.5i}, \eqref{eq:bootstrap2i} and \eqref{eq:EtoEtilde}, we have $ \sup_{t \in [0, T]} \sqrt{\widetilde{\mathscr E}_{-,\alpha}^\eps(t) } \lesssim_M  \kappa$. Taking $\kappa  \leq \frac{\varsigma}{C_M}$, we have that the conclusions of Proposition \ref{prop:derivationRefined} are hold on $[0,\hat T_\eps]$.

From \eqref{eq:stability1}, we have
\begin{align}\label{eq:stability1'}
\eps \sup_{t \in [0,\hat T_\eps]}   {\mathscr E}^\eps_{-,2}(t) + \int_0^{\hat T_\eps}   {\mathscr D}^\eps_{-,2}(t) dt \leq C_M (\eps   {\mathscr E}^\eps_{-,2,in}  + \eps^2).
\end{align}
Combining the above with \eqref{eq:stability2}, we have
and for all $t \in[0,\hat T_\eps]$, we have 
\begin{align}\label{eq:stability2'}
{\mathscr E}^\eps_{-,1}(t) \leq C_M( e^{-\frac{1}{C_M}(\frac{t}{\eps})^{\frac{2}{3}}}\eps +\eps^{\frac{5}{3}} t^\frac{1}{3} + \eps^2).
\end{align}

We now prove the estimate
\begin{align}\label{eq:ionDiff_eps}
\sup_{t \in [0,\hat T_\eps]}\|F_+^\eps(t) - F_+^0(t)\|_{\mathfrak E'}^2  + \int_0^{\hat T_\eps}\|F_+^\eps(t) - F_+^0(t)\|_{\mathfrak D'} ^2dt\lesssim_M \eps^2.
\end{align}
Using the above and \eqref{eq:stability1}, then for all $T \leq \hat T_{\eps}$, we  integrate \eqref{eq:Gbd} on $t\in[0,T]$ to get
\begin{align}
&\sup_{t \in [0,T]} \|F_+^\eps(t) - F_+^0(t)\|_{\mathfrak E'}^2 + \int_0^T\|F_+^\eps(t) - F_+^0(t)\|_{\mathfrak D'}^2 dt \\
&\quad  \lesssim_M  \|F_+^\eps(0) - F_+^0(0)\|_{\mathfrak E'}^2\\
&\quad \quad + \int_0^T \langle \|(F_+^\eps,F_+^0)\|_{\mathfrak D}\rangle^2 ( \eps^2 +\|F_+^\eps(t) - F_+^0(t)\|_{\mathfrak E'}) + \mathscr D_{-,2}(t) dt\\
&\quad \lesssim_M \int_0^T \langle \|(F_+^\eps,F_+^0)\|_{\mathfrak D}\rangle^2 ( \eps^2 +\sup_{t' \in [0,t]} \|F_+^\eps(t') - F_+^0(t')\|_{\mathfrak E'}^2)  dt + \eps^2.
\end{align}
In the final line, we used \eqref{eq:stability1'}.
Using the time integrated form of Gr\"onwall's inequality, we deduce \eqref{eq:ionDiff_eps}, provided the follow bound holds true: 
\begin{align}\label{eq:dissControl}
\int_0^{\hat T_\eps}\langle \|(F_+^0(t),F_+^\eps(t))\|_{\mathfrak D}\rangle^2 dt\lesssim_M 1.
\end{align}
Indeed, by integrating \eqref{eq:F_+apriori} in Proposition \ref{prop:apriori}, and using \eqref{eq:stability1'} to control the right-hand side, we have
\begin{align}
\int_0^{\hat T_\eps}  \|F_+^\eps(t)\|_{\mathfrak D}^2 dt\lesssim_M 1.
\end{align}
Combined with \eqref{eq:smallDiss}, we have \eqref{eq:dissControl}.  \\

\noindent \textit{Step 3 of part (i) (closing the bootstrap):} We now show that $\hat T := \inf_{\eps \in (0,\overline \eps]} \hat T_\eps \geq \kappa$. To show this, we suppose $\hat T_\eps < \kappa$ to yield a contradiction.
Now, one of
\eqref{eq:bootstrap1i} through \eqref{eq:bootstrap4i} holds with equality. We now show that in each of these five cases, the condition $\hat T_\eps < \kappa$ cannot hold.

 First, suppose  
 \eqref{eq:bootstrap1i} holds with equality.  By integrating \eqref{eq:F_+aprioriAlt},  and using \eqref{eq:stability1'} to control the right-hand side, we have for all $t \in [0,\hat T_\eps]$,
 \begin{align}\label{eq:F_+epsBootstrapClose}
\|F_+^\eps(t)\|_{\mathfrak E}^2 \leq \|F_{+,in}^\eps\|_{\mathfrak E}^2 + C_M (t+ \|\langle v\rangle^{m_1} F_+^\eps\|_{L^2_x(\dot{\mathcal H}_\sigma\cap \mathcal H_{\sigma;\eps}^+)_v}^2 dt')
 \end{align}
 We claim that
 \begin{align}\label{eq:smallF_+epsDiss}
\int_0^t \|\langle v\rangle^{m_1} F_+^\eps(t')\|_{L^2_x(\dot{\mathcal H}_\sigma\cap \mathcal H_{\sigma;\eps}^+)_v}^2 dt' \lesssim_M \kappa +  \eps .
 \end{align}
 Indeed, 
  \begin{align}
 \int_0^t \|\langle v\rangle^{m_1} F_+^\eps(t')\|_{L^2_x(\dot{\mathcal H}_\sigma\cap \mathcal H_{\sigma;\eps}^+)_v}^2 dt' &\leq \int_0^t \|F_+^\eps(t')\|_{\mathfrak D'}dt' + \int_0^t \|\langle v\rangle^{m_1} F_+^\eps(t')\|_{L^2_x( \mathcal H_{\sigma;\eps}^+)_v}^2 dt' 
  \end{align}
  Now, by \eqref{eq:smallDiss} and \eqref{eq:ionDiff_eps},
  \begin{align}
  \int_0^t \|F_+^\eps(t')\|_{\mathfrak D'}^2dt'  \lesssim   \int_0^t \|F_+^\eps\|_{\mathfrak D'}^2dt'  +   \int_0^t \|F_+^\eps(t') - F_+^0(t')\|_{\mathfrak D'}^2dt'  \lesssim_M \eps^2 + \kappa^2.
  \end{align}
  On the other hand, recalling \eqref{eq:H+}, we note that for all $\nu \in \mathbf S^2$, 
  \begin{align}
  \eps \sigma_{ij}(\eps v) \nu_i \nu_j \lesssim \eps \lesssim \eps \langle v\rangle^{3} \sigma_{ij}( v) \nu_i \nu_j.
  \end{align}
  Hence, using \eqref{eq:dissControl},
  \begin{align}
    \int_0^t \|\langle v\rangle^{m_1}F_+^\eps\|_{L^2_x( \mathcal H_{\sigma;\eps}^+)_v}^2dt'   &\lesssim  t +   \int_0^t \|\langle v\rangle^{m_2}F_+^\eps\|_{L^2_x( \mathcal H_{\sigma;\eps}^+)_v}^2dt'  \\
    &\lesssim_M t + \eps  \int_0^t \|F_+^\eps\|_{\mathfrak D}^2dt' \\
    &\lesssim_M t + \eps.
  \end{align}
  Using $t \leq \hat T_\eps \leq \kappa$,
 we conclude \eqref{eq:smallF_+epsDiss} holds true. Taking the supremum of \eqref{eq:F_+epsBootstrapClose} over all $t \in [0,\hat T_\eps]$, we deduce
 \begin{align}
 \sup_{t\in[0,\hat T\eps]} \|F_+^\eps (t)\|_{\mathfrak E} \leq  \|F_{+,in}^\eps \|_{\mathfrak E}  + C_M (\kappa + \eps).
 \end{align}
Next, by the continuity equation, we have for all $t \in [0,\hat T_\eps]$, 
\begin{align}
\frac{1}{n_+^\eps(t,x)} = \frac{1}{n^\eps_{+,in}(x) - \int_0^t\int_{\mathbf R^3} v\cdot \nabla_x F_+(t,x,v)dvdt}
\end{align}
Taking $\kappa$ smaller if necessary, we get
\begin{align}
\sup_{t \in [0,\hat T_{\eps} ]} \|\frac{1}{n_+^\eps(t)}\|_{L^\infty_x} \leq \|\frac{1}{n_{+,in}^\eps}\|_{L^\infty_x} \frac{1}{1-C\|\frac{1}{n_{+,in}^\eps}\|_{L^\infty_x}\|F_+^\eps\|_{\mathfrak E} \kappa} \leq\frac{3}{2} \|\frac{1}{n_{+,in}^\eps}\|_{L^\infty_x}.
\end{align}
Now, taking $\eps $ and $\kappa$ sufficiently small, we have
\begin{align}
2M = \sup_{t\in[0,\hat T_\eps]}(\|  \frac{1}{n_+^\eps(t)}\|_{L^\infty_x}+\|F_+^\eps(t)\|_{\mathfrak E} )&\leq\frac{3}{2}\|  \frac{1}{n_{+,in}^\eps}\|_{L^\infty_x}+\|F_{+,in}^\eps\|_{\mathfrak E} + C_M(\kappa + \eps) \\
&=\frac{3}{2}M.
\end{align}

Next, assume \eqref{eq:bootstrap1.5i} holds with equality. However, by \eqref{eq:ionDiff_eps}, we have $\sup_{t\in [0,\hat T_\eps]} \|F_+^\eps(t) - F_+^0(t)\|_{\mathfrak E'} \lesssim _M \eps$, so we may take $\eps$ sufficiently small to reach a contradiction.

Next, assume \eqref{eq:bootstrap2i} holds with equality.  However by \eqref{eq:stability1'}, $\sqrt{ {\mathscr E^\eps}_{-,2}} \lesssim_M\sqrt{\eps}$. Thus, by taking $\eps$ small enough, we ensure
\begin{align}
\kappa = \sqrt{ {\mathscr E^\eps}_{-,2}(\hat T_\eps)} \leq \frac{\kappa}{2},
\end{align}
a contradiction.

Next, assume \eqref{eq:bootstrap3i} holds with equality. By \eqref{eq:dtBigPhiandBetaBd} and \eqref{eq:stability1'},
\begin{align}
\eta &= |\ln(\frac{\gamma^\eps(\hat T_\eps)}{\beta_{in}})|\\
& \leq \int_0^{\hat T_\eps} |\frac{\dot\gamma^\eps(t)}{\gamma^\eps(t)}|dt\\
&\lesssim  \int_0^{\hat T_\eps} 1 + \sqrt{{\mathscr D}_{-,2}^\eps} dt  \\
& \leq \hat T_\eps + \hat T_\eps^{\frac{1}{2}}\left( \int_0^{T_\eps}{\mathscr D}_{-,2}^\eps dt\right)^{\frac{1}{2}}.
\end{align}
Combining with \eqref{eq:stability1'}, we conclude $\eta \lesssim \hat T_\eps$.  We conclude $\eta = \eta(M) \lesssim_M \hat T_\eps \leq \kappa$. Taking $\kappa$ sufficiently small, we reach a contradiction.  

Now, assume \eqref{eq:bootstrap4i} holds with equality. By \eqref{eq:massConsPert},  \eqref{eq:dtBigPhiandBetaBd},  \eqref{eq:stability1}, we have
\begin{align}
1 &= \int_0^{\hat T_\eps} \|\int_{\mathbf R^3} (F_-^\eps - \mu_{\gamma^\eps}e^{\gamma^\eps\psi^\eps})d\xi \|_{L^2_x} dt  \\
&\quad \lesssim_M \int_0^{\hat T_\eps}\frac{1}{\eps} \| \int_{\mathbf R^3} v\cdot \nabla_x (F_-^\eps - \mu_{\gamma^\eps}e^{\gamma^\eps\psi^\eps})d\xi  \|_{L^2_x} + \|\partial_t (\gamma^\eps \psi^\eps) e^{\gamma^\eps \psi^\eps}\|_{L^2_x}dt \\
& \quad\lesssim_M \int_0^{\hat T_\eps}1 + \frac{1}{\eps}\sqrt{\widetilde {\mathscr D}_{-,2}^\eps} dt\\
&\quad \lesssim_M \hat T_\eps + \hat T_\eps^\frac{1}{2} 
\end{align}
This implies $1\lesssim_M \hat T_\eps \leq \kappa$. Taking $\kappa$ small enough, we reach a contradiction. 

In conclusion, $\hat T_\eps \geq \kappa$ for all $\eps$ small enough, so $\hat T \geq \kappa$. The bounds \eqref{eq:stability1'} and \eqref{eq:stability2'} imply \eqref{eq:error1} and \eqref{eq:error2} respectively.
\\

\noindent \textbf{Proof of part (ii):} The proof is similar to part (i), so we omit some details. In the same way as in part (i), we take solutions $(F_+,\beta,\phi^0)$ to \eqref{eq:VPLIon} (satisfying \eqref{eq:smallDiss}). We also take the solution $(F_+^\eps,F_-^\eps)$ to \eqref{eq:VPLRescaled} and $(\gamma^\eps ,\psi^\eps)$ satisfying the conditions \eqref{eq:bootstrap1i} through \eqref{eq:bootstrap4i} for some $\hat T_\eps > 0$. We note that the conclusions of Propositions \ref{prop:apriori} and \ref{prop:ionError} both hold. 

By \eqref{eq:energyCondition0}, and the fact that $\psi^\eps_{in} = \phi^0_{in}$, we have that the expression in \eqref{eq:energyCondition} is exactly zero. The bounds \eqref{eq:EtoEtilde} and \eqref{eq:DtoDtilde} both hold in this context as well, so we can guarantee $\sup_{t \in [0,\hat T_\eps]}\sqrt{\mathscr E_{-,2}^\eps} < \varsigma$ by taking $\kappa$ small enough.
Thus all the hypotheses of Proposition \ref{prop:derivationRefined} are satisfied up to  $T = \hat T_\eps$. 
 Next, we have that $\mathscr E^\eps_{-,\alpha,in} = \widetilde{\mathscr E}^\eps_{-,\alpha,in} \lesssim_M \delta^2$ (and this expression is independent of $\eps$). Therefore, by  \eqref{eq:stability1}, we have
\begin{align}\label{eq:stability1''}
\eps \sup_{t \in [0,\hat T_\eps]}  \widetilde {\mathscr E}^\eps_{-,2}(t) + \int_0^{\hat T_\eps} \widetilde  {\mathscr D}^\eps_{-,2}(t) dt \leq C_M (\eps  \delta^2 + \eps^2).
\end{align}
On the other hand, the above and  \eqref{eq:stability2} give the bound 
\begin{align}\label{eq:stability2''}
\widetilde{\mathscr E}^\eps_{-,1}(t) \lesssim_M  e^{-\frac{1}{C_M}(\frac{t}{\eps})^{\frac{2}{3}}}\delta^2+\eps^{\frac{5}{3}} t^\frac{1}{3} + \eps^2.
\end{align}

Next, similarly to  \eqref{eq:ionDiff_eps} in part (i), we have
\begin{align}\label{eq:ionDiff_eps'}
\sup_{t \in [0,\hat T_\eps]} \|F_+^\eps(t) - F_+^0(t)\|_{\mathfrak E'}^2 + \int_0^{\hat T_\eps}\|F_+^\eps(t) - F_+^0(t)\|_{\mathfrak D'}^2 dt \lesssim \eps(\eps+\delta^2).
\end{align}

Now, combining \eqref{eq:ionDiff_eps'} with  \eqref{eq:EtoEtilde} , \eqref{eq:DtoDtilde}, and \eqref{eq:stability1''} and \eqref{eq:stability2''}, we have
\begin{align}\label{eq:stability1'''}
\eps \sup_{t \in [0,\hat T_\eps]}   {\mathscr E}^\eps_{-,2}(t) + \int_0^{\hat T_\eps}   {\mathscr D}^\eps_{-,2}(t) dt \leq C_M (\eps  \delta^2 + \eps^2)
\end{align}
and for all $t \in [0,\hat T_\eps]$, we have
\begin{align}\label{eq:stability2'''}
{\mathscr E}^\eps_{-,1}(t) \lesssim_M  e^{-\frac{1}{C_M}(\frac{t}{\eps})^{\frac{2}{3}}}\delta^2+ \eps^{\frac{5}{3}}t^{\frac{1}{3}} +  \eps \delta^2.
\end{align}

We now show that $\hat T = \inf_{\eps \in (0,\overline \eps]} \hat T_\eps$ is positive. As before, we take $\hat T_\eps < \kappa$, and show that each of \eqref{eq:bootstrap1i} through \eqref{eq:bootstrap4i} holding with equality yields a contradiction, provided $\kappa$ is taken small enough. In the cases of \eqref{eq:bootstrap1i}, \eqref{eq:bootstrap1.5i}, \eqref{eq:bootstrap2i} and \eqref{eq:bootstrap3i}, the proof is essentially same as in the case of part (i), up to taking $\delta$ sufficiently small in addition to $\kappa$ and $\eps$.

We now address the case of when \eqref{eq:bootstrap4i} holds with equality at $T = \hat T_\eps$.
Next, take $0<t_0  < \hat T_\eps$ to be chosen later. Through a trivial modification of the proof of Proposition \ref{prop:derivationRefined}, we have that the estimate \eqref{eq:stability1} holds with $\alpha=1$ instead of $\alpha=2$, and $[t_0,T]$ instead of $[0,T]$. This gives
\begin{align}
\eps \sup_{t \in [t_0,T]} \widetilde{\mathscr E}_{-,1}^\eps(t) +\int_{t_0}^{\hat T_\eps} \mathscr D_{-,1}(t) dt&\lesssim_M \mathscr E_{-,1}^\eps(t_0)  + \eps^2\\
&\lesssim_M \eps (e^{-\frac{1}{C_M}(\frac{t_0}{\eps})^{\frac{2}{3}}}\delta^2+\eps^{\frac{5}{3}} t_0^\frac{1}{3}) + \eps^2
\end{align}
Taking $t_0 = \eps (C_M|\ln(\eps)|)^{\frac{3}{2}}$, we can bound last expression by by $\eps^2$. On the other hand, integrating \eqref{eq:stability2''} on $[0,t_0]$, we have
\begin{align}
\int_0^{t_0} \sqrt{ \widetilde{\mathscr E}^\eps_{-,1}(t)} dt \lesssim_M \eps \delta + \eps^{\frac{5}{4}} t_0^{\frac{7}{6}} + \eps t_0 \lesssim_M \eps \delta + \eps^{\frac{3}{2}}
\end{align}  
Therefore, combining these bounds, we deduce
\begin{align}\label{eq:L1Stability}
&\int_0^{\hat T_\eps}\|\int_{\mathbf R^3} \xi\cdot \nabla_x (F_-^{\eps} - \mu_{\gamma^\eps} e^{\gamma^\eps \psi^\eps})d\xi \|_{L^2_{x}}^2 dt \\
&\quad\lesssim_M \int_0^{t_0} \sqrt{ \widetilde{\mathscr E}^\eps_{-,1}(t)} dt +\left(\int_0^{t_0} \widetilde{\mathscr D}^\eps_{-,1}(t) dt \right)^{\frac{1}{2}} \lesssim _M \eps.
\end{align}
Now, combining the above with \eqref{eq:massConsPert},  \eqref{eq:dtBigPhiandBetaBd}, and \eqref{eq:stability1'''},
\begin{align}
1 &= \int_0^t \|\int_{\mathbf R^3} (F_-^\eps - \mu_{\gamma^\eps}e^{\gamma^\eps\psi^\eps})d\xi \|_{L^2_x} dt  \\
&\quad \lesssim_M \int_0^{\hat T_\eps}\frac{1}{\eps} \| \int_{\mathbf R^3} \xi\cdot \nabla_x (F_-^\eps - \mu_{\gamma^\eps}e^{\gamma^\eps\psi^\eps})d\xi  \|_{L^2_x} + \|\partial_t (\gamma^\eps \psi^\eps) e^{\gamma^\eps \psi^\eps}\|_{L^2_x}dt \\
&\quad \lesssim_M \eps \delta + \eps^\frac{3}{2} + \hat T_\eps + \hat T_\eps^{\frac{1}{2}}\\
&\quad \leq \eps \delta +\eps^{\frac{3}{2}} + \kappa^{\frac{1}{2}}.
\end{align}
Taking $\eps,\delta$ and $\kappa$ sufficiently small, we have a contradiction.

In conclusion, we have $\hat T \geq \kappa >0$. Moreover, \eqref{eq:error1'}, \eqref{eq:error2'} and \eqref{eq:error3'} follow from \eqref{eq:ionDiff_eps'}, \eqref{eq:stability1'''}, and \eqref{eq:stability2'''} respectively. \qed

\bibliographystyle{abbrv} 
\bibliography{refs}

\end{document}